\documentclass[11pt]{article}

\usepackage[margin=1in]{geometry}
\usepackage{amsmath,amssymb,amsthm}
\usepackage{mathtools}
\usepackage{bm}
\usepackage[Symbolsmallscale]{upgreek}
\usepackage{enumitem}
\usepackage{microtype}
\usepackage{aliascnt}
\usepackage[backend=bibtex,style=alphabetic,sorting=nyt,giveninits=true,
  maxalphanames=3,maxbibnames=99,isbn=false]{biblatex}
\AtBeginBibliography{\small\setlength{\emergencystretch}{1em}}
\usepackage[hidelinks]{hyperref}
\usepackage[nameinlink,noabbrev]{cleveref}

\numberwithin{equation}{section}

\newtheorem{theorem}{Theorem}[section]
\newaliascnt{proposition}{theorem}
\newtheorem{proposition}[proposition]{Proposition}
\aliascntresetthe{proposition}
\newaliascnt{lemma}{theorem}
\newtheorem{lemma}[lemma]{Lemma}
\aliascntresetthe{lemma}
\newaliascnt{corollary}{theorem}
\newtheorem{corollary}[corollary]{Corollary}
\aliascntresetthe{corollary}
\newaliascnt{remark}{theorem}
\newtheorem{remark}[remark]{Remark}
\aliascntresetthe{remark}

\crefname{proposition}{proposition}{propositions}
\Crefname{proposition}{Proposition}{Propositions}
\crefname{lemma}{lemma}{lemmas}
\Crefname{lemma}{Lemma}{Lemmas}
\crefname{corollary}{corollary}{corollaries}
\Crefname{corollary}{Corollary}{Corollaries}
\crefname{remark}{remark}{remarks}
\Crefname{remark}{Remark}{Remarks}

\newcommand{\R}{\mathbb{R}}
\newcommand{\E}{\mathbb{E}}
\newcommand{\cN}{\mathsf{N}}
\newcommand{\Law}{\operatorname{Law}}
\newcommand{\W}{W}
\newcommand{\Id}{I}
\newcommand{\ip}[2]{\langle #1,#2\rangle}
\newcommand{\norm}[1]{\lVert #1\rVert}
\newcommand{\mmid}{\mathrel{\Vert}}
\newcommand{\e}{\mathrm{e}}
\newcommand{\deq}{\coloneqq}
\DeclareMathOperator*{\argmin}{arg\,min}

\title{Sharp dimensional analysis of midpoint methods for Langevin sampling}
\author{
 Fan Chen\thanks{Department of Electrical Engineering and Computer Science,
 Massachusetts Institute of Technology.
 Email: \href{mailto:fanchen@mit.edu}{\texttt{fanchen@mit.edu}}.}
 \and
 Sinho Chewi\thanks{Department of Statistics and Data Science, Yale University.
 Email: \href{mailto:sinho.chewi@yale.edu}{\texttt{sinho.chewi@yale.edu}}.}
 \and
 Jianfeng Lu\thanks{Department of Mathematics, Duke University.
 Email: \href{mailto:jianfeng@math.duke.edu}{\texttt{jianfeng@math.duke.edu}}.}
 \and
 Matthew S. Zhang\thanks{Department of Mathematics, Massachusetts Institute of Technology.
 Email: \href{mailto:shuns436@mit.edu}{\texttt{shuns436@mit.edu}}.}
}
\date{September 2026}

\begin{document}
\maketitle

\begin{abstract}
We study deterministic and randomized midpoint discretizations of Langevin
 dynamics for a target \(\pi\propto\e^{-V}\), where
\(0 \prec \alpha I\preceq\nabla^2V\preceq\beta I\) and \(\kappa\deq\beta/\alpha\).
To achieve \(\sqrt\alpha\,W_2\leqslant\varepsilon\), we show that deterministic Heun uses at most
\(\widetilde O(\kappa^{4/3}d^{1/3}\varepsilon^{-2/3})\) gradient
queries, and underdamped exponential midpoint uses
\(\widetilde O(\kappa^{5/4}d^{1/4}\varepsilon^{-1/2})\).
The proofs exploit cancellation at stationarity and smoothing using techniques from Malliavin calculus, outperforming previous upper bounds based on standard couplings.
At bounded condition number, a lower bound matches the $d$ and $\varepsilon$ powers of both
deterministic methods.
To contrast, for the randomized midpoint methods and Poisson midpoint with
at least two grid points (both overdamped and underdamped variants), a simple Gaussian calculation yields a lower bound \(d^{1/3}\varepsilon^{-1/3}\) to get an $\varepsilon$-close sample despite starting at a benign initialization. This shows surprisingly that in high dimensions,
deterministic discretizations can outperform their random counterparts.
\end{abstract}

\tableofcontents
\clearpage

\section{Introduction}
\label{sec:introduction}

We study sampling from \(\pi(\mathrm d x)\propto\e^{-V(x)}\,\mathrm d x\)
on \(\R^d\), where \(V\in C^2(\R^d)\) satisfies
\begin{equation}
    0 \prec \alpha\Id_d\preceq\nabla^2V(x)\preceq\beta\Id_d\,,
    \qquad x\in\R^d\,.
    \label{eq:sc-smooth}
\end{equation}
Write \(\kappa\deq\beta/\alpha\). No bounds on third or
higher derivatives of \(V\) are assumed.  We count gradient evaluations needed to
produce a distribution \(\mu\) with
\begin{equation}
    \sqrt\alpha\,\W_2(\mu,\pi)\leqslant\varepsilon\,,
    \label{eq:global-absolute-criterion}
\end{equation}
where \(\W_2\) is the \(2\)-Wasserstein distance.

A basic approach is to discretize overdamped or underdamped Langevin
dynamics. Euler methods evaluate the gradient at the initial point of each step;
midpoint methods correct this Euler prediction with additional gradient
evaluations to improve accuracy. In this work we compare deterministic quadrature\footnote{Deterministic refers only to
quadrature times; the Brownian increments remain random.}---Heun and
underdamped exponential midpoint---with both randomized
midpoint (RLMC and RULMC)
\cite{ShenLee2019, HeBalasubramanianErdogdu2020, YuKaragulyanDalalyan2024,AltschulerChewiZhang2026}
and Poisson midpoint (PLMC and PULMC) \cite{KandasamyNagaraj2024, SrinivasanNagaraj2026}.

At bounded condition number, prior analyses suggested the following hierarchy of performance.
The best known complexity of overdamped Euler, known as Langevin Monte Carlo (LMC), in our setting was for a long time
\(\widetilde O(d\varepsilon^{-2})\) \cite{Dalalyan2017}, and deterministic
midpoint quadrature seemed to offer no improvement under
\eqref{eq:sc-smooth}. Randomization improved the guarantee to
\(\widetilde O(d^{1/2}\varepsilon^{-1})\)
\cite{HeBalasubramanianErdogdu2020}, while introducing momentum (i.e., discretizing underdamped Langevin dynamics) improved both:
to \(\widetilde O(d^{1/2}\varepsilon^{-1})\) for deterministic exponential
Euler \cite{ChengEtAl2018}, and
\(\widetilde O(d^{1/3}\varepsilon^{-2/3})\) for randomized midpoint
\cite{ShenLee2019}. This gave rise to the intuition that randomization and
momentum each bring an advantage, reducing the exponent of \(d\varepsilon^{-2}\) from \(1\) to \(1/2\),
and then to \(1/3\) when combined.

Two recent results challenged this picture. Pedrotti and
Whalley~\cite{PedrottiWhalley2026} show that LMC already matches the classical randomized overdamped guarantee. The recently proposed Poisson midpoint method~\cite{SrinivasanNagaraj2026} obtained accuracy rates $\varepsilon^{-1/3}$ exceeding the conjectured exponent above.
These results reveal cancellations missed by conventional bounds and reopen the question of comparison between deterministic and
randomized quadrature.
In response to this, we investigate the convergence properties of several classes of midpoint methods in the present work.

\subsection{Our results and their implications}
\label{subsec:intro-results}

\noindent\textbf{Key takeaways.} At bounded condition number, our results
give three conclusions.
\begin{enumerate}[label=(\roman*),leftmargin=*]
\item \textbf{Improved complexity estimates for midpoint schemes.}
Heun achieves \(\widetilde O(d^{1/3}\varepsilon^{-2/3})\), and
underdamped exponential midpoint achieves
\(\widetilde O(d^{1/4}\varepsilon^{-1/2})\), using two gradient
evaluations per step. This is incredibly surprising --- until the recent work of~\cite{ChenEtAl2026Exact}, the $d^{1/4}$ rate would have been state-of-the-art. The underdamped rate also completely bypasses the complexity
required for path approximation in~\cite{CaoLuWang2021}, and shows that the former does not
constrain distributional approximation.

\item \textbf{Deterministic can beat randomized.} This is perhaps the most shocking result in the context of current literature. The \(d^{1/3}\) and \(d^{1/4}\) deterministic rates both improve on the classical randomized midpoint
guarantees; at fixed accuracy, the underdamped deterministic rate even beats
the \(d^{1/3}\) lower bound for every randomized method studied here.

\item \textbf{Lower bounds make the separations sharp.}
We give lower bounds for every method in \Cref{tab:intro-comparison}.
They match the deterministic dimension and accuracy powers, establish a
common $d^{1/3}$ obstruction for all randomized discretizations, and match PULMC's leading rate in its
regime of sufficiently small \(\varepsilon\). These are concrete limits of the
methods, rather than just differences between available analyses; they are stated more precisely below.
\end{enumerate}

\begin{table}[t]
\centering
\small
\renewcommand{\arraystretch}{1.25}
\begin{tabular}{@{}lll@{}}
\hline
Method & Upper bound & Lower bound\\
\hline
Overdamped Euler (LMC) & \(d^{1/2}\varepsilon^{-1}\) \cite{PedrottiWhalley2026}
 & \(d^{1/2}\varepsilon^{-1}\) \cite[Example~2]{Wibisono2018}\\
Heun & \(\bm{d^{1/3}\varepsilon^{-2/3}}\)
 & \(\bm{d^{1/3}\varepsilon^{-2/3}}\)\\
RLMC & \(d^{1/2}\varepsilon^{-1}\) \cite{HeBalasubramanianErdogdu2020}
 & \(\bm{d^{1/3}\varepsilon^{-1/3}}\)\\
PLMC & \(d^{2/3}\varepsilon^{-2/3}\) \cite{SrinivasanNagaraj2026}
 & \(\bm{d^{1/3}\varepsilon^{-1/3}}\)\\
\hline
Underdamped exponential midpoint
 & \(\bm{d^{1/4}\varepsilon^{-1/2}}\)
 & \(\bm{d^{1/4}\varepsilon^{-1/2}}\)\\
RULMC & \(d^{1/3}\varepsilon^{-2/3}\) \cite{ShenLee2019}
 & \(\bm{d^{1/3}\varepsilon^{-1/3}}\)\\
PULMC (small accuracy regime) & \(d^{1/3}\varepsilon^{-1/3}\) \cite{SrinivasanNagaraj2026}
 & \(\bm{d^{1/3}\varepsilon^{-1/3}}\)\\
\hline
\end{tabular}
\caption{Gradient query bounds for \(\sqrt\alpha\,\W_2\leqslant\varepsilon\)
at bounded condition number, omitting logarithms. Bold entries are new
results. ``Lower bound'' means a necessary number of iterations for some
admissible target and an initial distribution with expected energy gap
\(O(d)\), as specified in \Cref{thm:intro-lower}.
Poisson methods use at least two grid points and have constant expected
gradient cost per batch. The cited upper bounds have their own
initialization hypotheses; PULMC's general \(p\) bound is discussed below.}
\label{tab:intro-comparison}
\end{table}

The following summarizes our upper bounds; see \Cref{thm:et-upper,thm:uld-midpoint-main}.

\begin{theorem}[Deterministic midpoint guarantees, informal]
\label{thm:intro-midpoint}
Assume \eqref{eq:sc-smooth} and \(0<\varepsilon\leqslant\sqrt d/2\). Then, Heun initialized at the minimizer
\(x_\star\) attains \eqref{eq:global-absolute-criterion} with
\[
    {\widetilde O\bigl(\kappa^{4/3}d^{1/3}\varepsilon^{-2/3}\bigr)}
\]
gradient queries. Underdamped exponential midpoint with friction
\(2\sqrt\beta\), initialized with \(X_0=x_\star\) and independent
\(P_0\sim\cN(0,\Id_d)\), attains the same position accuracy with
\[
    {\widetilde O\bigl(\kappa^{5/4}d^{1/4}\varepsilon^{-1/2}\bigr)}
\]
gradient queries. Here \(\widetilde O\) suppresses logarithmic factors.
\end{theorem}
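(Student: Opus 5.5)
The plan is the standard local-to-global template for $W_2$: one-step contraction of the discrete chain, plus a bound on how far a single step moves the stationary law, plus a sum over iterations. The novelty sits in the local error term. Fix step size $h$ (with $h\beta\lesssim 1$) and let $\widehat P_h$ denote one step of the scheme (Heun, or underdamped exponential midpoint). By synchronous coupling and \eqref{eq:sc-smooth}, $\widehat P_h$ contracts $W_2$ by a factor $1-c\alpha h$; in the underdamped case the distance is a twisted norm adapted to friction $2\sqrt\beta$, with rate $c\alpha h/\sqrt\beta$. The usual argument then gives
\[
W_2(\mu\widehat P_h^N,\pi)\le (1-c\alpha h)^N W_2(\mu,\pi)+\frac{W_2(\pi\widehat P_h,\pi)}{c\alpha h}.
\]
So everything reduces to bounding the \emph{stationary one-step bias} $W_2(\pi\widehat P_h,\pi)$, rather than the pathwise local error, which is what gives the naive $d^{1/2}$ rates. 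The initialization at $x_\star$ has $W_2(\delta_{x_\star},\pi)\lesssim\sqrt{d/\alpha}$, which only contributes logarithms.

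To bound the stationary bias, I use $\pi P_h=\pi$ for the exact semigroup and write $\pi\widehat P_h-\pi = \pi(\widehat P_h-P_h)$. Coupling $X_0\sim\pi$ with the same Brownian path, the difference $\widehat X_h - X_h$ splits into two parts.
\begin{itemize}[leftmargin=*]
\item A \emph{mean-zero} part. These are terms such as $\int_0^h(\nabla V(X_s)-\nabla V(X_0))\,\mathrm ds$ linearized in Brownian increments, of size $\sim\beta h^{3/2}\sqrt d$ (overdamped).
\item A \emph{drift} part of higher order.
\end{itemize}
Because the mean-zero part is a martingale-like fluctuation independent of the past, the $W_2$ distance is governed not by its $L^2$ norm but by a smoothed quantity. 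I would bound $W_2$ by $\sqrt{\mathrm{KL}}$ via Talagrand ($\pi$ is $\alpha$-log-concave), or directly through a Malliavin integration by parts. Concretely, I write the density perturbation as a divergence of the error field against the Gaussian noise injected in the step. Integrating by parts against the Brownian increment trades the spatial randomness for a factor $h^{-1/2}$ while eliminating the dimension-dependent variance, so only the conditional mean $\E[\widehat X_h-X_h\mid X_0,\ldots]$ and second-order terms survive.

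At stationarity the leading conditional-mean error of Heun, which involves $\nabla^2V\nabla V-\Delta\nabla V$-type combinations, cancels in distribution. This is the generator identity $\pi(\mathcal L^2 f)$ telescoping. Crucially, this has to be done without third derivatives: I keep the terms as finite differences of $\nabla V$ and bound them only through $\beta$. I expect the outcome to be
\[
W_2(\pi\widehat P_h,\pi)\lesssim \frac{\kappa^{O(1)}\,(\beta h)^{2}\sqrt d\,h^{\dots}}{\sqrt\alpha}.
\]
Calibrated so that the global error is $\sqrt\alpha\,W_2\lesssim \kappa\,(\beta h)^{3/2}d^{1/2}$ for Heun, solving for $h$ gives $N\approx(\alpha h)^{-1}\log = \widetilde O(\kappa^{4/3}d^{1/3}\varepsilon^{-2/3})$. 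In the underdamped case the analogous bias is $\sqrt\alpha\,W_2\lesssim \kappa\,(\sqrt\beta h)^{2}d^{1/2}$ with contraction rate $\alpha h/\sqrt\beta$, which gives $\widetilde O(\kappa^{5/4}d^{1/4}\varepsilon^{-1/2})$. I would fix the precise exponents of $h$ by working backward from the stated rates, and then verify them in the Gaussian case $V=\tfrac\alpha2|x|^2$.

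The main obstacle is the smoothing step: showing rigorously that the mean-zero, dimension-heavy fluctuation contributes only at higher order to $W_2$ between laws when only $C^2$ regularity and Hessian bounds are available. I would first attempt a one-step KL bound by Girsanov/interpolation along $t\mapsto$ the law of $X_0+t(\widehat X_h-X_h)$, then use the Malliavin derivative of the Gaussian increment to integrate by parts, controlling the Jacobian of the error map by $\beta h$. A secondary issue is non-stationary iterates: the bias bound at $\pi$ must be transferred along the chain, which the contraction estimate handles because only the stationary bias enters the recursion.
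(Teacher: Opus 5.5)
\textbf{Main gap: the Heun reduction.} Your plan rests on bounding the stationary one-step error $\W_2(\pi\widehat P_h,\pi)$ and dividing by the contraction gap. For Heun this cannot work, whatever technique you use for the one-step term. In normalized units ($\beta=1$), getting $\W_2(\widehat\pi_h^{\mathrm{Heun}},\pi)\lesssim\kappa\sqrt{dh^3}$ from your recursion would require $\W_2(\pi K_h^{\mathrm{Heun}},\pi)\lesssim\sqrt{dh^5}$. The paper's \Cref{prop:heun-oscillatory-lower} rules this out. It gives an admissible target of bounded condition number (a quadratic plus oscillations of amplitude $h$ and wavelength $\sqrt h$) on which
\[
\W_2(\pi_hK_h^{\mathrm{Heun}},\pi_h)\geqslant c\sqrt d\,h^{3/2}
\quad\text{and}\quad
\operatorname{KL}(\pi_hK_h^{\mathrm{Heun}}\mmid\pi_h)\geqslant cdh^2 .
\]
This is a lower bound on the one-step quantity itself. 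So Talagrand, Girsanov, or integration by parts against the same step's Brownian increment can give no better than $\sqrt{dh^3}$ there, and dividing by the gap $h/\kappa$ gives only $\kappa\sqrt{dh}$, the Euler-level rate. Two of your premises fail for the same reason. First, the fluctuation you want to smooth is a functional of the very Brownian path that would supply the smoothing. Second, the trapezoidal conditional-mean defect $e_h$ does not cancel within a step at stationarity: only $\int e_h\,\mathrm d\pi=0$ holds, and its effect disappears only after summing over future steps.

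\textbf{What is missing.} The cancellation happens across steps, so the comparison must be rearranged. The paper solves the discrete Poisson equation $(\Id-P_h)\chi=e_h$, i.e.\ $\chi=\sum_{n\geqslant0}P_{nh}e_h=h\,r(-hL)\nabla V$ with $r(z)=z^{-1}-\tfrac12\coth(z/2)$, so that $\norm\chi_{L^2(\pi)}\lesssim\sqrt{dh^3}$. It then compares the chain with the shifted law $\nu=(\Id+\chi)_\#\pi$, where the conditional mean of the local error is only $O(\sqrt{dh^5})$. The centered fluctuation is still $O(\sqrt{dh^3})$ in $L^2$. It is tamed by the independent Gaussian increment of the \emph{next} step, through a conditional Stein kernel bound $\E\norm{\tau}_{\mathrm{HS}}^2\lesssim dh^6$ and \Cref{lem:et-gaussian-convolution}. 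That Stein bound uses Malliavin derivatives in $L^4(\Omega;\mathfrak S_4)$ and the Witten--Laplacian multiplier estimate $\norm{\nabla\chi}_{L^4(\pi;\mathfrak S_4)}\lesssim d^{1/4}h$; this is what avoids third derivatives while keeping $\sqrt d$. Accordingly the accumulation is \Cref{lem:et-local-framework}: $\W_2(\nu K_h^{\mathrm{Heun}},\nu)\lesssim\sqrt{dh^3}$ is paid once, and only the lag-one term $\W_2(\nu(K_h^{\mathrm{Heun}})^2,\nu K_h^{\mathrm{Heun}})\lesssim\sqrt{dh^5}$ is multiplied by $(1-\rho)^{-1}\lesssim\kappa/h$. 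Your one-step recursion at $\pi$ has room for neither the shift nor the lag.

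\textbf{Exponential midpoint.} Here your template is the paper's, and $\W_{2,M}(\bm\pi K_h^{\mathrm{ExpMid}},\bm\pi)\lesssim\sqrt d\,h^3$ does hold. However, the mechanism is specific and absent from your sketch. The conditional mean has $\norm{m_h}_{L^2}$ only of order $\sqrt d\,h^2$; the gain is $\norm{\delta_{\bm\pi}m_h}_{\dot H^{-1}(\bm\pi)}\lesssim\sqrt d\,h^3$. This holds because $\delta_{\bm\pi}m_h$ is the midpoint-rule error of a curve that is affine up to $O(\sqrt d\,t^2)$ in $\dot H^{-1}$. That bound is then converted to transport by \Cref{lem:uld-midpoint-random-displacement}. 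Contraction of this kernel also needs $\sqrt\beta\,h\lesssim\kappa^{-1/4}$ (\Cref{rem:uld-midpoint-sharpness}), not merely $\beta h\lesssim1$.

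\textbf{Calibration.} Your exponents are off by $\kappa^{1/2}$. Biases of $\kappa(\beta h)^{3/2}\sqrt d$ and $\kappa(\sqrt\beta\,h)^2\sqrt d$ would give $\kappa^{5/3}$ and $\kappa^{3/2}$. The stated rates come from $\sqrt\alpha\,\W_2\lesssim\sqrt{\kappa d}\,(\beta h)^{3/2}$ and $\sqrt{\kappa d}\,(\sqrt\beta\,h)^2$.
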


\begin{theorem}[Stationary bias and iteration lower bounds, informal]
\label{thm:intro-lower}
For each method in \Cref{tab:intro-comparison},
and every sufficiently small normalized step \(h\), there exist a
strongly convex, smooth target with bounded condition number and an
initial distribution satisfying
\[
 \E[V(X_0)-V(x_\star)]\leqslant Cd\,,
 \qquad x_\star\deq\argmin V\,,
\]
such that
for every sufficiently small \(\varepsilon>0\), achieving
\(\sqrt\alpha\,\W_2(\nu_N,\pi)\leqslant\varepsilon\) requires at least
the query complexity in~\Cref{tab:intro-comparison}'s lower bound column, with logarithmic
factors suppressed. Here \(\nu_N\) is the position distribution after
\(N\) steps. In the underdamped case, this initial distribution is on $X_0$ (with \(P_0\sim\cN(0,\Id_d)\) independent of
\(X_0\)).

The stationary bounds are stated in
\Cref{prop:heun-oscillatory-lower,prop:exponential-midpoint-Gaussian-lower,prop:overdamped-common-construction,prop:randomized-underdamped-Gaussian-lower}.
The finite iteration bounds and their initialization conditions are in
\Cref{cor:midpoint-iteration-lower};
\Cref{thm:et-linear-kappa-lower} additionally quantifies the condition
number dependence for Heun.
\end{theorem}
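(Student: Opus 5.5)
Everything reduces to Gaussian targets, or to targets that are Gaussian up to a bounded perturbation. For a Gaussian target, each scheme is an affine recursion with Gaussian noise. Its stationary law is therefore an explicit Gaussian, and both the stationary bias and the transient can be computed in closed form.

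The plan proceeds in three steps.

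\textbf{Step 1: stationary bias.} For each method we take a target whose Hessian is block diagonal. The blocks sit at a fixed curvature level $\lambda\in[\alpha,\beta]$ chosen depending on $h$, and in the deterministic cases they may carry a small oscillatory non-quadratic component; this is the role of the ``oscillatory'' construction for Heun. We then compute the per-coordinate stationary variance $\sigma_h^2(\lambda)$ of the scheme and compare it with $1/\lambda$.

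For the randomized and Poisson midpoint methods, the random quadrature time makes the one-step map random. Averaging the squared multiplier produces a variance excess in each coordinate. For overdamped RLMC this excess is of order $h^2$ relative to $1/\lambda$, and the underdamped and Poisson variants should give analogous orders. A covariance mismatch of relative size $\delta$ in $d$ coordinates costs $\sqrt\alpha\,W_2\gtrsim\sqrt d\,\delta$. Taking this with the transient constraint of Step 2, which forces $N\gtrsim 1/h$ up to logarithms, should give $N\gtrsim d^{1/3}\varepsilon^{-1/3}$.

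For Heun and exponential midpoint, the quadratic bias is of higher order. The Gaussian computation then yields $\sqrt d\,h^{3/2}$-type and $\sqrt d\,h^{2}$-type biases respectively. Inverting these gives $h\lesssim(\varepsilon/\sqrt d)^{2/3}$ and $h\lesssim(\varepsilon/\sqrt d)^{1/2}$, hence $N\gtrsim 1/h$ matching the column. Where the pure Gaussian bias is too small, we add a bounded-third-derivative perturbation at scale $\sqrt h$ so as to defeat the cancellation that the upper bound exploits. These are the stationary propositions cited in \Cref{thm:intro-lower}.

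\textbf{Step 2: finite-$N$ lower bound.} The initial law is taken with $X_0$ displaced, for instance $\cN(0,c\lambda^{-1}\Id_d)$ with $c\neq1$, or a mean shift of size $\sqrt{d/\alpha}$, so that $\E[V(X_0)-V(x_\star)]\leqslant Cd$. In the Gaussian case the iterates' covariance contracts toward the stationary covariance at rate $(1-\Theta(\lambda h))^N$. The triangle inequality then gives
\[
\sqrt\alpha\,W_2(\nu_N,\pi)\geqslant \max\bigl(\text{bias}(h),\ c'\sqrt d\,\e^{-C\lambda hN}\bigr)-\text{lower-order terms}.
\]
Requiring this to be at most $\varepsilon$ forces both $h$ small and $N\gtrsim h^{-1}\log(\sqrt d/\varepsilon)$. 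Multiplying by the constant number of queries per step gives \Cref{cor:midpoint-iteration-lower}.

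\textbf{Main obstacle.} The hard part is the deterministic methods, and Heun in particular. Their quadratic bias partially cancels, so one must certify a non-vanishing leading bias coefficient for a concrete target, possibly non-Gaussian. I would first try the exact Gaussian calculation over a range of $\lambda$, since the mismatch coefficient is a rational function of $\lambda h$ that vanishes only at isolated points. Only if that fails would I move to the oscillatory perturbation, controlling it by a second-order expansion of the stationary law.
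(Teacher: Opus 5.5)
Your Step~1 cannot produce the randomized entry $d^{1/3}\varepsilon^{-1/3}$. For RLMC, PLMC, RULMC and PULMC the stationary law is not Gaussian. The quadrature variable ($\Theta$, or the Bernoulli array) is drawn once per step and shared by all $d$ coordinates, so the one-step map is random. The invariant law is therefore a scale mixture $\Law(\sqrt S\,G_d)$ with $S\stackrel{\mathrm d}=m(\Theta)^2S'+q(\Theta)$.

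The order you quote is also off. On $\norm x^2/2$ one has $m(\Theta)^2+q(\Theta)-1=h^2(1-2\Theta)+h^4\Theta^2$, whose $h^2$ part has mean zero, so $\E S=1+h^3/6+O(h^4)$.

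More fundamentally, a per-coordinate covariance comparison tensorizes to a bias $\sqrt d\,\delta(h)$. Combined with $N\gtrsim1/h$, this constrains $(d,\varepsilon)$ only through $\varepsilon/\sqrt d$; with the true $\delta\asymp h^3$ you get only $d^{1/6}\varepsilon^{-1/3}$. By contrast, $d^{1/3}\varepsilon^{-1/3}=(\varepsilon/d)^{-1/3}$ requires a bias growing \emph{linearly} in $d$, namely $dh^3$ when $dh^3\leqslant1$.

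The missing idea is that this linear growth comes from the shared scale. The mean-zero $h^2(1-2\Theta)$ fluctuations accumulate over the $\asymp1/h$ memory of the recursion for $S$, giving $\operatorname{Var}(S)\asymp h^3$. Hence the $1$-Lipschitz observable $\norm x\approx\sqrt{Sd}$ has standard deviation $\gtrsim\sqrt{dh^3}$, while under $\cN(0,\Id_d)$ it is at most $1$ by Gaussian Poincar\'e. This yields $\W_2\gtrsim\min\{dh^3,\sqrt{dh^3}\}$ by \Cref{lem:gaussian-scale-perturbation}, using the fourth-moment control from \Cref{lem:scalar-linear-mixture,lem:random-linear-mixture}; the underdamped variants use a $2\times2$ covariance recursion.

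For Heun, the Gaussian route fails at every curvature. On $\lambda\norm x^2/2$ the stationary variance is $\lambda^{-1}\{1-(\lambda h)^2/4+O((\lambda h)^3)\}$. Since $\lambda h\leqslant\beta h$ is small, what matters is this second-order vanishing at $0$, not isolated zeros. You only get $\sqrt d\,(\beta h)^2$, hence $d^{1/4}\varepsilon^{-1/2}$ (\Cref{prop:heun-gaussian-lower}).

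The oscillatory target is therefore necessary, and it must be $a_\star h\cos(x/\sqrt h)$: amplitude $h$, hence $V'''\asymp h^{-1/2}$. A cosine at wavelength $\sqrt h$ whose third derivative is bounded uniformly in $h$ has amplitude $O(h^{3/2})$. Its one-step effect is linear in the amplitude, so it only reaches order $h^2$.

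You also need no expansion of the stationary law. Take $f=\sqrt h\cos(x/\sqrt h)$; pathwise contractivity makes $K_h^{\mathrm{Heun}}f$ $1$-Lipschitz. Invariance then gives $\lvert\pi(K_h^{\mathrm{Heun}}f-f)\rvert=\lvert(\pi-\mu)(K_h^{\mathrm{Heun}}f-f)\rvert\leqslant2\W_2(\mu,\pi)$. One Gaussian step from $\pi$ gives $\lvert\pi(K_h^{\mathrm{Heun}}f-f)\rvert\asymp h^{3/2}$ (\Cref{lem:et-one-step-defect}); then tensorize.

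Finally, in Step~2 the triangle inequality gives bias \emph{minus} transient, not their maximum, so the initialization must prevent the transient from cancelling the bias. For example, run exponential midpoint on $\norm x^2/2$ from $\cN(0,c\Id_d)$ with $c>1$. The position law is exactly Gaussian, and its variance decreases from $c$ to $s_h=1-h^2/24+O(h^3)<1$. It therefore passes within $O(h^3)$ of $1$ at some finite $N$, where the error is $O(\sqrt d\,h^3)$ and your claimed bound fails.

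The paper avoids this in three ways, each with one separate displaced coordinate forcing $Nh>\tau$:
\begin{itemize}
\item for the overdamped methods, it starts the bulk coordinates at the numerical invariant law;
\item for exponential midpoint, it starts at a deterministic position, so the covariance stays $\preceq$ the stationary one;
\item for the randomized underdamped methods, it uses the finite-time fluctuation estimate \Cref{lem:finite-time-underdamped-covariance}.
\end{itemize}
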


\paragraph{Deterministic upper bounds.}
The proofs uncover higher-order, distributional cancellations at stationarity which had been missed in previous coupling analyses. A discrete Poisson
corrector \cite{PedrottiWhalley2026} combined with Malliavin--Stein and
Gaussian smoothing estimates \cite{PaulinWhalley2026} are the main techniques for analyzing overdamped Heun.
For exponential midpoint, weighted divergence estimates in
\(\dot H^{-1}\) reveal cancellation in the conditional mean, and a
transport interpolation controls the
full local error, extending \cite{HigherOrderRHMC} to a non-Gaussian
base measure. See
\Cref{app:overdamped-proof,app:underdamped-proof}.

\paragraph{Randomized algorithm bounds.}
Substituting \(\epsilon=\varepsilon/\sqrt d\) in the tolerance
\(\epsilon\sqrt{d/\alpha}\) of
\cite[Corollaries~1--2]{SrinivasanNagaraj2026} gives PLMC's
\(\widetilde O(d^{2/3}\varepsilon^{-2/3})\) upper bound and, for each fixed
integer \(p\geqslant3\), the PULMC upper bound
\[
    \widetilde O_p\bigl(
       d^{1/3}\varepsilon^{-1/3}
       +d^{\frac{3p+2}{8p+6}}
          \varepsilon^{-\frac{p+2}{4p+3}}\bigr)\,.
\]
For any fixed \(p>3\), sufficiently small accuracy
\(\varepsilon\leqslant d^{-p/(2\,(p-3))}\) makes the first term dominate,
up to logarithms. This \(d^{1/3}\varepsilon^{-1/3}\) scale matches our
lower bound. The PLMC bound has no such parameter, and does not match the lower bound.
With respect to upper bounds, PULMC retains an
accuracy advantage over deterministic exponential midpoint, while the
latter wins in dimension: neither upper bound dominates for all
\((d,\varepsilon)\), whereas both RLMC and RULMC have uncompetitive upper bounds. We leave proofs of improved upper bounds, where they are currently loose, as a question for future work.

\paragraph{Lower bounds and sharpness.}
The deterministic lower bounds match the dimension and accuracy powers
of \Cref{thm:intro-midpoint} at bounded condition number. Heun requires
an oscillatory target, whereas a Gaussian target already gives the
matching exponential midpoint bound. An anisotropic Gaussian extends
the deterministic exponential midpoint lower bound to the case where
\(\kappa\gg1\); see \Cref{prop:exponential-midpoint-Gaussian-lower}.
For Heun, coupling the oscillatory
coordinates to a weakly confined coordinate also gives
\[
    N\gtrsim\kappa d^{1/3}\varepsilon^{-2/3}\log(1/\varepsilon)\,,
    \qquad d^{-1}\leqslant\varepsilon\leqslant\varepsilon_0\,,
\]
under the initialization condition in \Cref{thm:et-linear-kappa-lower}.
The condition number dependence is not fully matched by the corresponding upper bound.

For the randomized methods, isotropic Gaussian targets reveal a different
obstruction. Conditional on past quadrature choices, all coordinates have
the same random variance. Its fluctuations make their squared magnitudes
vary together, creating an error in the distribution of \(\norm X\) that
cannot be detected by inspecting one coordinate alone. This shared
fluctuation explains the common dimension obstruction for overdamped and
underdamped randomized methods; see \Cref{sec:randomized-lower}.

Finally, other approaches can provide better complexity guarantees in
this setting, including Metropolization
\cite{ChewiEtAl2021MALA,AltschulerChewi2024}, the proximal sampler
\cite{LeeShenTian2021,ChenChewiSalimWibisono2022}, first-order rejection
sampling (FORS) \cite{ChenEtAl2026FORS,ChenEtAl2026Exact}, and
discretizations of Hamiltonian dynamics \cite{HigherOrderRHMC}.
We limit the present study to the midpoint discretizations described above.

\subsection{Related work}
\label{subsec:related-work}

\paragraph{Euler discretizations.}
Early non-asymptotic analyses of overdamped Euler (LMC) appeared in
\cite{DalalyanTsybakov2012}. \(\W_2\) guarantees were developed in
\cite{Dalalyan2017,DurmusMoulines2019} and sharpened in
\cite{DalalyanKaragulyan2019}.
At bounded condition number, the classical guarantee under
\eqref{eq:sc-smooth} was \(\widetilde O(d\varepsilon^{-2})\).
The analysis extended to KL divergence in
\cite{ChengBartlett2018,DurmusMajewskiMiasojedow2019}, and to targets
satisfying a log-Sobolev inequality in \cite{VempalaWibisono2019}.
R\'enyi guarantees followed in
\cite{GaneshTalwar2020,ErdogduHosseinzadehZhang2022}, with extensions
under a Poincar\'e inequality and intermediate functional inequalities
in \cite{ChewiEtAl2025LMC}.
Underdamped exponential Euler followed a similar trajectory: \(\W_2\)
analysis in \cite{ChengEtAl2018}, refined in
\cite{DalalyanRiouDurand2020}; KL analysis in \cite{MaEtAl2021}; and
R\'enyi guarantees in \cite{GaneshTalwar2020,AltschulerChewi2024}, with
weaker smoothness and functional inequality assumptions treated in
\cite{ZhangEtAl2023}.
Its classical Wasserstein guarantee at bounded condition
number is \(\widetilde O(d^{1/2}\varepsilon^{-1})\), improving on the
older overdamped rate.
See \cite[Chapters~4--5]{ChewiBook} for a survey.
The recent work of Pedrotti and
Whalley~\cite{PedrottiWhalley2026} revises the overdamped baseline significantly: they show that
LMC attains \(\widetilde O(\kappa\sqrt d\,\varepsilon^{-1})\) under
\eqref{eq:sc-smooth}, without changing the algorithm. In their work, a discrete Poisson
corrector removes the leading conditional discretization error,
revealing cancellations missed by standard pathwise estimates.

\paragraph{Deterministic midpoint methods.}
Just as with ODE integrators, more accurate quadrature provides a natural improvement to Euler. Classical weak and invariant distribution error expansions
explain its advantage under additional smoothness
\cite{TalayTubaro1990,MattinglyStuartTretyakov2010}.
Runge--Kutta and midpoint splitting schemes were analyzed in
\cite{LiEtAl2019,SanzSernaZygalakis2021,PaulinWhalley2024}.
Anticipating Girsanov theory \cite{Buckdahn1994,Nualart2006} provides
another route to KL and R\'enyi analysis of midpoint interpolations, as
developed in \cite{Zhang2025Midpoint}. Under additional Hessian
regularity, that work obtains a \(d^{1/4}\varepsilon^{-1/2}\) guarantee
for a deterministic underdamped double midpoint variant.
Under only \eqref{eq:sc-smooth}, however, the standard estimates did not
establish an advantage for deterministic midpoint rules over Euler.

\paragraph{Randomized midpoint.}
Shen and Lee~\cite{ShenLee2019} evaluate the predicted path at a uniformly
random time. Conditional on that path, this gives an unbiased estimate of
its force integral. Separating bias from centered fluctuations yields the
classical guarantees \(\widetilde O(d^{1/2}\varepsilon^{-1})\) for RLMC
and \(\widetilde O(d^{1/3}\varepsilon^{-2/3})\) for RULMC
\cite{ShenLee2019,HeBalasubramanianErdogdu2020,YuKaragulyanDalalyan2024}.
These improvements suggested that randomization was essential without
higher derivative bounds. Cao, Lu, and Wang~\cite{CaoLuWang2021}
reinforced this intuition by proving optimality for \emph{strong
approximation}: no algorithm in their oracle model improves the worst
case error \(\sqrt d\,N^{-3/2}\) in approximating a prescribed diffusion
path. This leaves open whether the same rate limits sampling from the
target distribution.
Further work on these algorithms studied invariant bias and asymptotic behavior
\cite{HeBalasubramanianErdogdu2020}, sharpened Wasserstein bounds
\cite{YuKaragulyanDalalyan2024}, and extended the analysis to KL divergence
through shifted composition
\cite{AltschulerChewi2026,AltschulerChewiZhang2026}.

\paragraph{Poisson midpoint.}
Kandasamy and Nagaraj~\cite{KandasamyNagaraj2024} use a different
randomization: on a fine grid, each gradient correction is independently
retained with probability equal to the reciprocal of the grid size.
Only a constant number of gradients is queried in expectation, and their
analysis controls distributional error through Gaussian perturbations.
Srinivasan and Nagaraj~\cite{SrinivasanNagaraj2026} then exploit higher
order centering and Gaussian smoothing to obtain accuracy dependence
\(\varepsilon^{-2/3}\) for PLMC and \(\varepsilon^{-1/3}\) for PULMC.
The latter goes beyond the apparent strong approximation barrier, though
the dimension dependence is more intricate; the full bounds and their
moment parameter \(p\) are discussed in \Cref{subsec:intro-results}.
Together with the revised Euler analysis, these results motivate a
comparison based directly on sampling error rather than pathwise error.
\par

\paragraph{Related settings and discretizations.}
In diffusion models, \cite{ZhangEtAl2025DDPM} analyzes randomized midpoint
for the reverse-time SDE and obtains sublinear dimension dependence for
DDPM sampling.
Related midpoint and leapfrog splittings also appear in HMC
\cite{BouRabeeMarsden2025}, including the construction of
algorithmic warm starts in \cite{ZhangAltschulerChewi2026}.
Random ODE approximations provide a further family of methods, including
the shifted ODE method and its numerical implementations
\cite{FosterLyonsOberhauser2021,ScottEtAl2026}.

\paragraph{AI usage.}
This research project was initially motivated by the desire to understand the optimal complexity bounds for (Poisson) randomized midpoint discretizations.
GPT-5.6 Sol produced both upper and lower bounds for these methods, which did not fully match; the lower bounds are included in this work. In an effort to produce algorithms matching the lower bounds, we gradually realized that deterministic midpoint methods were better candidates, and that their complexity bounds surprisingly surpassed the randomized midpoint lower bounds.
At this point, we switched the focus of the project to pinning down the optimal rates for deterministic midpoint methods.
GPT-5.6 Sol proved the main theorems in this paper and organized the initial draft. We then edited and simplified the proofs and prepared the manuscript. We take full responsibility for its contents.

\section{Preliminaries}
\label{sec:preliminaries}
\label{subsec:intro-setup}

\subsection{Notation and normalization}
\label{sec:normalization}

We assume \eqref{eq:sc-smooth} throughout and write \(x_\star\) for the
unique minimizer of \(V\).  For probability measures with finite second
moments,
\[
    \W_2^2(\mu,\nu)
    \deq\inf_{\Gamma\in\Pi(\mu,\nu)}
       \int\norm{x-y}^2\,\Gamma(\mathrm d x,\mathrm d y)\,,
\]
where \(\Pi(\mu,\nu)\) is the set of their couplings.  We denote this space
of measures by \(\mathcal P_2\). We write
\(\operatorname{KL}(\mu\mmid\nu)\deq\int\log(\mathrm d\mu/\mathrm d\nu)\,\mathrm d\mu\)
when \(\mu\ll\nu\), and \(+\infty\) otherwise. For a Markov kernel \(K\), \(\mu K\)
is the distribution after applying one update to a point drawn from \(\mu\).
For an integrable function \(f\), write
\(\mu(f)\deq\int f\,\mathrm d \mu\).
Vector norms are Euclidean; \(\norm{\cdot}_p\) denotes a probabilistic
\(L^p\) norm when the underlying distribution is specified.  Matrix norms
carry an operator, Hilbert--Schmidt, or Schatten subscript.
We use \(A\lesssim B\) for \(A\leqslant CB\) with a universal constant,
and \(\gtrsim\) and \(\asymp\) analogously.  The constants \(c,C>0\)
may change between displays.  The notation \(\widetilde O\) hides fixed
powers of logarithms in \(d,\kappa,\varepsilon^{-1}\).

The algorithms and main theorems are stated in the original units.
For the upper bound proofs we normalize \(\beta=1\), so that
\(\alpha=\kappa^{-1}\).  Indeed, replacing \(x\) by \(\sqrt\beta\,x\)
and time by \(\beta t\) for overdamped dynamics, or by
\(\sqrt\beta\,t\) for underdamped dynamics, preserves the form of the
algorithms.  Momentum is unchanged in the latter rescaling.  The normalized
steps are therefore \(\beta h\) and \(\sqrt\beta\,h\), respectively.
The proofs use \(h\) for the normalized step; the scaled distance
\(\sqrt\alpha\,\W_2\) is unchanged.  This convention applies to the
proof outlines and upper bound appendices unless original units are
explicitly stated.  We fix a sufficiently small universal
\(h_0\in(0,1/3]\) for short-time estimates; stronger restrictions are
stated where needed.

\subsection{Overdamped dynamics and algorithms}
\label{sec:variants}

The overdamped Langevin diffusion is
\begin{equation}
    \mathrm d X_t=-\nabla V(X_t)\,\mathrm d t
       +\sqrt2\,\mathrm d B_t\,.
    \label{eq:langevin}
\end{equation}
Here, \(B\) is standard Brownian motion in \(\R^d\).  Its generator is
\(L\deq\Delta-\nabla V\cdot\nabla\), and \(P_h\deq\e^{hL}\) denotes its
transition kernel over time \(h\).  The target \(\pi\) is reversible for
\(P_h\).

Given the current position \(x\), first predict the path by keeping the
gradient equal to \(\nabla V(x)\):
\begin{equation*}
    X_t^{\mathrm{pred}}
       \deq x-t\,\nabla V(x)+\sqrt2\,B_t\,,
    \qquad 0\leqslant t\leqslant h\,.
\end{equation*}
All methods below reuse this predictor and the same Brownian path within a
step.  Across steps, Brownian paths and all additional random variables are
independent.

\paragraph{Deterministic Heun.}
The Heun method~\cite{HighamKloeden2021} predicts the endpoint,
\begin{equation}
    X_h^{\mathrm{pred}}\deq x-h\,\nabla V(x)+\sqrt2\,B_h\,,
    \label{eq:et-predictor}
\end{equation}
and averages the initial and predicted gradients:
\begin{equation}
    X^+\deq x-\frac h2\,\{\nabla V(x)+\nabla V(X_h^{\mathrm{pred}})\}
       +\sqrt2\,B_h\,.
    \label{eq:et-step}
\end{equation}
We denote its kernel by \(K_h^{\mathrm{Heun}}\).  It uses two gradient
queries per step. Here, deterministic refers to the quadrature times;
the Brownian increment remains random.

\paragraph{Randomized midpoint (RLMC).}
Draw \(\Theta\sim\mathsf{Unif}[0,1]\), independently of \(B\), and set
\begin{equation}
    X^+\deq x-h\,\nabla V(X_{\Theta h}^{\mathrm{pred}})+\sqrt2\,B_h\,.
    \label{eq:rm-step}
\end{equation}
This is the randomized midpoint update of~\cite{ShenLee2019, HeBalasubramanianErdogdu2020} (see also
\cite[Section~5.1]{ChewiBook}); its kernel is \(K_h^{\mathrm{RLMC}}\).
It uses one gradient at \(x\) to form the predictor and one at the predicted
point.  The scalar \(\Theta\) is common to all coordinates.

\paragraph{Poisson midpoint (PLMC).}
Fix \(k\geqslant1\), put \(s_i\deq i/k\), and draw independent indicators
\(\mathsf B_i\sim\mathsf{Bernoulli}(1/k)\) for
\(i\in\{1,\ldots,k-1\}\), independently of \(B\).  One step of length
\(h\), also called a batch, is
\begin{equation}
    X^+\deq x-h\,\nabla V(x)
       +h\sum_{i=1}^{k-1}\mathsf B_i\,
          \{\nabla V(x)-\nabla V(X_{s_i h}^{\mathrm{pred}})\}
       +\sqrt2\,B_h\,.
    \label{eq:poisson-step}
\end{equation}
The
correction at time zero would vanish and is omitted.  Reusing the initial
gradient, the expected query count is \(1+(k-1)/k=2-1/k\).
For \(k=1\), the sum is empty and the update is the Euler-Maruyama scheme, known as unadjusted Langevin
algorithm (ULA).  We write \(K_h^{\mathrm{PLMC},k}\) for this kernel
\cite{KandasamyNagaraj2024,SrinivasanNagaraj2026}.

\subsection{Underdamped dynamics and algorithms}
\label{sec:underdamped-algorithms}

The underdamped Langevin diffusion evolves a position and a momentum:
\begin{align*}
    \mathrm d X_t&=P_t\,\mathrm d t\,,\\
    \mathrm d P_t&=-\nabla V(X_t)\,\mathrm d t
       -\gamma P_t\,\mathrm d t+\sqrt{2\gamma}\,\mathrm d B_t\,.
\end{align*}
Its invariant distribution is
\begin{equation*}
    \bm\pi(\mathrm d x,\mathrm d p)
       \deq\pi(\mathrm d x)\,\cN(0,\Id_d)(\mathrm d p)\,.
\end{equation*}
For a distribution \(\mu\) on phase space, \(\mu_x\) denotes the position
marginal; the required accuracy is
\(\sqrt\alpha\,\W_2(\mu_x,\pi)\leqslant\varepsilon\).

\begin{remark}[Friction]
\label{rem:friction-choice}
We choose \(\gamma=2\sqrt\beta\), following the synchronous coupling
analysis of underdamped Langevin in
\cite[Theorem~5.3.6]{ChewiBook}.  This choice permits contraction in a
fixed norm combining position and momentum.  It becomes \(\gamma=2\)
under the normalization in \Cref{sec:normalization}.  Our results do not
address discretizations with smaller friction.
\end{remark}

Integrating the friction term exactly gives
\begin{align*}
    X_t&=x+a_t p-\int_0^t a_{t-s}\nabla V(X_s)\,\mathrm d s
       +\sqrt{2\gamma}\int_0^t a_{t-s}\,\mathrm d B_s\,,\\
    P_t&=\e^{-\gamma t}p
       -\int_0^t\e^{-\gamma(t-s)}\nabla V(X_s)\,\mathrm d s
       +\sqrt{2\gamma}\int_0^t\e^{-\gamma(t-s)}\,\mathrm d B_s\,,
\end{align*}
where
\[
    a_t\deq\frac{1-\e^{-\gamma t}}\gamma\,,
    \qquad
    c_t\deq\int_0^t a_s\,\mathrm d s=\frac{t-a_t}\gamma\,.
\]
Evaluating the gradient at the initial point \(x\) in the position formula
gives the predictor
\begin{equation}
    X_t^{\mathrm{pred}}
       \deq x+a_t p-c_t\nabla V(x)
          +\sqrt{2\gamma}\int_0^t a_{t-s}\,\mathrm d B_s\,,
    \qquad 0\leqslant t\leqslant h\,.
    \label{eq:uld-predictor}
\end{equation}
This is the position component of the usual ULMC update, evaluated at an
intermediate time; see \cite[Section~5.3.2]{ChewiBook}.
All stochastic integrals within a step use the same Brownian path.
Conditional on the evaluation times they are jointly Gaussian, so the
algorithms require only finitely many Gaussian draws with the appropriate
covariance, rather than an entire Brownian path.

\paragraph{Exponential midpoint (ExpMid).}
Predict the position at time \(h/2\) using \eqref{eq:uld-predictor}, and
approximate each gradient integral using the gradient evaluated at this
predicted midpoint:
\begin{align}
    \widehat X_h^{\mathrm{ExpMid}}
    &\deq x+a_hp-h a_{h/2}\nabla V(X_{h/2}^{\mathrm{pred}})
       +\sqrt{2\gamma}\int_0^h a_{h-s}\,\mathrm d B_s\,,
    \label{eq:EM-x}\\
    \widehat P_h^{\mathrm{ExpMid}}
    &\deq\e^{-\gamma h}p-h\e^{-\gamma h/2}\nabla V(X_{h/2}^{\mathrm{pred}})
       +\sqrt{2\gamma}\int_0^h\e^{-\gamma(h-s)}\,\mathrm d B_s\,.
    \label{eq:EM-v}
\end{align}
These formulas define the exponential midpoint method studied here, with
kernel \(K_h^{\mathrm{ExpMid}}\).  Each step uses two gradient queries.
We write \((\widehat X_n,\widehat P_n)\) for its iterates.

\paragraph{Randomized midpoint (RULMC).}
The usual underdamped randomized midpoint method draws a common
\(\Theta\sim\mathsf{Unif}[0,1]\), independently of \(B\), and sets
\begin{align*}
    \widehat X_h^{\mathrm{RULMC}}
    &\deq x+a_hp-h a_{(1-\Theta)h}\nabla V(X_{\Theta h}^{\mathrm{pred}})
    +\sqrt{2\gamma}\int_0^h a_{h-s}\,\mathrm d B_s\,,
    \\
    \widehat P_h^{\mathrm{RULMC}}
    &\deq\e^{-\gamma h}p-h\e^{-\gamma(1-\Theta)h}
       \nabla V(X_{\Theta h}^{\mathrm{pred}})
       +\sqrt{2\gamma}\int_0^h\e^{-\gamma(h-s)}\,\mathrm d B_s\,.
\end{align*}
This is Algorithm~1 of Shen and Lee~\cite{ShenLee2019}, expressed in the
underdamped convention above.  Its kernel is \(K_h^{\mathrm{RULMC}}\).

\paragraph{Poisson midpoint (PULMC).}
Fix \(k\geqslant1\), let \(\Delta\deq h/k\) and \(t_i\deq i\Delta\), and draw
independent \(\mathsf B_i\sim\mathsf{Bernoulli}(1/k)\) for
\(i\in\{1,\ldots,k-1\}\), independently of \(B\).
For \(i\in\{0,\ldots,k-1\}\), define
\begin{align*}
    \omega_i^x
    &\deq\int_{t_i}^{t_{i+1}}a_{h-s}\,\mathrm d s
       =c_{h-t_i}-c_{h-t_{i+1}}\,, \qquad
    \omega_i^p
    \deq\int_{t_i}^{t_{i+1}}\e^{-\gamma(h-s)}\,\mathrm d s
       =a_{h-t_i}-a_{h-t_{i+1}}\,.
\end{align*}
The update is
\begin{align}
    \widehat X_h^{\mathrm{PULMC},k}
    &\deq x+a_hp-c_h\nabla V(x)\notag\\
    &\quad-k\sum_{i=1}^{k-1}\mathsf B_i\omega_i^x\,
          \{\nabla V(X_{t_i}^{\mathrm{pred}})-\nabla V(x)\}
       +\sqrt{2\gamma}\int_0^h a_{h-s}\,\mathrm d B_s\,,
    \label{eq:uld-PULMC-x}\\
    \widehat P_h^{\mathrm{PULMC},k}
    &\deq\e^{-\gamma h}p-a_h\nabla V(x)\notag\\
    &\quad-k\sum_{i=1}^{k-1}\mathsf B_i\omega_i^p\,
          \{\nabla V(X_{t_i}^{\mathrm{pred}})-\nabla V(x)\}
       +\sqrt{2\gamma}\int_0^h\e^{-\gamma(h-s)}\,\mathrm d B_s\,.
    \label{eq:uld-PULMC-v}
\end{align}
We write \(K_h^{\mathrm{PULMC},k}\) for the kernel, using the finite grid
formulation of \cite{SrinivasanNagaraj2026}.  As in PLMC, the time zero
correction vanishes, and a batch has expected query count \(2-1/k\).
For \(k=1\) this is the ordinary ULMC update, with the gradient evaluated
at the initial point.

\section{Overdamped upper bound}
\label{sec:overdamped-upper}

The deterministic Heun method achieves a squared stationary error of order
\(h^3\) under strong convexity and a Lipschitz gradient, without a bound on
third derivatives.  The step threshold \(h_0\) and the normalization used in the proof
are fixed in \Cref{sec:normalization}.

\begin{theorem}[Heun upper bound]
\label{thm:et-upper}
There are universal constants \(c,C>0\) such that, if
\(0<\beta h\leqslant h_0\), then \(K_h^{\mathrm{Heun}}\) has a unique
invariant distribution \(\widehat\pi_h^{\mathrm{Heun}}\) and
\begin{equation}
    \alpha\W_2^2(\widehat\pi_h^{\mathrm{Heun}},\pi)
    \leqslant C\kappa d\,(\beta h)^3\,.
    \label{eq:et-stationary-upper}
\end{equation}
For every \(\mu\in\mathcal P_2(\R^d)\) and integer \(N\geqslant0\),
\begin{equation}
    \alpha\W_2^2(\mu (K_h^{\mathrm{Heun}})^N,\pi)
    \leqslant C\e^{-cN\beta h/\kappa}\alpha\W_2^2(\mu,\pi)
       +C\kappa d\,(\beta h)^3\,.
    \label{eq:et-finite-time-upper}
\end{equation}
In particular, for every
\(\varepsilon>0\), the number of gradient queries sufficient
for \eqref{eq:global-absolute-criterion} is at most
\[
    C\,\Bigl(\kappa+
       \frac{\kappa^{4/3}d^{1/3}}{\varepsilon^{2/3}}\Bigr)
       \log\Bigl(\e+\frac{\alpha\W_2^2(\mu,\pi)}{\varepsilon^2}\Bigr)\,.
\]
Initialization at the minimizer \(x_\star\) satisfies
\(\alpha W_2^2(\delta_{x_\star},\pi) \leqslant d\),
and hence, for \(0<\varepsilon\leqslant\sqrt d/2\), gives
\begin{equation}
    N_{\nabla V}\leqslant C\,{\frac{\kappa^{4/3}d^{1/3}}{\varepsilon^{2/3}}}
       \log\frac d{\varepsilon^2}\,.
    \label{eq:et-complexity}
\end{equation}
\end{theorem}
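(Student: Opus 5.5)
We work in the normalization of \Cref{sec:normalization} ($\beta=1$, $\alpha=\kappa^{-1}$, $h\leqslant h_0$). The plan has three stages: a one-step contraction for the Heun kernel, a bound on the one-step defect of the exact dynamics against $K_h^{\mathrm{Heun}}$ tested at stationarity, and a conversion of that defect into the $\W_2$ bound. The complexity statements then follow by choosing $h$ and $N$.

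\textbf{Step 1: contraction.} Couple two copies of $K_h^{\mathrm{Heun}}$ started from $x,y$ with the same Brownian increment. The map $x\mapsto x-\tfrac h2\{\nabla V(x)+\nabla V(x-h\nabla V(x)+\sqrt2 B_h)\}$ has Jacobian $I-hA-\tfrac h2 A'(I-hA)\cdot$ (with $A,A'$ averaged Hessians lying between $\alpha I$ and $I$), which equals $I-h\bar A+O(h^2)$. This gives $\norm{X^+-Y^+}\leqslant(1-ch/\kappa)\norm{x-y}$ almost surely. Existence and uniqueness of $\widehat\pi_h^{\mathrm{Heun}}$ follow by Banach's fixed point theorem on $(\mathcal P_2,\W_2)$. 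The finite-time bound \eqref{eq:et-finite-time-upper} follows from the triangle inequality once the stationary bound is known.

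\textbf{Step 2: the key estimate.} We need $\W_2^2(\widehat\pi_h,\pi)\lesssim\kappa^2 d h^3$, equivalently $\alpha\W_2^2\lesssim\kappa d h^3$. A naive coupling of $\pi K_h^{\mathrm{Heun}}$ with $\pi P_h$ gives a local strong error of order $\sqrt d\,h^{3/2}$. Summed against contraction $h/\kappa$, that yields only $\kappa^2 d h$, which is too weak. The remedy, following \cite{PedrottiWhalley2026}, is to split the local error $X^+-X_h$ for $X_0\sim\pi$ into two parts:
\begin{enumerate}[label=(\alph*),leftmargin=*]
\item a conditional mean part $\E[X^+-X_h\mid X_0]$, whose size after cancellation should be $O(\sqrt d\,h^2)$;
\item a martingale part, which should be $O(\sqrt d\,h^{3/2})$ but mean-zero given $X_0$.
\end{enumerate}
Summing over steps with contraction, the fluctuation parts add in squares, giving $(d h^3)\cdot\kappa/h=\kappa d h^2$. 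That is still one power of $h$ short. So we introduce a discrete Poisson corrector: we find $g$ solving approximately $(I-K_h)g=\text{drift defect}$, and shift coordinates by $h\,g$. The leading fluctuation term is Brownian-linear, of the form $\int_0^h\{\nabla V(x+\sqrt2 B_s)-\nabla V(x)\}\,\mathrm ds$ compared with $\tfrac h2\{\nabla V(x+\sqrt2 B_h)-\nabla V(x)\}$. The difference of these quadratures is exactly what Heun's trapezoidal rule controls at the level of law rather than path.

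Without third derivatives, we cannot Taylor expand $\nabla V$ to second order. Instead, we will smooth: write these terms as expectations against Gaussian kernels and integrate by parts (Malliavin--Stein, \cite{PaulinWhalley2026}). This moves derivatives onto the Gaussian density at a cost of $(h)^{-1/2}$ per derivative, using only $\norm{\nabla^2V}\leqslant1$. The result we aim for is a one-step weak defect $\sup_{\norm{\nabla f}_{\mathrm{Lip}}}|\pi K_h f-\pi P_hf|\lesssim \sqrt d\,h^{5/2}$ in a form that propagates. Combining this with contraction, we expect $\W_2^2(\widehat\pi_h,\pi)\lesssim(\kappa/h)^2 d h^5=\kappa^2 dh^3$.

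\textbf{Step 3: complexity.} Choose $h\asymp\min\{h_0,(\varepsilon^2/(\kappa d))^{1/3}\}$ and $N\asymp(\kappa/h)\log(\e+\alpha\W_2^2(\mu,\pi)/\varepsilon^2)$. Each step costs two gradient queries, which yields the stated count. For initialization at the minimizer, $\E_\pi\norm{x-x_\star}^2\leqslant d/\alpha$, so $\alpha\W_2^2(\delta_{x_\star},\pi)\leqslant d$. Together with $\varepsilon\leqslant\sqrt d/2$ this gives \eqref{eq:et-complexity}.

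\textbf{Main obstacle.} The main obstacle is Step 2: extracting the extra $h^{1/2}$ of cancellation in the fluctuation term with only Hessian bounds. This requires making the Gaussian smoothing argument dimension-efficient, so that only $\sqrt d$ appears and not $d$.
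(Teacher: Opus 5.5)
Step~3 and the outline of Step~1 match the paper. Step~2, however, carries the whole content of the theorem, and it has a genuine gap: the target you set there cannot be reached.

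\textbf{Why the target fails.} You plan to bound a one-step defect at $\pi$ by $\sqrt d\,h^{5/2}$ and divide by the gap $h/\kappa$. On the oscillatory target of \Cref{prop:heun-oscillatory-lower}, however, $\W_2(\pi_hK_h^{\mathrm{Heun}},\pi_h)\geqslant c\sqrt d\,h^{3/2}$. So the bound $\W_2(\widehat\pi_h^{\mathrm{Heun}},\pi_h)\leqslant\W_2(\pi_hK_h^{\mathrm{Heun}},\pi_h)/(1-\rho)$ gives at best order $\sqrt d\,h^{1/2}$ there.

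The weak form you aim for is also false as stated. The function $f(x,z)\deq h\sum_{i=1}^{d-1}\cos(x_i/\sqrt h)$ has $\norm{\nabla^2f}_{\mathrm{op}}\leqslant1$, yet \Cref{lem:et-one-step-defect} and the product structure give $\lvert\pi_hK_h^{\mathrm{Heun}}f-\pi_hf\rvert\geqslant c\,d\,h^2$. In any case, weak errors against $C^{1,1}$ test functions do not control $\W_2$.

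Your claim (a) fails under Hessian bounds alone. On the same target, the $i$-th coordinate of $\E[X^+-X_h\mid X_0]$ equals $a_\star\sqrt h\sin(x_i/\sqrt h)$ times $h(3\e^{-1}-1)/2$, up to $O(h^2)$ in $L^2(\pi_h)$. The factor $h(3\e^{-1}-1)/2$ is the error of the trapezoidal rule for $\int_0^h\e^{-t/h}\,\mathrm dt$. So the conditional mean has order $\sqrt d\,h^{3/2}$, the same as the strong error. The corrector is needed to remove the conditional mean, not the fluctuation; Gaussian smoothing is what handles the fluctuation.

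\textbf{What is missing.} The paper's mechanism has the following parts, none of which appears in your sketch.
\begin{itemize}
\item It moves the reference law to $\nu=(\Id+\chi)_\#\pi$, where $(\Id-P_h)\chi=e_h$ and $\norm\chi_{L^2(\pi)}\lesssim\sqrt{dh^3}$. Here $e_h$ is the conditional mean of the trapezoidal error given the \emph{endpoint} $X_h$, computed by reversibility.
\item It applies \Cref{lem:et-local-framework}. The one-step cost $\W_2(\nu K_h^{\mathrm{Heun}},\nu)\lesssim\sqrt{dh^3}$ is paid once. Only the lag-one distance $\W_2(\nu(K_h^{\mathrm{Heun}})^2,\nu K_h^{\mathrm{Heun}})\lesssim\sqrt{dh^5}$ is multiplied by $\kappa/h$.
\item The lag-one bound holds because the residual conditional mean is at most $h\norm\chi_{L^2(\pi)}$, and because the fluctuation $R$ is centered given the point from which the next step starts. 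The fresh Brownian increment of that step then smooths $R$ at cost $\E\norm{\tau_R}_{\mathrm{HS}}^2/h+h^2\E\norm R^2$ (\Cref{lem:et-Gaussian-convolution-transport}). This is why one conditions on the endpoint rather than on $X_0$.
\item The dimension issue you flag is the bound $\E\norm{\tau_R}_{\mathrm{HS}}^2\lesssim dh^6$. Hilbert--Schmidt bounds on both factors of the Stein kernel give only $d^2h^6$. An operator-norm bound on one factor would need $\norm{\nabla\chi}_{\mathrm{op}}\lesssim h$ pointwise, but summing the Neumann series with the contraction of $P_t^{(1)}$ gives only $\lesssim\kappa$.
\item The paper instead proves $\norm{\nabla\chi}_{L^4(\pi;\mathfrak S_4)}\lesssim d^{1/4}h$ (\Cref{lem:et-S4-corrector}). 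This comes from a uniform bound on the multiplier $r(-hL^{(1)})$ on $L^4(\pi;\mathfrak S_4)$, obtained by interpolation and sectorial calculus.
\end{itemize}

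\textbf{A smaller point in Step~1.} The expansion $\Id-h\bar A+O(h^2)$ contracts by $1-ch/\kappa$ only when $h\lesssim1/\kappa$. That restriction would add a $\kappa^2$ term to the query count. The exact factorization $\tfrac12\{\Id+(\Id-hH_{\mathrm{pred}})(\Id-hH_{\mathrm{init}})\}$ gives $1-h/(2\kappa)$ for every $h\leqslant1$.
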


We explain the proof in the normalized units \(\beta=1\),
\(\alpha=\kappa^{-1}\) of \Cref{sec:normalization}.  The technical estimates are
proved in \Cref{app:overdamped-proof}.  The first ingredient is contraction
of the numerical kernel itself, which is relatively standard.

\begin{proposition}[Contractivity of the Heun kernel]
\label{prop:et-contractivity}
For \(0<h\leqslant1\) and all \(\mu,\nu\in\mathcal P_2(\R^d)\),
\begin{equation}
    \W_2(\mu K_h^{\mathrm{Heun}},\nu K_h^{\mathrm{Heun}})
    \leqslant\rho\W_2(\mu,\nu)\,,
    \qquad \rho\leqslant1-\frac h{2\kappa}\,.
    \label{eq:et-contraction-factor}
\end{equation}
In particular, \(K_h^{\mathrm{Heun}}\) has a unique invariant
 distribution in \(\mathcal P_2(\R^d)\).
\end{proposition}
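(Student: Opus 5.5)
We use a synchronous coupling: run the Heun step from two starting points \(x,y\) with the same Brownian increment \(B_h\). Then show that the resulting deterministic map is a contraction. Write \(\Phi(x)\deq x-\frac h2\{\nabla V(x)+\nabla V(x-h\nabla V(x)+\xi)\}\), where \(\xi=\sqrt2 B_h\) is common to both copies. Since the noise enters additively and identically, \(X^+-Y^+=\Phi(x)-\Phi(y)\). It therefore suffices to prove \(\norm{\Phi(x)-\Phi(y)}\leqslant\rho\norm{x-y}\) for every fixed \(\xi\). Taking an optimal coupling of \(\mu,\nu\) and squaring then gives the \(\W_2\) bound, and Banach's fixed point theorem on the complete space \((\mathcal P_2,\W_2)\) gives existence and uniqueness of the invariant distribution.

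For the pathwise estimate, set \(\delta=x-y\). By the mean value theorem in integral form, \(\nabla V(x)-\nabla V(y)=A\delta\) with \(A=\int_0^1\nabla^2V(y+s\delta)\,\mathrm d s\), so \(\alpha\Id\preceq A\preceq\Id\) in normalized units (\(\beta=1\), \(\alpha=\kappa^{-1}\)). The predicted points differ by \((\Id-hA)\delta\). Similarly, \(\nabla V(x^{\mathrm{pred}})-\nabla V(y^{\mathrm{pred}})=A'(\Id-hA)\delta\), where \(A'\) is a different averaged Hessian along the segment between the predicted points, also with \(\alpha\Id\preceq A'\preceq\Id\). Hence
\[
\Phi(x)-\Phi(y)=M\delta\,,\qquad M\deq \Id-\tfrac h2 A-\tfrac h2 A'+\tfrac{h^2}2A'A\,.
\]
The key point is that \(A\) and \(A'\) need not commute, so \(M\) is not symmetric. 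We must bound \(\norm M_{\mathrm{op}}\) directly rather than through eigenvalues.

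Write \(M=\Id-\frac h2(A+A')+E\) with \(\norm E_{\mathrm{op}}\leqslant h^2/2\). The symmetric part \(S=\Id-\frac h2(A+A')\) satisfies \((1-h)\Id\preceq S\preceq(1-h\alpha)\Id\), so for \(h\leqslant1\), \(\norm S_{\mathrm{op}}\leqslant1-h/\kappa\). This alone gives only \(\norm M_{\mathrm{op}}\leqslant 1-h/\kappa+h^2/2\), which is not good enough when \(h\gtrsim1/\kappa\). This is the main obstacle, and we plan to handle it more carefully by computing \(\norm{M\delta}^2\) for a unit vector \(\delta\).

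Write \(M=(\Id-\frac h2A')(\Id-hA)+\frac h2A\), i.e. \(M\delta=(\Id-\frac h2 A')u+\frac h2A\delta\) with \(u=(\Id-hA)\delta\). An alternative decomposition is \(M=\frac12(\Id-hA')(\Id-hA)+\frac12(\Id-hA)+\frac12hA'(\Id-hA)\)? This expansion does not simplify cleanly. Instead, we use convexity: note \(M=\frac12\bigl[(\Id-hA)\bigr]+\frac12\bigl[(\Id-hA')(\Id-hA)\bigr]+\frac h2 A\cdot\) correction, and check directly the identity
\[
M=\tfrac12(\Id-hA')(\Id-hA)+\tfrac12\Id-\tfrac h2A'\,\cdot 0\,.
\]
Concretely, expanding \(\frac12(\Id-hA')(\Id-hA)+\frac12\Id\) gives \(\Id-\frac h2A-\frac h2A'+\frac{h^2}2A'A\), which is exactly \(M\). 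Hence
\[
\norm M_{\mathrm{op}}\leqslant\tfrac12\norm{\Id-hA'}_{\mathrm{op}}\norm{\Id-hA}_{\mathrm{op}}+\tfrac12\leqslant\tfrac12(1-h/\kappa)^2+\tfrac12\leqslant1-\tfrac h\kappa+\tfrac{h^2}{2\kappa^2}\,,
\]
using \(0\preceq\Id-hA\preceq(1-h\alpha)\Id\) for \(h\leqslant1\). For \(h\leqslant1\leqslant\kappa\), the last quantity is at most \(1-\frac h{2\kappa}\), which is the claimed \(\rho\). The only delicate step is thus finding this factorization, which sidesteps the noncommutativity; the rest is routine.
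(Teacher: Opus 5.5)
Your proof is correct and is essentially the paper's argument: a synchronous coupling, averaged Hessians along the two segments, the factorization $M=\tfrac12\{\Id+(\Id-hA')(\Id-hA)\}$ with submultiplicativity to get around noncommutativity, and then Banach's fixed point theorem on $(\mathcal P_2,\W_2)$. When you write it up, remove the abandoned false starts before the final identity. Also state that Lipschitzness of $\nabla V$ makes the kernel map $\mathcal P_2$ into itself, since Banach's theorem needs that.
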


Rescaling \Cref{prop:et-contractivity} to the original units gives
contraction factor \(\rho\leqslant1-\alpha h/2\) whenever
\(0<h\leqslant1/\beta\), with the same existence and uniqueness conclusion.

The usual Langevin displacement estimate also gives an elementary comparison,
which when combined with contraction would give only
\(\kappa^{-1}\W_2^2(\widehat\pi_h^{\mathrm{Heun}},\pi)
\lesssim\kappa dh\).  To recover two further powers of \(h\), we
instead seek a different route of analysis. First, separate the conditional mean of the error from the centered fluctuation.  This is the
organizing principle of local error analysis; see
\cite[Section~5.1.1 and Lemma~5.1.2]{ChewiBook} and
\cite[Lemma~4.4]{HigherOrderRHMC}.

To measure the error of one Heun step, we compare it with the
Langevin diffusion run for the same time \(h\) from the same starting
point and driven by the same Brownian motion.  Let
\((X_t)_{0\leqslant t\leqslant h}\) solve \eqref{eq:langevin} with
\(X_0\sim\pi\).  Since \(\pi\) is invariant, \(X_t\sim\pi\) for every
\(t\); we call this the \emph{stationary path}, and we call \(X_h\) its
\emph{endpoint}, i.e., the exact position after one step.  In integral
form,
\[
    X_h=X_0-\int_0^h\nabla V(X_t)\,\mathrm d t+\sqrt2\,B_h\,.
\]
The Heun step \eqref{eq:et-step} from \(X_0\) evaluates its second
gradient at the predictor \(X_h^{\mathrm{pred}}\), which is only an
approximation of \(X_h\).  To separate this approximation from the
quadrature error, we first analyze an idealized step that evaluates the
second gradient at the endpoint \(X_h\) itself:
\[
    \widetilde X_h
    \deq X_0-\frac h2\,\{\nabla V(X_0)+\nabla V(X_h)\}+\sqrt2\,B_h\,.
\]
This step cannot be implemented, because \(X_h\) is unknown, and we use
it only as an intermediate object in the proof; we call it the
\emph{auxiliary step}, and its transition kernel
\(\widetilde K_h\) is defined in \eqref{eq:et-exact-reference}.  It
differs from \(K_h^{\mathrm{Heun}}\) by only \(O(\sqrt{dh^5})\) in the
comparisons needed below.  Subtracting the two displays, the error of the
auxiliary step is
\[
    \widetilde X_h-X_h
    =\int_0^h\nabla V(X_t)\,\mathrm d t
       -\frac h2\,\{\nabla V(X_0)+\nabla V(X_h)\}\,,
\]
the error of the trapezoidal rule for the integral of \(\nabla V\) along
the path.  Its conditional mean given the endpoint is
\(e_h(X_h)\), where, by reversibility of \(\pi\),
\[
    e_h\deq\int_0^hP_t(\nabla V)\,\mathrm d t
       -\frac h2\,\{\nabla V+P_h(\nabla V)\}\,.
\]
Thus, the systematic error is the trapezoidal quadrature defect for the
semigroup.

This systematic error is the main obstacle.  Unlike the centered
fluctuation \(\widetilde X_h-X_h-e_h(X_h)\), which is discussed below, it
is a deterministic function of the current position.  The fresh noise of
the next step therefore does not damp it, and contraction alone would let
it accumulate over the relaxation time.  We instead cancel it in advance by
shifting the point at which the comparison is made.  Fix a vector field
\(\chi\colon\R^d\to\R^d\), to be chosen below, and consider two points:
\begin{itemize}
    \item the output of the auxiliary step started from the shifted point
    \(X_0+\chi(X_0)\) instead of \(X_0\), using the same Brownian
    motion; and
    \item the shifted endpoint \(X_h+\chi(X_h)\), which has the same
    distribution as the shifted starting point \(X_0+\chi(X_0)\), because
    \(X_0\) and \(X_h\) both have distribution \(\pi\).
\end{itemize}
If the first point is close to the second on average, then the auxiliary
step nearly preserves the distribution of \(X_0+\chi(X_0)\).  Up to terms
of order \(h\,\norm{\chi}_{L^2(\pi)}\), which come from evaluating
\(\nabla V\) at the shifted points rather than at \(X_0\) and \(X_h\),
the difference between the two points is
\[
    \chi(X_0)+(\widetilde X_h-X_h)-\chi(X_h)\,.
\]
Given the endpoint, reversibility of \(\pi\) gives
\(\E[\chi(X_0)\mid X_h]=P_h\chi(X_h)\), so the conditional mean of this
difference is \((P_h\chi+e_h-\chi)(X_h)\).  It vanishes exactly when
\(\chi\) solves the discrete Poisson equation
\[
    (\Id-P_h)\chi=e_h\,.
\]
Equivalently, \(\chi=\sum_{n=0}^\infty P_h^n e_h\) is the
total systematic error accumulated over all future steps, each step's
contribution \(e_h\) being transported by the exact dynamics.  Shifting
the starting point by \(\chi\) accounts for this accumulated bias at
once, so that the remaining conditional mean comes only from the terms of
order \(h\,\norm{\chi}_{L^2(\pi)}\) neglected above.  We call \(\chi\) the
\emph{corrector}.
Such Poisson equations have long been used to isolate
systematic discretization error
\cite{MattinglyStuartTretyakov2010,JohndrowMattingly2017,LuYeZhou2025}; a related discrete
corrector appears in the analysis of the unadjusted Langevin algorithm
in \cite{PedrottiWhalley2026}.
It motivates the reference distribution
\begin{equation}
    \nu\deq(\Id+\chi)_\#\pi\,.
    \label{eq:et-corrected-law}
\end{equation}
The spectral calculation in \Cref{subsec:et-corrector} gives
\(\norm\chi_{L^2(\pi)}\leqslant C\sqrt{dh^3}\), and hence
\(\W_2(\nu,\pi)\leqslant C\sqrt{dh^3}\).  At this shifted
reference distribution, the conditional mean of the error is only
\(O(\sqrt{dh^5})\). 

The centered fluctuation still has size \(O(\sqrt{dh^3})\).
However, as the next step supplies fresh Gaussian noise, this
fluctuation cost is only \(O(\sqrt{dh^5})\) in \(\W_2\).  The reason is
that a centered perturbation affects a Gaussian convolution to second
order; see \Cref{lem:et-gaussian-convolution}.  We use the convolution estimate of
\cite[Theorem~E.4 and Remark~E.6(ii)]{PaulinWhalley2026}{,
with the sharp constant proved directly in \Cref{lem:et-gaussian-convolution}};
\cite[Lemma~4.11]{HigherOrderRHMC} develops the same smoothing principle
for perturbations that can depend on the smoothing Gaussian.
The appendix verifies the bounds using Malliavin calculus, which is the main analytic ingredient.

The following elementary consequence of contractivity then accumulates
the two costs.

\begin{lemma}[Bias accumulation at a reference law]
\label{lem:et-local-framework}
Let \(0<h\leqslant1\), let \(\rho\) be as in
\eqref{eq:et-contraction-factor}, and let
\(\nu\in\mathcal P_2(\R^d)\).  Then
\begin{equation}
    \W_2(\widehat\pi_h^{\mathrm{Heun}},\nu)
    \leqslant\W_2(\nu K_h^{\mathrm{Heun}},\nu)
       +\frac{\W_2\bigl(\nu(K_h^{\mathrm{Heun}})^2,
          \nu K_h^{\mathrm{Heun}}\bigr)}{1-\rho}\,.
    \label{eq:et-local-framework}
\end{equation}
\end{lemma}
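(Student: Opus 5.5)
The plan is to telescope along the orbit of \(\nu\) and use the contraction of \Cref{prop:et-contractivity}. Write \(K\deq K_h^{\mathrm{Heun}}\) and \(\nu_n\deq\nu K^n\). By \Cref{prop:et-contractivity}, \(K\) is a strict contraction on \((\mathcal P_2,\W_2)\) with factor \(\rho<1\). Hence \((\nu_n)\) is Cauchy and converges in \(\W_2\) to the unique fixed point \(\widehat\pi_h^{\mathrm{Heun}}\).

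The first step is to start the telescoping at \(\nu_1\) rather than at \(\nu\). By the triangle inequality,
\[
    \W_2(\widehat\pi_h^{\mathrm{Heun}},\nu)
    \leqslant\W_2(\nu_1,\nu)+\W_2(\widehat\pi_h^{\mathrm{Heun}},\nu_1)\,.
\]
The first term is already the first term of \eqref{eq:et-local-framework}.

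The second step bounds the remaining term. Since \(\nu_n\to\widehat\pi_h^{\mathrm{Heun}}\) and \(\W_2\) is continuous along converging sequences,
\[
    \W_2(\widehat\pi_h^{\mathrm{Heun}},\nu_1)
    \leqslant\sum_{n\geqslant1}\W_2(\nu_{n+1},\nu_n)\,.
\]
Applying the contraction \(n-1\) times to the pair \((\nu_2,\nu_1)\) gives
\[
    \W_2(\nu_{n+1},\nu_n)\leqslant\rho^{n-1}\W_2(\nu_2,\nu_1)\,.
\]
Summing the geometric series yields \(\W_2(\nu_2,\nu_1)/(1-\rho)\), which is the second term of \eqref{eq:et-local-framework}.

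Starting the sum at \(n=1\) is deliberate. It is what places the two-step quantity \(\W_2(\nu K^2,\nu K)\), rather than \(\W_2(\nu K,\nu)\), in the amplified term. This matters for the application: after one step the fresh Gaussian noise has smoothed the centered fluctuation, and only the \(O(\sqrt{dh^5})\) cost survives. There is no real obstacle here. The only points to check are that \(\nu_n\in\mathcal P_2\) for all \(n\), which holds because \(K\) maps \(\mathcal P_2\) into itself (gradients are Lipschitz and the noise is Gaussian), and the convergence \(\nu_n\to\widehat\pi_h^{\mathrm{Heun}}\), which follows from completeness of \((\mathcal P_2,\W_2)\) together with the Banach fixed point theorem.
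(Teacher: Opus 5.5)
Your proof is correct and is essentially the paper's argument: the triangle inequality from \(\nu\) through \(\nu K_h^{\mathrm{Heun}}\), the contraction bound \(\W_2(\nu(K_h^{\mathrm{Heun}})^{j+1},\nu(K_h^{\mathrm{Heun}})^j)\leqslant\rho^{j-1}\W_2(\nu(K_h^{\mathrm{Heun}})^2,\nu K_h^{\mathrm{Heun}})\) summed as a geometric series, and passage to the limit along the orbit. The only cosmetic difference is in justifying the limit: the paper uses \(\W_2(\nu(K_h^{\mathrm{Heun}})^N,\widehat\pi_h^{\mathrm{Heun}})\leqslant\rho^N\W_2(\nu,\widehat\pi_h^{\mathrm{Heun}})\to0\) with the already-established fixed point, whereas you invoke the Cauchy/Banach fixed point argument directly.
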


The proofs of
\Cref{prop:et-contractivity} and \Cref{lem:et-local-framework}
are in \Cref{subsec:et-auxiliary-proofs}. For the reference distribution
\eqref{eq:et-corrected-law}, \Cref{subsec:et-assembly} proves
\[
    \W_2(\nu K_h^{\mathrm{Heun}},\nu)\leqslant C\sqrt{dh^3}\,,
    \qquad
    \W_2\bigl(\nu(K_h^{\mathrm{Heun}})^2,\nu K_h^{\mathrm{Heun}}\bigr)
       \leqslant C\sqrt{dh^5}\,.
\]
The first bound pays for the full local error once.  The second pays for
its smaller effect after the next step and is multiplied by the
relaxation time \((1-\rho)^{-1}\lesssim\kappa/h\).
Together with \(\W_2(\nu,\pi)\leqslant C\sqrt{dh^3}\),
\Cref{lem:et-local-framework} proves the stationary bound.
\Cref{subsec:et-upper-completion} then gives the convergence and query
bounds in \Cref{thm:et-upper}.

\begin{remark}[The remaining dependence on the condition number]
\label{rem:et-condition-gap}
The bound \eqref{eq:et-complexity} contains
\(\kappa^{4/3}d^{1/3}\varepsilon^{-2/3}\).  The lower bound in
\Cref{thm:et-linear-kappa-lower} contains
\(\kappa d^{1/3}\varepsilon^{-2/3}\) for
\(d^{-1}\leqslant\varepsilon\leqslant\varepsilon_0\), so the exponents of \(d\) and
\(\varepsilon\) agree while a factor \(\kappa^{1/3}\) remains.
In the upper bound, this loss comes from accumulating the distance
between consecutive iterates by the triangle inequality.  Improving
that accumulation would require further cancellation across steps;
the present proof does not establish it.
\end{remark}

\section{Underdamped upper bound}
\label{sec:underdamped-upper}

Before proceeding, we note that contraction is measured in the same twisted
metric as the continuous underdamped Langevin dynamics in \cite[Theorem~5.3.6]{ChewiBook}.
Friction acts directly on momentum; the cross term in this metric
transfers its dissipation to position.  For \(z\deq (x,p)\), define
\[
    T_\gamma\deq
    \begin{bmatrix}
      \Id_d&0\\
      \Id_d&\frac{2}{\gamma}\,\Id_d
    \end{bmatrix}\,,
    \qquad
    M_\gamma
    \deq T_\gamma^\top T_\gamma
    =\begin{bmatrix}
       2\Id_d&\frac{2}{\gamma}\,\Id_d\\[0.5em]
       \frac{2}{\gamma}\,\Id_d&\frac4{\gamma^2}\,\Id_d
     \end{bmatrix}\,,
\]
and the hypocoercive norm
\(\norm z_{M_\gamma}^2\deq z^\top M_\gamma z
=\norm{T_\gamma z}^2\).
The associated Wasserstein distance is denoted by
\(\W_{2,M_\gamma}\).  In the normalized setting
\(\beta=1\) and \(\gamma=2\), we omit the subscript and write
\(T\), \(M\), and \(\W_{2,M}\).

Now, we show that the exponential midpoint method
\eqref{eq:EM-x}--\eqref{eq:EM-v} with \(\gamma=2\sqrt\beta\)
achieves $d^{1/4}$ query complexity in Wasserstein distance under the Hessian bounds
\eqref{eq:sc-smooth}. 

\begin{theorem}[Exponential midpoint upper bound]
\label{thm:uld-midpoint-main}
There are universal constants \(c,C>0\) such that, for
\(0<\sqrt\beta\,h\leqslant{c\kappa^{-1/4}}\), the kernel \(K_h^{\mathrm{ExpMid}}\)
has a unique invariant distribution \(\bm\pi_h^{\mathrm{ExpMid}}\) in
\(\mathcal P_2(\R^{2d})\), and
\begin{equation}
    \sqrt\alpha\,\W_2
       \bigl((\bm\pi_h^{\mathrm{ExpMid}})_x,\pi\bigr)
    \leqslant C\sqrt{\kappa d}\,(\sqrt\beta\,h)^2\,.
    \label{eq:uld-midpoint-position-bias}
\end{equation}
For the initialization \(\widehat X_0=x_\star\) and
\(\widehat P_0\sim\cN(0,\Id_d)\), independent of the driving Brownian
motion, for every integer
\(n\geqslant0\),
\begin{equation}
    \sqrt\alpha\,\W_2
       \bigl(\Law(\widehat X_n),\pi\bigr)
    \leqslant C\sqrt d\,\e^{-cn\sqrt\beta\,h/\kappa}
       +C\sqrt{\kappa d}\,(\sqrt\beta\,h)^2\,.
    \label{eq:uld-midpoint-finite-time-explicit}
\end{equation}
Consequently, for \(0<\varepsilon\leqslant\sqrt d/2\), the number
\(N_{\nabla V}\) of gradient queries required to attain the
accuracy criterion \eqref{eq:global-absolute-criterion} satisfies
\begin{equation}
    N_{\nabla V}\leqslant{C\,\frac{\kappa^{5/4}d^{1/4}}{\varepsilon^{1/2}}\,
       \log\frac d{\varepsilon^2}}\,.
    \label{eq:uld-midpoint-complexity}
\end{equation}
\end{theorem}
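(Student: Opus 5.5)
We work in the normalized units of \Cref{sec:normalization}, so \(\beta=1\), \(\gamma=2\) and \(\alpha=\kappa^{-1}\). The plan follows the overdamped template of \Cref{sec:overdamped-upper}, with the twisted metric \(\W_{2,M}\) in place of \(\W_2\).

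\textbf{Step 1: contraction.} Under synchronous coupling, the difference of two ExpMid steps is an affine map. Its linear part comes from averaged Hessians: one between the two predicted midpoints, and one inside the predictor. Expanding this map to order \(h\) gives the continuous-time hypocoercive matrix of \cite[Theorem~5.3.6]{ChewiBook}, whose contraction rate in \(\norm{\cdot}_M\) is of order \(h/\kappa\). The remaining terms are \(O(h^2)\) with absolute constants. These must be dominated by \(h/\kappa\); we expect the condition \(h\leqslant c\kappa^{-1/4}\) to be what keeps the \(h^2\) cross terms and the predictor feedback harmless, though \(h\lesssim1/\kappa\) may be needed for a crude version. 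The output is \(\rho\leqslant1-ch/\kappa\), together with existence and uniqueness of \(\bm\pi_h^{\mathrm{ExpMid}}\) by Banach's fixed point theorem.

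\textbf{Step 2: accumulate at a reference law.} Next we prove the analogue of \Cref{lem:et-local-framework} in \(\W_{2,M}\):
\[
\W_{2,M}(\bm\pi_h^{\mathrm{ExpMid}},\bm\nu)\leqslant\W_{2,M}(\bm\nu K,\bm\nu)+(1-\rho)^{-1}\,\W_{2,M}(\bm\nu K^2,\bm\nu K),
\]
where \(K=K_h^{\mathrm{ExpMid}}\) and \(\bm\nu\) is a suitable reference law close to \(\bm\pi\).

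\textbf{Step 3: local error, split into mean and fluctuation.} We compare one ExpMid step with the exact underdamped flow from a stationary start \((X_0,P_0)\sim\bm\pi\), driven by the same Brownian path. Insert an idealized midpoint step that evaluates \(\nabla V\) at the true \(X_{h/2}\). The predictor error \(X_{h/2}-X^{\mathrm{pred}}_{h/2}=O(h^2\sqrt{d})\) in \(L^2\), and the gradient multiplies it by \(h\), so this replacement costs \(O(\sqrt d\,h^3)\) per step. The quadrature defect of the idealized step, \(\int_0^h a_{h-s}\nabla V(X_s)\,\mathrm ds-h a_{h/2}\nabla V(X_{h/2})\) and its momentum analogue, is split as follows.
\begin{itemize}
\item \emph{Centered fluctuation.} Its size is \(O(\sqrt d\,h^{2})\) in momentum, because \(X_s-X_{h/2}\) contains Brownian-type terms of order \(h^{3/2}\). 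By the Gaussian-smoothing principle of \Cref{lem:et-gaussian-convolution} and \cite[Lemma~4.11]{HigherOrderRHMC}, fresh noise reduces it to second order, namely \(O(\sqrt d\,h^{3})\) per step.
\item \emph{Conditional mean.} Here the symmetric midpoint cancels the first-order term. The remaining piece involves \(\nabla^2V\) applied to a drift of size \(O(h^2)\). Only Hessian bounds are available, so we cannot Taylor expand further. Instead we bound this mean in the weighted \(\dot H^{-1}(\pi)\) norm, using integration by parts against \(\pi\) and the fact that the drift of \(X_s\) has mean zero at stationarity. We then pass from \(\dot H^{-1}\) to \(\W_2\) through a transport interpolation along \(\bm\nu\), extending \cite{HigherOrderRHMC} to the non-Gaussian base \(\pi\). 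If needed, we absorb a residual systematic part with a corrector \(\chi\) solving \((\Id-P_h)\chi=e_h\), as in \Cref{sec:overdamped-upper}.
\end{itemize}
The target estimates are \(\W_{2,M}(\bm\nu K,\bm\nu)\lesssim\sqrt d\,h^2\) and \(\W_{2,M}(\bm\nu K^2,\bm\nu K)\lesssim\sqrt d\,h^3\). With \((1-\rho)^{-1}\lesssim\kappa/h\), these give \(\W_2\lesssim\kappa\sqrt d\,h^2\). Multiplying by \(\sqrt\alpha=\kappa^{-1/2}\) yields \eqref{eq:uld-midpoint-position-bias}.

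\textbf{Step 4: finite time and complexity.} Contraction gives a bias term plus \(\rho^n\W_{2,M}(\delta_{x_\star}\otimes\cN(0,\Id_d),\bm\pi)\), and this last distance is \(\lesssim\sqrt{\kappa d}\). The comparison between \(\norm{\cdot}_M\) and the Euclidean norm of the position is absolute, which yields \eqref{eq:uld-midpoint-finite-time-explicit}. For the complexity, choose \(h\asymp\varepsilon^{1/2}(\kappa d)^{-1/4}\); this respects \(h\leqslant c\kappa^{-1/4}\) because \(\varepsilon\leqslant\sqrt d\). Then take \(n\asymp(\kappa/h)\log(d/\varepsilon^2)\), which gives \eqref{eq:uld-midpoint-complexity}.

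\textbf{Main obstacle.} The hardest step is the conditional-mean bound in Step 3. It requires extracting order \(h^3\) cancellation, after the division by \(1-\rho\), without third derivatives. This is where the \(\dot H^{-1}\) estimate and the transport interpolation must do all the work.
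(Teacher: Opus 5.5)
Your overall architecture is the paper's: synchronous contraction in $\W_{2,M}$, a stationary local error split into conditional mean and centered fluctuation, an $\dot H^{-1}$ bound on the mean, and a transport interpolation. The paper in fact needs less than you plan. It uses no corrector and no two-step lemma, because it proves $\W_{2,M}(\bm\pi K_h^{\mathrm{ExpMid}},\bm\pi)\leqslant C\sqrt d\,h^3$ directly and divides by the gap $h/(4\kappa)$.

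\textbf{The central estimate is missing.} The estimate that carries the theorem is asserted rather than derived. The mechanism you indicate (integration by parts against $\pi$ plus stationarity of the drift) does not by itself control the third derivative of $V$ in the second time derivative of the quadrature integrand. What must be bounded is the phase space divergence $\delta_{\bm\pi}m_h$ in $\dot H^{-1}(\bm\pi)$, where $m_h$ is the mean conditional on the \emph{endpoint} $Z_h$. A position-only norm cannot see the momentum component or the $p$-dependence of $m_h$.

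The missing idea runs as follows.
\begin{itemize}
\item With $g_t=\e^{t\mathcal L^\dagger}\nabla V$, the divergence $\delta_{\bm\pi}m_h=\int_0^h\psi_t\,\mathrm dt-h\psi_{h/2}$ is itself a midpoint error, for a curve with $\psi_0=\ip p{\nabla V}$.
\item The commutators of $\delta_x,\delta_p$ with $\mathcal L^\dagger$ involve only $\nabla^2V$.
\item $(\mathcal L^\dagger)^2\psi_0$ is $O(\sqrt d)$ in $\dot H^{-1}(\bm\pi)$. This holds because $\mathcal L^\dagger$ maps $L^2$ functions of bounded momentum Hermite degree boundedly into $\dot H^{-1}(\bm\pi)$, which moves the missing derivative onto the test function.
\item With short-time stability of $\e^{t\mathcal L^\dagger}$ on $\dot H^{-1}(\bm\pi)$, $\psi_t$ is affine up to $O(\sqrt d\,t^2)$. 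Exactness of the midpoint rule on affine curves then gives $h^3$.
\end{itemize}

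\textbf{The transport and fluctuation steps are also unsupported.} The transport step needs the derivative bounds $\norm{D_z\mathcal E_h}_{\mathrm{op}}\lesssim h$, $\norm{D_G\mathcal E_h}_{\mathrm{op}}\lesssim h^{5/2}$ and $\norm{D_zR_h}_{L^2(\mathrm{HS})}\lesssim\sqrt d\,h$. The product $\norm{D_z\mathcal E_h}_{\mathrm{op}}\norm{\mathcal E_h}_{L^2}$ already sits exactly at $\sqrt d\,h^3$, so there is no slack. Your fluctuation step has three problems:
\begin{itemize}
\item The fluctuation is $O(\sqrt d\,h^{5/2})$, as your own Brownian heuristic indicates, not $O(\sqrt d\,h^2)$.
\item \Cref{lem:et-gaussian-convolution} needs isotropic nondegenerate noise. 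The ExpMid noise has position variance $O(h^3)$ and passes through the midpoint gradient.
\item A squared-size bound loses a factor $\sqrt d$ without Schatten-type Stein kernel control.
\end{itemize}
The paper avoids all of this. It uses the within-step noise, conditional on $Z_h$, through a divergence-free Malliavin field. As a result one factor is an operator norm and only the other is Hilbert--Schmidt.

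\textbf{Step 1 leaves open the range the theorem asserts.} The crude range does not suffice. With $h\lesssim\kappa^{-1}$, your step choice becomes $h=\min\{c\kappa^{-1},c\,\varepsilon^{1/2}(\kappa d)^{-1/4}\}$. The count is then $\max\{\kappa^2,\kappa^{5/4}d^{1/4}\varepsilon^{-1/2}\}$ up to logarithms, which is not \eqref{eq:uld-midpoint-complexity}.

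The missing idea is to measure every $O(h^2)$ coefficient error against the dissipation, not against $\norm{\Delta z}_M^2$. The dissipation is $\mathcal D=2\norm{\Delta p}^2+2\ip{\Delta x+\Delta p}{H_{\mathrm{pred}}\Delta x}\geqslant\kappa^{-1}\norm{\Delta z}_M^2$, built from the Hessian averaged between the two predictors. The only unweighted error is then the predictor feedback $-c_{h/2}H_{\mathrm{pred}}H_{\mathrm{init}}\Delta x=O(h^2\norm{\Delta x})$. After Young's inequality it costs $Ch^5\norm{\Delta z}_M^2$, which is absorbed into $h/(4\kappa)$ precisely when $h\lesssim\kappa^{-1/4}$.
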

Just as in the overdamped case, there is a discrepancy in the condition number exponent, which cannot easily be rectified in the current proof.

The remainder of this section outlines the proof; the technical
components are carried out in \Cref{app:underdamped-proof}.
The propositions below use the coordinate normalization in \Cref{sec:normalization}. First, the following contractivity lemma is proved by some elementary coupling analysis.

\begin{proposition}[Numerical hypocoercive contraction]
\label{prop:uld-midpoint-numerical-contraction}
There is a universal constant \(c>0\) such that, for
\(0<h\leqslant{c\kappa^{-1/4}}\) and all
\(\mu,\nu\in\mathcal P_2(\R^{2d})\),
\[
    \W_{2,M}(\mu K_h^{\mathrm{ExpMid}},\nu K_h^{\mathrm{ExpMid}})
    \leqslant\Bigl(1-\frac{h}{4\kappa}\Bigr)\,
       \W_{2,M}(\mu,\nu)\,.
\]
\end{proposition}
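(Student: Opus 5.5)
The plan is to use a synchronous coupling. Run two copies of the exponential midpoint step from \(z=(x,p)\) and \(z'=(x',p')\) with the same Brownian path. Then all stochastic integrals cancel in the difference, and the map \(z\mapsto K\)-output is deterministic given the noise. Write \(\delta x=x-x'\), \(\delta p=p-p'\). Let \(\delta y\) be the difference of the predicted midpoints, so that
\[
 \delta y=\delta x+a_{h/2}\delta p-c_{h/2}H_0\delta x,
\]
where \(H_0\deq\int_0^1\nabla^2V(x'+s\delta x)\,\mathrm ds\) satisfies \(\kappa^{-1}\Id\preceq H_0\preceq\Id\). Similarly, \(\nabla V(y)-\nabla V(y')=H_1\delta y\) with \(H_1\) an averaged Hessian along the segment between the predicted midpoints, and the same bounds hold for \(H_1\). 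The output difference is then linear:
\[
 \delta x^+=\delta x+a_h\delta p-ha_{h/2}H_1\delta y,\qquad
 \delta p^+=\e^{-2h}\delta p-h\e^{-h}H_1\delta y .
\]
It suffices to show the pointwise bound \(\norm{(\delta x^+,\delta p^+)}_M\le(1-h/(4\kappa))\norm{(\delta x,\delta p)}_M\) for every realization. Taking the optimal \(W_{2,M}\) coupling of \(\mu,\nu\) and averaging then gives the claim. Uniqueness of the invariant law follows by Banach's fixed point theorem on \((\mathcal P_2,W_{2,M})\), which matters for the theorem.

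Next I would compare with the exact linearized continuous dynamics. Expanding \(a_h=h-h^2+O(h^3)\), \(a_{h/2}=h/2+O(h^2)\) and \(c_{h/2}=O(h^2)\), the update matrix is
\[
 A=\Id+hB_1+h^2R,\qquad B_1=\begin{bmatrix}0&\Id\\-H_1&-2\Id\end{bmatrix},
\]
where \(R\) has operator norm \(O(1)\) uniformly in \(\kappa\), because all Hessians are bounded by \(1\). The key estimate is that of the continuous contraction in \cite[Theorem~5.3.6]{ChewiBook}: with \(\gamma=2\), for any symmetric \(\kappa^{-1}\preceq H\preceq\Id\),
\[
 T B_1 T^{-1}+(TB_1T^{-1})^\top\preceq-\frac{1}{\kappa}\Id .
\]
I would redo this short block computation. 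In the coordinates \((\delta x,\delta x+\delta p)\) the matrix becomes \(\begin{bmatrix}-\Id&\Id\\ \Id-H&-\Id\end{bmatrix}\), and its symmetric part is \(\preceq -\lambda_{\min}(H)\Id\). This gives \(\norm{TAT^{-1}}^2\le1-h/\kappa+Ch^2\) to first order.

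The main obstacle is the second-order term. A naive bound gives \(1-h/\kappa+Ch^2\), which only contracts for \(h\lesssim\kappa^{-1}\) and not up to \(h\le c\kappa^{-1/4}\). So the \(h^2\) contributions must be tracked precisely. I would write \(\norm{TAT^{-1}u}^2=\norm u^2+2h\langle u,\tilde B_1u\rangle+h^2\norm{\tilde B_1u}^2+2h^2\langle u,\tilde Ru\rangle+O(h^3)\), where tildes denote conjugation by \(T\). The claim to verify is that the \(h^2\) quadratic form is itself nonpositive up to \(O(h^2\cdot\kappa^{-1})\) plus higher-order terms. This is plausible because the midpoint rule is second-order accurate, so \(\tilde B_1^2/2+\tilde R\) matches the exact exponential \(\e^{h\tilde B}\). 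For the exact exponential, \(\norm{\e^{h\tilde B}}^2\le \e^{-h/\kappa}\) follows from the generator bound. Here \(H_0\) versus \(H_1\) differ, and the mismatch enters only through \(c_{h/2}H_1H_0=O(h^2)\) multiplied by another \(h\), giving \(O(h^3)\). The remaining error is then a sum of \(O(h^3)\) and \(O(h^4)\) terms. I expect the genuinely delicate part to be showing that the residual beyond the exact-flow comparison is \(O(h^5)\) in the directions where the generator gives only \(\kappa^{-1}\) dissipation. If that holds, the condition \(h^5\lesssim h/\kappa\), i.e.\ \(h\lesssim\kappa^{-1/4}\), is exactly the stated threshold. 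I would first try an explicit \(2\times2\) block computation treating \(H_1\) as a scalar eigenvalue \(\lambda\in[\kappa^{-1},1]\). This is justified when \(H_0=H_1\) commute, and I would handle noncommuting perturbations as \(O(h^3)\) terms with the factor of the weak eigenvalue kept explicitly.
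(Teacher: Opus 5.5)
The gap is exactly at the step you flag as delicate, and neither tool you offer for it works as stated.

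First, the claim that the \(h^2\) quadratic form is nonpositive up to \(O(\kappa^{-1})\) is false. For \(\delta x=0\) the energy change is \(-2h\norm{\delta p}^2+h^2\bigl(4\norm{\delta p}^2-\ip{\delta p}{H_1\delta p}\bigr)+O(h^3)\norm{\delta p}^2\). Its \(h^2\) coefficient is at least \(3\norm{\delta p}^2\); it is harmless only because the first-order dissipation dominates it.

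Second, the exact-flow comparison as you set it up also stalls. The operator bound \(\norm{\e^{h\tilde B_1}}^2\leqslant\e^{-h/\kappa}\), plus a residual that is \(O(h^3)\) in operator norm, gives only \(\norm{TAT^{-1}}\leqslant\e^{-h/(2\kappa)}+Ch^3\), i.e.\ \(h\lesssim\kappa^{-1/2}\). The residual really is that large: the noncommuting term \(-c_{h/2}H_1(H_0-H_1)\delta x\) in the force is of order \(h^2\norm{\delta x}\), and by \Cref{rem:uld-midpoint-sharpness} it produces a genuine expansion of order \(h^5\). So the \(h^5\) is not the size of any residual, in weak directions or otherwise. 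It is what Young's inequality leaves once the cross term is bounded by \(h^3\) times the square root of the dissipation.

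The missing idea is to keep the dissipation as the quadratic form \(\mathcal D(u)\deq-\ip u{(\tilde B_1+\tilde B_1^\top)u}=2\norm{\delta p}^2+2\ip{\delta x+\delta p}{H_1\delta x}\). It satisfies \(\mathcal D\geqslant\kappa^{-1}\norm u^2\) and \(\mathcal D\geqslant\norm{\delta p}^2+\ip{\delta x}{H_1\delta x}\). Measure every error against \(\mathcal D\) rather than \(\norm u^2\), and use \(\mathcal D\geqslant\kappa^{-1}\norm u^2\) only at the end. The paper proceeds as follows.
\begin{enumerate}
\item It writes the energy change exactly; the force-free part changes \(\norm{\cdot}_M^2\) by exactly \(-\tfrac{1-\e^{-4h}}2\norm{\delta p}^2\).
\item It replaces \(g\) by \(\bar g=H_1(\delta x+a_{h/2}\delta p)\). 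Every resulting correction contains \(\delta p\) or \(H_1\delta x\), and \(\norm{H_1\delta x}^2\leqslant\ip{\delta x}{H_1\delta x}\), so this part is at most \(-h\mathcal D+Ch^2\mathcal D\).
\item The remainder \(g-\bar g=-c_{h/2}H_1H_0\delta x\) is not controlled by \(\mathcal D\). In the cross term, however, the factor \(H_1\) moves onto the vector \(\delta x+\tfrac{1+\e^{-3h}}2\,\delta p\) it is paired with, whose image under \(H_1\) has norm at most \(2\sqrt{\mathcal D}\).
\item Hence that cross term is at most \(Ch^3\sqrt{\mathcal D}\,\norm{\delta x}\leqslant\tfrac h8\mathcal D+Ch^5\norm u^2\), and its square is \(O(h^6)\norm{\delta x}^2\).
\item Finally \(-\tfrac34h\mathcal D+Ch^5\norm u^2\leqslant-\tfrac h{2\kappa}\norm u^2\) once \(h\leqslant c\kappa^{-1/4}\).
\end{enumerate}

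Your blockwise idea can be completed into an equivalent argument, but only with two estimates you do not state. Work in the eigenbasis of \(H_1\).
\begin{enumerate}
\item Each \(2\times2\) block of the update with \(H_0\) replaced by \(H_1\) differs from its exact flow by \(O(\lambda_ih^3)\), not merely \(O(h^3)\), because the exponential integrator is exact at \(\lambda_i=0\). So it contracts the squared norm by a factor \(1-c\lambda_ih\).
\item The perturbation has block components of size \(Ch^3\lambda_i\lvert w_i\rvert\), with \(w\deq(H_0-H_1)\delta x\). Cauchy--Schwarz and Young then bound its cross term by \(\epsilon h\sum_i\lambda_i\norm{u_i}^2+C_\epsilon h^5\norm u^2\).
\end{enumerate}
Without the factor \(\lambda_i\) (or \(\sqrt{\mathcal D}\)) in both places, the argument cannot get past \(\kappa^{-1/2}\).
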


The second ingredient is the one-step error at stationarity, which
carries the full order \(h^3\).

\begin{proposition}[One-step error at stationarity]
\label{prop:uld-midpoint-one-step}
There is a universal constant \(C>0\) such that, for
\(0<h\leqslant h_0\),
\[
    \W_{2,M}
       (\bm\pi K_h^{\mathrm{ExpMid}},\bm\pi)
    \leqslant C\sqrt d\,h^3\,.
\]
\end{proposition}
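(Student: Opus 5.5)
We couple one ExpMid step with the exact underdamped Langevin diffusion started from \((X_0,P_0)\sim\bm\pi\) and driven by the same Brownian motion. Since \(\bm\pi\) is invariant, \((X_h,P_h)\sim\bm\pi\). The difference \(\widehat Z_h-Z_h\) is a quadrature error: we replace \(\int_0^h a_{h-s}\nabla V(X_s)\,\mathrm d s\) by \(h a_{h/2}\nabla V(X^{\mathrm{pred}}_{h/2})\), and similarly in momentum. A naive pathwise bound gives only \(\sqrt d\,h^{5/2}\), because \(\nabla V(X_s)-\nabla V(X_{h/2})\) is of size \(\sqrt d\,h\) over an interval of length \(h\). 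To reach \(\sqrt d\,h^3\) we need two gains: cancellation in the conditional mean, and a transport argument that converts the centered fluctuation into a smaller distributional cost.

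\textbf{Step 1 (decomposition).} We write the error \(E\deq\widehat Z_h-Z_h\) as the sum of three pieces.
\begin{itemize}
\item A predictor term \(h a_{h/2}\{\nabla V(X_{h/2})-\nabla V(X^{\mathrm{pred}}_{h/2})\}\). Its size is \(h\cdot\norm{X_{h/2}-X^{\mathrm{pred}}_{h/2}}\lesssim h\cdot h^2\sqrt d\), because the predictor misses only \(\int(\nabla V(X_s)-\nabla V(x))\) weighted by \(a\), which is \(O(h^2\sqrt d)\). This is already \(O(\sqrt d h^3)\) in \(L^2\).
\item A midpoint quadrature error \(\int_0^h a_{h-s}\{\nabla V(X_s)-\nabla V(X_{h/2})\}\,\mathrm d s\), together with a smooth weight correction \((\int_0^h a_{h-s}\mathrm d s-h a_{h/2})\nabla V=O(h^3)\nabla V\), and the analogous momentum terms.
\item The remaining piece, which is the main quadrature error handled below.
\end{itemize}
Note that the \(M\)-norm weights momentum by \(2/\gamma=1\), so both components must be of order \(h^3\).

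\textbf{Step 2 (mean cancellation).} Because \(\nabla V\) is only Lipschitz, we cannot Taylor expand to second order. Instead we use the dynamics: \(\nabla V(X_s)-\nabla V(X_{h/2})=\int_{h/2}^s \nabla^2V(X_r)P_r\,\mathrm d r\). Since \(\nabla^2 V\) is bounded, the leading term \(\int(s-h/2)\nabla^2V(X_{h/2})P_{h/2}\) integrates to zero by symmetry of the midpoint, up to the weight asymmetry \(a_{h-s}-a_{h/2}=O(h(s-h/2))\). The remainder involves the increments \(\nabla^2V(X_r)P_r-\nabla^2V(X_{h/2})P_{h/2}\), which are not small pointwise without third derivatives. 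We therefore work at the level of measures rather than pathwise: conditionally on the endpoint, at stationarity, the mean of these remainders is a divergence-form quantity. Following the paper's outline, we bound the conditional mean \(\E[E\mid Z_h]\) in \(L^2(\bm\pi)\) by estimating it in \(\dot H^{-1}\)-type weighted norms and using integration by parts against the Gaussian momentum. Under \(\bm\pi\), time reversal (momentum flip) makes \(\E[P_r\,g(X_r)]=0\), which kills the odd terms. We aim to show \(\norm{\E[E\mid Z_h]}_{L^2}\lesssim\sqrt d\,h^3\). The momentum Gaussian supplies the integration by parts: terms like \(\E[\nabla^2V(X)p\cdot\phi]\) are rewritten as \(\E[\nabla^2 V(X)\nabla_p\phi]\).

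\textbf{Step 3 (fluctuation via transport interpolation).} The centered part \(E-\E[E\mid Z_h]\) has \(L^2\) size \(\sqrt d h^{5/2}\) at worst, which is too large for a direct bound. We instead build a coupling between \(\Law(\widehat Z_h)\) and \(\bm\pi\) by interpolating \(Z_h+\theta E\) for \(\theta\in[0,1]\) and transporting along the velocity field \(v_\theta=\E[E\mid Z_h+\theta E]\), in the spirit of \cite{HigherOrderRHMC}. Then \(\W_{2,M}\le\int_0^1\norm{v_\theta}_{L^2}\,\mathrm d\theta\). The difference between \(v_\theta\) and \(v_0=\E[E\mid Z_h]\) is controlled by the covariance of \(E\) against the score of the interpolated law. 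Because \(Z_h\) contains fresh Gaussian noise of variance \(\sim h\) in momentum and \(h^3\) in position, this gives an extra factor, schematically \(\norm{E}^2_{L^2}\times\)(score size). The main obstacle is here: the score of the base measure is non-Gaussian, since \(\pi\) is only log-concave. We plan to handle this by conditioning on \(Z_0\), so that the law of \(Z_h\) given \(Z_0\) is approximately Gaussian with known covariance, and by computing the interpolated conditional expectations via Malliavin integration by parts with respect to the Brownian increments, as in \Cref{lem:et-gaussian-convolution}. This should yield a fluctuation contribution of \(O(\sqrt d h^3)\). The hardest part of the whole argument is the anisotropy of the smoothing: the position noise is only \(h^{3/2}\), so the position component of the fluctuation must be paired with momentum noise through the \(M\)-metric. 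Combining Steps 1–3 with the triangle inequality gives the proposition.
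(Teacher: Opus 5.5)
\textbf{Where the proposal breaks.} Your architecture matches the paper's: replace the predictor by the exact midpoint, split the exact-midpoint error \(\mathcal E_h\) into \(m_h(Z_h)=\E[\mathcal E_h\mid Z_h]\) plus a centered part, then interpolate. The predictor step is fine; it even costs only \(O(\sqrt d\,h^4)\).

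The gap is in Step 2. The target \(\norm{m_h}_{L^2(\bm\pi)}\lesssim\sqrt d\,h^3\) is false under only \(\nabla^2V\preceq\Id\). Take, e.g., \(V(x)=\sum_i\{x_i^2/2+ah^2\cos(x_i/h)\}\), whose Hessian lies between \((1-a)\Id\) and \((1+a)\Id\). Given the endpoint \((x,p)\), the reversed position path is \(x-sp\) up to noise \(O(s^{3/2})\ll h\). So, per coordinate, the momentum component of \(m_h\) equals \(-ah^2\sin(x/h-p/2)\{2\sin(p/2)/p-1\}+O(h^{5/2})\). This has \(L^2(\bm\pi)\) norm of order \(ah^2\), hence \(\norm{m_h}_{L^2(\bm\pi)}\asymp\sqrt d\,h^2\). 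This is also why the naive bound on \(\mathcal E_h\) is \(\sqrt d\,h^2\), not \(\sqrt d\,h^{5/2}\) as you state.

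Moreover, the method you describe could not yield an \(L^2\) bound anyway: \(\norm{\delta_{\bm\pi}m}_{\dot H^{-1}(\bm\pi)}\leqslant\norm m_{L^2(\bm\pi)}\), and not conversely. Consequently your interpolation in Step 3, whose velocity at \(\theta=0\) is \(m_h(Z_h)\) itself, starts from a term of size \(\sqrt d\,h^2\). No estimate of \(v_\theta-v_0\) can repair that.

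\textbf{The missing idea.} Only the weighted divergence of \(m_h\) moves \(\bm\pi\) to first order, and the transport step must be built to see only that. Since the velocity is determined modulo divergence-free fields, the paper does two things:
\begin{enumerate}
\item It replaces \(m_h\) by its projection \(w\) onto the closure of gradients, with \(\norm w_{L^2(\bm\pi)}=\norm{\delta_{\bm\pi}m_h}_{\dot H^{-1}(\bm\pi)}\).
\item It cancels the phase-space divergence of the centered part \(R_h\) by a field in the Gaussian direction, \(F=-D_G(-\mathcal L_{\mathrm{OU}})^{-1}\delta_{\bm\pi}R_h\). Then \((\mathcal E_h-w,F)\) is divergence-free on the product of phase space and Wiener space.
\end{enumerate}
Testing against \(\varphi(Z_h+\theta\mathcal E_h)\) gives the velocity \(w-\theta D_z\mathcal E_h\,(\mathcal E_h-w)-\theta D_G\mathcal E_h\,F\) (\Cref{lem:uld-midpoint-random-displacement}). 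Its extra terms are products:
\begin{itemize}
\item \(\norm{D_z\mathcal E_h}_{\mathrm{op}}\norm{\mathcal E_h}_2\lesssim h\cdot\sqrt d\,h^2\);
\item \(\norm{D_G\mathcal E_h}_{\mathrm{op}}\norm F_2\lesssim h^{5/2}\cdot\sqrt d\,h\).
\end{itemize}
Here \(\norm F_2\leqslant\norm{\delta_{\bm\pi}R_h}_2\), and the Bochner identity bounds this by \(\norm{D_zR_h}_{\mathrm{HS}}\) and \(\norm{R_h}_2\) using only \(\nabla^2V\preceq\Id\).

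No score of \(\bm\pi\) and no smoothing by the forward noise given \(Z_0\) are needed. The Gaussian variable is the noise of the reversed path given \(Z_h\), so the anisotropy you worry about never enters.

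Finally, the \(h^3\) bound on \(\norm{\delta_{\bm\pi}m_h}_{\dot H^{-1}(\bm\pi)}\) comes from three steps:
\begin{itemize}
\item write \(\delta_{\bm\pi}m_h=\int_0^h\psi_t\,\mathrm dt-h\psi_{h/2}\), with \(\psi_t=\e^{t\mathcal L^\dagger}\ip p{\nabla V}+O_{\dot H^{-1}}(\sqrt d\,t^2)\);
\item bound \((\mathcal L^\dagger)^2\ip p{\nabla V}\) by \(C\sqrt d\) in \(\dot H^{-1}(\bm\pi)\) through a Hermite-degree duality argument;
\item use exactness of the midpoint rule on affine curves to gain the extra power of \(h\).
\end{itemize}
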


A fixed-point argument gives a unique invariant distribution and
\(\W_{2,M}(\bm\pi_h^{\mathrm{ExpMid}},\bm\pi)
\leqslant C\kappa\sqrt d\,h^2\): divide the one-step error by the
contraction gap. Iterating from
\(\delta_{x_\star}\otimes\cN(0,\Id_d)\)
gives \eqref{eq:uld-midpoint-finite-time-explicit}.

The main work is therefore \Cref{prop:uld-midpoint-one-step}.

\paragraph{The local error.}
Let \((X_t,P_t)_{0\leqslant t\leqslant h}\) be the underdamped Langevin
diffusion started at \((X_0,P_0)\sim\bm\pi\).  Since \(\bm\pi\) is
invariant, the \emph{endpoint} \(Z_h\deq(X_h,P_h)\), i.e., the exact
phase space position after one step, also has distribution \(\bm\pi\).
The exact solution formulas in \Cref{sec:underdamped-algorithms} express \(Z_h\)
through the two gradient integrals
\[
    \int_0^h a_{h-t}\nabla V(X_t)\,\mathrm d t
    \qquad\text{and}\qquad
    \int_0^h\e^{-2(h-t)}\nabla V(X_t)\,\mathrm d t\,.
\]
The exponential midpoint method approximates each integral by \(h\)
times its integrand at \(t=h/2\), with the predictor
\(X_{h/2}^{\mathrm{pred}}\) in place of the exact midpoint \(X_{h/2}\).
The predictor differs from \(X_{h/2}\) by \(O(\sqrt d\,h^3)\) in
\(L^2\), and the gradient enters with a coefficient of order \(h\), so
replacing the predictor by the exact midpoint changes the output by only
\(O(\sqrt d\,h^4)\).  With the exact midpoint, the method started at
\((X_0,P_0)\) and driven by the same Brownian motion outputs
\(Z_h+\mathcal E_h\), where the \emph{local error} \(\mathcal E_h\)
collects the errors of the midpoint rule for the two integrals; see
\eqref{eq:exact-midpoint-defect}.  Since \(Z_h\sim\bm\pi\),
\Cref{prop:uld-midpoint-one-step} reduces to the question of how far the
random displacement \(Z_h\mapsto Z_h+\mathcal E_h\) moves the distribution
\(\bm\pi\) in \(\W_2\).

The na\"{\i}ve bound only gives \(\norm{\mathcal E_h}_{L^2} \le C\sqrt d\,h^2\). Dividing by the contraction gap
\(h/\kappa\) gives only \(C\kappa\sqrt d\,h\) for the bias, one power
of \(h\) short of the theorem.

\paragraph{Conditional mean and centered fluctuation.}
As in the overdamped case, we split
\[
    \mathcal E_h=m_h(Z_h)+R_h\,,
    \qquad
    m_h(Z_h)\deq\E[\mathcal E_h\mid Z_h]\,.
\]
We discuss the \emph{centered fluctuation} \(R_h\) below; the main issue is the conditional mean.

The \emph{conditional mean} \(m_h\) is a deterministic vector field on
phase space, and what matters is not its size but its effect on
\(\bm\pi\).  For a vector field \(F\) on phase space, write
\(D\deq(\nabla_x,\nabla_p)\) and define the \emph{weighted divergence}
\(\delta_{\bm\pi}F\) by
\(\int\ip{D\varphi}{F}\,\mathrm d\bm\pi
=\int\varphi\,\delta_{\bm\pi}F\,\mathrm d\bm\pi\) for all smooth test
functions \(\varphi\).  If \(Z\sim\bm\pi\), a first-order Taylor
expansion gives
\[
    \E\varphi(Z+sF(Z))
    =\E\varphi(Z)+s\int\varphi\,\delta_{\bm\pi}F\,\mathrm d\bm\pi
       +O(s^2)\,,
    \qquad s\to0\,,
\]
so to first order the displacement changes the distribution only
through \(\delta_{\bm\pi}F\). We measure the first-order size of the
displacement by the negative Sobolev norm
\[
    \norm{\delta_{\bm\pi}F}_{\dot H^{-1}(\bm\pi)}
    \deq\sup\Bigl\{\int\varphi\,\delta_{\bm\pi}F\,\mathrm d\bm\pi :
       \norm{D\varphi}_{L^2(\bm\pi)}\leqslant1\Bigr\}\,,
\]
which equals the smallest \(L^2(\bm\pi)\) norm of a vector field with the
same weighted divergence as \(F\) (\Cref{subsec:uld-normalization}).  In
particular, it is at most \(\norm F_{L^2(\bm\pi)}\), but it can be much
smaller.  For small displacements, \(\W_2\) is comparable to this norm
\cite{Peyre2018}.

\paragraph{Cancellation in the weighted divergence.}
\Cref{prop:uld-midpoint-weighted-divergence} shows that
\(\norm{\delta_{\bm\pi}m_h}_{\dot H^{-1}(\bm\pi)}\leqslant C\sqrt d\,h^3\),
one power of \(h\) better than the bound on \(\norm{m_h}_{L^2(\bm\pi)}\).
We show that \(\delta_{\bm\pi}m_h\) is itself the
midpoint quadrature error \(\int_0^h\psi_t\,\mathrm d t-h\psi_{h/2}\) of a
curve \(t\mapsto\psi_t\) of functions on phase space, which starts at
\(\psi_0(x,p)=\ip p{\nabla V(x)}\) and evolves essentially by the
reversed dynamics; see \eqref{eq:uld-midpoint-divergence-quadrature}.  The
second time derivative of this curve involves third derivatives of
\(V\), which are not available pointwise.  In \(\dot H^{-1}(\bm\pi)\),
however, a third derivative of \(V\) is a derivative of the bounded
Hessian, and integration by parts moves this derivative onto the test
function \(\varphi\).  \Cref{lem:uld-midpoint-second-generator} makes this
precise using a Hermite expansion in the momentum, and shows that the
second time derivative is bounded by \(C\sqrt d\) in
\(\dot H^{-1}(\bm\pi)\).  Thus \(\psi_t\) is affine in \(t\) up to an
error \(O(\sqrt d\,t^2)\) in this norm, and the exactness of the midpoint
rule for affine functions gives the bound \(C\sqrt d\,h^3\).

\paragraph{From divergence bounds to transport.}
\Cref{lem:uld-midpoint-random-displacement} turns these estimates into a
Wasserstein bound.  For \(Y\sim\mu\) and a random displacement
\(U=m(Y)+R\) with \(\E[R\mid Y]=0\), it bounds
\(\W_2(\Law(Y+U),\mu)\) by
\(\norm{\delta_\mu m}_{\dot H^{-1}(\mu)}\) plus second-order terms,
which are products of derivatives of \(U\) with the size of \(U\) or
\(R\).  It extends the Gaussian transport estimate of
\cite[Lemma~4.11]{HigherOrderRHMC} to the non-Gaussian phase space
distribution \(\bm\pi\). We use Malliavin calculus estimates to bound these second-order terms.
Together with
\(\norm{\delta_{\bm\pi}m_h}_{\dot H^{-1}(\bm\pi)}\leqslant C\sqrt d\,h^3\),
this proves \Cref{prop:uld-midpoint-one-step}.  The construction and its
regularity requirements are given in
\Cref{subsec:uld-defect,subsec:uld-cancellation,subsec:uld-transport}.

At a high level, the bounds here and in \Cref{sec:overdamped-upper} are related, although the techniques are not exactly comparable. Both objects, instead of directly working in $L^2$, use a divergence-type quantity to analyze the error, which is then related to the error in $W_2$ via a divergence-transport lemma. The $\dot H^{-1}$ norm can be viewed as a linearization of the conditional mean's effect on the distribution, while the Stein kernel in \Cref{sec:overdamped-upper} can be viewed as a generalization of the former which also captures non-linear effects. However, the detailed quantification of their error contributions in \Cref{lem:uld-midpoint-random-displacement} and \Cref{lem:et-Gaussian-convolution-transport} do not lead to comparable estimates in general.

\section{Deterministic lower bounds}
\label{sec:deterministic-lower}

We treat deterministic Heun and underdamped exponential midpoint together.
Their stationary biases give the two deterministic lower bounds in
\Cref{tab:intro-comparison}. We first present the oscillatory
Heun construction and its condition number extension, then the Gaussian
covariance calculation for exponential midpoint. The stationary bias proofs follow the
same order in \Cref{app:deterministic-lower}.
\par

\subsection{Heun on oscillatory targets}
\label{subsec:overdamped-common-construction}

The Heun lower bound uses an oscillatory target: small oscillations of
wavelength \(\sqrt h\) and amplitude \(h\) produce a persistent error
while keeping the Hessian bounded, and a final coordinate is added to ensure the algorithm needs to take a fixed positive time before convergence. The construction depends on the step size.

Specifically, let \(d\geqslant2\), and define
\begin{equation}
    {V_{h,d}(x,z)
    \deq\sum_{i=1}^{d-1}\Bigl\{\frac{x_i^2}{2}
       +a_\star h\cos\frac{x_i}{\sqrt h}\Bigr\}
       +\frac{z^2}{2}\,,
    \qquad (x,z)\in\R^{d-1}\times\R\,.}
    \label{eq:overdamped-common-target}
\end{equation}
Here \(a_\star\in(0,1/4)\) is a fixed numerical constant.
It controls the oscillations and,
since
\[
    (1-a_\star)\Id_d\preceq\nabla^2V_{h,d}
       \preceq(1+a_\star)\Id_d\,,
\]
keeps the condition number bounded independently of \(d\) and \(h\).
Write \(\pi_h\) for the probability distribution on \(\R^d\) with
density proportional to \(\e^{-V_{h,d}}\).
We remark that this construction was used in~\cite{ChewiEtAl2021MALA} to provide upper bounds on the conductance of MALA\@.

\begin{proposition}[Heun lower bounds on an oscillatory target]
\label{prop:heun-oscillatory-lower}
There are universal \(c,h_0>0\) such that, for every \(d\geqslant2\)
and \(0<h\leqslant h_0\), the Heun sampler on
\eqref{eq:overdamped-common-target} has a unique invariant distribution
\(\widehat\pi_h^{\mathrm{Heun}}\in\mathcal P_2\), and
\begin{align}
 &\W_2(\pi_hK_h^{\mathrm{Heun}},\pi_h)\geqslant c\sqrt d\,h^{3/2}\,, \qquad
       &&\W_2(\widehat\pi_h^{\mathrm{Heun}},\pi_h)
 \geqslant c\sqrt d\,h^{3/2}\,,
 \label{eq:overdamped-common-ET}\\
 &\operatorname{KL}(\pi_hK_h^{\mathrm{Heun}}\mmid\pi_h)\geqslant cdh^2\,,\qquad
       &&\operatorname{KL}(\widehat\pi_h^{\mathrm{Heun}}\mmid\pi_h)
 \geqslant cdh^2\,.\notag
\end{align}
\end{proposition}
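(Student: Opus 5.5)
The plan is to reduce everything to a one-dimensional calculation. Both the target and the Heun kernel factorize over coordinates: the Brownian increments are independent, and \(\nabla V_{h,d}\) acts coordinatewise. Hence \(\pi_hK_h^{\mathrm{Heun}}\) is a product of \(d-1\) copies of the one-dimensional law \(\mu_h\deq\pi^{(1)}_hK^{(1)}_h\) (the \(z\)-coordinate is Gaussian and contributes a nonnegative term). Likewise \(\widehat\pi_h^{\mathrm{Heun}}\) is the product of the one-dimensional invariant laws \(\widehat\pi^{(1)}_h\). Existence and uniqueness of the invariant law follow from \Cref{prop:et-contractivity}, since \(\kappa\leqslant(1+a_\star)/(1-a_\star)\) is bounded. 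For products, \(\W_2^2\) is additive over coordinates (the optimal coupling of products is the product of optimal couplings, because \(\W_2^2\) of a product is at least the sum of the marginal costs). KL is also additive. It therefore suffices to show, in dimension one, that \(\W_2(\mu_h,\pi^{(1)}_h)\geqslant ch^{3/2}\) and \(\operatorname{KL}(\mu_h\mmid\pi^{(1)}_h)\geqslant ch^2\), and the same for \(\widehat\pi^{(1)}_h\).

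For the one-step bound I would pick a test function and use the dual bounds. If \(\varphi\) is \(L\)-Lipschitz, then \(\W_2\geqslant\W_1\geqslant|\mu_h(\varphi)-\pi(\varphi)|/L\). For KL, Donsker--Varadhan or Pinsker-type bounds give \(\operatorname{KL}\gtrsim(\mu_h(\varphi)-\pi(\varphi))^2/\operatorname{Var}_\pi(\varphi)\) for bounded \(\varphi\). The natural test function is \(\varphi(x)=\sqrt h\sin(x/\sqrt h)\) or \(\cos(x/\sqrt h)\), which is \(1\)-Lipschitz with variance \(O(1)\) (unnormalized: \(\cos\)). Then I expand one Heun step from \(x\). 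With \(\xi=\sqrt2B_h\sim\cN(0,2h)\), write \(\nabla V(x)=x-a_\star\sqrt h\sin(x/\sqrt h)\). The oscillatory gradient has size \(\sqrt h\), so the drift \(\tfrac h2\{\nabla V(x)+\nabla V(x-h\nabla V(x)+\xi)\}\) has an oscillatory part of size \(h^{3/2}\). The Heun error relative to the exact dynamics comes from evaluating \(\sin\) at a point shifted by \(\xi\sim\sqrt h\), i.e.\ at phase shift \(O(1)\). The quadrature error \(\int_0^h\nabla V(X_t)\,\mathrm dt-\tfrac h2(\cdots)\) then has a nonvanishing mean of order \(h\cdot\sqrt h\cdot O(1)=h^{3/2}\) correlated with \(\sin(x/\sqrt h)\). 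Averaging over the Gaussian gives explicit factors like \(\e^{-1}\) versus \(\int_0^1\e^{-t}\,\mathrm dt\) (from \(\E\cos(\xi_t/\sqrt h)=\e^{-t/h}\)), which differ by a constant independent of \(h\). Concretely, I would compute \(\mu_h(\cos(\cdot/\sqrt h))-\pi(\cos(\cdot/\sqrt h))\) by comparing with the exact semigroup, since \(\pi K=\pi\) exactly for \(P_h\). This should yield \(c_1h\) for the cosine test (Lipschitz constant \(h^{-1/2}\), hence \(\W_2\gtrsim h^{3/2}\), and \(\operatorname{KL}\gtrsim h^2\)). Under \(\pi_h\), \(x\) is nearly \(\cN(0,1)\), so \(x/\sqrt h\bmod2\pi\) is nearly uniform up to \(\e^{-c/h}\) errors, plus an \(O(a_\star h)\) tilt. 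This makes the phase averages explicit.

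For the invariant law I would use \(\widehat\pi(\varphi)-\pi(\varphi)=\sum_n(\pi K^n-\pi K^{n+1})(\varphi)\)-type telescoping, or more directly \(\widehat\pi(\varphi)-\pi(\varphi)=\widehat\pi((K-P_h)\psi)\) with \(\psi\) solving a Poisson equation. The key observation is that the frequency-\(h^{-1/2}\) test function decays under one step by a factor \(\e^{-1}\) (Gaussian noise of variance \(2h\) damps \(\cos(x/\sqrt h)\) by \(\e^{-1}\)). So \(\sum_nK^n\varphi\) is \(\varphi\) times a constant \((1-\e^{-1})^{-1}\) plus small errors, and no accumulation over the relaxation time is needed. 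The per-step defect \(c_1h\) then transfers to \(\widehat\pi\) with the same order, provided the correction terms (which are \(O(h^{3/2})\)) are smaller.

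The main obstacle is the explicit one-step computation: showing that the constant \(c_1(a_\star)\) is nonzero. This requires expanding the Heun update to first order in \(a_\star\) and tracking the \(O(1)\) phase averages exactly. Choosing \(a_\star\) small makes the linearization in \(a_\star\) rigorous, and the nonvanishing coefficient is an explicit Gaussian integral to be checked.
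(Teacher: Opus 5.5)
\textbf{One-step bounds.} Your reduction to one dimension matches the paper's argument, as do the test function $f=\sqrt h\,\varphi$ with $\varphi=\cos(\cdot/\sqrt h)$, Lipschitz duality for $\W_2$, Pinsker for KL, and tensorization over the $d-1\geqslant d/2$ oscillatory coordinates. The paper simply evaluates $\pi(K\varphi)$ and $\pi(\varphi)$ directly under the nearly Gaussian target, with $K=K_h^{\mathrm{Heun}}$, using Gaussian characteristic functions, and finds $\pi(Kf-f)=-a_\star(3\e^{-1}-1)h^{3/2}/4+O(h^{5/2})$. No smallness of $a_\star$ is needed, because every perturbation is $O(a_\star h)$ in phase units.

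\textbf{The gap: transfer to the invariant law.} You claim that $\sum_nK^n\varphi\approx(1-\e^{-1})^{-1}\varphi$ up to small errors, and that the corrections are $O(h^{3/2})$. This is not accurate as stated.
\begin{itemize}
\item The Heun contraction factor $1-h+h^2/2$ shifts the phase. Hence $K\varphi-\e^{-1}\varphi$ contains a term $\approx\e^{-1}\sqrt h\,x\sin(x/\sqrt h)$, which is not uniformly small in $x$.
\item $K\varphi$ also carries a nearly constant component of order $a_\star h$, coming from products of the output and predictor phases. It is not damped at rate $\e^{-1}$, but only at the relaxation rate $h$.
\item Showing that these pieces contribute $o(h)$ to $\sum_n(\pi K-\pi)(K^n\varphi)$ requires an upper bound such as $\lvert(\pi K-\pi)(g)\rvert\lesssim h\,\norm g$, in a norm covering these error functions (essentially a TV or $\chi^2$ upper bound on the one-step discrepancy). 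You neither state nor prove such a bound.
\item In your Poisson-equation variant the expectation is taken under the unknown $\widehat\pi$. Evaluating it to leading order would need an a priori upper bound on $\widehat\pi-\pi$.
\end{itemize}

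\textbf{The missing idea.} For each noise realization the Heun map is a contraction: its derivative is at most $\{1+(1-h(1-a_\star))^2\}/2<1$. Hence $Kf$ is $1$-Lipschitz whenever $f$ is, and $\widehat\pi K=\widehat\pi$ gives
\[
 \lvert\pi(Kf-f)\rvert=\bigl\lvert(\pi-\widehat\pi)(Kf-f)\bigr\rvert\leqslant2\W_1(\widehat\pi,\pi)\leqslant2\W_2(\widehat\pi,\pi)\,.
\]
Likewise, apply Pinsker to $g=K\varphi-\varphi$, which satisfies $\norm g_\infty\leqslant2$ and $\widehat\pi(g)=0$. This gives $\operatorname{KL}(\widehat\pi\mmid\pi)\geqslant\pi(K\varphi-\varphi)^2/8\gtrsim h^2$. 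The one-step defect thus transfers to stationarity at the same order, with no decay estimates for $K^n\varphi$, no Poisson equation, and no a priori knowledge of $\widehat\pi$. Replacing your third paragraph by this argument completes the proof.
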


The Heun proof in \Cref{subsec:overdamped-bounded-proof} computes the
change in expectation of the \(1\)-Lipschitz function
\(\sqrt h\cos(\cdot/\sqrt h)\) after one update started from the target.
A Gaussian calculation shows this is of order \(h^{3/2}\), which is then used to lower bound the Wasserstein error at stationarity. Tensorization gives the dimension dependence. Pinsker's inequality applied
to the same observable divided by \(\sqrt h\) gives the KL bounds.

\subsection{The condition number dependence for Heun}
\label{subsec:overdamped-condition-number}

The preceding construction has bounded condition number. To capture the
cost of a large \(\kappa\), we couple the oscillatory coordinates to a
coordinate whose curvature is of order \(\kappa^{-1}\). The numerical
error in the former will have an effect on the latter which is amplified by the inverse curvature. Our potential consists of numerous coupled blocks, each containing at most \(h^{-1}\)
oscillatory coordinates, which ensures that we can keep the Hessian bounded.
To make the construction explicit, in normalized units a block
with step \(t\asymp\beta h\) has potential
\[
 \begin{aligned}
(x,y)&\longmapsto \frac{\norm x^2}{2}+\frac{y^2}{2K} +a\ell^2\bigl(2+\sin(y/\sqrt K)\bigr)
           \sum_{i=1}^m\cos(x_i/\ell)\,,\qquad \ell^2=\frac t\tau\,.
 \end{aligned}
\]
Here \(a>0\) is a small fixed numerical constant, \(\tau>1\) is a
large fixed numerical constant, \(K\asymp\kappa\), and \(mt\leqslant1\). Products of these
blocks give the dimension dependence.
Equation \eqref{eq:heun-product-potential} gives the full product
potential; the proof then rescales it to the prescribed \(\alpha,\beta\). A final coordinate, independent of all others
with the same small curvature, supplies a lower bound on relaxation time, so that the algorithm must run for some number of iterations before it is close to convergence.

\begin{theorem}[Condition number lower bound for Heun]
\label{thm:et-linear-kappa-lower}
There are universal \(c,C,\varepsilon_0>0\), \(d_0\geqslant4\), and
\(\kappa_0>1\) with the following property. Let \(d\geqslant d_0\),
\(\kappa\geqslant\kappa_0\), \(0<\varepsilon\leqslant\varepsilon_0\), and
let Heun use a fixed step with \(0<\beta h\leqslant c\).
There is an \(\alpha\)-strongly convex, \(\beta\)-smooth potential
\(V_h\) on \(\R^d\), with \(\beta/\alpha=\kappa\), and an initial
 distribution \(\nu_h\) such that
\[
    \E_{X_0\sim\nu_h}\bigl[V_h(X_0)-V_h(x_\star)\bigr]\leqslant Cd\,,
    \qquad x_\star\deq\argmin V_h\,.
\]
Writing \(\pi_h(\mathrm d x)\propto\e^{-V_h(x)}\,\mathrm d x\), for every
integer \(N\geqslant0\),
\(\sqrt\alpha\,\W_2(\nu_h(K_h^{\mathrm{Heun}})^N,\pi_h)
\leqslant\varepsilon\) implies
\begin{equation}
    N\geqslant {c\kappa
       \min\Bigl\{\frac{d^{1/3}}{\varepsilon^{2/3}}\,,
   \frac{d^{1/2}}{\varepsilon^{1/2}}\Bigr\}\log \frac{1}{\varepsilon}}\,.
    \label{eq:linear-kappa-endpoint-rate}
\end{equation}
In particular, in the moderate accuracy range
\(d^{-1}\leqslant\varepsilon\leqslant\varepsilon_0\),
\begin{equation}
    N\geqslant \frac{c\kappa d^{1/3}}{\varepsilon^{2/3}}
                 \log\frac1\varepsilon\,.
    \label{eq:linear-kappa-third-root}
\end{equation}
\end{theorem}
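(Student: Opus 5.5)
We work in normalized units (\(\beta=1\), \(\alpha=\kappa^{-1}\), step \(t\asymp h\)) and build \(V_h\) as a product of the coupled blocks described above, plus one independent slow coordinate \(w\) with potential \(w^2/(2K)\), \(K\asymp\kappa\). Take \(B\) blocks, each with \(m\le 1/t\) oscillatory coordinates \(x^{(b)}\in\R^m\) and one slow coordinate \(y^{(b)}\), so \(d\approx B(m+1)+1\). The oscillation term has Hessian in \(x\) of size \(a\,(2+\sin)\), cross terms of size \(a\ell^2 K^{-1/2}\ell^{-1}\sqrt m\), and a \(yy\) term of size \(a\ell^2 m/K\). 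With \(mt\le1\), \(\ell^2=t/\tau\), and \(a\) small, these perturb the diagonal \(\mathrm{diag}(1,\dots,1,K^{-1})\) only mildly. Hence \(V_h\) is \(c/K\)-strongly convex and \(C\)-smooth, and a final rescaling gives exactly \(\beta/\alpha=\kappa\). The initial law \(\nu_h\) is \(\pi_h\) except that the \(w\) coordinate is started at a point with \(w_0^2\asymp K\cdot\)const (energy gap \(O(1)\le Cd\)), or at its mean with scaled variance. This forces relaxation along \(w\): since \(w\) evolves under a linear Heun recursion with contraction \(1-\Theta(t/K)\), its \(\sqrt\alpha\)-scaled mean error is \(\asymp (1-ct/K)^N\). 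Requiring this to be \(\le\varepsilon\) gives \(N\gtrsim (K/t)\log(1/\varepsilon)\), supplying the \(\kappa\log(1/\varepsilon)/h\) factor.

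The key step is a stationary bias along the slow coordinates. The argument of \Cref{prop:heun-oscillatory-lower} shows that one Heun step from stationarity shifts the observable \(\ell\cos(x_i/\ell)\) by order \(\ell\, t\cdot t^{1/2}\)-type amounts, i.e.\ a systematic \(O(1)\)-relative error in the local statistics of \(\cos(x_i/\ell)\). Here the oscillation amplitude is modulated by \(2+\sin(y/\sqrt K)\), so the force on \(y\) is \(-y/K - a\ell^2K^{-1/2}\cos(y/\sqrt K)\sum_i\cos(x_i/\ell)\). Under the numerical chain, the stationary average of \(\sum_i\cos(x_i/\ell)\) is shifted from its \(\pi\)-value by \(\gtrsim m\cdot t\) (per-coordinate bias of order \(t\), from the Gaussian calculation of the Heun proof with \(\tau\) large to enter the regime where the quadrature defect for frequency \(1/\ell\) is non-negligible). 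This produces an effective drift perturbation on \(y\) of size \(a\ell^2 m t K^{-1/2}\asymp m t^2K^{-1/2}\). Balancing against the restoring force \(y/K\) gives a stationary displacement in \(y\) of order \(K^{1/2} m t^2\), i.e.\ \(\sqrt\alpha\) times it is \(\asymp m t^2\) per block. I would prove this by testing against the Lipschitz observable \(\sqrt K\,\phi(y/\sqrt K)\) for suitable \(\phi\), using stationarity \(\widehat\pi(K\phi)=\widehat\pi(\phi)\). The one-step drift of \(\phi\) decomposes as the exact generator term, which averages to zero under \(\pi\), plus the Heun bias. Comparing \(\widehat\pi\) with \(\pi\) to first order closes the argument. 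Summing over \(B\) independent blocks gives \(\sqrt\alpha\,W_2(\widehat\pi,\pi)\gtrsim\sqrt B\,m t^2\).

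For optimization, use \(m\le 1/t\) and \(B\le d/m\), which gives \(\sqrt{d/m}\,m t^2=\sqrt{dm}\,t^2\). With \(m=\min(d,1/t)\) this is \(\sqrt d\,t^{3/2}\) when \(t\ge 1/d\), else \(d t^2\). Since the finite-\(N\) law converges to \(\widehat\pi\), and the bias persists if \(N\) is at least the relaxation time, \(\sqrt\alpha W_2\le\varepsilon\) needs either \(t\lesssim (\varepsilon^2/d)^{1/3}\) or \(t\lesssim(\varepsilon/d)^{1/2}\). Combining this with \(N\gtrsim (\kappa/t)\log(1/\varepsilon)\) yields \eqref{eq:linear-kappa-endpoint-rate}, and the regime \(\varepsilon\ge d^{-1}\) selects the first branch, giving \eqref{eq:linear-kappa-third-root}. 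The case where \(N\) is below the relaxation time is already handled by the \(w\) coordinate.

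The main obstacle is making the slow-coordinate bias rigorous. The bias on \(\sum\cos(x_i/\ell)\) must be shown to have a definite sign and size \(\gtrsim mt\) under the numerical chain, conditional on \(y\). The \(y\)-response then has to be controlled despite mixing along \(y\) occurring on time \(K\) while errors in \(x\) relax on time \(O(1)\). I would first try a two-scale argument: freeze \(y\), use contraction of the fast Heun dynamics in \(x\) (\Cref{prop:et-contractivity} applied blockwise) to show that the conditional law of \(x\) is within \(o(t)\) of the per-coordinate product Heun stationary law, and then import the explicit per-coordinate computation from \Cref{prop:heun-oscillatory-lower}.
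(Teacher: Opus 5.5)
Your construction, Hessian bookkeeping, relaxation argument along $w$, heuristic scale $K^{1/2}mt^2$ for the $y$-shift, and optimization over $m$ all agree with the paper. Starting the blocks at $\pi_h$ rather than at their numerical invariant law is also fine: \Cref{prop:et-contractivity} gives $\W_2(\pi_h(K_h^{\mathrm{Heun}})^N,\pi_h)\geqslant(1-\rho^N)\,\W_2(\widehat\pi_h^{\mathrm{Heun}},\pi_h)$ with $\rho\leqslant1-ct/K$, and $N\gtrsim(K/t)\log(1/\varepsilon)$ is forced anyway.

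The gap is the step you defer, which is the whole content of \Cref{lem:heun-modulated-block}: showing $\lvert\E_{\widehat\pi}Y-\E_{\pi}Y\rvert/\sqrt K\gtrsim mt^2$. Your suggested route does not supply it, for three reasons.
\begin{enumerate}
\item \Cref{lem:et-one-step-defect} computes the one-step change $\pi(K_h^{\mathrm{Heun}}\varphi-\varphi)$ at wavelength $\sqrt h$ and turns it into a $\W_2$ lower bound by duality. It gives neither the sign nor the size of the \emph{stationary} difference $\widehat\pi(\cos(x/\ell))-\pi(\cos(x/\ell))$, nor of its conditional version given $y$. That conditional quantity is what enters the force on $y$.
\item The two-scale contraction step is too coarse if measured in $\W_2$. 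Since $\cos(x/\ell)$ is $\ell^{-1}\asymp\sqrt{\tau/t}$-Lipschitz, pinning its mean to $o(t)$ requires accuracy $o(t^{3/2})$. But the drift of $y$ over the $x$-relaxation time changes the amplitude $\mathcal A$ by order $aK^{-1/2}$. Contraction then only controls the conditional $x$-law up to errors of order $a\ell K^{-1/2}$, which is not $o(t^{3/2})$ uniformly in $K\geqslant K_0$.
\item The force on $y$ carries the factor $\mathcal A'(y/\sqrt K)=a\cos(y/\sqrt K)$, which changes sign. Balancing against the restoring force therefore does not fix the sign of the shift.
\end{enumerate}

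The paper closes this step with three ideas your proposal lacks.

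First, the $\eta$-update is affine plus the bounded term $\zeta_0$, so stationarity of the mean gives the exact identity $\E_\mu\eta=-m\ell^2\{(1-r)C_\mu+C_{\mathrm{pred}}\}/(2-r)$ with $C_\mu\deq\E_\mu[\mathcal A'(\eta)\cos\theta_1]$. Integrating $\partial_yV$ against $\pi$ gives $\E_\pi\eta=-m\ell^2C_\pi$. No linear-response or two-scale argument is needed.

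Second, $\tau$ is taken large to \emph{damp} phases, not to make the defect visible. The per-step phase noise has variance $2\tau$. Iterating the stationarity relation for the characteristic function of $(\theta_1,\eta)$ makes $\E[\mathcal A'(\bar\eta)\cos\bar\theta_1]$ and $C_{\mathrm{pred}}$ at most of order $a^2h\,\e^{-\tau/2}$. A single one-step expansion under $\mu$ then gives $C_\mu=-\tfrac h4\,\E_\mu w(\eta)+O\bigl(a^2h\{\e^{-\tau/2}+(K\tau)^{-1}+\sqrt h\}\bigr)$ with $w\deq\mathcal A\mathcal A'$; the surviving term is $\E[w(\eta)\sin^2\bar\theta_1^{\mathrm{pred}}]\approx\tfrac12\E_\mu w(\eta)$. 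Meanwhile $\lvert C_\pi\rvert\lesssim a^2h/\tau$.

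Third, the sign comes from $\E_\mu w(\eta)\geqslant a^2\{3/2-\E_\mu\eta^2\}\geqslant a^2/4$. This needs the second-moment bound on $\eta$ under the numerical invariant law from \Cref{lem:stationary-affine-perturbation}.

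Without these, or a substitute, the step-size restriction, and hence \eqref{eq:linear-kappa-endpoint-rate}, is not established.
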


The estimates and the product construction are proved in
\Cref{subsec:heun-modulated-block-proof,subsec:heun-product-lower-proof}.

\subsection{Underdamped exponential midpoint}
\label{subsec:exponential-midpoint-lower}

For the underdamped Gaussian calculation, let us first consider the $\kappa = 1$ case. Then, we choose \(V(x)=\norm x^2/2\),
\(\alpha=\beta=1\), and \(\gamma=2\), so \(\pi=\cN(0,\Id_d)\).
More generally, when $\kappa \gg 1$, let \(d\geqslant2\), \(0<\alpha\leqslant\beta\),
\(\kappa\deq\beta/\alpha\geqslant2\), and \(\gamma=2\sqrt\beta\).
We consider the anisotropic Gaussian target with potential
\begin{equation}
 V(x)\deq\frac\beta2\,\sum_{i=1}^{d-1}x_i^2
                    +\frac\alpha2\,x_d^2\,,
 \label{eq:em-anisotropic-potential}
\end{equation}
to capture the dependence on $\kappa$ in our bounds.

On the quadratic target, exponential midpoint is a linear update with a
deterministic state matrix and additive Gaussian noise. Its invariant
distribution is therefore Gaussian, and the error in its position variance
gives both the isotropic lower bound and its anisotropic extension.

\begin{proposition}[Exponential midpoint on a Gaussian target]
\label{prop:exponential-midpoint-Gaussian-lower}
There are universal constants \(c>0\) and \(h_0\in(0,1]\) such that,
for the isotropic target \(V(x)=\norm x^2/2\), with
\(\alpha=\beta=1\) and \(\gamma=2\), for every \(d\geqslant1\) and
\(0<h\leqslant h_0\), the kernel \(K_h^{\mathrm{ExpMid}}\) has a unique
invariant distribution \(\bm\pi_h^{\mathrm{ExpMid}}\) with finite second
moment. Its position marginal is \(\cN(0,s_h\Id_d)\), where
\begin{equation*}
    s_h=1-\frac{h^2}{24}+O(h^3)\,.
\end{equation*}
The remainder is universal. In particular,
\begin{equation}
    \W_2\bigl((\bm\pi_h^{\mathrm{ExpMid}})_x,\pi\bigr)
       \geqslant c\sqrt d\,h^2\,.
    \label{eq:exponential-midpoint-lower}
\end{equation}

More generally, for the dynamics corresponding to~\eqref{eq:em-anisotropic-potential},
the kernel has a unique invariant distribution in
\(\mathcal P_2(\R^{2d})\) whenever
\(0<\sqrt\beta\,h\leqslant h_0\), and its position bias satisfies
\[
 \sqrt\alpha\,\W_2\bigl((\bm\pi_h^{\mathrm{ExpMid}})_x,\pi\bigr)
 =\frac{(\sqrt\beta\,h)^2}{48}\,
       \sqrt{\frac{d-1}{\kappa}+\frac1{\kappa^2}}\,
       \{1+O(\sqrt\beta\,h)\}\,.
\]
All constants and remainders are uniform in \(d,\alpha,\beta\).
Initialize
\[
 X_0=\alpha^{-1/2}e_d\,,\qquad
 P_0\sim\cN(0,\Id_d)\,,
\]
with momentum independent of future randomness, where \(e_d\) is the
final coordinate vector. This start satisfies
\(\E[V(X_0)-V(x_\star)]=1/2\). For a universal \(C>0\) and every
integer \(N\geqslant0\),
\begin{equation}
 \sqrt\alpha\,\W_2\bigl(\Law(\widehat X_N),\pi\bigr)
 \geqslant\max\Bigl\{
       c\sqrt{\frac d\kappa}\,(\sqrt\beta\,h)^2,
       \e^{-CN\sqrt\beta\,h/\kappa}\Bigr\}\,.
 \label{eq:em-anisotropic-finite-time}
\end{equation}
Consequently, for \(0<\varepsilon\leqslant1/2\), achieving
\(\sqrt\alpha\,\W_2(\Law(\widehat X_N),\pi)\leqslant\varepsilon\)
requires
\begin{equation}
 N\geqslant c\,\Bigl(
       \kappa+\frac{\kappa^{3/4}d^{1/4}}{\varepsilon^{1/2}}
       \Bigr)\log\frac1\varepsilon\,.
 \label{eq:em-anisotropic-iteration}
\end{equation}

\end{proposition}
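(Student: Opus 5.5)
On a quadratic target everything decouples across coordinates, so the plan is to reduce to a scalar linear recursion and compute its stationary covariance exactly.

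\textbf{Scalar reduction.} For \(V(x)=\frac\lambda2\norm x^2\) on a coordinate (with \(\lambda\in\{\beta,\alpha\}\)), the gradient at the predicted midpoint is
\[
\lambda X_{h/2}^{\mathrm{pred}}=\lambda\Bigl(x+a_{h/2}p-c_{h/2}\lambda x+\sqrt{2\gamma}\int_0^{h/2}a_{h/2-s}\,\mathrm dB_s\Bigr),
\]
which is affine in \((x,p)\) plus a Gaussian. Substituting into \eqref{eq:EM-x}--\eqref{eq:EM-v} gives a one-coordinate update
\[
z^+=Az+\xi,\qquad z=(x,p)\in\R^2,
\]
where \(A\) is a deterministic \(2\times2\) matrix and \(\xi\) is a centered Gaussian with covariance \(\Sigma\). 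Both are built from the integrals of \(a\), \(\e^{-\gamma\cdot}\) and their products over \([0,h]\), with \(\xi\) assembled jointly from the Brownian integrals on \([0,h/2]\) and \([0,h]\).

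\textbf{Uniqueness and variance.} Uniqueness of the invariant law and finiteness of its second moment follow once the spectral radius of \(A\) is \(<1\), which also follows from \Cref{prop:uld-midpoint-numerical-contraction} after normalization. The stationary law is then \(\cN(0,S)\), where \(S\) solves the discrete Lyapunov equation
\[
S=ASA^\top+\Sigma,
\]
a \(3\times3\) linear system in the entries of \(S\). I would solve it by Taylor expanding \(A\) and \(\Sigma\) in \(h\) and matching powers up to \(h^3\) in \(S_{xx}\).

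\textbf{Expected obstacle.} The hardest step is that \(S-\Id\) is \(O(h^2)\) while \(I-A\) is \(O(h)\). Consequently \(A\) and \(\Sigma\) must be expanded to order roughly \(h^4\) (for instance \(\Sigma\) to \(h^4\)), carefully, including the exact midpoint cross-covariances. A useful check is that exact dynamics give \(S=\Id\); this means only the quadrature defect terms survive, namely \(\int_0^h f-hf(h/2)\approx\frac{h^3}{24}f''\). This is the source of the coefficient \(1/24\).

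For \(\lambda=1\), \(\gamma=2\) this yields \(s_h=1-h^2/24+O(h^3)\). In the anisotropic case, in normalized units, the stiff coordinates have \(\lambda=1\) and the soft one has \(\lambda=\kappa^{-1}\) with \(\gamma=2\). Here one tracks the \(\lambda\)-dependence: the soft coordinate's variance error should be \(\asymp\kappa\cdot h^2/\kappa^{2}\) relative to its variance \(\kappa\), i.e.\ the contribution \(h^2/(48\kappa)\) to \(\sqrt\alpha\,\W_2\).

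\textbf{From variances to \(\W_2\).} For centered Gaussians with commuting covariances, \(\W_2^2=\sum_i(\sqrt{s_i}-\sqrt{\sigma_i})^2\). Using \(\sqrt{1-u}-1\approx -u/2\) gives \(\sqrt{s_h}-1\approx-h^2/48\) per stiff coordinate. Summing over \(d-1\) stiff coordinates and the single soft one, and multiplying by \(\sqrt\alpha\), gives the stated formula \(\frac{(\sqrt\beta h)^2}{48}\sqrt{\frac{d-1}\kappa+\frac1{\kappa^2}}\); the isotropic bound \eqref{eq:exponential-midpoint-lower} is the case \(\kappa=1\).

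\textbf{Finite-time bound.} Start from \(X_0=\alpha^{-1/2}e_d\). Since \(V(X_0)-V(x_\star)=\frac\alpha2\cdot\frac1\alpha=\frac12\), the energy condition holds.

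The stationary part of \eqref{eq:em-anisotropic-finite-time} needs a lower bound at every finite \(N\), not only at stationarity. The stiff coordinates start at the same values at every \(N\), so I would compute their covariance recursion \(S_{n+1}=AS_nA^\top+\Sigma\) from \(S_0=\operatorname{diag}(0,1)\). Near stationarity \(S_n\) stays \(h^2\)-close to the biased \(S\). If monotonicity fails this step is delicate; as an alternative I would bound \(\W_2\) below by the soft coordinate and stiff coordinates separately once \(N\gtrsim 1/h\).

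For the transient part, the mean of the soft coordinate evolves by \(\E z_n=A_{\mathrm{soft}}^n\E z_0\), and its eigenvalues are \(1-\Theta(h/\kappa)\). Hence
\[
\sqrt\alpha\,\bigl|\E\widehat X_{N,d}\bigr|\geqslant \e^{-CNh/\kappa},
\]
and \(\W_2\) is at least the mean difference. The iteration bound \eqref{eq:em-anisotropic-iteration} then follows by forcing both terms below \(\varepsilon\): the transient term requires \(Nh\gtrsim\kappa\log(1/\varepsilon)\), the bias term requires \(h\lesssim \kappa^{1/4}\varepsilon^{1/2}d^{-1/4}\), and \(h\leqslant h_0\) gives the additive \(\kappa\) term.
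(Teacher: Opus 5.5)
Your overall route is the paper's: per-coordinate $2\times2$ linear Gaussian recursions, the discrete Lyapunov equation, the Gaussian $\W_2$ formula, and decay of the soft coordinate's mean. At unit curvature, matching powers of $h$ does give $s_h=1-h^2/24+O(h^3)$. Two steps with real content are left open, however.

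\emph{Uniformity in $\kappa$.} Take curvature $\lambda=\kappa^{-1}$ and the unknowns $(S_{11},S_{12},S_{22})$. After dividing by $h$, the Lyapunov system has leading matrix $\bigl[\begin{smallmatrix}0&-2&0\\ \lambda&2&-1\\ 0&2\lambda&4\end{smallmatrix}\bigr]$, whose determinant is $8\lambda$. Inverting it costs a factor $\kappa$, so power matching a priori controls only $S_{11}=\kappa-h^2/24+O(\kappa h^3)$. That is useless once $\kappa h\gtrsim1$, yet the anisotropic formula with $\kappa$-uniform remainder needs $S_{11}=\kappa-h^2/24+O(h^3)$ uniformly. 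The phrase ``tracking the $\lambda$-dependence'' has to be replaced by the paper's device, which has two parts. First, use $(\lambda S_{11},S_{12},S_{22})$ as unknowns; the leading matrix becomes $\bigl[\begin{smallmatrix}0&-2&0\\ 1&2&-1\\ 0&2\lambda&4\end{smallmatrix}\bigr]$ with determinant $8$ for every $\lambda\in[0,1]$, so the solution is analytic on a uniform neighbourhood of $\{0\}\times[0,1]$. Second, at $\lambda=0$ the exact solution is $\lambda S_{11}=S_{22}=1$, so $\lambda S_{11}-1$ and $S_{22}-1$ carry a factor $\lambda$. A related issue affects uniqueness. \Cref{prop:uld-midpoint-numerical-contraction} only covers $\sqrt\beta\,h\leqslant c\kappa^{-1/4}$, not $\sqrt\beta\,h\leqslant h_0$, and you give no argument that the spectral radius is below one uniformly in $\lambda$. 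The paper deduces this from $C\succ0$ and $Q\succ0$ in the Lyapunov equation.

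\emph{Persistence of the bias at every $N$.} Knowing that $S_n$ is $h^2$-close to the stationary covariance gives no lower bound on the deficit, because $1-s_h$ is itself of order $h^2$. Restricting to $N\gtrsim1/h$ does not prove \eqref{eq:em-anisotropic-finite-time} for all $N$. This matters because $c\sqrt{d/\kappa}\,h^2$ can exceed the transient term, which is at most $1$.

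The monotonicity you suspected does hold, and it is exactly the paper's argument. Write $C_h$ for the stationary covariance at unit curvature. Then $C_h-\operatorname{diag}(0,1)=\bigl[\begin{smallmatrix}1-h^2/24&h^2/12\\ h^2/12&h^2/12\end{smallmatrix}\bigr]+O(h^3)$ has determinant $h^2/12+O(h^3)>0$, so it is positive semidefinite. Hence $C_N=C_h-M_h^N\{C_h-\operatorname{diag}(0,1)\}(M_h^N)^\top\preceq C_h$, and $s_N\leqslant s_h<1$ for every $N\geqslant0$. This relies on the positive momentum bias $(C_h)_{22}-1=h^2/12$ and on the size of $(C_h)_{12}$, both of which your expansion must supply.

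Finally, in the transient step only $\rho_+$ equals $1-\Theta(h/\kappa)$; the other eigenvalue is $1-\Theta(h)$. To get $\E\widehat X_{N,d}/\E X_{0,d}\geqslant\rho_+^N$ you must rule out cancellation between the two modes. The paper does this via $0<\rho_-<\rho_+$ and $(M_{h,\lambda})_{11}\geqslant\rho_+$.
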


The coordinates of curvature \(\beta\) impose the step size restriction
through their variance error, while the coordinate of curvature
\(\alpha\) requires elapsed time of order
\(\kappa\log(1/\varepsilon)\) in normalized units.
The resulting \(\kappa^{3/4}\) dependence in
\eqref{eq:em-anisotropic-iteration} leaves a discrepancy of
\(\kappa^{1/2}\) between the accuracy-dependent terms of this lower
bound and \Cref{thm:uld-midpoint-main}.

The proof in \Cref{subsec:exponential-midpoint-Gaussian-proof} computes the
invariant covariance of one position--momentum pair and applies the exact
Wasserstein formula for centered Gaussian distributions.

\section{Randomized lower bounds}
\label{sec:randomized-lower}

We illustrate the technique for the randomized lower bounds at a high level. To see where the factor \(d^{1/3}\) in the iteration bounds comes
from, consider RLMC on \(V=\norm\cdot^2/2\); the other randomized methods
behave in the same way.  Conditionally on the evaluation time \(\Theta\),
one step has the form
\[
    X^+=m(\Theta)\,X+\sqrt{q(\Theta)}\,G_d
\]
with scalar coefficients \(m(\Theta)\) and \(q(\Theta)\) and a fresh
\(G_d\sim\cN(0,\Id_d)\); see the proof of
\Cref{prop:gaussian-radial-lower}.  The crucial point is that a single
draw of \(\Theta\) is used for all \(d\) coordinates: every coordinate is
integrated with the same evaluation time.  Hence, conditionally on the
past draws of \(\Theta\), the iterate is exactly \(\cN(0,S\Id_d)\) for a
single random scalar \(S \perp G\) shared by all coordinates, and at stationarity
\(S\stackrel{\mathrm d}=m(\Theta)^2S'+q(\Theta)\), with \(S'\) an
independent copy of \(S\).

On average the quadrature is very accurate: \(\E S=1+O(h^3)\).  In a
single step, however, started from the target variance \(S=1\), the
iterate has conditional variance \(m(\Theta)^2+q(\Theta)\), and its error
\(m(\Theta)^2+q(\Theta)-1=h^2\,(1-2\Theta)+h^4\Theta^2\) has mean only
\(h^4/3\) but fluctuates by order \(h^2\).  Since \(1-\E m(\Theta)^2\approx2h\), the recursion for
\(S\) forgets its past after about \(1/h\) steps, so \(S\) accumulates
roughly \(1/h\) independent fluctuations of size \(h^2\).  This gives
\(\operatorname{Var}(S)\asymp h^4\cdot h^{-1}=h^3\).

Because \(S\) is shared, these fluctuations add up in the
norm.  Indeed, \(\norm X^2=S\,\norm{G_d}^2\approx Sd\), so \(\norm X\)
fluctuates by about \(\sqrt d\,h^{3/2}\), whereas under \(\cN(0,\Id_d)\)
the norm has variance at most \(1\) by the Gaussian Poincar\'e
inequality.  (If instead each coordinate used its own independent
evaluation time, the coordinate variances would fluctuate independently,
and their fluctuations would largely cancel in
\(\norm X\).)  Since the norm is \(1\)-Lipschitz,
no coupling can hide the extra spread of \(\norm X\), and \(\W_2\) is at
least of order \(\sqrt{1+dh^3}-1\asymp\min\{dh^3,\sqrt{dh^3}\}\); this
is \Cref{lem:gaussian-scale-perturbation}. Note that deterministic midpoint methods avoid this common scale obstruction.

The proof template is the same for all the randomized methods: derive
the fixed-point equation for the conditional covariance $S$, compute its
variance and centered fourth moment through \Cref{lem:scalar-linear-mixture}/\ref{lem:random-linear-mixture} via Taylor expansion of the equation for small $h$, and apply
\Cref{lem:gaussian-scale-perturbation} to translate these moments into a lower bound for the Wasserstein distance. The scalar calculation for RLMC
and PLMC comes first; RULMC and PULMC require
some matrix algebra instead. \Cref{app:randomized-lower} supplies these
calculations, followed by the unified proof of
\Cref{cor:midpoint-iteration-lower}.
\par

\subsection{Overdamped randomized midpoint and Poisson midpoint}
\label{subsec:overdamped-randomized-lower}

\begin{proposition}[Randomized overdamped lower bounds on a Gaussian target]
\label{prop:overdamped-common-construction}
There are universal \(c,h_0>0\) such that, for every \(d\geqslant1\) and
\(0<h\leqslant h_0\), RLMC and PLMC on \(V(x)=\norm x^2/2\)
have unique invariant distributions in \(\mathcal P_2\). Their errors satisfy
\begin{align}
    \W_2(\widehat\pi_h^{\mathrm{RLMC}},\cN(0,\Id_d))
       &\geqslant c\min\{dh^3,\sqrt{dh^3}\}\,,
       \label{eq:overdamped-common-RLMC}\\
    \W_2(\widehat\pi_h^{\mathrm{PLMC},k},\cN(0,\Id_d))
       &\geqslant c\min\{dh^3,\sqrt{dh^3}\}\,,\qquad k\geqslant2\,,\notag
\\
    \W_2(\widehat\pi_h^{\mathrm{PLMC},1},\cN(0,\Id_d))
       &\geqslant c\sqrt d\,h\,.
       \label{eq:overdamped-common-ULA}
\end{align}
The constants are uniform in \(k\). For \(k=1\), PLMC is the
Euler--Maruyama scheme (ULA).
\end{proposition}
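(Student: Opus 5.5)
We work throughout with the isotropic target \(V(x)=\norm x^2/2\) and follow the template described at the start of this section. The plan has four steps: write each step as a randomly scaled Gaussian update, derive the fixed-point law of the shared scale \(S\), compute its variance, and convert that variance into a \(\W_2\) bound through \Cref{lem:gaussian-scale-perturbation}.

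\textbf{Step 1: the conditional linear form.} For RLMC, substituting \(\nabla V(x)=x\) into \eqref{eq:rm-step} gives
\[
X^+=(1-h+\Theta h^2)\,x+\sqrt2\,(B_h-hB_{\Theta h})\,.
\]
Conditionally on \(\Theta\) this is \(X^+=m(\Theta)x+\sqrt{q(\Theta)}\,G_d\), with
\[
m(\theta)=1-h+\theta h^2\,,\qquad q(\theta)=2h\,(1-2\theta h+\theta h^2)\,.
\]
A direct expansion confirms the identity \(m^2+q-1=h^2(1-2\Theta)+h^4\Theta^2\) quoted above.

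For PLMC, \eqref{eq:poisson-step} gives the same structure. Put \(\Sigma_1\deq\sum_i\mathsf B_is_i\). Then
\[
m=1-h+h^2\Sigma_1\,,\qquad q=2h-4h^2\Sigma_1+2h^3\textstyle\sum_{i,j}\mathsf B_i\mathsf B_j\min(s_i,s_j)\,.
\]
Again all coefficients are scalars shared by the \(d\) coordinates.

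\textbf{Step 2: existence, uniqueness and the fixed-point law.} Since \(m\leqslant1-h/2\) almost surely for \(h\leqslant h_0\), synchronous coupling contracts \(\W_2\). This gives a unique invariant law in \(\mathcal P_2\). Alternatively we can quote contraction as in \Cref{prop:et-contractivity}.

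Started from a centered Gaussian with covariance proportional to \(\Id_d\), the iterates stay of the form \(\cN(0,S_n\Id_d)\) conditionally on the quadrature randomness. Passing to the limit, \(\widehat\pi_h=\Law(\sqrt S\,G_d)\) with \(S\perp G_d\) and
\[
S\stackrel{\mathrm d}=m^2S'+q\,.
\]
For \(k=1\) (ULA) \(S\) is deterministic: \(S=2h/(1-(1-h)^2)=1/(1-h/2)\). Hence \(\W_2=\sqrt d\,|\sqrt S-1|\asymp\sqrt d\,h\), which proves \eqref{eq:overdamped-common-ULA}.

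\textbf{Step 3: moments of \(S\).} Write \(\xi\deq m^2+q-1\) and \(S=1+D\). Then
\[
D\stackrel{\mathrm d}=m^2D'+\xi\,,\qquad 1-\E m^2\asymp h\,.
\]
We feed this into \Cref{lem:scalar-linear-mixture}, which gives the mean, variance and centered fourth moment of such a scalar linear mixture. The expected outcome is
\[
\operatorname{Var}(S)\asymp\frac{\operatorname{Var}(\xi)}{h}\,,\qquad \E(S-\E S)^4\lesssim h^6\,.
\]

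For RLMC, \(\operatorname{Var}(\xi)=h^4/3+O(h^5)\). For PLMC we get \(\xi=h^2(1-2\Sigma_1)+O(h^3)\) after the cancellation \(-2h\) against \(+2h\). Hence
\[
\operatorname{Var}\xi\approx4h^4\operatorname{Var}\Sigma_1=4h^4\sum_i\frac1k\Bigl(1-\frac1k\Bigr)s_i^2\,.
\]
For \(k\geqslant2\) this is bounded below by \(ch^4\) uniformly in \(k\): the sum \(\frac1k\sum_i s_i^2\) is at least \(1/8\) and tends to \(1/3\).

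The cross terms \(\operatorname{Cov}(m^2,\xi)\) and the correlation between \(m^2\) and \(\xi\) enter the variance formula only at relative order \(h\), so they do not destroy the lower bound. This is the point I expect to be the main obstacle. The remainders must be controlled uniformly in \(k\), and \(\operatorname{Var}(S)\geqslant ch^3\) must be shown not merely to be of that order. I would first expand everything to the needed order using the exact Bernoulli moments \(\E\Sigma_1\), \(\E\Sigma_1^2\) and \(\E\Sigma_1^4\), which are all bounded uniformly in \(k\). I would also bound the \(O(h^3)\) part of \(\xi\) crudely in \(L^4\).

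\textbf{Step 4: conclusion.} We apply \Cref{lem:gaussian-scale-perturbation} with \(\operatorname{Var}(S)\geqslant ch^3\), the fourth-moment bound from Step 3, and \(|\E S-1|\lesssim h^2\). The expected lower bound is
\[
\W_2\bigl(\Law(\sqrt S\,G_d),\cN(0,\Id_d)\bigr)\gtrsim\min\{d\operatorname{Var}S,\sqrt{d\operatorname{Var}S}\}\,.
\]
The mechanism is that the \(1\)-Lipschitz observable \(\norm X\) has variance about \(d\operatorname{Var}(S)/4\), versus at most \(1\) under \(\cN(0,\Id_d)\). Substituting \(\operatorname{Var}S\asymp h^3\) gives \eqref{eq:overdamped-common-RLMC} and its PLMC analogue, with constants uniform in \(k\geqslant2\).
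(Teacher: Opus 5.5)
Your proposal follows the paper's route: the scalar conditional recursion \(X^+=mX+\sqrt q\,G_d\) with the same coefficients, then \Cref{lem:scalar-linear-mixture} for the shared scale. That gives \(\operatorname{Var}(s)=h^3/12+O(h^4)\) for RLMC, \(h^3\operatorname{Var}(\Sigma_1)+O(h^4)\) with \(\operatorname{Var}(\Sigma_1)\geqslant1/16\) for PLMC, and an \(O(h^6)\) fourth moment, after which you apply \Cref{lem:gaussian-scale-perturbation} and use the exact Gaussian formula for \(k=1\). There are two harmless slips. First, for PLMC the almost sure bound \(m\leqslant1-h/2\) fails once \(k>1+1/h\), because \(\Sigma_1\) can reach \((k-1)/2\); so contraction (which only needs \(\E m^2<1\)) and the hypothesis \(\E m^8<1\) must come from moment bounds on \(\sum_i\mathsf B_i\) that are uniform in \(k\). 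Second, for bounded \(k\) one has \(\E S-1=h/(2k)+O(h^2)\), which is of order \(h\) rather than \(h^2\); this does not matter, since \Cref{lem:gaussian-scale-perturbation} makes no assumption on the mean scale.
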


At any fixed sufficiently small accuracy, the first two bounds
require \(h\lesssim d^{-1/3}\), while the \(k=1\) case requires
\(h\lesssim d^{-1/2}\). The latter recovers the Gaussian calculation
in~\cite{Wibisono2018}. For the randomized cases, the proof in
\Cref{subsec:randomized-overdamped-proof} combines moment estimates for
the linear update coefficients with
\Cref{lem:scalar-linear-mixture,lem:gaussian-scale-perturbation} to
obtain the Wasserstein lower bounds.

\subsection{Underdamped randomized midpoint and Poisson midpoint}
\label{subsec:underdamped-randomized-lower}

We use the same Gaussian target and underdamped normalization as in
\Cref{subsec:exponential-midpoint-lower}; its rescaling statement also
applies here. The scalar covariance calculation for RLMC and PLMC is
replaced by a matrix calculation for a representative position--momentum pair.
\par

\begin{proposition}[Randomized methods on a Gaussian target]
\label{prop:randomized-underdamped-Gaussian-lower}
There are universal \(c,h_0>0\) such that, for every \(d\geqslant1\) and
\(0<h\leqslant h_0\), RULMC and PULMC on \(V(x)=\norm x^2/2\)
have unique invariant distributions in \(\mathcal P_2\). Their errors satisfy
\begin{align}
    \W_2\bigl((\bm\pi_h^{\mathrm{RULMC}})_x,\cN(0,\Id_d)\bigr)
       &\geqslant c\min\{dh^3,\sqrt{dh^3}\}\,,
       &&\notag\\
    \W_2\bigl((\bm\pi_h^{\mathrm{PULMC},k})_x,\cN(0,\Id_d)\bigr)
       &\geqslant c\min\{dh^3,\sqrt{dh^3}\}\,,
       &&k\geqslant2\,,\notag\\
    \W_2\bigl((\bm\pi_h^{\mathrm{PULMC},1})_x,\cN(0,\Id_d)\bigr)
       &\geqslant c\sqrt d\,h\,.\notag
\end{align}
The constants are uniform in \(k\). For \(k=1\), PULMC is the
exponential Euler scheme (ULMC).
\end{proposition}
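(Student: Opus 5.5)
On the quadratic target \(V(x)=\norm x^2/2\) with \(\gamma=2\), both RULMC and PULMC are linear in \((x,p)\) conditionally on the quadrature randomness (\(\Theta\), respectively the indicators \(\mathsf B_i\)). The randomness is shared by all coordinates, so the \(d\) position--momentum pairs evolve as independent copies driven by a common random \(2\times2\) matrix. Writing \(Z=(x,p)\in\R^{2}\) for a representative pair, one step reads
\[
Z^+=A(\xi)Z+\Sigma(\xi)^{1/2}G\,,
\]
where \(\xi\) is the quadrature randomness and \(G\sim\cN(0,\Id_2)\). Since the predictor is Gaussian given \(\xi\), the entries of \(A(\xi)\) and \(\Sigma(\xi)\) are explicit in \(a_t\), \(c_t\), and the covariances of the stochastic integrals. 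Conditionally on the past draws of \(\xi\), the full iterate is therefore \(\cN(0,\bm S\otimes\Id_d)\) for a random \(2\times2\) covariance \(\bm S\). At stationarity \(\bm S\) satisfies the random fixed-point equation
\[
\bm S\stackrel{\mathrm d}{=}A\bm S'A^\top+\Sigma\,.
\]
Existence and uniqueness of the invariant law in \(\mathcal P_2\) follow from mean-square contraction, \(\E\norm{A}^2_{M}\leqslant 1-ch\) in the twisted norm, using the same coupling argument as in \Cref{prop:uld-midpoint-numerical-contraction}.

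The key steps, in order, are as follows.
\begin{enumerate}[label=(\arabic*)]
\item Expand the one-step covariance error
\[
\Delta(\xi)\deq A(\xi)A(\xi)^\top+\Sigma(\xi)-\Id_2
\]
in powers of \(h\). Its mean is small, whereas its fluctuation in the \(xx\)-entry, or in a linear functional that feeds into it after a few steps, is of order \(h^{2}\) for RULMC. I expect the random midpoint weight \(a_{(1-\Theta)h}\) times the \(\Theta\)-dependent predictor variance to give a centered term like \(h^2 p(\Theta)\) with \(\operatorname{Var}p(\Theta)\asymp1\). For PULMC with \(k\geqslant2\), the analogous fluctuation comes from \(\mathsf B_i-1/k\) weighted by \(k\omega_i\). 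I must check that its variance is bounded below by \(ch^4\) uniformly in \(k\), which should hold since \(\sum_i k^2\omega_i^2\operatorname{Var}\mathsf B_i\asymp h^2\) for every \(k\geqslant2\).
\item Apply the matrix mixture lemma \Cref{lem:random-linear-mixture} to the linear recursion for \(\bm S-\E\bm S\). Because \(\E A\otimes A\) has spectral gap of order \(h\), this yields \(\operatorname{Var}(\bm S_{xx})\geqslant ch^4\cdot h^{-1}=ch^3\), together with an upper bound of the same order on the centered fourth moment, and \(\E\bm S_{xx}=1+O(h^2)\).
\item Apply \Cref{lem:gaussian-scale-perturbation} to the position marginal \(\cN(0,S\,\Id_d)\) with \(S=\bm S_{xx}\). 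This converts the two moment bounds into \(\W_2\gtrsim\min\{dh^3,\sqrt{dh^3}\}\). The bias \(\E S-1\) can only help, or is dominated, since the lemma only needs the spread of \(\norm X\).
\end{enumerate}
For \(k=1\) (ULMC) the chain is deterministic-linear. The invariant covariance is then explicit, with \(s_h=1-ch+O(h^2)\) and \(c\neq0\), and the Gaussian \(\W_2\) formula gives \(c\sqrt d\,h\), exactly as in \Cref{prop:exponential-midpoint-Gaussian-lower}.

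The main obstacle is step (1)–(2) in matrix form. The lower bound requires the fluctuation of \(\Delta\), propagated through the slowly mixing \(2\times2\) dynamics, not to cancel in the \(xx\)-entry. The fluctuation may enter primarily through the \(pp\) or \(xp\) entries, and is then transferred to \(S_{xx}\) over \(O(1/h)\) steps with damping. I would first compute the stationary covariance of \(\operatorname{vec}(\bm S)\) to leading order by solving the Lyapunov-type equation
\[
(\Id-\E A\otimes A)\operatorname{Cov}=\operatorname{Cov}(\operatorname{vec}\Delta)\,.
\]
This equation is dominated by the continuous-time limit \(\E A\approx \e^{hJ}\) with \(J=\begin{bmatrix}0&1\\-1&-2\end{bmatrix}\), so the leading order reduces to an explicit positive integral, \(\int_0^\infty\) of the squared transported fluctuation. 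I would verify its positivity uniformly in \(k\geqslant2\) by direct evaluation.
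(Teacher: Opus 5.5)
Your proposal is essentially the paper's proof. The shared quadrature variable makes each position--momentum pair a conditionally Gaussian linear recursion, and \Cref{lem:random-linear-mixture} gives the scale-Gaussian decomposition. The centered covariance forcing is $h^2(2\Theta-1)(1,-1,-1)^\top$ for RULMC and $h^2\xi_k e_3$ for PULMC (only in the $pp$ entry, as you anticipated, reaching $S_{11}$ through $e_1^\top\e^{tF}e_3=t^2\e^{-2t}$). The two-sided Lyapunov solution $h^3\sigma^2\int_0^\infty(e_1^\top\e^{tF}v)^2\,\mathrm dt$ gives $\operatorname{Var}(s)\asymp h^3$ uniformly in $k$, with centered fourth moment $O(h^6)$, and \Cref{lem:gaussian-scale-perturbation} finishes; $k=1$ is the deterministic computation $s=1+h/4+O(h^2)$.

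Two small corrections. First, for PULMC the mean scale is only $\Id_2+O(h)$, not $1+O(h^2)$, because the grid leaves a mean defect of order $h^2/k$. This is harmless: $M-\E M=O(h^2)$, so recentering at $\E S$ changes the forcing only by $O(h^3)$. Second, since $N_{\mathsf B}=\sum_i\mathsf B_i$ is unbounded, the pathwise bound $\norm{M}_{H_{\mathrm{hyp}}}\leqslant1-ch+Ch^2(1+N_{\mathsf B})$ must be combined with exponential moments of $N_{\mathsf B}$. This gives the bound $\E\norm{M}_{H_{\mathrm{hyp}}}^8\leqslant1-ch$ uniformly in $k$ that \Cref{lem:random-linear-mixture} requires, which is stronger than mean-square contraction.
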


\Cref{subsec:randomized-underdamped-proof,subsec:poisson-underdamped-proof}
derive the fixed-point equation for these systems, which is then passed again through
\Cref{lem:random-linear-mixture} and \Cref{lem:gaussian-scale-perturbation} to convert this into a Wasserstein lower bound.

\subsection{Iteration lower bounds for all midpoint methods}
\label{subsec:lower-iterations}

We collect the iteration consequences of the deterministic bounds in
\Cref{sec:deterministic-lower} and the preceding randomized bounds in one
statement. In each construction, a displaced position coordinate forces
a positive elapsed time in order that the given coordinate converges, while the error in the remaining coordinates
forces a small step size.

\begin{corollary}[Iteration lower bounds for all midpoint methods]
\label{cor:midpoint-iteration-lower}
There are universal \(c,C,\varepsilon_0,h_0>0\) such that, for every
\(d\geqslant2\), \(0<h\leqslant h_0\), each method listed below,
and every Poisson grid size \(k\geqslant1\), there is an initial position distribution \(\nu_0\) satisfying
\[
 \E_{X_0\sim\nu_0}[V(X_0)-V(x_\star)]\leqslant Cd\,,
 \qquad \alpha\W_2^2(\nu_0,\pi)\leqslant Cd\,,
 \qquad x_\star\deq\argmin V\,,
\]
with the following property. Write \(\nu_N\) for the position distribution
after \(N\) steps. For every \(0<\varepsilon\leqslant\varepsilon_0\)
and integer \(N\geqslant0\), position accuracy
\(\sqrt\alpha\,\W_2(\nu_N,\pi)\leqslant\varepsilon\) requires:
\[
\begin{array}{l|c}
 \text{Method}&\text{Necessary iterations}\\ \hline
 \text{Heun}&N\geqslant cd^{1/3}\varepsilon^{-2/3}\\
 \text{Exponential midpoint}&N\geqslant cd^{1/4}\varepsilon^{-1/2}\\
 \text{RLMC or RULMC}
    &N\geqslant cd^{1/3}\varepsilon^{-1/3}\\
 \text{PLMC or PULMC with }k\geqslant2
    &N\geqslant cd^{1/3}\varepsilon^{-1/3}\\
 \text{ULA or ULMC (}k=1\text{)}
 &N\geqslant cd^{1/2}\varepsilon^{-1}
\end{array}
\]
Here, \(V\deq V_{h,d}\) from \eqref{eq:overdamped-common-target} for Heun,
and \(V\deq\norm \cdot^2/2\) for every other method; the condition numbers
are bounded by a universal constant. The randomized methods use
quadrature choices shared across coordinates, and the constants are
uniform in the Poisson grid size \(k\).

For all underdamped methods, \(\gamma=2\) and
\(P_0\sim\cN(0,\Id_d)\) is independent of \(X_0\) and future
randomness, and the initial positions are
\[
 \nu_0\deq\delta_{e_d}\quad\text{for exponential midpoint}\,,\qquad
 \nu_0\deq\cN(e_d,\Id_d)\quad\text{for RULMC and PULMC}\,.
\]
These choices are independent of \(h,k\); the latter has
\(\W_2(\nu_0,\pi)=1\). The overdamped initial distributions may depend
on the method, \(h\), and \(k\), but not on \(\varepsilon\) or \(N\).

Under the hypotheses of \Cref{thm:et-linear-kappa-lower}, its Heun target
and initialization satisfy the same two initialization bounds, and
\(\sqrt\alpha\,\W_2(\nu_N,\pi)\leqslant\varepsilon\) requires
\[
 N\geqslant c\kappa
 \min\{d^{1/3}\varepsilon^{-2/3},
                  d^{1/2}\varepsilon^{-1/2}\}\log\frac1\varepsilon\,.
\]
\end{corollary}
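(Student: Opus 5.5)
The plan is a unified two-part argument. A stationary bias forces a small step $h$. A slowly relaxing displaced coordinate forces a total elapsed time $Nh\gtrsim\log(1/\varepsilon)$, or at least $Nh\gtrsim 1$. The two combine to give $N\gtrsim h^{-1}\gtrsim$ (stated rate). The setup is the same for every method. The target is a product over coordinates: $V_{h,d}$ for Heun, $\norm\cdot^2/2$ otherwise. Each initial law $\nu_0$ is chosen so that the last coordinate (the $z$ coordinate, or $e_d$) is displaced by order one, while the remaining $d-1$ coordinates start either at stationarity or at a law whose bias estimate still applies.

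\emph{Step 1 (time lower bound).} I would take the $1$-Lipschitz observable $f(x)=x_d$ (or $z$). Under each algorithm on a quadratic potential in that coordinate, $\E[x_d]$ evolves by a deterministic linear recursion. For the randomized methods this recursion is averaged over $\Theta$ or the Bernoulli variables. Its contraction factor is $1-O(h)$ per step, and in the underdamped case it acts through a $2\times2$ matrix with eigenvalues $1-O(h)$. Hence $|\E x_d|\geqslant \e^{-CNh}$, and since $\W_2\geqslant|\E f(\nu_N)-\E f(\pi)|$, accuracy $\varepsilon$ forces $Nh\geqslant c\log(1/\varepsilon)\geqslant c$ for $\varepsilon\leqslant\varepsilon_0$. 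For Heun on $V_{h,d}$ the $z$-coordinate is exactly quadratic and decoupled, so the same holds.

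\emph{Step 2 (bias persists at finite $N$).} The bias propositions are stationary statements, so I would transfer them to $\nu_N$. The triangle inequality gives $\W_2(\nu_N,\pi)\geqslant \W_2(\widehat\pi_h,\pi)-\W_2(\nu_N,\widehat\pi_h)$. It then suffices to use contraction, $\W_2(\nu_N,\widehat\pi_h)\leqslant C\e^{-cNh}\W_2(\nu_0,\widehat\pi_h)$, via \Cref{prop:et-contractivity} or \Cref{prop:uld-midpoint-numerical-contraction}, or a direct Gaussian contraction for the linear randomized updates. With this, for $N\geqslant C'h^{-1}\log(d)$ the bias dominates.

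A cleaner alternative avoids the logarithm in $d$. Restricted to the first $d-1$ coordinates, $\nu_0$ is chosen to be the invariant law of the method, which is a product or Gaussian. This is legitimate because the overdamped $\nu_0$ may depend on $h$ and $k$. With this choice the marginal is exactly stationary for all $N$, so $\W_2(\nu_N,\pi)\geqslant\W_2$ of the marginals, which equals the stationary bias from \Cref{prop:heun-oscillatory-lower}, \Cref{prop:exponential-midpoint-Gaussian-lower}, \Cref{prop:overdamped-common-construction}, or \Cref{prop:randomized-underdamped-Gaussian-lower}.

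For exponential midpoint the required bound is already \eqref{eq:em-anisotropic-finite-time}. For RULMC and PULMC, with $\nu_0=\cN(e_d,\Id_d)$, the coordinatewise covariance recursion remains driven by the shared random scale. I would rerun the fixed-point calculation of \Cref{lem:random-linear-mixture} for the transient $S_N$, showing that its variance reaches order $h^3$ after $O(1/h)$ steps. This is where the argument would use Step 1's bound $Nh\gtrsim1$.

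One must also verify the initialization energy bound $\E[V(X_0)-V(x_\star)]\leqslant Cd$. This holds because the invariant laws have second moments $O(d)$ and $V$ is smooth.

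\emph{Step 3 (inversion).} Accuracy $\varepsilon$ combined with the bias bounds gives $\sqrt d h^{3/2}\lesssim\varepsilon$ for Heun, so $h\lesssim(\varepsilon^2/d)^{1/3}$. For exponential midpoint it gives $\sqrt d h^2\lesssim\varepsilon$. For ULA/ULMC it gives $\sqrt d h\lesssim\varepsilon$. For the randomized methods it gives $\min\{dh^3,\sqrt{dh^3}\}\lesssim\varepsilon$, which for $\varepsilon\leqslant\varepsilon_0<1$ yields $dh^3\lesssim\varepsilon$. Multiplying by $Nh\gtrsim1$ gives each row of the table. The condition-number statement for Heun follows verbatim from \Cref{thm:et-linear-kappa-lower}, once one checks that its initialization satisfies $\alpha\W_2^2(\nu_0,\pi)\leqslant Cd$.

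The main obstacle is Step 2 for the underdamped randomized methods. There the displaced start and the shared random scale interact, and the stationary moment lemmas must be redone for a transient recursion. My first attempt would instead start the non-displaced block at its invariant law and prove a product/independence structure, so that the stationary bound applies directly. The fallback is the contraction-plus-triangle argument.
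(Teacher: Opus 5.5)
Your outline matches the paper's proof. The mean of the displaced last coordinate forces $Nh>\tau$. The first $d-1$ overdamped coordinates start at the numerical invariant law $\mu_h$, so their quadrature-averaged marginal is exactly stationary. For exponential midpoint the variance deficit persists because $C_h-\operatorname{diag}(0,1)\succeq0$ gives $C_{N,h}=C_h-M_h^N(C_h-\operatorname{diag}(0,1))(M_h^N)^\top\preceq C_h$; this is the argument behind the first term of \eqref{eq:em-anisotropic-finite-time}, and it applies verbatim to the unit-curvature coordinates of the isotropic target. For RULMC and PULMC one reruns the covariance recursion from $S_0=\Id_2$, which is \Cref{lem:finite-time-underdamped-covariance}. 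Step~3 is the paper's inversion.

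What does not work is your closing paragraph, which sets that transient computation aside in favour of two alternatives.

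\emph{The invariant-law start.} Starting the non-displaced block at the RULMC/PULMC invariant law violates the prescribed initialization. The statement fixes $\nu_0=\cN(e_d,\Id_d)$ with $P_0\sim\cN(0,\Id_d)$ independent of $X_0$. The invariant law instead has a non-degenerate scale-mixture position marginal, and its position and momentum share the random scale $S$. With the prescribed start the block has zero position error at $N=0$, so you genuinely have to show that the shared-scale fluctuation builds up; this is exactly where $Nh>\tau$ enters.

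\emph{The contraction-plus-triangle fallback.} This does not close either. Take the crude initial distances you have in mind: order $\sqrt d$, or order $\sqrt{dh^3}$ from coupling the Gaussian factors identically. With these, the triangle inequality transfers the bias $c\min\{dh^3,\sqrt{dh^3}\}$ only after an elapsed time $Nh$ of logarithmic size. In the window between $\tau$ and that time, your argument gives no bound $h\lesssim(\varepsilon/d)^{1/3}$, hence no lower bound on $N$.

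\emph{What the transient route must deliver.} Two estimates are needed, since \Cref{lem:gaussian-scale-perturbation} requires both.
\begin{itemize}
\item The variance formula $\operatorname{Var}(s_N)=h^3\sigma^2\int_0^{Nh}(e_1^\top\e^{tF}v)^2\,\mathrm dt+O(h^4)$, uniformly in $N$. Once $Nh>\tau$, this is bounded below by the integral over $[0,\tau]$.
\item The fourth-moment bound $\E\lvert s_N-\E s_N\rvert^4\leqslant Ch^6$. This follows from the martingale-difference argument of \Cref{lem:random-linear-mixture}, applied to the finite sum $\sum_{j<N}L_{N-1}\cdots L_{j+1}\mathfrak d_j$.
\end{itemize}
For ULMC ($k=1$) the scale is deterministic, and $s_N-1\geqslant ch$ once $Nh>\tau$.

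Finally, in Step~1 you only need, and the paper only uses, $\lvert\E X_{N,d}-1\rvert\leqslant\norm{\overline M_h^N-\Id}\leqslant\e^{CNh}-1$. The pointwise lower bound $\lvert\E X_{N,d}\rvert\geqslant\e^{-CNh}$ is not automatic for the nearly degenerate $2\times2$ underdamped mean recursion at unit curvature (critical damping). It is also unnecessary, since the table carries no factor $\log(1/\varepsilon)$.
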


\par

\newpage
\appendix
\section{Proof of the overdamped upper bound}
\label{app:overdamped-proof}

We complete the deterministic Heun proof outlined in
\Cref{sec:overdamped-upper}.
We work throughout in the
normalized units \(\beta=1\), \(\alpha=\kappa^{-1}\), with the fixed
threshold \(h_0\) from \Cref{sec:normalization}.  In particular,
\(V\in C^2\) and \(\kappa^{-1}\Id_d\preceq\nabla^2V\preceq\Id_d\).
\Cref{subsec:et-upper-completion} translates the resulting estimate back
to the original units.  The analytic background for this and all other appendices is collected in
\Cref{app:toolbox}.

We explain the main idea intuitively. Firstly, we will work at a shifted measure $\nu$, which removes the leading bias term without too much cost, and consider the effect of the dynamics around this measure. The remaining fluctuation term is still too large when measured in $L^2$. To witness additional cancellations, we instead provide estimates on the conditional Stein kernel, which in turn requires detailed analysis of Schatten norms and Witten Laplacians. This is the most technical part of the proof and involves heavy technical machinery from Malliavin calculus and sectorial functional calculus. After doing this, we apply the technique of~\cite[Theorem E.4]{PaulinWhalley2026} to convert this back to a transport bound, which accumulates via Lemma~\ref{lem:et-local-framework}.

A detailed roadmap is also provided below, which follows the sequential presentation of the proof. 
We begin with \Cref{subsec:et-auxiliary-proofs}, which opens with some useful auxiliary estimates.
\begin{enumerate}
\item \emph{Correcting the conditional mean.}
\Cref{subsec:et-corrector} solves the discrete Poisson equation
\((\Id-P_h)\chi=e_h\) and shifts the target to
\(\nu\). This absorbs the leading conditional bias at
transport cost \(O(\sqrt d\,h^{3/2})\); the remaining conditional mean
is only \(O(\sqrt d\,h^{5/2})\).

\item \emph{A small conditional Stein kernel.}
Even after centering, the remaining fluctuation is still \(O(\sqrt d\,h^{3/2})\) in \(L^2\), which is too large.
\Cref{subsec:et-malliavin-stein} instead estimates its conditional Stein
kernel, which captures the smaller error relevant after Gaussian smoothing.

\item \emph{Corrector regularity from the bounded Hessian.}
The Stein kernel estimate requires Schatten-$4$ bounds on the corrector's
derivative. \Cref{subsec:et-S4-multiplier} proves the Witten Laplacian
multiplier estimate that supplies these bounds without higher derivatives
of \(V\); here, we need to invoke numerous Malliavin derivative bounds and a sectorial functional calculus argument.

\item \emph{Gaussian smoothing.}
\Cref{subsec:et-transport} converts the conditional Stein kernel bound into
a transport cost of \(O(\sqrt d\,h^{5/2})\) after convolution. This is the
step that improves on accumulating the size of the fluctuation itself.

\item \emph{Accumulation and the implementable method.}
\Cref{subsec:et-assembly} controls the predictor replacement and applies
\Cref{lem:et-local-framework}: contraction accumulates the smoothed error
with a factor of order \(\kappa/h\), giving invariant distribution bias
\(O(\kappa\sqrt d\,h^{3/2})\). \Cref{subsec:et-upper-completion} adds
initialization and returns to the original units.
\end{enumerate}

The main ideas have established precedents.  Mattingly, Stuart, and
Tretyakov~\cite{MattinglyStuartTretyakov2010} use the continuous Poisson
equation to analyze stationary discretization error, while discrete Poisson equations were used by \cite{JohndrowMattingly2017} and \cite{LuYeZhou2025} to study sampling
algorithms. The direct predecessor of our construction is the discrete
corrector developed by Pedrotti and Whalley~\cite{PedrottiWhalley2026}
for the unadjusted Langevin algorithm.
The Malliavin--Stein approach of Nourdin and Peccati is presented in
\cite{NourdinPeccati2012}, and the Stein kernel representation used here
is from Nourdin, Peccati, and Swan~\cite[Proposition~3.7]{NourdinPeccatiSwan2014}.
The independent Gaussian convolution bound is due to Paulin and
Whalley~\cite[Theorem~E.4]{PaulinWhalley2026}.
In \Cref{lem:et-gaussian-convolution}, we give a more direct
transport proof of the preceding result with sharp numerical constant.
The subsequent geometric accumulation is the standard contractive
coupling argument; see \cite[Chapter~4]{ChewiBook}.
Here the work is to combine these tools with the corrector and its
Schatten derivative estimates under \(C^2\) regularity.

\subsection{Auxiliary proofs for the Heun upper bound}
\label{subsec:et-auxiliary-proofs}

Throughout the appendix, \(X_t^x\) denotes the Langevin diffusion
\eqref{eq:langevin} started from \(x\).  We first record elementary
bounds on its trajectory.

\begin{lemma}[Displacement estimates]
\label{lem:et-displacement}
There is a universal constant \(C>0\) such that, for
\(0<t\leqslant h_0\) and \(x\in\R^d\),
\begin{equation}
    \E\norm{X_t^x-x}^2
    \leqslant C\,\bigl\{dt+t^2\,\norm{\nabla V(x)}^2\bigr\}\,;
    \label{eq:et-displacement-pointwise}
\end{equation}
if \(X_0\sim\pi\), then
\begin{equation}
    \E\norm{X_t-X_0}^2\leqslant Cdt\,.
    \label{eq:et-displacement-stationary}
\end{equation}
\end{lemma}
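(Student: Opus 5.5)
We write \(X_t^x-x=-\int_0^t\nabla V(X_s^x)\,\mathrm d s+\sqrt2\,B_t\) and control the drift integral through a Gronwall argument. Set \(\Delta_s\deq X_s^x-x\). Because \(\nabla V\) is \(1\)-Lipschitz in normalized units, \(\norm{\nabla V(X_s^x)}\leqslant\norm{\nabla V(x)}+\norm{\Delta_s}\). Hence
\[
    \norm{\Delta_t}\leqslant t\,\norm{\nabla V(x)}+\int_0^t\norm{\Delta_s}\,\mathrm d s+\sqrt2\,\norm{B_t}\,.
\]
Since \(s\mapsto\norm{B_s}\) does not monotonically dominate, we replace it by the running maximum \(M_t\deq\sup_{s\leqslant t}\norm{B_s}\). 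Pathwise Gronwall on \([0,t]\) then gives \(\norm{\Delta_t}\leqslant \e^{t}\bigl(t\norm{\nabla V(x)}+\sqrt2\,M_t\bigr)\). For \(t\leqslant h_0\leqslant1/3\), the factor \(\e^t\) is a universal constant.

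We then square and take expectations. Doob's \(L^2\) maximal inequality applied to the submartingale \(\norm{B_s}\) gives \(\E M_t^2\leqslant4\,\E\norm{B_t}^2=4dt\). Combining this with \((a+b)^2\leqslant2a^2+2b^2\) yields \eqref{eq:et-displacement-pointwise}. As an alternative that avoids the maximal function, we could apply Itô's formula to \(\norm{\Delta_t}^2\) and obtain a differential inequality for \(u(t)\deq\E\norm{\Delta_t}^2\):
\[
    u'\leqslant 2\sqrt u\,\norm{\nabla V(x)}+2u+2d\,.
\]
Integrating this inequality gives the same bound.

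For the stationary estimate \eqref{eq:et-displacement-stationary}, we integrate the pointwise bound against \(X_0\sim\pi\). This requires \(\E_\pi\norm{\nabla V}^2\leqslant d\), which follows from integration by parts: \(\E_\pi\norm{\nabla V}^2=\E_\pi\Delta V\leqslant d\) because \(\nabla^2V\preceq\Id_d\). The resulting bound is \(C(dt+t^2d)\leqslant Cdt\) for \(t\leqslant1\).

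The argument is routine. The only point needing care is the justification of the integration by parts, namely that the boundary terms vanish under the Gaussian-type tails of \(\pi\) implied by strong convexity.
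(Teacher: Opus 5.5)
Your proof is correct and follows the paper's route. The paper gets the pointwise bound by citing a textbook displacement lemma for the Langevin diffusion (applied with $\beta=1$ and $t\leqslant h_0\leqslant1/3$), which your running-maximum Gr\"{o}nwall argument with Doob's bound $\E M_t^2\leqslant4dt$ reproduces, and it gets the stationary bound exactly as you do, by averaging over $X_0\sim\pi$ and using $\int\norm{\nabla V}^2\,\mathrm d\pi=\int\Delta V\,\mathrm d\pi\leqslant d$.

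Your It\^{o} alternative also works, but ``integrating'' $u'\leqslant2\sqrt u\,\norm{\nabla V(x)}+2u+2d$ is not immediate: a crude Young inequality only gives $t\,\norm{\nabla V(x)}^2$, and getting the $t^2\norm{\nabla V(x)}^2$ term needs a comparison with a supersolution such as $K\,(dt+t^2\norm{\nabla V(x)}^2)$ for a large universal $K$.
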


\begin{proof}
The pointwise estimate is \cite[Lemma~4.1.8]{ChewiBook} with deterministic
initial state, \(\beta=1\), and \(t\leqslant h_0\leqslant1/3\).
For a stationary initial state, apply the same lemma and use
\(\int\norm{\nabla V}^2\,\mathrm d \pi
=\int\Delta V\,\mathrm d \pi\leqslant d\).
\end{proof}

We next prove
\Cref{prop:et-contractivity} and \Cref{lem:et-local-framework},
stated in \Cref{sec:overdamped-upper}.

\begin{proof}[Proof of \Cref{prop:et-contractivity}]
Couple the steps from \(x\) and \(y\) with the same Brownian increment.
Let \(X_h^{\mathrm{pred}}\) and \(Y_h^{\mathrm{pred}}\) be their
predictors.  The fundamental theorem of calculus gives
\[
\begin{split}
    H_{\mathrm{init}}&\deq\int_0^1\nabla^2V\bigl(y+s\,(x-y)\bigr)\,\mathrm d s\,,\\
    H_{\mathrm{pred}}&\deq\int_0^1\nabla^2V\bigl(Y_h^{\mathrm{pred}}
       +s\,(X_h^{\mathrm{pred}}-Y_h^{\mathrm{pred}})\bigr)\,\mathrm d s\,,
\end{split}
\]
so \(\nabla V(x)-\nabla V(y)=H_{\mathrm{init}}\,(x-y)\) and
\(\nabla V(X_h^{\mathrm{pred}})-\nabla V(Y_h^{\mathrm{pred}})
=H_{\mathrm{pred}}\,(X_h^{\mathrm{pred}}-Y_h^{\mathrm{pred}})\).
The predictor difference is \((\Id_d-hH_{\mathrm{init}})(x-y)\).  Substitution into
the update \eqref{eq:et-step} therefore yields
\begin{equation}
\begin{split}
    X^+-Y^+
    &=\bigl\{\Id_d-\tfrac h2\,H_{\mathrm{init}}-\tfrac h2\,H_{\mathrm{pred}}\,(\Id_d-hH_{\mathrm{init}})\bigr\}(x-y)\\
    &=\tfrac12\,\bigl\{\Id_d+(\Id_d-hH_{\mathrm{pred}})(\Id_d-hH_{\mathrm{init}})\bigr\}(x-y)\,.
    \label{eq:et-contraction-factorization}
\end{split}
\end{equation}
Both \(H_{\mathrm{init}}\) and \(H_{\mathrm{pred}}\) are symmetric with eigenvalues in
\([\kappa^{-1},1]\).  If \(0<h\leqslant1\), the eigenvalues of
\(\Id_d-hH_{\mathrm{init}}\) and \(\Id_d-hH_{\mathrm{pred}}\) lie in
\([0,1-h/\kappa]\); their operator norms are therefore at most
\(1-h/\kappa\).  No commutation of \(H_{\mathrm{init}}\) and \(H_{\mathrm{pred}}\) is needed:
submultiplicativity and \eqref{eq:et-contraction-factorization} give
\[
    \norm{X^+-Y^+}
    \leqslant\frac{1+(1-h/\kappa)^2}{2}\,\norm{x-y}
    \leqslant\Bigl(1-\frac h{2\kappa}\Bigr)\,\norm{x-y}\,.
\]
Integrating this pathwise bound over an optimal coupling of the initial
states proves \eqref{eq:et-contraction-factor}.  Lipschitzness of the
force ensures that the kernel preserves \(\mathcal P_2(\R^d)\), so
Banach's fixed point theorem on the complete metric space
\((\mathcal P_2(\R^d),\W_2)\) gives the invariant distribution.
\end{proof}

\begin{proof}[Proof of \Cref{lem:et-local-framework}]
Let \(0<h\leqslant1\).  For \(j\geqslant1\), contractivity gives
\[
    \W_2\bigl(\nu(K_h^{\mathrm{Heun}})^{j+1},
       \nu(K_h^{\mathrm{Heun}})^j\bigr)
    \leqslant\rho^{j-1}
    \W_2\bigl(\nu(K_h^{\mathrm{Heun}})^2,\nu K_h^{\mathrm{Heun}}\bigr)\,.
\]
The triangle inequality from \(\nu\) to \(\nu(K_h^{\mathrm{Heun}})^N\)
and summing as a geometric series suffice to bound
\(\W_2(\nu,\nu(K_h^{\mathrm{Heun}})^N)\) by the right-hand side of
\eqref{eq:et-local-framework}, uniformly in \(N\).  Since
\(\W_2(\nu(K_h^{\mathrm{Heun}})^N,\widehat\pi_h^{\mathrm{Heun}})
\leqslant\rho^N\W_2(\nu,\widehat\pi_h^{\mathrm{Heun}})\to0\),
the claim follows.
\end{proof}

\subsection{The corrector and the discrete Poisson equation}
\label{subsec:et-corrector}

The Heun method averages the force at the starting point and
the predictor.  For the analysis, replace the predictor by the exact
endpoint and define
\begin{equation}
    \widetilde X_h^x
    \deq x-\frac h2\,\{\nabla V(x)+\nabla V(X_h^x)\}+\sqrt2\,B_h\,,
    \qquad
    \widetilde K_h(x,\cdot)\deq\Law(\widetilde X_h^x)\,.
    \label{eq:et-exact-reference}
\end{equation}
This auxiliary step uses the same Brownian motion as the exact diffusion.
\Cref{subsec:et-assembly} bounds the cost of replacing its exact endpoint
by the implementable predictor.

Start both processes at \(X_0\sim\pi\).  Their difference is the
trapezoidal quadrature error
\begin{equation}
    \mathcal E_h
    \deq\widetilde X_h-X_h
    =\int_0^h\nabla V(X_t)\,\mathrm d t
       -\frac h2\,\{\nabla V(X_0)+\nabla V(X_h)\}\,.
    \label{eq:et-quadrature-error}
\end{equation}
We construct a corrector for its conditional mean
\(e_h(X_h)\deq\E[\mathcal E_h\mid X_h]\).

Our sign convention is numerical minus exact in
\eqref{eq:et-quadrature-error}, so the integral term in \(e_h\) has a
positive sign. Conditionally on \(X_h\), reversibility
makes the reversed path \((X_{h-t})_{0\leqslant t\leqslant h}\) a
Langevin diffusion started from \(X_h\), so
\(\E[\nabla V(X_t)\mid X_h]=(P_{h-t}\nabla V)(X_h)\) for
\(0\leqslant t\leqslant h\).  Taking conditional means in
\eqref{eq:et-quadrature-error} term by term and substituting
\(t\mapsto h-t\) in the integral gives
\begin{equation}
    e_h
    =\int_0^hP_t(\nabla V)\,\mathrm d t
       -\frac h2\,\bigl\{\nabla V+P_h(\nabla V)\bigr\}\,,
    \label{eq:et-conditional-mean}
\end{equation}
as vector fields on \(\R^d\).
Invariance and \(\int\nabla V\,\mathrm d \pi=0\) show that
\(e_h\) integrates to zero against \(\pi\).
Define the corrector \(\chi\) as the solution of the discrete
Poisson equation
\begin{equation}
    (\Id-P_h)\chi=e_h\,,
    \qquad\text{equivalently}\qquad
    \chi=(\Id-P_h)^{-1}e_h\,.
    \label{eq:et-corrector}
\end{equation}
On the mean-zero subspace of \(L^2(\pi)\), the spectral gap gives
\(\norm{P_h}\leqslant\e^{-h/\kappa}<1\) for every \(h>0\).
Thus, the equation has a unique solution in that subspace, with the
Neumann series representation
\begin{equation}
    \chi=\sum_{n=0}^\infty P_h^n e_h
        =\sum_{n=0}^\infty P_{nh}e_h
    \qquad\text{in }L^2(\pi)\,.
    \label{eq:et-corrector-series}
\end{equation}
Indeed, iterating \(\chi=e_h+P_h\chi\) gives
\[
    \chi-\sum_{n=0}^{N}P_{nh}e_h=P_{(N+1)h}\chi\,,
    \qquad
    \norm{P_{(N+1)h}\chi}_{L^2(\pi)}
    \leqslant\e^{-(N+1)h/\kappa}\,\norm\chi_{L^2(\pi)}\longrightarrow0\,.
\]

On the mean-zero subspace, the semigroup identity
\[
    \int_0^hP_t\,\mathrm d t=(-L)^{-1}(\Id-P_h)
\]
allows us to solve \eqref{eq:et-corrector} directly.  Substitution in
\eqref{eq:et-conditional-mean}, followed by \(P_h=\e^{hL}\), gives
\begin{equation}
\begin{aligned}
    \chi
    &=\bigl\{(-L)^{-1}
       -\tfrac h2\,(\Id-P_h)^{-1}(\Id+P_h)\bigr\}(\nabla V)\\
    &=h\,\Bigl\{(-hL)^{-1}
       -\frac12\,\coth\Bigl(\frac{-hL}{2}\Bigr)\Bigr\}(\nabla V)=h\,r(-hL)(\nabla V)\,.
\end{aligned}
    \label{eq:et-corrector-multiplier}
\end{equation}
The last equality defines the scalar function
\[
    r(z)\deq\frac1z-\frac12\,\coth\frac z2\,,
    \quad r(0)\deq0\,.
\]

To estimate \(\norm{\chi}_{L^2(\pi)}^2\), we need the following estimate
of \(r\):
\[
    \lvert r(z)\rvert\leqslant C\min\{z,1\}
    \leqslant C\sqrt z\,,
    \qquad z\geqslant0\,.
\]
Indeed, \(r(z)=-z/12+O(z^3)\) near zero, while both \(z^{-1}\) and
\(\coth(z/2)\) are bounded for \(z\geqslant1\).  Combining this with
\eqref{eq:et-corrector-multiplier} and the spectral theorem gives
\begin{equation}
\begin{aligned}
    \norm{\chi}_{L^2(\pi)}^2
    &=h^2\sum_{j\in[d]}
       \norm{r(-hL)(\partial_jV)}_{L^2(\pi)}^2\leqslant Ch^3\sum_{j\in[d]}
       \norm{(-L)^{1/2}(\partial_jV)}_{L^2(\pi)}^2\\
    &=Ch^3\sum_{j\in[d]}
       \int\norm{\nabla\partial_jV}^2\,\mathrm d \pi=Ch^3\int\norm{\nabla^2V}_{\mathrm{HS}}^2\,\mathrm d \pi\leqslant Cdh^3\,.
\end{aligned}
    \label{eq:et-corrector-L2}
\end{equation}
The coupling \(X\mapsto X+\chi(X)\), with \(X\sim\pi\), now gives
\[
    \W_2^2((\Id+\chi)_\#\pi,\pi)
    \leqslant\norm\chi_{L^2(\pi)}^2\leqslant Cdh^3\,.
\]

\subsection{Malliavin--Stein kernel estimate}
\label{subsec:et-malliavin-stein}

We now couple the auxiliary step from
\(\nu=(\Id+\chi)_\#\pi\) to another sample from \(\nu\).
Start a stationary diffusion \((X_t)_{0\leqslant t\leqslant h}\) at
\(X_0\sim\pi\).  Let \(\overline X_t\) be the diffusion started at
\(X_0+\chi(X_0)\), using the same Brownian path, and set
\[
    \widetilde{\overline X}_h
    \deq X_0+\chi(X_0)
       -\frac h2\,\{\nabla V(X_0+\chi(X_0))
          +\nabla V(\overline X_h)\}+\sqrt2\,B_h
       \sim\nu\widetilde K_h\,.
\]
We retain \(\mathcal E_h=\widetilde X_h-X_h\) for the raw
quadrature error and write \(\mathcal E_h^{\mathrm{corr}}\) for the
error after shifting the initial distribution and the reference endpoint
by the corrector.  Since \(X_h+\chi(X_h)\sim\nu\), decompose this
corrected error into its conditional mean and centered fluctuation:
\begin{equation}
\begin{aligned}
    \mathcal E_h^{\mathrm{corr}}
    &\deq\widetilde{\overline X}_h-\{X_h+\chi(X_h)\}
      =m(X_h)+R\,,\\
    m(X_h)&\deq\E[\mathcal E_h^{\mathrm{corr}}\mid X_h]\,,
    \qquad \E[R\mid X_h]=0\,.
\end{aligned}
    \label{eq:et-mean-fluctuation}
\end{equation}
Using the diffusion equation for $X_h$ and adding and subtracting the term \(\frac h2\,\{\nabla V(X_0)+\nabla V(X_h)\}\),
we obtain from \eqref{eq:et-quadrature-error} the pathwise identity
\begin{equation}
\begin{split}
    \mathcal E_h^{\mathrm{corr}}={}&\chi(X_0)+\mathcal E_h-\chi(X_h)\\
       &-\frac h2\,\{\nabla V(X_0+\chi(X_0))-\nabla V(X_0)
          +\nabla V(\overline X_h)-\nabla V(X_h)\}\,.
    \label{eq:et-corrected-displacement}
\end{split}
\end{equation}
The next two lemmas bound its total size and its conditional mean.

\begin{lemma}[Second moment of the corrected local error]
\label{lem:et-defect-second-moment}
There is a universal constant \(C>0\) such that, for
\(0<h\leqslant h_0\),
\[
    \E\norm{\mathcal E_h^{\mathrm{corr}}}^2
    \leqslant Cdh^3\,.
\]
\end{lemma}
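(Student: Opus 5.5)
We bound each term of the pathwise identity \eqref{eq:et-corrected-displacement} separately in \(L^2\) and use \(\norm{a+b+c+d}^2\leqslant4(\norm a^2+\norm b^2+\norm c^2+\norm d^2)\). Since \(X_0\sim\pi\) and \(X_h\sim\pi\), the corrector terms satisfy \(\E\norm{\chi(X_0)}^2=\E\norm{\chi(X_h)}^2=\norm\chi_{L^2(\pi)}^2\leqslant Cdh^3\) by \eqref{eq:et-corrector-L2}. It then remains to bound the raw quadrature error \(\mathcal E_h\) and the last bracket, which is the cost of evaluating the force at shifted points.

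\emph{The raw quadrature error.} Write
\[
\mathcal E_h=\int_0^h\Bigl\{\nabla V(X_t)-\tfrac12\nabla V(X_0)-\tfrac12\nabla V(X_h)\Bigr\}\,\mathrm d t\,.
\]
Since \(\nabla V\) is \(1\)-Lipschitz in normalized units, the integrand has norm at most \(\tfrac12\norm{X_t-X_0}+\tfrac12\norm{X_t-X_h}\). By Cauchy--Schwarz in \(t\), \(\E\norm{\mathcal E_h}^2\leqslant h\int_0^h\{\E\norm{X_t-X_0}^2+\E\norm{X_h-X_t}^2\}\,\mathrm d t\). Stationarity of the path together with \eqref{eq:et-displacement-stationary} bounds each expectation by \(Cdh\) (for the second, apply the Markov property at time \(t\), where \(X_t\sim\pi\)). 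This gives \(\E\norm{\mathcal E_h}^2\leqslant Cdh^3\). No cancellation is needed here, because only the crude Lipschitz bound is used.

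\emph{The shifted force terms.} Lipschitzness gives \(\norm{\nabla V(X_0+\chi(X_0))-\nabla V(X_0)}\leqslant\norm{\chi(X_0)}\). For the endpoint term, the synchronous coupling of the diffusions \(\overline X\) and \(X\) driven by the same Brownian motion is contractive under strong convexity: \(\frac{\mathrm d}{\mathrm d t}\norm{\overline X_t-X_t}^2\leqslant0\). Hence \(\norm{\overline X_h-X_h}\leqslant\norm{\chi(X_0)}\) pathwise, and so \(\norm{\nabla V(\overline X_h)-\nabla V(X_h)}\leqslant\norm{\chi(X_0)}\). The whole bracket, multiplied by \(h/2\), therefore has second moment at most \(h^2\norm\chi_{L^2(\pi)}^2\leqslant Cdh^5\), which is well within the claimed bound.

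Summing the four contributions yields \(\E\norm{\mathcal E_h^{\mathrm{corr}}}^2\leqslant Cdh^3\). There is no real obstacle. The only point to check is the endpoint displacement \(\E\norm{X_h-X_t}^2\leqslant Cd(h-t)\), which needs stationarity at the intermediate time rather than at time \(0\); the Markov property supplies it.
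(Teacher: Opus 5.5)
Your proposal is correct and follows essentially the same route as the paper. Both arguments bound the two corrector terms by stationarity and \eqref{eq:et-corrector-L2}, the raw trapezoidal error by Lipschitzness and the stationary displacement estimate, and the two force increments by Lipschitzness plus synchronous contraction, then combine the pieces. The only difference is cosmetic: the paper writes \(\mathcal E_h=\int_0^h\{\nabla V(X_t)-\nabla V(X_0)\}\,\mathrm d t-\frac h2\{\nabla V(X_h)-\nabla V(X_0)\}\), so it needs only displacements from \(X_0\) and avoids your Markov-property step at the intermediate time.
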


\begin{proof}
Stationarity and \eqref{eq:et-corrector-L2} control the two corrector
terms in \eqref{eq:et-corrected-displacement}.  The quadrature term
satisfies, by \(1\)-Lipschitzness and
\eqref{eq:et-displacement-stationary},
\[
    \norm{\mathcal E_h}_{L^2}
    \leqslant\int_0^h\norm{X_t-X_0}_{L^2}\,\mathrm d t
       +\frac h2\,\norm{X_h-X_0}_{L^2}
    \leqslant C\sqrt{dh^3}\,.
\]
Synchronous contraction of the exact diffusion
\cite[Lemma~5.1.6(1)]{ChewiBook} gives
\begin{equation}
    \norm{\overline X_h-X_h}
    \leqslant\e^{-h/\kappa}\,\norm{\chi(X_0)}\,.
    \label{eq:et-synchronous-contraction}
\end{equation}
Consequently, each force increment on the second line of
\eqref{eq:et-corrected-displacement} is bounded by
\(\norm{\chi(X_0)}\).  The triangle inequality in \(L^2\) now gives
\(\norm{\mathcal E_h^{\mathrm{corr}}}_{L^2}\leqslant
(2+h)\,\norm\chi_{L^2(\pi)}+C\sqrt{dh^3}\), proving the claim.
\end{proof}

\begin{lemma}[Conditional mean bound]
\label{lem:et-mean-bound}
For every \(h>0\),
\[
    \norm{m}_{L^2(\pi)}
    \leqslant h\,\norm{\chi}_{L^2(\pi)}\,.
\]
\end{lemma}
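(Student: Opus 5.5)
We take conditional expectations given \(X_h\) term by term in the pathwise identity \eqref{eq:et-corrected-displacement}. The goal is to show that the first line contributes exactly zero, so that only the force increments on the second line remain.

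\emph{First line.} By reversibility, \(\E[\chi(X_0)\mid X_h]=P_h\chi(X_h)\). By the definition of \(e_h\), we have \(\E[\mathcal E_h\mid X_h]=e_h(X_h)\). The term \(\chi(X_h)\) is \(X_h\)-measurable. Hence the conditional mean of the first line is
\[
    (P_h\chi+e_h-\chi)(X_h)\,,
\]
which vanishes identically by the discrete Poisson equation \eqref{eq:et-corrector}. Strictly, this identity holds \(\pi\)-almost everywhere in \(L^2(\pi)\). We take the mean-zero component of \(\chi\), which is fine since \(e_h\) has mean zero.

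\emph{Second line.} What remains is
\[
    m(X_h)=-\frac h2\,\E\bigl[\nabla V(X_0+\chi(X_0))-\nabla V(X_0)
       +\nabla V(\overline X_h)-\nabla V(X_h)\bigm| X_h\bigr]\,.
\]
Conditional expectation is a contraction in \(L^2\), so
\(\norm m_{L^2(\pi)}\) is at most \(h/2\) times the \(L^2\) norm of the sum of the two force increments. The normalization \(\beta=1\) makes \(\nabla V\) 1-Lipschitz. This bounds the first increment by \(\norm{\chi(X_0)}\). The synchronous contraction \eqref{eq:et-synchronous-contraction} bounds the second by \(\e^{-h/\kappa}\norm{\chi(X_0)}\leqslant\norm{\chi(X_0)}\). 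By the triangle inequality in \(L^2\) and stationarity \(X_0\sim\pi\), the \(L^2\) norm of the sum is at most \(2\norm\chi_{L^2(\pi)}\). Multiplying by \(h/2\) gives \(\norm m_{L^2(\pi)}\leqslant h\norm\chi_{L^2(\pi)}\).

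\emph{Main obstacle.} The only subtle point is justifying \(\E[\chi(X_0)\mid X_h]=P_h\chi(X_h)\) for \(\chi\in L^2(\pi)\), which need not be continuous. The same applies to the conditional mean of \(\mathcal E_h\). Both follow from reversibility: \((X_0,X_h)\) and \((X_h,X_0)\) are equal in law, and \(P_h\) is the conditional expectation operator on \(L^2(\pi)\). This derivation was already used for \(e_h\). The bound holds for every \(h>0\), since neither step requires \(h\) to be small.
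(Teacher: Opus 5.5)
Your proposal is correct and follows the paper's proof essentially step for step. The first line of \eqref{eq:et-corrected-displacement} has zero conditional mean by reversibility and the discrete Poisson equation. The two force increments are each bounded by \(\norm{\chi(X_0)}\) via \(1\)-Lipschitzness and synchronous contraction, and conditional Jensen then gives \(\norm m_{L^2(\pi)}\leqslant h\,\norm\chi_{L^2(\pi)}\).
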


\begin{proof}
Reversibility and the discrete Poisson equation give
\[
    \E[\chi(X_0)+\mathcal E_h-\chi(X_h)\mid X_h]
    =P_h\chi(X_h)+e_h(X_h)-\chi(X_h)=0\,.
\]
Thus \eqref{eq:et-corrected-displacement} implies
\[
    m(X_h)=-\frac h2\,\E\bigl[
       \nabla V(X_0+\chi(X_0))-\nabla V(X_0)
       +\nabla V(\overline X_h)-\nabla V(X_h)\mid X_h\bigr]\,.
\]
Both increments are bounded by \(\norm{\chi(X_0)}\), by
\(1\)-Lipschitzness and \eqref{eq:et-synchronous-contraction}.
Conditional Jensen's inequality proves the result.
\end{proof}

The conditional mean therefore has size
\(\norm m_{L^2(\pi)}\leqslant C\sqrt{dh^5}\).  To obtain the same
transport cost for the centered fluctuation \(R\) after Gaussian
smoothing, we bound its conditional Stein kernel.

For a vector field \(\varphi\), let \(\nabla\varphi\) have columns
\(\nabla\varphi_j\).  In this convention, a Stein kernel of a centered
random vector \(R\) is a matrix field \(\tau_R\) satisfying
\begin{equation}
    \E\langle R\,,\varphi(R)\rangle
    =\E\operatorname{tr}\{\tau_R(R)\nabla\varphi(R)^\top\}
    \label{eq:et-stein-identity}
\end{equation}
for smooth test fields with bounded derivative.  We use the classical
Malliavin representation of this kernel
\cite[Proposition~3.7]{NourdinPeccatiSwan2014}, together with a Schatten
norm estimate that preserves the desired dependence on \(d\).
Background and notation are in
\Cref{subsec:tb-stein,subsec:tb-malliavin,subsec:tb-schatten}.

\begin{lemma}[Malliavin--Stein kernel estimate]
\label{lem:et-malliavin-stein}
Let \(R\in L^2(\Omega;\R^d)\) be a centered, Malliavin-differentiable
random vector over an isonormal Gaussian probability space with
Cameron--Martin space \(\mathcal H\), and suppose that
\(D_GR\in L^4(\Omega;\mathfrak{S}_4)\).  Then the law of \(R\) has a
Stein kernel \(\tau_R\) satisfying
\[
    \E\norm{\tau_R}_{\mathrm{HS}}^2
    \leqslant \norm{D_GR}_{L^4(\Omega;\mathfrak{S}_4)}^4\,,
\]
where \(D_GR:\mathcal H\to\R^d\) denotes the Malliavin derivative of
\(R\).
\end{lemma}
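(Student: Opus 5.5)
The plan is to use the Nourdin--Peccati--Swan representation of the Stein kernel, \cite[Proposition~3.7]{NourdinPeccatiSwan2014}, and then bound the resulting kernel with Jensen's inequality and Hölder's inequality for Schatten norms. Let \(L_{\mathrm{OU}}\) be the Ornstein--Uhlenbeck generator on the Gaussian space and \(L_{\mathrm{OU}}^{-1}\) its pseudo-inverse on centered variables. Let \(P^{\mathrm{OU}}_s\) be the Mehler semigroup. Since \(R\) is centered, the candidate kernel is
\[
    \tau_R(R)\deq\E\bigl[\,D_GR\,(-D_GL_{\mathrm{OU}}^{-1}R)^{*}\bigm| R\,\bigr]\,,
\]
where \(D_GR\colon\mathcal H\to\R^d\) and the product is the \(d\times d\) matrix \(\ip{D_GR_i}{-D_GL_{\mathrm{OU}}^{-1}R_j}_{\mathcal H}\).

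\textbf{Step 1 (Stein identity).} Write \(R=\delta(-D_GL_{\mathrm{OU}}^{-1}R)\), using \(\delta D_G=-L_{\mathrm{OU}}\). For a smooth test field \(\varphi\) with bounded derivative, the Malliavin integration by parts and the chain rule \(D_G\varphi_i(R)=\sum_j\partial_j\varphi_i(R)\,D_GR_j\) give
\[
    \E\ip{R}{\varphi(R)}=\E\sum_{i,j}\partial_j\varphi_i(R)\ip{D_GR_j}{-D_GL^{-1}_{\mathrm{OU}}R_i}\,.
\]
Conditioning on \(R\) turns this into \eqref{eq:et-stein-identity}, with the index convention fixed so that it matches \(\operatorname{tr}\{\tau_R\nabla\varphi^\top\}\). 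The chain rule needs only Lipschitz \(\varphi\) and \(R\in\mathbb D^{1,2}\). The latter is implied by the hypothesis, since \(\norm{\cdot}_{\mathrm{HS}}\leqslant\norm{\cdot}_{\mathfrak S_4}\) fails in general but finite dimensionality of the range \(\R^d\) makes \(D_GR\) Hilbert--Schmidt with \(\norm{D_GR}_{\mathrm{HS}}\leqslant d^{1/4}\norm{D_GR}_{\mathfrak S_4}\).

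\textbf{Step 2 (Mehler formula).} Use the identity \(-D_GL_{\mathrm{OU}}^{-1}R=\int_0^\infty \e^{-s}P^{\mathrm{OU}}_s(D_GR)\,\mathrm ds\), where \(P^{\mathrm{OU}}_s\) acts on operator-valued variables through Mehler's formula. This gives
\[
    \tau_R=\int_0^\infty\e^{-s}\,\E\bigl[D_GR\,(P^{\mathrm{OU}}_sD_GR)^*\bigm| R\bigr]\,\mathrm ds\,.
\]

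\textbf{Step 3 (norm bound).} By conditional Jensen for the convex map \(A\mapsto\norm A_{\mathrm{HS}}^2\), and by Jensen again for the probability measure \(\e^{-s}\mathrm ds\),
\[
    \E\norm{\tau_R}_{\mathrm{HS}}^2\leqslant\int_0^\infty\e^{-s}\,\E\norm{D_GR\,(P^{\mathrm{OU}}_sD_GR)^*}_{\mathrm{HS}}^2\,\mathrm ds\,.
\]
Hölder's inequality for Schatten classes gives \(\norm{AB^*}_{\mathfrak S_2}\leqslant\norm A_{\mathfrak S_4}\norm B_{\mathfrak S_4}\). Cauchy--Schwarz in \(\Omega\) then bounds the integrand by \(\norm{D_GR}_{L^4(\mathfrak S_4)}^2\,\norm{P^{\mathrm{OU}}_sD_GR}_{L^4(\mathfrak S_4)}^2\).

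\textbf{Step 4 (main obstacle).} It remains to show that \(P^{\mathrm{OU}}_s\) is a contraction on \(L^4(\Omega;\mathfrak S_4)\). Mehler's formula writes \(P^{\mathrm{OU}}_sF(\omega)\) as an expectation over an independent copy \(\omega'\) of \(F(\e^{-s}\omega+\sqrt{1-\e^{-2s}}\,\omega')\). Since \(\norm{\cdot}_{\mathfrak S_4}^4\) is convex, Jensen applies, and the rotation invariance of Gaussian measure gives the contraction. Integrating \(\e^{-s}\) over \((0,\infty)\) then yields exactly \(\norm{D_GR}_{L^4(\Omega;\mathfrak S_4)}^4\).

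The delicate points are two. First, the Banach-valued Mehler formula and the convergence of the \(s\)-integral must be justified. This can be done by approximating \(R\) with smooth cylindrical functionals and passing to the limit in \(L^4(\mathfrak S_4)\). Second, we must check that the conditional expectation defines a genuine measurable kernel \(\tau_R(\cdot)\).
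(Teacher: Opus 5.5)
Your proposal is correct and follows the paper's proof. Both use the same Nourdin--Peccati--Swan representation, the same Mehler integral formula for $-D_G\mathcal L_{\mathrm{OU}}^{-1}R$, contractivity of the Mehler semigroup on $L^4(\Omega;\mathfrak S_4)$ via Jensen and rotation invariance, and the bound $\norm{AB^*}_{\mathfrak S_2}\leqslant\norm A_{\mathfrak S_4}\norm B_{\mathfrak S_4}$ followed by Cauchy--Schwarz. The only difference is cosmetic: the paper first proves $\norm{-D_G\mathcal L_{\mathrm{OU}}^{-1}R}_{L^4(\Omega;\mathfrak S_4)}\leqslant\norm{D_GR}_{L^4(\Omega;\mathfrak S_4)}$ by Minkowski and applies H\"older once, whereas you pull the time integral outside by Jensen against $\e^{-s}\,\mathrm ds$ and apply H\"older at each $s$; both routes give constant one.
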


\begin{proof}
The representation in \cite[Proposition~3.7]{NourdinPeccatiSwan2014},
transposed to match \eqref{eq:et-stein-identity}, gives
\[
    \tau_R(R)=\E\bigl[(D_GR)
       (-D_G\mathcal L_{\mathrm{OU}}^{-1}R)^*\mid R\bigr]\,.
\]
Here, \(\mathcal L_{\mathrm{OU}}\) is the Gaussian
Ornstein--Uhlenbeck generator and \( (\mathcal Q_t)_{t\ge 0} \) its Mehler
semigroup.  The pseudoinverse and gradient commutation formulas give
\begin{equation}
    -D_G\mathcal L_{\mathrm{OU}}^{-1}R
    =\int_0^\infty\e^{-t}\mathcal Q_tD_GR\,\mathrm d t\,.
    \label{eq:et-ou-gradient-representation}
\end{equation}
Jensen's inequality in the Mehler representation makes
\(\mathcal Q_t\) a contraction on
\(L^4(\Omega;\mathfrak S_4)\).  Minkowski's inequality in
\eqref{eq:et-ou-gradient-representation} therefore gives
\[
    \norm{-D_G\mathcal L_{\mathrm{OU}}^{-1}R}_{L^4(\Omega;\mathfrak S_4)}
    \leqslant\norm{D_GR}_{L^4(\Omega;\mathfrak S_4)}\,.
\]
Conditional Jensen, the Schatten inequality
\(\norm{AB^*}_{\mathfrak S_2}\leqslant
\norm A_{\mathfrak S_4}\,\norm B_{\mathfrak S_4}\), and
Cauchy--Schwarz imply
\begin{align*}
    \E\norm{\tau_R}_{\mathrm{HS}}^2
    &\leqslant\E\bigl[
       \norm{D_GR}_{\mathfrak S_4}^2\,
       \norm{-D_G\mathcal L_{\mathrm{OU}}^{-1}R}_{\mathfrak S_4}^2\bigr]\leqslant\norm{D_GR}_{L^4(\Omega;\mathfrak S_4)}^4\,. \qedhere
\end{align*}
\end{proof}

We now return to the fluctuation \(R\) defined in
\eqref{eq:et-mean-fluctuation}.  By
\Cref{lem:et-malliavin-stein}, the desired conditional Stein kernel
bound reduces to controlling \(D_GR\) in
\(L^4(\Omega;\mathfrak{S}_4)\).  For each fixed endpoint \(X_h=y\), reversibility
identifies the law of the reversed path with that of a Langevin diffusion
started from \(y\).  We realize this diffusion on canonical Wiener space and
write \(D_G\) for the Malliavin derivative with respect to its driving
Brownian path.  In this realization,
\(D_Gm(X_h)=0\).  Differentiating the representation
\eqref{eq:et-corrected-displacement} of \(\mathcal E_h^{\mathrm{corr}}\) introduces
\(\nabla\chi(X_0)\), so we first record the required regularity
estimate for the corrector, proved later in
\Cref{subsec:et-S4-multiplier}.

\begin{lemma}[Corrector derivative]
\label{lem:et-S4-corrector}
There is a universal constant \(C>0\) such that, for every \(h>0\), the
corrector in \eqref{eq:et-corrector} satisfies
\[
    \norm{\nabla\chi}_{L^4(\pi;\mathfrak{S}_4)}
    \leqslant Cd^{1/4} h\,.
\]
\end{lemma}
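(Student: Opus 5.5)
The plan is to express \(\nabla\chi\) through the semigroup acting on gradients, expand the multiplier \(r\) as a sum of semigroup quadrature defects, and bound that sum using analyticity of the semigroup on \(L^4(\pi;\mathfrak S_4)\).

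\textbf{Step 1: commute \(\nabla\) past the multiplier.} For a scalar \(f\), the Bakry--\'Emery intertwining gives \(\nabla Lf=\vec L\,\nabla f\), where \(\vec L\deq L-\nabla^2V\) is the Witten Laplacian on vector fields, with \(L\) acting componentwise. At the level of semigroups, \(\nabla P_tf=\vec Q_t\nabla f\) with
\[
    \vec Q_tF(x)\deq\E[J_t^{x\top}F(X_t^x)]\,,
\]
where \(J_t^x\) is the derivative flow, satisfying \(\norm{J_t^x}_{\mathrm{op}}\leqslant\e^{-t/\kappa}\leqslant1\). Apply this column by column to \(\chi=h\,r(-hL)(\nabla V)\) from \eqref{eq:et-corrector-multiplier}, where \(\nabla V\) has columns \(\partial_jV\). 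This yields
\[
    \nabla\chi=h\,r(-h\vec L)\,(\nabla^2V)\,,
\]
with \(\vec Q_t\) acting on matrix fields through \(F\mapsto\E[J_t^\top F(X_t)]\). Since \(\norm{\nabla^2V}_{\mathfrak S_4}\leqslant d^{1/4}\) pointwise, it suffices to show that \(r(-h\vec L)\) is bounded on \(L^4(\pi;\mathfrak S_4)\) uniformly in \(h\).

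\textbf{Step 2: contractivity and the quadrature decomposition.} Pointwise, \(\norm{\E[J_t^\top F(X_t)]}_{\mathfrak S_4}\leqslant P_t\norm F_{\mathfrak S_4}\). By invariance of \(\pi\), \(\vec Q_t\) is therefore a contraction on \(L^p(\pi;\mathfrak S_p)\) for every \(p\in[1,\infty]\). It is also self-adjoint on \(L^2(\pi;\mathrm{HS})\), since the intertwined operator is symmetric there. Next, use \(1/z=\int_0^\infty\e^{-tz}\,\mathrm d t\) and \(\coth(z/2)=1+2\sum_{n\geqslant1}\e^{-nz}\) to write
\[
    r(z)=\sum_{n\geqslant0}\Bigl[\int_n^{n+1}\e^{-tz}\,\mathrm d t-\tfrac12\bigl(\e^{-nz}+\e^{-(n+1)z}\bigr)\Bigr]\,.
\]
Each summand is the trapezoidal defect of \(t\mapsto\e^{-tz}\) on \([n,n+1]\). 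The operator version therefore reads
\[
    r(-h\vec L)=\sum_{n\geqslant0}\Bigl[\int_n^{n+1}\vec Q_{th}\,\mathrm d t-\tfrac12(\vec Q_{nh}+\vec Q_{(n+1)h})\Bigr]\,.
\]
The \(n=0\) term has norm at most \(2\) by contractivity alone. For \(n\geqslant1\), the Peano kernel of the trapezoidal rule bounds the term by \(\sup_{t\in[n,n+1]}\norm{(h\vec L)^2\vec Q_{th}}\). If that supremum is at most \(C/n^2\), the series is summable and the lemma follows with constant \(C\), uniformly in \(h\) and \(\kappa\).

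\textbf{Step 3 (main obstacle): analyticity on \(L^4(\pi;\mathfrak S_4)\).} It remains to prove \(\norm{(t\vec L)^2\vec Q_t}_{L^4(\mathfrak S_4)\to L^4(\mathfrak S_4)}\leqslant C\). By the semigroup property this follows from \(\norm{t\vec L\vec Q_t}\leqslant C\), i.e.\ from analyticity of \(\vec Q\). On \(L^2(\pi;\mathrm{HS})\) this is spectral calculus for a nonnegative self-adjoint operator, which gives \(\vec Q\) an extension to the right half-plane. I would then mimic Stein's complex-interpolation proof of analyticity for symmetric Markov semigroups. Interpolate the analytic family \(\vec Q_{t\e^{i\theta}}\), bounded on \(L^2(\mathrm{HS})\) for \(|\theta|<\pi/2\), against the contraction \(\vec Q_t\) on \(L^\infty(\pi;\mathfrak S_\infty)\). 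This uses the complex interpolation identity \([L^2(\mathfrak S_2),L^\infty(\mathfrak S_\infty)]_{1/2}=L^4(\mathfrak S_4)\). It should give a bounded holomorphic extension on a sector of half-angle \(\pi/4\), and Cauchy's formula then yields \(\norm{t\vec L\vec Q_t}\leqslant C\).

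The delicate points are two. First, the family must be of admissible growth on the strip. Second, the \(L^\infty(\mathfrak S_\infty)\) bound is only available on the real axis, so one works with \(\e^{i\theta z}\)-weighted families as in Stein's argument. If this route stalls, the fallback is the abstract result that a semigroup which is self-adjoint on \(L^2\) and contractive on all \(L^p\) of a UMD-valued space (here \(\mathfrak S_4\)) is analytic on the intermediate spaces.
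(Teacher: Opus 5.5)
Your proposal is correct. Steps 1 and 3 coincide with the paper. The paper also writes \(\nabla\chi=h\,r(-hL^{(1)})\,\nabla^2V\) by gradient intertwining. It also proves contractive analyticity of \(\e^{zL^{(1)}}\) on \(L^4(\pi;\mathfrak S_4)\) for \(\lvert\arg z\rvert<\uppi/4\) by the Stein interpolation you describe (\Cref{lem:et-witten-analyticity}). There the family is \(\exp(t\e^{2\boldsymbol i\theta\zeta}L^{(1)})\), interpolated between the real-time \(L^\infty(\pi;\mathfrak S_\infty)\) contraction and the complex-time \(L^2(\pi;\mathfrak S_2)\) contraction. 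The growth condition you worry about is harmless: this family is contractive on \(L^2(\pi;\mathfrak S_2)\) throughout the strip, so the fallback is unnecessary.

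The genuine difference is how you pass from analyticity to the uniform bound on \(r(-hL^{(1)})\). The paper converts analyticity into sectoriality, with resolvent bounds from a rotated Laplace transform. It then applies a Dunford--Riesz contour estimate (\Cref{lem:tb-sectorial-multiplier}). That requires checking that \(r\) is holomorphic on \(\Sigma_{3\uppi/8}\) (its poles are at \(2\uppi\boldsymbol ik\), \(k\neq0\)) and tends to \(0\) and \(-1/2\) at polynomial rates. It also requires subtracting a constant and a multiple of \((1+z)^{-1}\).

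You instead regroup the Neumann series \eqref{eq:et-corrector-series} into trapezoidal defects of \(t\mapsto\e^{-tz}\). You then need only the Cauchy estimate \(\sup_{t>0}\norm{t\vec L\vec Q_t}<\infty\) and the Peano kernel. These give \(O(n^{-2})\) decay for \(n\geqslant1\), and contractivity handles \(n=0\).

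What your route buys:
\begin{itemize}
\item It is more elementary.
\item \(r(-h\vec L)\) becomes an absolutely convergent series of semigroup operators. Its agreement with the \(L^2\) spectral definition, and the interchange of \(\nabla\) with the series, are then immediate.
\item It makes the role of analyticity explicit. \(r\) is the Laplace transform of the signed measure \(\mathrm d t\) on \([0,\infty)\) minus \(\tfrac12\delta_0+\sum_{n\geqslant1}\delta_n\), which has infinite total variation. Contractivity alone therefore cannot bound it; analyticity supplies the cancellation inside each grouped term.
\end{itemize}
The paper's calculus is more general. It bounds any multiplier holomorphic on a sector with polynomial limits at \(0\) and \(\infty\), without needing a quadrature-type series representation.
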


\begin{proposition}[Conditional \(\mathfrak{S}_4\) derivative]
\label{prop:et-S4-derivative}
There is a universal constant \(C>0\) such that, for
\(0<h\leqslant h_0\), the fluctuation \(R\) in
\eqref{eq:et-mean-fluctuation} satisfies
\begin{equation}
    \norm{D_GR}_{L^4(\Omega;\mathfrak{S}_4)}
    \leqslant Cd^{1/4} h^{3/2}\,.
    \label{eq:et-DR-S4}
\end{equation}
The norm averages over both the endpoint \(X_h\sim\pi\) and the
Brownian realization with that endpoint fixed.
Consequently, it has conditional Stein kernels \(\tau_{X_h}\) such that
\begin{equation}
    \E\norm{\tau_{X_h}}_{\mathrm{HS}}^2
    \leqslant Cdh^6\,.
    \label{eq:et-Stein-kernel-bound}
\end{equation}
\end{proposition}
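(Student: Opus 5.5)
We work conditionally on the endpoint \(X_h=y\) and realize the reversed path \(Y_s\deq X_{h-s}\), \(0\leqslant s\leqslant h\), as a Langevin diffusion from \(y\) driven by a Brownian motion \(W\). The Malliavin derivative \(D_G\) is taken with respect to \(W\). In this realization \(X_0=Y_h\), every \(X_t=Y_{h-t}\) is a functional of \(W\), and \(D_Gm(X_h)=0\). Hence \(D_GR=D_G\mathcal E_h^{\mathrm{corr}}\), and we differentiate the pathwise identity \eqref{eq:et-corrected-displacement} term by term.

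The basic input is the derivative of the reversed flow. For \(r\leqslant s\), \(D_rY_s=\sqrt2\,J_{r\to s}\), where \(J_{r\to s}\) solves \(\partial_sJ=-\nabla^2V(Y_s)J\) with \(J_{r\to r}=\Id_d\), and \(D_rY_s=0\) for \(r>s\). Since \(0\preceq\nabla^2V\preceq\Id_d\), we have \(\norm{J_{r\to s}}_{\mathrm{op}}\leqslant1\) and \(\norm{J_{r\to s}-\Id_d}_{\mathrm{op}}\leqslant s-r\). Viewed as an operator \(\mathcal H=L^2([0,h];\R^d)\to\R^d\), \(D_GY_s\) therefore satisfies \(\norm{D_GY_s}_{\mathfrak S_4}\lesssim(\int_0^s\norm{J_{r\to s}}_{\mathfrak S_4}^{2}\,\mathrm dr)^{1/2}\)-type bounds. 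The cleanest way to get these is to compare with the identity part, which is \(d^{1/4}\sqrt s\), and then treat the rest as a perturbation of operator norm \(O(s^{3/2})\). Dimension enters only through \(\norm{\cdot}_{\mathfrak S_4}\leqslant d^{1/4}\norm{\cdot}_{\mathrm{op}}\) on rank-\(d\) pieces. This gives \(\norm{D_GX_t}_{\mathfrak S_4}\lesssim d^{1/4}\sqrt{h}\) uniformly. The useful point is that differences carry extra smallness: for \(t,t'\in[0,h]\), \(D_G(X_t-X_{t'})\) has \(\mathfrak S_4\) norm \(\lesssim d^{1/4}\sqrt{|t-t'|}+d^{1/4}h^{3/2}\).

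Next we bound each term of \(D_G\mathcal E_h^{\mathrm{corr}}\).

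\emph{Quadrature term.} \(D_G\mathcal E_h=\int_0^h\nabla^2V(X_t)D_GX_t\,\mathrm dt-\frac h2\nabla^2V(X_0)D_GX_0\), using \(D_GX_h=0\). We do not use cancellation of the Hessians, since no third derivatives are available. Instead we bound it crudely by \(h\cdot\sup_t\norm{D_GX_t}_{\mathfrak S_4}\lesssim d^{1/4}h^{3/2}\), using \(\norm{AB}_{\mathfrak S_4}\leqslant\norm A_{\mathrm{op}}\norm B_{\mathfrak S_4}\). This is already the target order.

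\emph{Force increments.} The second line of \eqref{eq:et-corrected-displacement} carries a prefactor \(h\). Its derivatives involve \(\nabla^2V\) at shifted points times \((\Id+\nabla\chi(X_0))D_GX_0\), together with the derivative of \(\overline X_h\). The latter is a Jacobian of the forward flow composed with \((\Id+\nabla\chi)D_GX_0\), plus Brownian terms, so all of it is \(\lesssim h\,(1+\norm{\nabla\chi}_{\mathrm{op}})\,d^{1/4}\sqrt h\). To avoid operator norms of \(\nabla\chi\), we instead use \(\norm{\nabla\chi(X_0)D_GX_0}_{\mathfrak S_4}\leqslant\norm{\nabla\chi(X_0)}_{\mathfrak S_4}\norm{D_GX_0}_{\mathrm{op}}\lesssim\norm{\nabla\chi(X_0)}_{\mathfrak S_4}\sqrt h\). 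This yields \(h\,d^{1/4}\sqrt h\,(1+h)\) in \(L^4\) after applying \Cref{lem:et-S4-corrector}.

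\emph{Corrector terms.} \(D_G\chi(X_0)=\nabla\chi(X_0)D_GX_0\), and \(\chi(X_h)\) has zero derivative. Hence \(\norm{D_G\chi(X_0)}_{L^4(\mathfrak S_4)}\leqslant\norm{\nabla\chi}_{L^4(\pi;\mathfrak S_4)}\sup\norm{D_GX_0}_{\mathrm{op}}\lesssim d^{1/4}h\cdot\sqrt h\). Here stationarity, \(X_0\sim\pi\), converts the \(L^4(\Omega)\) norm into \(L^4(\pi)\). This is exactly where \Cref{lem:et-S4-corrector} is needed.

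Summing, we get \(\norm{D_GR}_{L^4(\Omega;\mathfrak S_4)}\lesssim d^{1/4}h^{3/2}\), which is \eqref{eq:et-DR-S4}. The Stein kernel bound then follows from \Cref{lem:et-malliavin-stein}, applied conditionally on \(X_h\) (where \(R\) is centered), and averaged over \(X_h\): \(\E\norm{\tau_{X_h}}_{\mathrm{HS}}^2\leqslant\norm{D_GR}^4_{L^4(\mathfrak S_4)}\lesssim dh^6\).

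The main obstacle is making the operator-norm bound \(\norm{D_GX_t}_{\mathrm{op}}\lesssim\sqrt h\) and the \(\mathfrak S_4\) bound rigorous under only \(C^2\) regularity. In particular, \(\overline X_h\) depends on \(W\) both directly and through \(X_0+\chi(X_0)\), which requires the reversed-path realization to be compatible with the forward flow from a shifted point. We would handle this by mollifying \(V\), and \(\chi\) via \Cref{lem:et-S4-corrector}, proving the bounds with constants independent of the mollification, and passing to the limit by closability of \(D_G\).
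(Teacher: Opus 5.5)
Your proposal is correct and follows essentially the same route as the paper. You condition on the endpoint, bound \(\norm{D_GX_t}_{\mathfrak S_4}\lesssim d^{1/4}\sqrt h\) through the contracting first-variation flow, differentiate \eqref{eq:et-corrected-displacement} term by term using the ideal inequality and \Cref{lem:et-S4-corrector} for the pieces involving \(\nabla\chi\), and then apply \Cref{lem:et-malliavin-stein} conditionally on \(X_h\) and average.

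The only real difference is in the term \(D_G\overline X_h\). The paper differentiates the noise-free equation for \(\eta_t\deq\overline X_t-X_t\) and uses \(D_GX_h=0\). This settles the compatibility issue you single out as the main obstacle without expressing the forward Brownian increments as functionals of the reversed noise, and it gives the smaller order \(d^{1/4}h^{5/2}\) for that contribution.
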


\begin{proof}
Condition on \(X_h=y\).  By reversibility, the conditional law of
\((X_{h-t})_{0\leqslant t\leqslant h}\) is the law of a Langevin diffusion
started from \(y\), which we realize on canonical Wiener space.  Write
\(\mathcal H\deq L^2([0,h];\R^d)\), let \(G\) be the associated isonormal
Gaussian process over \(\mathcal H\), and define
\[
    \mathcal J_t\deq D_GX_{h-t}:\mathcal H\to\R^d\,,
    \qquad 0\leqslant t\leqslant h\,.
\]
Thus, \(\mathcal J_t\) is the Malliavin derivative of the reversed path at
time \(t\).  To identify it, fix \(f\in\mathcal H\) and shift
the canonical Brownian path in the Cameron--Martin direction
\(t\mapsto\int_0^t f_s\,\mathrm d s\).  Differentiating the resulting Langevin SDE
with respect to this shift shows that
\[
    \frac{\mathrm d}{\mathrm d t}\,(\mathcal J_tf)
    =-\nabla^2V(X_{h-t})\mathcal J_tf+\sqrt2\,f_t\,,
    \qquad \mathcal J_0f=0\,.
\]
For \(0\leqslant s\leqslant t\leqslant h\), let \(J_{t,s}\) be the
homogeneous evolution operator, characterized by
\[
    \frac{\mathrm d}{\mathrm d t}\,J_{t,s}=-\nabla^2V(X_{h-t})J_{t,s}\,,
    \qquad J_{s,s}=\Id\,.
\]
For fixed \(s\) and \(v\in\R^d\), convexity of \(V\) gives
\[
    \frac{\mathrm d}{\mathrm d t}\,\norm{J_{t,s}v}^2
    =-2\,\ip{J_{t,s}v}{\nabla^2V(X_{h-t})J_{t,s}v}
    \leqslant0\,.
\]
Since \(J_{s,s}=\Id\), it follows that
\(\norm{J_{t,s}v}\leqslant\norm v\), and hence
\(\norm{J_{t,s}}_{\mathfrak{S}_\infty}\leqslant1\).  For every
\(f\in\mathcal H\), variation of constants now yields
\[
    \mathcal J_tf=\sqrt2\int_0^tJ_{t,s}f_s\,\mathrm d s\,.
\]
Cauchy--Schwarz and the bound on \(J_{t,s}\) give
\(\norm{\mathcal J_t}_{\mathfrak{S}_\infty}\leqslant\sqrt{2t}\).  The
integral kernel representation gives
\[
    \norm{\mathcal J_t}_{\mathfrak{S}_2}^2
    =2\int_0^t\norm{J_{t,s}}_{\mathfrak{S}_2}^2\,\mathrm d s
    \leqslant2dt\,.
\]
In summary,
\begin{equation}
    \norm{\mathcal J_t}_{\mathfrak{S}_\infty}\leqslant\sqrt{2t}\,,
    \qquad
    \norm{\mathcal J_t}_{\mathfrak{S}_2}\leqslant\sqrt{2dt}\,.
    \label{eq:et-K-op-HS}
\end{equation}
H\"older's inequality for the singular values therefore gives
\begin{equation}
    \norm{\mathcal J_t}_{\mathfrak{S}_4}
    \leqslant\norm{\mathcal J_t}_{\mathfrak{S}_\infty}^{1/2}\,
       \norm{\mathcal J_t}_{\mathfrak{S}_2}^{1/2}
    \leqslant\sqrt{2t}\,d^{1/4}\,.
    \label{eq:et-K-S4}
\end{equation}

Because \(X_h=y\) is held fixed when differentiating with respect to the
driving Brownian path,
\(D_G\{m(X_h)\}=D_G\{\chi(X_h)\}=0\) as they depend only on this endpoint.  Differentiating
\eqref{eq:et-corrected-displacement} gives
\[
\begin{split}
    D_GR={}&D_G\{\chi(X_0)\}+D_G\mathcal E_h \\
           &\qquad{} -\frac h2\,D_G\{\nabla V(X_0+\chi(X_0))-\nabla V(X_0)\}
      -\frac h2\,D_G\{\nabla V(\overline X_h)-\nabla V(X_h)\}\,.
\end{split}
\]
We estimate these four terms in turn.  The Malliavin chain rule first gives
\[
    D_G\{\chi(X_0)\}
    =\nabla\chi(X_0)^\top D_GX_0
    =\nabla\chi(X_0)^\top\mathcal J_h\,.
\]
Using \eqref{eq:et-K-op-HS}, then averaging over the reverse noise
and the stationary endpoint, gives
\begin{equation}
    \norm{D_G\{\chi(X_0)\}}_{L^4(\Omega;\mathfrak S_4)}
    \leqslant\sqrt{2h}\,\norm{\nabla\chi}_{L^4(\pi;\mathfrak S_4)}
    \leqslant Cd^{1/4} h^{3/2}\,,
    \label{eq:et-u-conditional-derivative}
\end{equation}
where the last inequality is \Cref{lem:et-S4-corrector}.

We next bound \(D_G\mathcal E_h\).  Applying the Malliavin chain rule to the
formula for \(\mathcal E_h=\widetilde X_h-X_h\) in
\eqref{eq:et-quadrature-error} gives
\[
\begin{split}
    D_G\mathcal E_h
    ={}&\int_0^h\nabla^2V(X_t)\mathcal J_{h-t}\,\mathrm d t
       -\frac h2\,\nabla^2V(X_0)\mathcal J_h\,.
\end{split}
\]
This uses \(\mathcal J_0=D_GX_h=0\) because the endpoint \(X_h=y\) is held
fixed.  Using
\(\norm{\nabla^2V}_{\mathrm{op}}\leqslant1\), Minkowski's inequality,
and \eqref{eq:et-K-S4}, we now obtain
\[
\begin{split}
    \norm{D_G\mathcal E_h}_{L^4(\Omega;\mathfrak{S}_4)}
    &\leqslant \int_0^h\norm{\mathcal J_{h-t}}_{\mathfrak{S}_4}\,\mathrm d t
       +\frac h2\,\norm{\mathcal J_h}_{\mathfrak{S}_4}\leqslant\sqrt2\,d^{1/4}\,
       \Bigl\{\int_0^h\sqrt{h-t}\,\mathrm d t+\frac h2\,\sqrt h\Bigr\}\leqslant Cd^{1/4} h^{3/2}\,.
\end{split}
\]

For the term
\(D_G\{\nabla V(X_0+\chi(X_0))-\nabla V(X_0)\}\), the chain rule gives
\[
\begin{split}
    &D_G\{\nabla V(X_0+\chi(X_0))-\nabla V(X_0)\}=
       \bigl\{\nabla^2V(X_0+\chi(X_0))
          (\Id+\nabla\chi(X_0)^\top)-\nabla^2V(X_0)\bigr\}\mathcal J_h\,.
\end{split}
\]
Using \(\norm{\nabla^2V}_{\mathrm{op}}\leqslant1\),
\Cref{lem:et-S4-corrector}, \eqref{eq:et-K-op-HS}, and
\eqref{eq:et-K-S4}, we obtain
\[
\begin{split}
    &\frac h2\,\norm{D_G\{\nabla V(X_0+\chi(X_0))-\nabla V(X_0)\}}_{L^4(\Omega;\mathfrak{S}_4)}\leqslant
    Cd^{1/4} h^{3/2}\,.
\end{split}
\]

It remains to bound
\(D_G\{\nabla V(\overline X_h)-\nabla V(X_h)\}\).  Return to the
forward time coordinate, let \(\overline X_t\) be the diffusion started from
\(X_0+\chi(X_0)\) with the same Brownian path, and put
\(\eta_t\deq\overline X_t-X_t\).  Then \(D_GX_t=\mathcal J_{h-t}\) and adding and subtracting $\nabla^2 V(\overline X_t) \mathcal J_{h-t}$ gives
\[
\begin{split}
    \frac{\mathrm d}{\mathrm d t}\,D_G\eta_t
    ={}&-\nabla^2V(\overline X_t)D_G\eta_t-\{\nabla^2V(\overline X_t)-\nabla^2V(X_t)\}\mathcal J_{h-t}\,,
    \qquad
    D_G\eta_0=\nabla\chi(X_0)^\top\mathcal J_h\,.
\end{split}
\]
The fundamental solution of the homogeneous equation is a contraction.
Since
\(\norm{\nabla^2V(\overline X_t)-\nabla^2V(X_t)}_{\mathrm{op}}
\leqslant2\),
variation of constants, \Cref{lem:et-S4-corrector},
\eqref{eq:et-K-op-HS}, and \eqref{eq:et-K-S4} bound the initial
variation by \(Cd^{1/4} h^{3/2}\) and the forcing at time \(t\) by
\(Cd^{1/4} \sqrt{h-t}\). Integrating the latter gives
\[
\begin{split}
    \norm{D_G\eta_h}_{L^4(\Omega;\mathfrak{S}_4)}
    &\leqslant C\sqrt h\,\norm{\nabla\chi}_{L^4(\pi;\mathfrak{S}_4)}
    +2\int_0^h\norm{\mathcal J_{h-t}}_{\mathfrak{S}_4}\,\mathrm d t\leqslant Cd^{1/4} h^{3/2}\,.
\end{split}
\]
Because \(X_h\) is fixed under the conditioning,
\(D_G\overline X_h=D_G\eta_h\).  Thus,
\begin{equation}
\begin{split}
    &\frac h2\,\norm{D_G\{\nabla V(\overline X_h)-\nabla V(X_h)\}}_{L^4(\Omega;\mathfrak{S}_4)}\leqslant C d^{1/4} h^{5/2}\,.
\end{split}
\label{eq:et-terminal-force-S4-derivative}
\end{equation}
Combining \eqref{eq:et-u-conditional-derivative}--\eqref{eq:et-terminal-force-S4-derivative} and using \(h\leqslant h_0\leqslant1\) proves
\eqref{eq:et-DR-S4}.

For $\pi$-almost every endpoint \(y\), \Cref{lem:et-malliavin-stein} gives a Stein kernel
\(\tau_y\) satisfying
\[
    \E\bigl[\norm{\tau_y}_{\mathrm{HS}}^2\mid X_h=y\bigr]
    \leqslant
    \E\bigl[\norm{D_GR}_{\mathfrak{S}_4}^4
       \mid X_h=y\bigr]\,.
\]
Since \(X_h\sim\pi\), integrating this inequality over the endpoint and
using the tower property gives
\[
\begin{split}
    \E\norm{\tau_{X_h}}_{\mathrm{HS}}^2
    &\leqslant
       \E\norm{D_GR}_{\mathfrak{S}_4}^4=\norm{D_GR}_{L^4(\Omega;\mathfrak{S}_4)}^4\leqslant
       \bigl(Cd^{1/4} h^{3/2}\bigr)^4
     =C^4dh^6\,,
\end{split}
\]
where the inequality is \eqref{eq:et-DR-S4}.  Renaming the universal
constant proves \eqref{eq:et-Stein-kernel-bound}.
\end{proof}

\subsection{A Schatten-\texorpdfstring{$4$}{4} multiplier estimate for the Witten Laplacian}
\label{subsec:et-S4-multiplier}

The remaining input to \Cref{prop:et-S4-derivative} is
\(\norm{\nabla\chi}_{L^4(\pi;\mathfrak S_4)}\lesssim d^{1/4} h\).
The Witten semigroup describes the evolution of spatial gradients under
Langevin dynamics; \Cref{subsec:tb-witten,subsec:tb-interpolation} recall
the background. The argument has three steps.
\Cref{lem:et-witten-endpoints} gives real time contraction on
\(L^\infty(\pi;\mathfrak S_\infty)\) and complex time contraction on
\(L^2(\pi;\mathfrak S_2)\).
\Cref{lem:et-witten-analyticity} interpolates these estimates to
\(L^4(\pi;\mathfrak S_4)\) and bounds the resolvent there.
\Cref{lem:et-S4-multiplier} then applies the general multiplier estimate of \Cref{lem:tb-sectorial-multiplier} to bound the
multiplier in \(\nabla\chi=h\,r(-hL^{(1)})\,\nabla^2V\).
The exponent \(4\) is needed because the Stein kernel is a product of two
operators in \(\mathfrak S_4\), and therefore belongs to \(\mathfrak S_2\).

We use the $1$-form generator and its semigroup
\[
    L^{(1)}\deq L-\nabla^2V\,,\qquad
    L=\Delta-\nabla V\cdot\nabla\,,\qquad P_t^{(1)}\deq\e^{tL^{(1)}}\,,
\]
acting columnwise on matrix fields.  The Feynman--Kac representation
\cite[Proposition~3.2, equation~(3.2)]{Huguet2021} is
\[
    (P_t^{(1)}M)(x)
    =\E\bigl[(J_t^x)^\top M(X_t^x)\bigr]\,,\qquad
    J_t^x\deq D_xX_t^x\,.
\]
Here, \(J_t^x\) is the Jacobian with respect to the starting point.
It satisfies \(\partial_tJ_t^x=-\nabla^2V(X_t^x)J_t^x\), hence
\(\norm{J_t^x}_{\mathrm{op}}\leqslant\e^{-t/\kappa}\).
\Cref{subsec:tb-witten} motivates the generator and transpose via the
scalar chain rule and explains how the cited representation applies.
We now record the resulting norm estimates.

\begin{lemma}[Contraction estimates for the Witten semigroup]
\label{lem:et-witten-endpoints}
For every \(t\geqslant0\) and \(2\leqslant p\leqslant\infty\),
\begin{equation}
    \norm{P_t^{(1)}M}_{L^p(\pi;\mathfrak{S}_p)}
    \leqslant \e^{-t/\kappa}\,\norm M_{L^p(\pi;\mathfrak{S}_p)}\,.
    \label{eq:et-witten-real-contraction}
\end{equation}
On \(L^2(\pi;\mathfrak{S}_2)\), \(-L^{(1)}\) is positive self-adjoint and
\begin{equation}
\begin{split}
    \ip M{-L^{(1)}M}_{L^2(\pi;\mathfrak{S}_2)}
    ={}&\int\norm{\nabla M}_{\mathrm{HS}}^2\,\mathrm d \pi+\int\sum_{j\in[d]}
          \ip{M_j}{\nabla^2V\,M_j}\,\mathrm d \pi\,.
\end{split}
\label{eq:et-witten-quadratic-form}
\end{equation}
Consequently,
\begin{equation}
    \norm{\e^{zL^{(1)}}M}_{L^2(\pi;\mathfrak{S}_2)}
    \leqslant\norm M_{L^2(\pi;\mathfrak{S}_2)}
    \qquad\text{whenever }\operatorname{Re}z\geqslant0\,.
    \label{eq:et-witten-L2-complex}
\end{equation}
\end{lemma}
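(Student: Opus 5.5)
The plan is to get all three claims from the Feynman--Kac representation \((P_t^{(1)}M)(x)=\E[(J_t^x)^\top M(X_t^x)]\) and from integration by parts against \(\pi\).

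\emph{Real-time contraction.} Fix \(x\) and apply Minkowski's inequality in the Schatten norm to the Feynman--Kac formula. Using \(\norm{AB}_{\mathfrak S_p}\leqslant\norm A_{\mathrm{op}}\norm B_{\mathfrak S_p}\) and \(\norm{J_t^x}_{\mathrm{op}}\leqslant\e^{-t/\kappa}\), this gives
\[
    \norm{(P_t^{(1)}M)(x)}_{\mathfrak S_p}\leqslant\e^{-t/\kappa}\,\E\norm{M(X_t^x)}_{\mathfrak S_p}\,.
\]
For \(p=\infty\) the claim follows by taking the essential supremum over \(x\). For finite \(p\), raise to the \(p\)-th power and apply Jensen's inequality, giving \(\E[\norm{M(X_t^x)}_{\mathfrak S_p}^p]=P_t(\norm M_{\mathfrak S_p}^p)(x)\). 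Integrating against \(\pi\) and using invariance \(\pi P_t=\pi\) yields \eqref{eq:et-witten-real-contraction}. One technical point: the representation is first needed for smooth compactly supported \(M\), and the bound then extends by density.

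\emph{Quadratic form and self-adjointness.} Act columnwise. For each column \(M_j\), the identity \(\int\ip{M_j}{-LM_j}\,\mathrm d\pi=\int\norm{\nabla M_j}_{\mathrm{HS}}^2\,\mathrm d\pi\) is the standard integration by parts for the reversible generator \(L=\Delta-\nabla V\cdot\nabla\), applied to each component. The zeroth-order term \(-\nabla^2V\) is a symmetric multiplication operator and contributes \(\int\ip{M_j}{\nabla^2V\,M_j}\,\mathrm d\pi\). Summing over \(j\) gives \eqref{eq:et-witten-quadratic-form}.

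This shows that \(-L^{(1)}\) is symmetric and nonnegative on \(C_c^\infty\); in fact the form is at least \(\kappa^{-1}\norm M^2\) because \(\nabla^2V\succeq\kappa^{-1}\Id\). Self-adjointness is the step I expect to need the most care. I would take \(-L^{(1)}\) to be the Friedrichs extension of the closed form. To confirm that this is the generator of the Feynman--Kac semigroup, I would check that the Feynman--Kac semigroup is a symmetric \(C_0\) contraction semigroup on \(L^2(\pi;\mathfrak S_2)\) (the case \(p=2\) above) whose generator agrees with \(L^{(1)}\) on a core. Essential self-adjointness on \(C_c^\infty\) is plausible here because the potential term \(\nabla^2V\) is bounded and \(L\) itself is essentially self-adjoint under \eqref{eq:sc-smooth}; the bounded perturbation then preserves this, for instance by Kato--Rellich.

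\emph{Complex-time contraction.} Once \(-L^{(1)}\) is known to be positive self-adjoint, the spectral theorem gives \(\norm{\e^{zL^{(1)}}}\leqslant\sup_{\lambda\geqslant0}\lvert\e^{-z\lambda}\rvert=\sup_{\lambda\geqslant0}\e^{-\lambda\operatorname{Re}z}\leqslant1\) whenever \(\operatorname{Re}z\geqslant0\). This is \eqref{eq:et-witten-L2-complex}.
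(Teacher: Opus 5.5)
Your proposal is correct and follows the paper's proof essentially step for step. The steps are: Feynman--Kac with the ideal property and $\norm{J_t^x}_{\mathrm{op}}\leqslant\e^{-t/\kappa}$; Jensen and invariance of $\pi$ for finite $p$, and the essential supremum for $p=\infty$ (there apply the representation directly to bounded fields, since density is unavailable in $L^\infty$); then integration by parts on compactly supported fields, closure of the form, and the spectral theorem. Your additional check that the form-closure operator coincides with the generator of the Feynman--Kac semigroup is a point the paper leaves implicit, so it is extra care rather than a different route.
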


\begin{proof}
The Feynman--Kac formula and the ideal property of Schatten norms give
\[
    \norm{P_t^{(1)}M(x)}_{\mathfrak S_p}
    \leqslant\e^{-t/\kappa}\,\E\norm{M(X_t^x)}_{\mathfrak S_p}\,.
\]
Jensen's inequality and stationarity give
\eqref{eq:et-witten-real-contraction} for finite \(p\); the essential
supremum gives the case \(p=\infty\).  Integration by parts proves
\eqref{eq:et-witten-quadratic-form} on smooth compactly supported
fields.  Closing this symmetric form defines the non-negative
self-adjoint operator \(-L^{(1)}\), and the spectral theorem proves
\eqref{eq:et-witten-L2-complex}.
\end{proof}

The preceding lemma supplies the contraction estimates. We now interpolate
its \(p=\infty\) and \(p=2\) bounds to obtain the
complex time estimate needed for the multiplier.  The interpolation
and sectorial calculus facts used here are recalled in
\Cref{subsec:tb-interpolation}.

\begin{lemma}[Dimension-free analyticity on \(L^4(\pi;\mathfrak{S}_4)\)]
\label{lem:et-witten-analyticity}
The semigroup \(\e^{zL^{(1)}}\) extends to a contractive analytic semigroup on
\(L^4(\pi;\mathfrak{S}_4)\) in the sector \(\lvert \arg z \rvert<\uppi/4\).  In
particular, \(-L^{(1)}\) is sectorial on \(L^4(\pi;\mathfrak{S}_4)\):
its spectrum is contained in the closed sector
\(\{\lvert\arg z\rvert\leqslant\uppi/4\}\), and for each fixed
\(\vartheta\in(\uppi/4,\uppi/2)\) there is a constant
\(C_\vartheta\) such that
\begin{equation}
    \norm{(z+L^{(1)})^{-1}}_{L^4(\pi;\mathfrak{S}_4)\to L^4(\pi;\mathfrak{S}_4)}
    \leqslant\frac{C_\vartheta}{\lvert z\rvert}
    \qquad\text{whenever }\lvert\arg z\rvert\geqslant\vartheta\,.
    \label{eq:et-witten-resolvent}
\end{equation}
\end{lemma}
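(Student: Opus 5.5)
The plan is complex interpolation between the two endpoint estimates of \Cref{lem:et-witten-endpoints}, with an analytic family of operators in the variable \(z\). On \(L^\infty(\pi;\mathfrak S_\infty)\) we have contraction only for real times \(t\geqslant0\), by \eqref{eq:et-witten-real-contraction} with \(p=\infty\). On \(L^2(\pi;\mathfrak S_2)\) we have contraction for all \(\operatorname{Re}z\geqslant0\), i.e.\ in the sector of half-angle \(\uppi/2\), by \eqref{eq:et-witten-L2-complex}. The spaces \(L^p(\pi;\mathfrak S_p)\) form a complex interpolation scale, \([L^\infty(\mathfrak S_\infty),L^2(\mathfrak S_2)]_{\theta}=L^4(\mathfrak S_4)\) with \(\theta=1/2\) (since \(1/4=(1-\theta)/\infty+\theta/2\)). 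This is the vector-valued Riesz--Thorin/Stein setting recalled in \Cref{subsec:tb-interpolation}. Interpolating the angles linearly then predicts contraction on \(L^4(\mathfrak S_4)\) in the sector of half-angle \(\theta\cdot\uppi/2=\uppi/4\).

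Concretely, fix \(z_0=r\e^{\mathrm i\phi}\) with \(\lvert\phi\rvert<\uppi/4\). Write \(\phi=\phi_1/2\) with \(\lvert\phi_1\rvert<\uppi/2\), and consider the analytic family
\[
    T_w\deq\exp\bigl(r\,\e^{\mathrm i\phi_1 w}L^{(1)}\bigr)
\]
for \(w\) in the strip \(0\leqslant\operatorname{Re}w\leqslant1\). At \(\operatorname{Re}w=0\), \(\e^{\mathrm i\phi_1 w}=\e^{-\phi_1\operatorname{Im}w}\) is a positive real number, so \(T_w\) is a real-time semigroup and is contractive on \(L^\infty(\mathfrak S_\infty)\). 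At \(\operatorname{Re}w=1\), the exponent has argument \(\phi_1\) with \(\lvert\phi_1\rvert<\uppi/2\), so \(T_w\) is contractive on \(L^2(\mathfrak S_2)\). Stein's interpolation theorem at \(w=1/2\) then gives \(\norm{\e^{z_0L^{(1)}}}_{L^4(\mathfrak S_4)}\leqslant1\).

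To make this rigorous, I would work on a dense class such as smooth compactly supported matrix fields in \(L^2\cap L^\infty\). There I need analyticity and admissible growth of \(w\mapsto\ip{T_wM}{N}\). Analyticity follows from the spectral theorem on \(L^2\), since the \(L^2\) semigroup is analytic in the open right half plane. For growth, boundedness on the strip holds because \(\lvert\ip{T_wM}{N}\rvert\leqslant\norm M_{L^2}\norm N_{L^2}\) whenever \(\operatorname{Re}(r\e^{\mathrm i\phi_1 w})\geqslant0\). The main technical obstacle is that this condition fails on part of the strip when \(\operatorname{Im}w\) is large, because \(\e^{\mathrm i\phi_1 w}\) rotates. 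I would handle this by first restricting to a compact angle range and using the Hadamard three-lines bound with \(\phi_1\) small, or equivalently by reparametrizing so the argument stays within \((-\uppi/2,\uppi/2)\). An alternative route is to interpolate along rays, with the boundary values taken at \(\arg=0\) and \(\arg=\phi_1\) and the map \(w\mapsto\) the angle linear in \(\operatorname{Re}w\), truncating in \(\operatorname{Im}w\) with a Gaussian damping factor \(\e^{\varepsilon w^2}\) and letting \(\varepsilon\to0\). Strong continuity on \(L^4\) then follows from continuity on the dense class together with uniform boundedness.

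Once \(\e^{zL^{(1)}}\) is a bounded analytic contraction semigroup on \(L^4(\mathfrak S_4)\) in every closed subsector \(\lvert\arg z\rvert\leqslant\uppi/4-\delta\), sectoriality follows from the standard characterization of bounded analytic semigroups, as recalled in \Cref{subsec:tb-interpolation}. For \(\lvert\arg z\rvert\geqslant\vartheta>\uppi/4\), I would represent the resolvent as a Laplace transform along a rotated ray,
\[
    (z+L^{(1)})^{-1}=\int_0^{\e^{\mathrm i\psi}\infty}\e^{-z s}\,\e^{sL^{(1)}}\,\mathrm d s\,,
\]
choosing \(\lvert\psi\rvert<\uppi/4\) so that \(\operatorname{Re}(z\e^{\mathrm i\psi})\geqslant c_\vartheta\lvert z\rvert\). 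Contractivity of the semigroup along this ray then bounds the integral by \(C_\vartheta/\lvert z\rvert\), which is \eqref{eq:et-witten-resolvent}. The spectrum statement follows because the resolvent exists outside the closed sector \(\{\lvert\arg z\rvert\leqslant\uppi/4\}\).
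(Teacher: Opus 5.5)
Your proposal is correct and is essentially the paper's proof: the same analytic family \(\exp(t\e^{2\mathrm{i}\theta\zeta}L^{(1)})\) (your \(\phi_1=2\theta\)), the same Stein interpolation at \(\zeta=1/2\) between \(L^\infty(\pi;\mathfrak S_\infty)\) and \(L^2(\pi;\mathfrak S_2)\) on the core \(L^\infty\cap L^2\), and the same rotated Laplace transform for the resolvent. Two small corrections: first, the obstacle you flag is not there, since \(r\e^{\mathrm{i}\phi_1 w}=r\e^{-\phi_1\operatorname{Im}w}\e^{\mathrm{i}\phi_1\operatorname{Re}w}\) has argument \(\phi_1\operatorname{Re}w\in[-\lvert\phi_1\rvert,\lvert\phi_1\rvert]\) on the whole strip, so the \(L^2\) bound holds uniformly with no damping or angle restriction needed; second, \(\int_0^{\e^{\mathrm{i}\psi}\infty}\e^{-zs}\e^{sL^{(1)}}\,\mathrm d s\) represents \((z-L^{(1)})^{-1}\), so for \((z+L^{(1)})^{-1}\) with \(\lvert\arg z\rvert\geqslant\vartheta\) you must apply it to \(u=-z\) (with an overall minus sign) and choose \(\psi\) so that \(\operatorname{Re}(u\e^{\mathrm{i}\psi})\geqslant c_\vartheta\lvert z\rvert\), which is possible because \(\lvert\arg u\rvert\leqslant\uppi-\vartheta<3\uppi/4\).
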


\begin{proof}
The interpolation identity for Bochner--Schatten spaces
(\Cref{subsec:tb-interpolation}) gives
\begin{equation}
    [L^\infty(\pi;\mathfrak{S}_\infty),L^2(\pi;\mathfrak{S}_2)]_{1/2}
    =L^4(\pi;\mathfrak{S}_4)\,,
    \label{eq:et-matrix-interpolation}
\end{equation}
isometrically~\cite{BerghLofstrom1976}.

Fix \(\lvert\theta\rvert<\uppi/4\) and, for \(t>0\), consider on the strip
\(0\leqslant\operatorname{Re}\zeta\leqslant1\) the analytic family
\[
\mathcal F(\zeta)\deq\exp(t\e^{2\boldsymbol i\theta\zeta}L^{(1)})\,.
\]
If \(\zeta=\boldsymbol iy\), then \(t\e^{2\boldsymbol i\theta\zeta}=t\e^{-2\theta y}>0\), so
\Cref{lem:et-witten-endpoints} shows that \(\mathcal F(\boldsymbol iy)\) is a
contraction on \(L^\infty(\pi;\mathfrak{S}_\infty)\).
If \(\zeta=1+\boldsymbol iy\), then
\(t\e^{-2\theta y}\e^{2\boldsymbol i\theta}\) has positive real part, so
\eqref{eq:et-witten-L2-complex} shows that
\(\mathcal F(1+\boldsymbol iy)\) is a contraction on
\(L^2(\pi;\mathfrak{S}_2)\).
Stein's interpolation theorem~\cite{Stein1956} at \(\zeta=1/2\) and
\eqref{eq:et-matrix-interpolation} yield
\[
    \norm{\exp(t\e^{\boldsymbol i\theta}L^{(1)})}_{L^4(\pi;\mathfrak{S}_4)\to L^4(\pi;\mathfrak{S}_4)}
    \leqslant1\,.
\]
The argument is first applied on the dense compatible core
\(L^\infty(\pi;\mathfrak{S}_\infty)\cap L^2(\pi;\mathfrak{S}_2)\) and then
extended to \(L^4(\pi;\mathfrak{S}_4)\).  Strong continuity on
\(L^4(\pi;\mathfrak{S}_4)\) follows from the same density argument.
This proves the first assertion.

The resolvent of \(-L^{(1)}\) is the Laplace transform of the
semigroup, and analyticity lets the integration ray be rotated by any
angle below \(\uppi/4\), as in \eqref{eq:tb-rotated-laplace}.  Choosing
the rotation as in \Cref{subsec:tb-interpolation} gives
\eqref{eq:et-witten-resolvent} with
\(C_\vartheta=1/\sin((\vartheta-\uppi/4)/2)\).
\end{proof}

Sectoriality puts the contour calculus of
\Cref{subsec:tb-interpolation} at our disposal, and with it the
multiplier bound, uniform in the step size; the proof reduces to
checking the hypotheses of \Cref{lem:tb-sectorial-multiplier}.

\begin{lemma}[Uniform \(\mathfrak{S}_4\) Witten--Laplacian multiplier]
\label{lem:et-S4-multiplier}
There is a universal constant \(C>0\) such that for every $h > 0$,
\begin{equation}
    \norm{r(-hL^{(1)})}_{L^4(\pi;\mathfrak{S}_4)\to L^4(\pi;\mathfrak{S}_4)}
    \leqslant C\,.
    \label{eq:et-S4-multiplier}
\end{equation}
\end{lemma}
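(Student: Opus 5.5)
The plan is to apply the general sectorial multiplier estimate \Cref{lem:tb-sectorial-multiplier} to the operator \(A\deq-hL^{(1)}\) on \(X\deq L^4(\pi;\mathfrak S_4)\). By \Cref{lem:et-witten-analyticity}, \(A\) is sectorial of angle \(\uppi/4\) on \(X\). The resolvent bound \eqref{eq:et-witten-resolvent} is invariant under the positive dilation \(L^{(1)}\mapsto hL^{(1)}\): if \((z+L^{(1)})^{-1}\) has norm at most \(C_\vartheta/\lvert z\rvert\) off the sector, then so does \((z-A)^{-1}=h^{-1}(z/h+L^{(1)})^{-1}\). The constants are therefore uniform in \(h>0\), and it suffices to show that \(r\) is an admissible symbol on a larger sector \(\Sigma_\omega\deq\{\lvert\arg z\rvert<\omega\}\) with \(\omega\in(\uppi/4,\uppi/2)\), say \(\omega=3\uppi/8\).

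\textbf{Step 1: holomorphy of \(r\).} The function \(r(z)=1/z-\tfrac12\coth(z/2)\) has a removable singularity at \(0\). Its remaining poles are those of \(\coth(z/2)\), which lie at \(z=2\uppi\boldsymbol i k\) for \(k\neq0\), on the imaginary axis. Hence \(r\) is holomorphic on a neighbourhood of \(\overline{\Sigma_\omega}\) for any \(\omega<\uppi/2\).

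\textbf{Step 2: the decay bounds that define the extended Dunford--Riesz class.} Near zero, \(r(z)=-z/12+O(z^3)\), so \(\lvert r(z)\rvert\lesssim\lvert z\rvert\) for \(\lvert z\rvert\leqslant1\). For \(\lvert z\rvert\geqslant1\) in \(\Sigma_\omega\), we have \(\operatorname{Re}z\geqslant\lvert z\rvert\cos\omega\), so \(\coth(z/2)=1+O(\e^{-\operatorname{Re}z})\) is bounded there, and \(\lvert 1/z\rvert\leqslant1\). Thus \(r\) is bounded on the sector and decays linearly at the origin. The constant \(-\tfrac12\) at infinity is not decaying, so I would split
\[
r(z)=-\tfrac12\cdot\frac{z}{1+z}+\Bigl\{r(z)+\tfrac12\cdot\frac{z}{1+z}\Bigr\}.
\]
The first piece is handled directly: \(A(1+A)^{-1}=\Id-(1+A)^{-1}\) is bounded by \(1+C_\vartheta\) by sectoriality. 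The bracketed remainder is \(O(\lvert z\rvert)\) near \(0\). At infinity it equals \(1/z-\tfrac12\{\coth(z/2)-1\}-\tfrac12(1+z)^{-1}=O(1/\lvert z\rvert)\) on \(\Sigma_\omega\). So it satisfies \(\lvert\psi(z)\rvert\lesssim\min\{\lvert z\rvert,\lvert z\rvert^{-1}\}\), which is the standard \(H_0^\infty(\Sigma_\omega)\) condition.

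\textbf{Step 3: the contour integral.} For the \(H_0^\infty\) remainder \(\psi\), I would write
\[
\psi(A)=\frac1{2\uppi\boldsymbol i}\int_{\partial\Sigma_\vartheta}\psi(z)(z-A)^{-1}\,\mathrm d z
\]
with \(\vartheta\in(\uppi/4,\omega)\). Bounding the resolvent by \(C_\vartheta/\lvert z\rvert\) gives
\[
\norm{\psi(A)}\leqslant C\int_0^\infty\min\{s,s^{-1}\}\,\frac{\mathrm d s}{s}<\infty\,,
\]
which is a universal constant. Two bookkeeping points remain. First, this functional-calculus definition of \(r(-hL^{(1)})\) must agree with the spectral-calculus definition on \(L^2(\pi;\mathfrak S_2)\) used in \(\nabla\chi=h\,r(-hL^{(1)})\nabla^2V\). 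This holds because both calculi agree on resolvents and on the dense intersection \(L^2\cap L^4\). Second, \(0\) may lie in the spectrum, but the linear vanishing of \(\psi\) at the origin makes the integral converge there without invoking the spectral gap.

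I expect the main obstacle to be the control of \(\coth(z/2)\) on the sector, uniformly away from the poles \(2\uppi\boldsymbol i k\) near the imaginary axis. The resolvent bound is only available for \(\lvert\arg z\rvert\geqslant\vartheta>\uppi/4\), so the contour stays strictly inside \(\lvert\arg z\rvert<\uppi/2\), and the exponential estimate \(\lvert\coth(z/2)-1\rvert\lesssim\e^{-\lvert z\rvert\cos\vartheta}\) for \(\lvert z\rvert\geqslant1\) closes this. I would also check that \Cref{lem:tb-sectorial-multiplier} is stated in a form that accepts a bounded, nondecaying symbol. If it is not, the decomposition in Step 2 handles that case by hand.
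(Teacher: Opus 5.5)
Your proposal is correct and matches the paper's proof: the paper applies \Cref{lem:tb-sectorial-multiplier} with $\mathcal T=-L^{(1)}$, sector angle $\vartheta=3\uppi/8$, $r(0)=0$, $r(\infty)=-1/2$ and $s=1$. Your splitting $r(z)=-\tfrac12\,z/(1+z)+\psi(z)$ is exactly that lemma's decomposition $f(\infty)+(f(0)-f(\infty))(1+z)^{-1}+g(z)$, and scale invariance of the resolvent constant gives uniformity in $h$. The paper's decay estimate at infinity, via $r(z)+\tfrac12=1/z-1/(\e^z-1)$ and $\lvert\e^z-1\rvert\geqslant\lvert z\rvert\cos\vartheta$, is the same as your exponential bound on $\coth(z/2)-1$.
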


\begin{proof}
We apply \Cref{lem:tb-sectorial-multiplier} on
\(\mathcal X=L^4(\pi;\mathfrak S_4)\) with \(\mathcal T=-L^{(1)}\) and
\(f=r\), which also defines \(r(-hL^{(1)})\) on this space.  By
\Cref{lem:et-witten-analyticity}, \(-L^{(1)}\) is sectorial of angle
\(\uppi/4\), with constants depending only on the angle.
Fix \(\vartheta\deq3\uppi/8\). It remains to verify the hypotheses
on \(r\) in the sector
\(\Sigma_\vartheta\deq\{z\in\mathbb C\setminus\{0\}:\lvert\arg z\rvert<\vartheta\}\),
recalled in \Cref{subsec:tb-interpolation}, with \(r(0)=0\), \(r(\infty)=-1/2\), and
\(s=1\).

\emph{Holomorphy.}  The poles of \(\coth(z/2)\) are at
\(2\uppi\boldsymbol ik\), \(k\in\mathbb Z\).  The pole at zero cancels
with \(z^{-1}\), since \(\tfrac12\coth(z/2)=z^{-1}+z/12+O(z^3)\); the
other poles lie on the imaginary axis, outside \(\Sigma_\vartheta\).
Thus \(r\) is holomorphic in \(\Sigma_\vartheta\) and in the disc
\(\{\lvert z\rvert<2\uppi\}\).

\emph{Behavior at zero.}  Since \(r\) is holomorphic in
\(\{\lvert z\rvert<2\uppi\}\) with \(r(0)=0\), there is a universal
constant \(C\) such that \(\lvert r(z)\rvert\leqslant C\,\lvert z\rvert\)
for \(\lvert z\rvert\leqslant1\).

\emph{Behavior at infinity.}  The identity
\(\coth(z/2)=1+2/(\e^z-1)\) gives
\[
    r(z)+\frac12=\frac1z-\frac1{\e^z-1}\,.
\]
For \(z\in\Sigma_\vartheta\), we have
\(\operatorname{Re}z\geqslant\lvert z\rvert\cos\vartheta\), so
\(\lvert\e^z-1\rvert\geqslant\e^{\operatorname{Re}z}-1
\geqslant\lvert z\rvert\cos\vartheta\).  Hence
\(\lvert r(z)+1/2\rvert\leqslant(1+1/\cos\vartheta)/\lvert z\rvert\).

\Cref{lem:tb-sectorial-multiplier} now bounds
\(\norm{r(-hL^{(1)})}_{L^4(\pi;\mathfrak S_4)\to L^4(\pi;\mathfrak S_4)}\)
uniformly in \(h>0\) by a universal constant, which proves
\eqref{eq:et-S4-multiplier}.
\end{proof}

We can now differentiate the corrector and prove the deferred estimate.

\begin{proof}[Proof of \Cref{lem:et-S4-corrector}]
Gradient intertwining gives \(\nabla P_t=P_t^{(1)}\nabla\).  Differentiate
the finite partial sums in \eqref{eq:et-corrector-series} and use
\eqref{eq:et-witten-real-contraction} to pass to the limit in
\(L^4(\pi;\mathfrak{S}_4)\).  Since $\nabla\chi=h\,r(-hL^{(1)})\,\nabla^2V$,
we can apply \Cref{lem:et-S4-multiplier} and the pointwise bound
\(\norm{\nabla^2V}_{\mathfrak{S}_4}\leqslant d^{1/4}\).
\end{proof}

\subsection{\texorpdfstring{\(\W_2\)}{W\_2} estimates after Gaussian convolution}
\label{subsec:et-transport}

We next convert the conditional Stein kernel bound into a transport
estimate.  An independent Gaussian convolution bound of this
form follows from \cite[Theorem~E.4 and Remark~E.6(ii)]{PaulinWhalley2026}.
The following direct proof sharpens its numerical constant.
Its hypotheses require only a centered perturbation with a
square-integrable Stein kernel; no density or Poincar\'e inequality for
the perturbation is needed.

\begin{lemma}[Gaussian convolution inequality]
\label{lem:et-gaussian-convolution}
Let \(R\in L^2(\Omega;\R^d)\) be centered, suppose that its law
admits a square-integrable Stein kernel \(\tau_R\), and let
\(\xi\sim\cN(0,\sigma^2\Id_d)\) be independent of \(R\).  Then,
\[
    \W_2^2\bigl(\Law(R+\xi),\Law(\xi)\bigr)
    \leqslant{\frac1{4\sigma^2}}\,\E\norm{\tau_R}_{\mathrm{HS}}^2\,.
\]
\end{lemma}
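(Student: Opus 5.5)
The plan is to interpolate between \(\Law(\xi)\) and \(\Law(R+\xi)\) by the Gaussian-smoothed path \(\mu_t\deq\Law(\sqrt t\,R+\xi)\), \(t\in[0,1]\), and to bound \(\W_2\) through the Benamou--Brenier formula. We exhibit a velocity field \(v_t\) solving the continuity equation \(\partial_t\mu_t+\nabla\cdot(\mu_tv_t)=0\), with
\[
\W_2(\mu_0,\mu_1)\leqslant\int_0^1\norm{v_t}_{L^2(\mu_t)}\,\mathrm d t\,.
\]
The choice \(\sqrt t\) is important. Since \(R\) is centered, the first-order term in \(t\) vanishes, and the Stein kernel converts the remaining term into a pure diffusion-type flux.

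The first step is to compute \(\partial_t\E\varphi(\sqrt tR+\xi)\) for a smooth test function \(\varphi\). This derivative is \(\frac1{2\sqrt t}\E\ip{R}{\nabla\varphi(\sqrt tR+\xi)}\). Conditioning on \(\xi\), apply the Stein identity \eqref{eq:et-stein-identity} to the test field \(r\mapsto\nabla\varphi(\sqrt t r+\xi)\). Its derivative is \(\sqrt t\,\nabla^2\varphi\), so
\[
\partial_t\E\varphi(\sqrt tR+\xi)=\tfrac12\,\E\operatorname{tr}\{\tau_R(R)\nabla^2\varphi(\sqrt tR+\xi)\}\,.
\]
Only the Hessian of \(\varphi\) appears; this is the second-order effect of a centered perturbation. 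Next, integrate by parts against the Gaussian variable \(\xi\). Write \(\nabla^2\varphi(y)=\nabla_\xi[\nabla\varphi(\sqrt tR+\xi)]\) and use Gaussian integration by parts, \(\E[\operatorname{div}_\xi F(\xi)]=\sigma^{-2}\E\ip{\xi}{F(\xi)}\) for the matrix field \(F=\tau_R(R)\nabla\varphi\). This gives
\[
\partial_t\E\varphi(Y_t)=\frac1{2\sigma^2}\,\E\ip{\tau_R(R)\,\xi}{\nabla\varphi(Y_t)}\,,\qquad Y_t\deq\sqrt tR+\xi\,.
\]
If \(\tau_R\) is not symmetric, a transpose appears here; the HS norm is unaffected. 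Hence the velocity \(v_t(y)\deq\frac1{2\sigma^2}\E[\tau_R(R)\xi\mid Y_t=y]\) solves the continuity equation weakly.

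The second step estimates the velocity. By conditional Jensen and the independence of \(\xi\) from \(R\),
\[
\norm{v_t}_{L^2(\mu_t)}^2\leqslant\frac1{4\sigma^4}\,\E\norm{\tau_R(R)\xi}^2=\frac{\sigma^2}{4\sigma^4}\,\E\norm{\tau_R}_{\mathrm{HS}}^2\,.
\]
This bound is uniform in \(t\). Integrating over \(t\in[0,1]\) then gives
\[
\W_2\leqslant\frac{1}{2\sigma}\,(\E\norm{\tau_R}_{\mathrm{HS}}^2)^{1/2}\,,
\]
which is exactly the claim with constant \(1/(4\sigma^2)\) after squaring.

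The main obstacle is rigor rather than computation: justifying Benamou--Brenier for a weak solution of the continuity equation. Here \(\mu_t\) has a smooth positive density, since it is a Gaussian convolution for every \(t\), and \(v_t\in L^2(\mu_t)\) uniformly. We would invoke the standard superposition or Ambrosio--Gigli--Savar\'e characterization of absolutely continuous curves in \((\mathcal P_2,\W_2)\), whose metric derivative is at most \(\norm{v_t}_{L^2(\mu_t)}\). Alternatively, we would use the dual (Kantorovich/Hamilton--Jacobi) formulation: test against Lipschitz \(\varphi\) and mollify. We would also check that \eqref{eq:et-stein-identity} extends to test fields of the form \(\nabla\varphi(\sqrt t\cdot+\xi)\) with bounded derivatives, averaged over \(\xi\), which follows by density.
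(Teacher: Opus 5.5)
Your proposal is correct and is the paper's argument: the paper interpolates along \(\mathsf X_\theta=\xi+\theta R\). It applies the Stein identity with \(\xi\) fixed, then Gaussian integration by parts with \(R\) fixed, to obtain the velocity \(\frac{\theta}{\sigma^2}\,\E[\tau_R\,\xi\mid\mathsf X_\theta]\), and concludes with the metric speed bound of Ambrosio--Gigli--Savar\'e. Your \(\sqrt t\) parametrization traces the same curve with \(\theta=\sqrt t\), so it is not actually essential (the length integral is reparametrization-invariant); it only makes the speed constant rather than linear in \(\theta\), and yields the same constant \(1/(2\sigma)\).
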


\begin{proof}
Set \(\mathsf X_\theta\deq\xi+\theta R\), for
\(0\leqslant\theta\leqslant1\).
For \(\varphi\in C_c^\infty(\R^d)\), the Stein identity
\eqref{eq:et-stein-identity}, followed by Gaussian integration by parts,
gives
\[
\begin{aligned}
    \frac{\mathrm d}{\mathrm d\theta}\,\E\varphi(\mathsf X_\theta)
    &=\E\ip R{\nabla\varphi(\mathsf X_\theta)}
      =\theta\,\E\operatorname{tr}\{\tau_R\nabla^2\varphi(\mathsf X_\theta)\}\\
    &=\frac\theta{\sigma^2}\,\E\ip{\tau_R\,\xi}{\nabla\varphi(\mathsf X_\theta)}\,.
\end{aligned}
\]
Here the first integration by parts holds with \(\xi\) fixed, and the
second with \(R\) fixed.  Thus
\(v_\theta(\mathsf X_\theta)\deq\frac\theta{\sigma^2}\,\E[\tau_R\,\xi\mid\mathsf X_\theta]\)
is a velocity for the continuity equation of \(\Law(\mathsf X_\theta)\).
Conditional Jensen's inequality and independence imply
\[
    \norm{v_\theta}_{L^2(\Law(\mathsf X_\theta))}^2
    \leqslant\frac{\theta^2}{\sigma^4}\,\E\norm{\tau_R\,\xi}^2
    =\frac{\theta^2}{\sigma^2}\,\E\norm{\tau_R}_{\mathrm{HS}}^2\,.
\]
The metric speed bound
\cite[Theorem~8.3.1]{AmbrosioGigliSavare2008} therefore yields
\[
    \W_2\bigl(\Law(R+\xi),\Law(\xi)\bigr)
    \leqslant\int_0^1\norm{v_\theta}_{L^2(\Law(\mathsf X_\theta))}\,\mathrm d\theta
    \leqslant\frac1{2\sigma}\,
       \bigl(\E\norm{\tau_R}_{\mathrm{HS}}^2\bigr)^{1/2}\,.\qedhere
\]
\end{proof}

\begin{remark}[Comparison with Paulin--Whalley]
Theorem~E.4 and Remark~E.6(ii) of \cite{PaulinWhalley2026} give the
same squared Wasserstein bound with coefficient \( (\log2)/\sigma^2\),
by combining a relative entropy estimate with Gaussian Talagrand.
The direct transport proof gives \(1/(4\sigma^2)\) under the same
square-integrable Stein kernel assumption; their theorem additionally
controls relative entropy.  The numerical constant \(1/4\) is sharp:
in dimension one, for independent \(R\sim\cN(0,\varepsilon)\) and
\(\xi\sim\cN(0,\sigma^2)\), one may take \(\tau_R=\varepsilon\), and
\[
    \W_2^2\bigl(\Law(R+\xi),\Law(\xi)\bigr)
    =\bigl(\sqrt{\sigma^2+\varepsilon}-\sigma\bigr)^2
    \sim\frac{\varepsilon^2}{4\sigma^2}\,,
    \qquad \varepsilon\downarrow0\,.
\]
\end{remark}

The next lemma carries this Gaussian convolution estimate through one
full auxiliary step.  Its additional term pays for the change in the
two force evaluations caused by perturbing the initial point.
For the model update \(x\mapsto x+\sqrt{2h}\,G\), the comparison is simply
between \(z+R+\sqrt{2h}\,G\) and \(z+\sqrt{2h}\,G\), so the preceding
lemma gives the Stein kernel cost divided by \(h\). Restoring the drift
adds an \(O(h\,\norm R_2)\) cost.

\begin{lemma}[\(\W_2\) estimate for one auxiliary step]
\label{lem:et-Gaussian-convolution-transport}
There is a universal constant \(C>0\) with the following property.
Let \(0<h\leqslant h_0\) and \(z\in\R^d\), and let
\(R\in L^2(\Omega;\R^d)\) be centered and independent of the Brownian
motion driving the auxiliary step \eqref{eq:et-exact-reference}, with
a law admitting a square-integrable Stein kernel \(\tau_R\).  Then,
\begin{equation}
    \W_2^2\bigl(\Law(\widetilde X_h^{z+R}),
       \Law(\widetilde X_h^{z})\bigr)
    \leqslant C\,\Bigl\{
       \frac{\E\norm{\tau_R}_{\mathrm{HS}}^2}{h}
       +h^2\,\E\norm R^2
      \Bigr\}\,.
\label{eq:et-Gaussian-convolution-transport}
\end{equation}
The same estimate holds conditionally on a preceding \(\sigma\)-field
when \(z\) is measurable with respect to it and the Brownian motion of
the auxiliary step is fresh and independent of that field and \(R\).
\end{lemma}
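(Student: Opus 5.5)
We would prove \eqref{eq:et-Gaussian-convolution-transport} by decomposing the auxiliary step into its Gaussian part and a drift part, and treating each with a different tool. Write \(\widetilde X_h^x=x+\sqrt2\,B_h+\Phi(x,B)\), where
\[
    \Phi(x,B)\deq-\frac h2\,\{\nabla V(x)+\nabla V(X_h^x)\}
\]
collects the two force evaluations. The natural intermediate point is
\[
    Y\deq z+R+\sqrt2\,B_h+\Phi(z,B)\,.
\]
It uses the unperturbed forces from \(z\) but the perturbed initial point. The triangle inequality then gives
\[
    \W_2\bigl(\Law(\widetilde X_h^{z+R}),\Law(\widetilde X_h^z)\bigr)
    \leqslant \W_2\bigl(\Law(\widetilde X_h^{z+R}),\Law(Y)\bigr)
       +\W_2\bigl(\Law(Y),\Law(\widetilde X_h^z)\bigr)\,.
\]

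\emph{Drift term.} For the first distance we use the synchronous coupling, with the same \(R\) and the same \(B\). The difference is \(\Phi(z+R,B)-\Phi(z,B)\). Since \(\nabla V\) is \(1\)-Lipschitz, and synchronous contraction \eqref{eq:et-synchronous-contraction} gives \(\norm{X_h^{z+R}-X_h^z}\leqslant\norm R\), this difference is bounded pathwise by \(h\norm R\). Squaring yields the term \(h^2\,\E\norm R^2\).

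\emph{Gaussian term.} The second distance is where smoothing enters, and it is the main obstacle. The variable \(\Phi(z,B)\) is not independent of \(B_h\), so \Cref{lem:et-gaussian-convolution} does not apply directly to \(Y\) versus \(\widetilde X_h^z\). We would handle this by conditioning on the Brownian bridge. Decompose \(B\) into \(B_h\) and an independent bridge \(\beta\), so that \(X_h^z\), and hence \(\Phi(z,B)\), is a function \(F(z+\sqrt2B_h,\beta)\) of the pair \((z+\sqrt2B_h,\beta)\). This makes
\[
    Y=z+R+\sqrt2B_h+F(z+\sqrt2B_h,\beta)\,,
\]
whereas \(\widetilde X_h^z\) is the same expression with \(R\) removed. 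Because \(F\) is evaluated at \(z+\sqrt2B_h\), and not at the perturbed point \(z+R+\sqrt2B_h\), the perturbation sits outside the map and \(Y\) is not simply \(\Psi(z+R+\sqrt2B_h)\); this is the delicate point.

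To resolve it, introduce
\[
    Y'\deq\Psi\bigl(z+R+\sqrt2B_h,\beta\bigr)\,,\qquad
    \Psi(w,\beta)\deq w+F(w,\beta)\,.
\]
We would compare \(Y\) and \(Y'\) by synchronous coupling. Their difference is \(F(w+R,\beta)-F(w,\beta)\) with \(w=z+\sqrt2B_h\), and we expect it to be \(O(h\norm R)\) pathwise if \(F\) is \(O(h)\)-Lipschitz in its first argument. This requires Lipschitz control of the bridge-driven endpoint in its endpoint variable; if that is awkward, we would instead realize the step forward in time and re-express it. For \(Y'\) versus \(\widetilde X_h^z\), condition on \(\beta\). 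The map \(\Psi(\cdot,\beta)\) is \((1+Ch)\)-Lipschitz, so
\[
    \W_2\bigl(\Law(Y')\mid\beta,\ \Law(\widetilde X_h^z)\mid\beta\bigr)
    \leqslant(1+Ch)\,\W_2\bigl(\Law(z+R+\sqrt2B_h),\Law(z+\sqrt2B_h)\bigr)\,.
\]
By \Cref{lem:et-gaussian-convolution} with \(\sigma^2=2h\), the right-hand side is at most
\[
    (1+Ch)\,\bigl(\E\norm{\tau_R}_{\mathrm{HS}}^2/(8h)\bigr)^{1/2}\,.
\]
Gluing these conditional couplings over \(\beta\) yields the Stein-kernel term.

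\emph{Conditional version.} The statement conditional on a preceding \(\sigma\)-field follows by running the same argument for fixed \(z\) and integrating, since the Brownian motion is fresh and independent of both the \(\sigma\)-field and \(R\).
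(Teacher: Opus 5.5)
Your proposal is correct and is essentially the paper's argument. The paper also splits the Brownian path into its endpoint and a shared bridge, applies \Cref{lem:et-gaussian-convolution} with $\sigma^2=2h$ to the endpoint, and controls both force evaluations by Gr\"onwall; it merely packages these as one explicit coupling instead of your triangle inequality through $Y$ and $Y'$. The Lipschitz property you hedge on is immediate: changing the endpoint by $\Delta$ shifts the driving path by $\frac th\,\Delta$, so Gr\"onwall gives $\norm{X_h-X_h'}\leqslant\e^h\norm\Delta$, which makes $F(\cdot,\beta)$ $O(h)$-Lipschitz and $\Psi(\cdot,\beta)$ $(1+Ch)$-Lipschitz.
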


\begin{proof}
Let the auxiliary step from \(z+R\) be driven by a Brownian motion
\(B\) independent of \(R\), and the step from \(z\) by a second
Brownian motion \(\widetilde B\); the joint law of
\((R,B,\widetilde B)\) is constructed as follows.  Take an optimal
\(\W_2\) coupling of the endpoint variables \(R+\sqrt2B_h\) and
\(\sqrt2\widetilde B_h\); by \Cref{lem:et-gaussian-convolution},
applied with \(\sigma^2=2h\), its cost satisfies
\begin{equation}
    \E\norm{\delta}^2
    \leqslant{\frac1{8h}}\,\E\norm{\tau_R}_{\mathrm{HS}}^2\,,
    \qquad
    \delta\deq R+\sqrt2B_h-\sqrt2\widetilde B_h\,.
    \label{eq:et-endpoint-coupling}
\end{equation}
First sample \((R,B_h)\) conditionally on the coupled value of
\(R+\sqrt2B_h\).  This gives a joint law of
\((R,B_h,\widetilde B_h)\) with the required marginals.
To construct the full paths, take a standard Brownian bridge
\((b_t)_{0\leqslant t\leqslant h}\), independent of this triple, and set
\[
    B_t\deq\frac th\, B_h+b_t\,,\qquad
    \widetilde B_t\deq\frac th\,\widetilde B_h+b_t\,,
    \qquad 0\leqslant t\leqslant h\,.
\]
Each path is marginally a Brownian motion.  Their common bridge cancels
in the difference, giving
\(B_t-\widetilde B_t=\frac th\,(B_h-\widetilde B_h)\).
Thus, their drift-free parts differ by
\[
    R+\sqrt2B_t-\sqrt2\widetilde B_t
    =\Bigl(1-\frac th\Bigr)\,R+\frac th\,\delta\,,
\]
whose norm never exceeds \(\norm R+\norm{\delta}\).  By
\(1\)-Lipschitzness of \(\nabla V\) and Gr\"{o}nwall's inequality, the
diffusions \(X_t^{z+R}\) and \(X_t^{z}\), driven by \(B\) and by
\(\widetilde B\) respectively, therefore satisfy, for
\(0\leqslant t\leqslant h\leqslant h_0\),
\[
    \norm{X_t^{z+R}-X_t^{z}}
    \leqslant C\,\bigl(\norm R+\norm{\delta}\bigr)\,.
\]
The outputs of the two auxiliary steps differ by
\[
    \widetilde X_h^{z+R}-\widetilde X_h^{z}
    =\delta
    -\frac h2\,\bigl\{\nabla V(z+R)-\nabla V(z)\bigr\}
    -\frac h2\,\bigl\{\nabla V(X_h^{z+R})-\nabla V(X_h^{z})\bigr\}\,,
\]
and the two preceding bounds, with \(h\leqslant1\), give
\[
    \E\norm{\widetilde X_h^{z+R}-\widetilde X_h^{z}}^2
    \leqslant C\,\bigl\{\E\norm{\delta}^2+h^2\,\E\norm R^2\bigr\}\,.
\]
Together with \eqref{eq:et-endpoint-coupling}, this proves
\eqref{eq:et-Gaussian-convolution-transport}.  Conditional
disintegration proves the final assertion.
\end{proof}

\subsection{Proof of the invariant distribution bound}
\label{subsec:et-assembly}

We apply \Cref{lem:et-local-framework} to the shifted distribution
\(\nu\) and transfer the required estimates from \(\widetilde K_h\)
to the implementable kernel.

\begin{proposition}[Invariant distribution bias]
\label{prop:et-invariant-bias}
There is a universal constant \(C>0\) such that, for
\(0<h\leqslant h_0\),
\begin{equation}
    \kappa^{-1}\W_2^2(\widehat\pi_h^{\mathrm{Heun}},\pi)
    \leqslant C\kappa dh^3\,.
    \label{eq:et-direct-invariant-bound}
\end{equation}
\end{proposition}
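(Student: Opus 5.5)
We apply \Cref{lem:et-local-framework} with the shifted reference law \(\nu=(\Id+\chi)_\#\pi\) from \eqref{eq:et-corrected-law}. Since \(\W_2(\nu,\pi)\leqslant\norm\chi_{L^2(\pi)}\leqslant C\sqrt{dh^3}\) by \eqref{eq:et-corrector-L2}, the triangle inequality reduces the claim to two estimates:
\[
\W_2(\nu K_h^{\mathrm{Heun}},\nu)\leqslant C\sqrt{dh^3}\,,\qquad
\W_2\bigl(\nu(K_h^{\mathrm{Heun}})^2,\nu K_h^{\mathrm{Heun}}\bigr)\leqslant C\sqrt{dh^5}\,.
\]
Given these and \((1-\rho)^{-1}\leqslant 2\kappa/h\), we get \(\W_2(\widehat\pi_h^{\mathrm{Heun}},\pi)\leqslant C\sqrt{dh^3}+C\kappa\sqrt{dh^3}\). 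Squaring and dividing by \(\kappa\) gives \(C\kappa dh^3\).

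\textbf{Replacing the predictor.} First we compare \(K_h^{\mathrm{Heun}}\) with \(\widetilde K_h\) under synchronous Brownian coupling from the same point \(x\). The outputs differ by \(\frac h2\{\nabla V(X_h^x)-\nabla V(X_h^{\mathrm{pred}})\}\), which is at most \(\frac h2\norm{X_h^x-X_h^{\mathrm{pred}}}\leqslant\frac h2\int_0^h\norm{\nabla V(X_t^x)-\nabla V(x)}\,\mathrm dt\). By \Cref{lem:et-displacement}, this is \(O(h\sqrt{h\,(dh+h^2\norm{\nabla V(x)}^2)})\) in \(L^2\). Under any law with \(\int\norm{\nabla V}^2\lesssim d\), such as \(\nu\) and \(\nu K_h^{\mathrm{Heun}}\), the difference is \(O(\sqrt{dh^5})\) in \(L^2\). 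Because the Heun kernel is a \(\W_2\)-contraction, we can swap kernels one at a time, paying \(O(\sqrt{dh^5})\) each time. So it suffices to prove both estimates with \(\widetilde K_h\) in place of \(K_h^{\mathrm{Heun}}\), at least in the mixed forms that appear. The first estimate is then immediate from \Cref{lem:et-defect-second-moment}: the coupling \(\widetilde{\overline X}_h\) versus \(X_h+\chi(X_h)\) has \(L^2\) cost \(C\sqrt{dh^3}\).

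\textbf{The second estimate.} This is the main step. We write \(\nu\widetilde K_h\) as the law of \(X_h+\chi(X_h)+m(X_h)+R\), by \eqref{eq:et-mean-fluctuation}, and \(\nu\) as the law of \(X_h+\chi(X_h)\). Then we apply one more kernel step driven by fresh Brownian motion to both. Set \(z=X_h+\chi(X_h)+m(X_h)\). Conditionally on \(X_h\), the variable \(R\) is centered, independent of the new noise, and has a Stein kernel \(\tau_{X_h}\) by \Cref{prop:et-S4-derivative}. The conditional form of \Cref{lem:et-Gaussian-convolution-transport} then gives
\[
\W_2^2\bigl(\widetilde K_h(z+R,\cdot),\widetilde K_h(z,\cdot)\bigr)\leqslant C\bigl\{h^{-1}\E\norm{\tau_{X_h}}_{\mathrm{HS}}^2+h^2\E\norm R^2\bigr\}\leqslant C(dh^5+dh^5)\,,
\]
using \eqref{eq:et-Stein-kernel-bound} and \Cref{lem:et-defect-second-moment}. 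Gluing these conditional couplings over \(X_h\) is the delicate point: we need a measurable selection of conditional optimal couplings, and it has to be consistent with the disintegration. The remaining drift from \(z\) to \(X_h+\chi(X_h)\) is removed synchronously. By Lipschitzness of \(\widetilde K_h\) under synchronous coupling, this costs \(C\norm m_{L^2}\leqslant Ch\norm\chi_{L^2}\leqslant C\sqrt{dh^5}\), by \Cref{lem:et-mean-bound}.

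Combining these, we obtain \(\W_2(\nu\widetilde K_h^2,\nu\widetilde K_h)\leqslant C\sqrt{dh^5}\). The predictor replacement then transfers this to \(K_h^{\mathrm{Heun}}\): we expand \(\nu(K^{\mathrm{Heun}})^2\) into \(\nu\widetilde K_h^2\) plus swap errors, each of order \(\sqrt{dh^5}\) by contractivity. The main obstacle we expect is verifying the conditional-independence hypotheses needed for \Cref{lem:et-Gaussian-convolution-transport}, and checking that the predictor swaps never cost more than \(\sqrt{dh^5}\).
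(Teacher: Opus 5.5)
Your proposal is correct and follows essentially the paper's own argument. It uses \Cref{lem:et-local-framework} at $\nu=(\Id+\chi)_\#\pi$ and the one-step bound from \Cref{lem:et-defect-second-moment}. It compares $\nu\widetilde K_h^2$ with $\nu\widetilde K_h$ by removing the mean $m$ synchronously and treating the centered $R$ with \Cref{lem:et-Gaussian-convolution-transport}, and it pays for the predictor swaps through Heun contractivity.

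There are two small corrections:
\begin{itemize}
\item The predictor error is $O\bigl(h^2\sqrt{dh+h^2\norm{\nabla V(x)}^2}\bigr)$ in $L^2$. Your intermediate expression drops a factor $\sqrt h$, although your stated conclusion $O(\sqrt{dh^5})$ is right.
\item With the contractivity-based swap, the second kernel comparison is made at $\nu\widetilde K_h$, not at $\nu K_h^{\mathrm{Heun}}$. So $\nu\widetilde K_h$ is the law whose gradient moment $\int\norm{\nabla V}^2\lesssim d$ you need, and this follows from $\int\E\norm{\widetilde X_h^x-x}^2\,\nu(\mathrm d x)\lesssim dh$.
\end{itemize}
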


\begin{proof}
Fix \(0<h\leqslant h_0\) and apply
\Cref{lem:et-local-framework} with the reference law
\(\nu=(\Id+\chi)_\#\pi\) from \eqref{eq:et-corrected-law}.  We verify the
two distances on the right-hand side of
\eqref{eq:et-local-framework}.

The coupling \eqref{eq:et-mean-fluctuation} and
\Cref{lem:et-defect-second-moment} immediately give
\begin{equation}
    \W_2^2(\nu\widetilde K_h,\nu)\leqslant Cdh^3\,.
    \label{eq:et-corrected-one-step-cost}
\end{equation}
For the distance between consecutive iterates, apply a fresh auxiliary
step to the coupled starting points
\[
    X_h+\chi(X_h)\sim\nu\,,
    \qquad
    X_h+\chi(X_h)+m(X_h)+R\sim\nu\widetilde K_h\,.
\]
Condition on \(X_h\).  Removing the mean \(m(X_h)\) by synchronous
coupling costs at most \(C\,\norm m_{L^2(\pi)}^2\leqslant Cdh^5\)
in squared distance, by \Cref{lem:et-mean-bound} and
\eqref{eq:et-corrector-L2}.  For the centered fluctuation \(R\),
\Cref{lem:et-Gaussian-convolution-transport} and
\eqref{eq:et-Stein-kernel-bound} give, after averaging over the endpoint,
\[
    C\,\bigl\{h^{-1}\,\E\norm{\tau_{X_h}}_{\mathrm{HS}}^2
       +h^2\,\E\norm R^2\bigr\}\leqslant Cdh^5\,.
\]
Here \(\E\norm R^2\leqslant\E\norm{\mathcal E_h^{\mathrm{corr}}}^2\leqslant Cdh^3\)
by conditional centering and \Cref{lem:et-defect-second-moment}.
The triangle inequality therefore yields
\begin{equation}
    \W_2^2(\nu\widetilde K_h^2,\nu\widetilde K_h)
    \leqslant Cdh^5\,.
    \label{eq:et-lag-one-cost}
\end{equation}

We next replace \(\widetilde K_h\) by the Heun kernel
\(K_h^{\mathrm{Heun}}\) defined by
\eqref{eq:et-predictor}--\eqref{eq:et-step}.  From a starting point \(x\),
drive the exact diffusion and the auxiliary step by the same Brownian path,
and use its endpoint \(B_h\) in the Heun step. Recall from
\eqref{eq:et-exact-reference} that
\(\widetilde X_h^x=x-\tfrac h2\,\{\nabla V(x)+\nabla V(X_h^x)\}+\sqrt2B_h\).
Thus
\[
    X^+-\widetilde X_h^x
    =-\frac h2\,\bigl\{\nabla V(X_h^{\mathrm{pred}})
       -\nabla V(X_h^x)\bigr\}\,.
\]
Moreover,
\[
    X_h^{\mathrm{pred}}-X_h^x
    =\int_0^h\bigl\{\nabla V(X_t^x)-\nabla V(x)\bigr\}\,\mathrm d t\,.
\]
Jensen's inequality, \(1\)-Lipschitzness of \(\nabla V\), and
\eqref{eq:et-displacement-pointwise} therefore give
\begin{equation}
    \E\norm{X^+-\widetilde X_h^x}^2
    \leqslant C\,\bigl\{dh^5+h^6\,\norm{\nabla V(x)}^2\bigr\}\,.
    \label{eq:et-implementation-local}
\end{equation}
Since \(\nu=(\Id+\chi)_\#\pi\), integration by parts and
\eqref{eq:et-corrector-L2} imply
\[
\begin{split}
    \int\norm{\nabla V}^2\,\mathrm d \nu
    &=\int\norm{\nabla V(x+\chi(x))}^2\,\pi(\mathrm d x)\leqslant2\int\norm{\nabla V}^2\,\mathrm d \pi
       +2\,\norm{\chi}_{L^2(\pi)}^2
     \leqslant Cd\,.
\end{split}
\]
The definition of \(\widetilde X_h^x\) and
\eqref{eq:et-displacement-pointwise} then give
\[
    \int\E\norm{\widetilde X_h^x-x}^2\,\nu(\mathrm d x)
    \leqslant C\,\Bigl\{dh+h^2\int\norm{\nabla V}^2\,\mathrm d \nu\Bigr\}
    \leqslant Cdh\,.
\]
Using \(1\)-Lipschitzness once more,
\begin{align*}
    \int\norm{\nabla V}^2\,\mathrm d(\nu\widetilde K_h)
    &\leqslant2\int\norm{\nabla V}^2\,\mathrm d \nu
       +2\int\E\norm{\widetilde X_h^x-x}^2\,\nu(\mathrm d x)
    \leqslant Cd\,.
\end{align*}
Integrating \eqref{eq:et-implementation-local} first against \(\nu\) and
then against \(\nu\widetilde K_h\) proves
\begin{equation}
\begin{split}
    \W_2^2(\nu K_h^{\mathrm{Heun}},\nu\widetilde K_h)
    &\leqslant Cdh^5\,,\qquad
    \W_2^2((\nu\widetilde K_h)K_h^{\mathrm{Heun}},
       (\nu\widetilde K_h)\widetilde K_h)
    \leqslant Cdh^5\,.
\end{split}
    \label{eq:et-kernel-comparison-costs}
\end{equation}

The triangle inequality and
\eqref{eq:et-corrected-one-step-cost} transfer the first cost to
\(K_h^{\mathrm{Heun}}\).  For the second cost, insert
\((\nu\widetilde K_h)K_h^{\mathrm{Heun}}\),
\(\nu\widetilde K_h^2\), and \(\nu\widetilde K_h\) between
\(\nu(K_h^{\mathrm{Heun}})^2\) and \(\nu K_h^{\mathrm{Heun}}\).
Contractivity bounds the first inserted distance by
\(\rho\W_2(\nu K_h^{\mathrm{Heun}},\nu\widetilde K_h)\).
Thus \eqref{eq:et-lag-one-cost} and
\eqref{eq:et-kernel-comparison-costs} prove
\[
\begin{split}
    \W_2(\nu K_h^{\mathrm{Heun}},\nu)&\leqslant C\sqrt{dh^3}\,, \qquad
    \W_2(\nu(K_h^{\mathrm{Heun}})^2,\nu K_h^{\mathrm{Heun}})
       \leqslant C\sqrt{dh^5}\,.
\end{split}
\]

Substitute these estimates into \Cref{lem:et-local-framework} and use
\(1-\rho\geqslant h/(2\kappa)\):
\[
    \W_2(\widehat\pi_h^{\mathrm{Heun}},\nu)
    \leqslant C\sqrt{dh^3}+C\kappa h^{-1}\sqrt{dh^5}
    \leqslant C\kappa\sqrt{dh^3}\,.
\]
Finally, \(\W_2(\nu,\pi)\leqslant\norm\chi_{L^2(\pi)}
\leqslant C\sqrt{dh^3}\), so the triangle inequality proves
\eqref{eq:et-direct-invariant-bound}.
\end{proof}

\subsection{Completion of the overdamped upper bound}
\label{subsec:et-upper-completion}

\begin{proof}[Proof of \Cref{thm:et-upper}]
The normalization in \Cref{sec:normalization} turns
\eqref{eq:et-direct-invariant-bound} into
\eqref{eq:et-stationary-upper}.  Existence and uniqueness follow from
\Cref{prop:et-contractivity} and its rescaling.

Contractivity and invariance give, for a universal \(c_0>0\),
\[
    \W_2(\mu(K_h^{\mathrm{Heun}})^N,
       \widehat\pi_h^{\mathrm{Heun}})
    \leqslant \e^{-c_0N\alpha h}
       \W_2(\mu,\widehat\pi_h^{\mathrm{Heun}})\,.
\]
Consequently,
\[
\begin{split}
    \W_2(\mu(K_h^{\mathrm{Heun}})^N,\pi)
    &\leqslant \e^{-c_0N\alpha h}\,
       \bigl\{\W_2(\mu,\pi)
          +\W_2(\pi,\widehat\pi_h^{\mathrm{Heun}})\bigr\}+\W_2(\widehat\pi_h^{\mathrm{Heun}},\pi)\,.
\end{split}
\]
Squaring this inequality and applying
\eqref{eq:et-stationary-upper} proves
\eqref{eq:et-finite-time-upper}, after changing the universal constants.

Choose a sufficiently small
universal \(c_1>0\) and set
\[
    \beta h=c_1h_0\min\Bigl\{1,
       \Bigl(\frac{\varepsilon^2}{\kappa d}\Bigr)^{1/3}\Bigr\}\,,
    \qquad
    N=\Bigl\lceil C\kappa\,(\beta h)^{-1}
       \log\Bigl(\e+\frac{\alpha\W_2^2(\mu,\pi)}{\varepsilon^2}\Bigr)\Bigr\rceil\,.
\]
The two terms in \eqref{eq:et-finite-time-upper} are then each at most
\(\varepsilon^2/2\), proving the query bound for arbitrary
initialization.  If \(\mu=\delta_{x_\star}\), strong convexity and
integration by parts give
\[
    \alpha\,\E\norm{X-x_\star}^2
    \leqslant\E\ip{\nabla V(X)}{X-x_\star}=d\,,
    \qquad X\sim\pi\,.
\]
For \(0<\varepsilon\leqslant\sqrt d/2\), we have \(d/\varepsilon^2\geqslant4\),
and hence \(\log(\e+d/\varepsilon^2)\leqslant2\log(d/\varepsilon^2)\).
Since
\(\kappa\,(\beta h)^{-1}\lesssim
\kappa+\kappa^{4/3}d^{1/3}\varepsilon^{-2/3}\),
this gives \eqref{eq:et-complexity}.

Each iteration uses two gradient queries, which is absorbed in the
universal constant.
\end{proof}

\section{Proof of the underdamped upper bound}
\label{app:underdamped-proof}

We prove the ingredients and assemble the argument outlined in
\Cref{sec:underdamped-upper}.  Throughout this appendix, except
\Cref{subsec:uld-completion}, the Langevin dynamics are normalized as in
\Cref{sec:normalization}: \(V\in C^2\),
\(\kappa^{-1}\Id_d\preceq\nabla^2V\preceq\Id_d\), and \(\gamma=2\).
Unless another measure is specified, \(L^2\) norms are under the
stationary phase space distribution, with auxiliary Gaussian noise
included for quantities depending on the path.  We use \(\mathrm{HS}\)
and \(\mathrm{op}\) for Hilbert--Schmidt and operator norms,
respectively.

The goal is a one step Wasserstein error of \(C\sqrt d\,h^3\) from
stationarity, which contraction turns into the second-order invariant
distribution bound. The essential plan is to separate the error into a conditional mean and a centered remainder portion. The former would dominate the error when analyzed in $L^2$. However, if one works instead in the negative Sobolev norm, then one witnesses additional cancellations via careful Malliavin estimates, which can then be translated into Wasserstein guarantees via a new heat interpolation lemma; this is combined with standard estimates for the remainder in order to conclude. 

More concretely, we proceed in four steps, which roughly follow the sequential flow of this appendix.
\begin{enumerate}
\item \emph{Conditional mean and centered fluctuation.}
For a stationary exact endpoint \(Z_h\sim\bm\pi\),
\Cref{subsec:uld-defect} decomposes the exact midpoint local error as
\(\mathcal E_h\eqqcolon m_h(Z_h)+R_h\), with
\(m_h(Z_h)\deq\E[\mathcal E_h\mid Z_h]\) and \(\E[R_h\mid Z_h]=0\).

\item \emph{Cancellation in the conditional mean.}
The weighted divergence of \(m_h\) determines its first-order effect on
the distribution. The key point is that the negative Sobolev norm measures the smallest
\(L^2\) size of a vector field possessing this divergence, detecting
cancellations invisible in the size of \(m_h\).
Although \(\norm{m_h}_{L^2(\bm\pi)}\leqslant C\sqrt d\,h^2\),
\Cref{subsec:uld-cancellation} proves
\(\norm{\delta_{\bm\pi}m_h}_{\dot H^{-1}(\bm\pi)}
\leqslant C\sqrt d\,h^3\) using its action on a Hermite decomposition. We mention that
\Cref{subsec:uld-normalization} contains some initial details about the negative Sobolev norm.

\item \emph{From mean and fluctuation to transport.}
\Cref{subsec:uld-transport} translates negative Sobolev bounds on the conditional mean, along with other $L^2$ error and derivative bounds, into errors measured in the transport cost. These give bounds of
\(O(\sqrt d\,h^3)\) and \(O(\sqrt d\,h^{7/2})\), respectively.

\item \emph{Predictor replacement and contraction.}
\Cref{subsec:uld-upper-auxiliary-proofs} takes care of one additional minor detail of replacing
the exact midpoint gradient by the gradient evaluated at the predictor.  It then proves the one-step estimate and numerical contraction. \Cref{subsec:uld-completion} combines them and returns to
the original units, completing \Cref{thm:uld-midpoint-main}.
\end{enumerate}

The separation of a local error into its conditional mean and a
centered fluctuation is the organizing principle of local error
analysis; see \cite[Section~5.1.1 and Lemma~5.1.2]{ChewiBook}.  For
small perturbations, \(\W_2\) is comparable to a weighted
\(\dot H^{-1}\) norm \cite{Peyre2018}; the minimum flux
characterization of this norm is recalled in \Cref{subsec:tb-sobolev},
and \Cref{lem:uld-midpoint-random-displacement} proves the finite
displacement bound used here.
The same lemma extends the Gaussian transport estimate of
\cite[Lemma~4.11]{HigherOrderRHMC} to the stationary phase space
distribution.

\subsection{Normalization and the negative Sobolev norm on phase space}
\label{subsec:uld-normalization}

The invariant distribution is
\[
    \bm\pi(\mathrm d x,\mathrm d p)\deq\pi(\mathrm d x)\,\cN(0,\Id_d)(\mathrm d p)\,,
    \qquad
    \pi(\mathrm d x)\propto\e^{-V(x)}\,\mathrm d x\,.
\]
The exact underdamped generator and its adjoint in \(L^2(\bm\pi)\) are
\begin{equation}
\begin{aligned}
    \mathcal L
    &\deq p\cdot\nabla_x-\nabla V\cdot\nabla_p
       +2\,(\Delta_p-p\cdot\nabla_p)\,,\\
    \mathcal L^\dagger
    &\deq-p\cdot\nabla_x+\nabla V\cdot\nabla_p
       +2\,(\Delta_p-p\cdot\nabla_p)\,.
\end{aligned}
    \label{eq:uld-generators}
\end{equation}
The Hamiltonian part is skew-adjoint, whereas the momentum
Ornstein--Uhlenbeck part is self-adjoint.  Thus \(\mathcal L^\dagger\)
generates the stationary reverse dynamics.  We write \( (\e^{t\mathcal L})_{t\ge 0} \)
for the forward semigroup and \( (\e^{t\mathcal L^\dagger})_{t\ge 0} \) for its
\(L^2(\bm\pi)\)-adjoint.  For a vector field \(F\),
\[
    \norm F_{L^2(\bm\pi)}^2
    \deq\int\norm{F(x,p)}^2\,\bm\pi(\mathrm d x,\mathrm d p)\,;
\]
if \(F\) depends only on \(x\), this equals \(\norm F_{L^2(\pi)}^2\).

Let \(D\deq(\nabla_x,\nabla_p)\), and let
\(\delta_{\bm\pi}\deq D^*\) be the adjoint in
\(L^2(\bm\pi)\), characterized by
\[
    \int\ip{D\varphi}{A}\,\mathrm d \bm\pi
    =\int\varphi\,\delta_{\bm\pi}A\,\mathrm d \bm\pi\,.
\]
For an \(\R^d\)-valued field \(F\) on phase space, let
\(\delta_x\) and \(\delta_p\) be the weighted divergences in the
position and momentum variables, respectively:
\[
\begin{aligned}
    \delta_xF&\deq-\nabla_x\cdot F+\ip{\nabla V}{F}\,,
    &\delta_pF&\deq-\nabla_p\cdot F+\ip pF\,,\\
    \delta_{\bm\pi}(F^x,F^p)&=\delta_xF^x+\delta_pF^p\,.
\end{aligned}
\]
The non-negative weighted Laplacian on phase space is
\begin{equation}
    \mathcal A\deq\delta_{\bm\pi}D
    =\mathcal A_x+\mathcal A_p\,,
    \qquad
    \mathcal A_x\deq-\Delta_x+\nabla V\cdot\nabla_x\,,
    \qquad
    \mathcal A_p\deq-\Delta_p+p\cdot\nabla_p\,.
    \label{eq:uld-weighted-laplacian}
\end{equation}
The spatial adjoint \(\delta_\pi\deq\nabla_x^*\) in \(L^2(\pi)\)
acts on a vector field \(F\) by
\(\delta_\pi F=-\nabla_x\cdot F+\ip{\nabla V}{F}\), and
\(\mathcal A_x=\delta_\pi\nabla_x\).
Thus \(\delta_xF=\delta_\pi F\) when \(F\) depends only on
position; we retain \(\delta_\pi\) for such fields.
The operator \(\mathcal A\) differs from the generator \(\mathcal L\).
In particular, \(\mathcal A_p\) is the negative of the momentum
Ornstein--Uhlenbeck generator and acts as multiplication by \(n\) on
the \(n\)-th Wiener chaos in \(p\); see \Cref{subsec:tb-malliavin}
for the normalized Hermite basis and this eigenvalue identity.

Let \(\dot H^1(\bm\pi)\) be the homogeneous Sobolev space given by
completion of smooth functions modulo constants for the norm
\(\norm{D\varphi}_2\).  Its continuous dual is
\(\dot H^{-1}(\bm\pi)\), with norm
\[
    \norm r_{\dot H^{-1}(\bm\pi)}
    \deq\sup_{\norm{D\varphi}_2\leqslant1}\ip r\varphi\,.
\]
The Riesz representation theorem gives
\begin{equation}
    \norm r_{\dot H^{-1}(\bm\pi)}
    =\norm{\mathcal A^{-1/2}r}_2
    =\norm{D\mathcal A^{-1}r}_2
    =\inf_{\delta_{\bm\pi}A=r}\,\norm A_2\,.
    \label{eq:uld-midpoint-Hminus1-variational}
\end{equation}
The minimizing field \(D\mathcal A^{-1}r\) is the orthogonal projection
of any field \(A\) satisfying \(\delta_{\bm\pi}A=r\) onto the closure
of gradients; see \cite[Theorem~1.3.20]{ChewiBook} and
\Cref{subsec:tb-sobolev}.

\begin{lemma}[Short-time stability in \(\dot H^{-1}\)]
\label{lem:uld-midpoint-Hminus1-stability}
There is a universal constant \(C>0\) such that, for every
\(r\in\dot H^{-1}(\bm\pi)\) and \(0\leqslant t\leqslant1\),
\[
    \norm{\e^{t\mathcal L^\dagger} r}_{\dot H^{-1}(\bm\pi)}
    \leqslant C\,\norm r_{\dot H^{-1}(\bm\pi)}\,.
\]
Moreover, \( (\e^{t\mathcal L^\dagger})_{t\ge 0} \) extends to a strongly continuous semigroup on
\(\dot H^{-1}(\bm\pi)\).
\end{lemma}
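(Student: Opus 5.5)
The plan is to argue by duality, reducing the \(\dot H^{-1}\) bound for the reverse semigroup to an \(L^2\) gradient bound for the forward semigroup. For \(r\in\dot H^{-1}(\bm\pi)\), which I first take in the dense class of bounded mean-zero functions, and for a smooth compactly supported test function \(\varphi\), adjointness in \(L^2(\bm\pi)\) gives
\[
    \ip{\e^{t\mathcal L^\dagger}r}{\varphi}=\ip r{\e^{t\mathcal L}\varphi}
    \leqslant\norm r_{\dot H^{-1}(\bm\pi)}\,\norm{D\e^{t\mathcal L}\varphi}_2\,.
\]
The semigroup preserves means, since \(\e^{t\mathcal L}1=1\), so the pairing is well defined modulo constants. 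Taking the supremum over \(\norm{D\varphi}_2\leqslant1\) reduces the claim to the gradient estimate
\[
    \norm{D\e^{t\mathcal L}\varphi}_2\leqslant C\,\norm{D\varphi}_2\,,
    \qquad 0\leqslant t\leqslant1\,.
\]
The general case then follows by density.

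\textbf{Gradient estimate via the tangent flow.} Let \(Z_t^z=(X_t,P_t)\) be the underdamped diffusion started at \(z\). Because the noise is additive and \(V\in C^2\), the map \(z\mapsto Z_t^z\) is pathwise \(C^1\). Its Jacobian \(J_t\) solves the linear random ODE
\[
    \partial_tJ_t=\begin{bmatrix}0&\Id_d\\-\nabla^2V(X_t)&-2\Id_d\end{bmatrix}J_t\,,
    \qquad J_0=\Id_{2d}\,.
\]
Since \(\norm{\nabla^2V}_{\mathrm{op}}\leqslant1\), the coefficient matrix has operator norm at most \(3\). Gr\"onwall's inequality therefore gives \(\norm{J_t}_{\mathrm{op}}\leqslant\e^{3t}\), deterministically.

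For smooth compactly supported \(\varphi\), dominated convergence gives \(D\e^{t\mathcal L}\varphi(z)=\E[J_t^\top D\varphi(Z_t^z)]\). Jensen's inequality and stationarity of \(\bm\pi\) then yield
\[
    \norm{D\e^{t\mathcal L}\varphi}_2^2\leqslant\e^{6t}\,\E_{\bm\pi}\norm{D\varphi(Z_t)}^2=\e^{6t}\,\norm{D\varphi}_2^2\,.
\]
This gives the bound with \(C=\e^3\). By density it extends to all of \(\dot H^1(\bm\pi)\). Consequently \(\e^{t\mathcal L}\) extends boundedly to \(\dot H^1\), and \(\e^{t\mathcal L^\dagger}\) extends to \(\dot H^{-1}\) as its Banach adjoint, with the same norm bound. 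The semigroup law passes to the extensions by density.

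\textbf{Strong continuity.} I would first show weak continuity. For \(r\) bounded and mean zero and \(\varphi\) smooth compactly supported, \(\ip{\e^{t\mathcal L^\dagger}r}{\varphi}=\ip r{\e^{t\mathcal L}\varphi}\to\ip r\varphi\), because \(\e^{t\mathcal L}\varphi\to\varphi\) in \(L^2(\bm\pi)\). The uniform bound on \([0,1]\) upgrades this to weak continuity on all of \(\dot H^{-1}\) by a \(3\varepsilon\) argument. Weakly continuous semigroups on Banach spaces are strongly continuous (Engel--Nagel, Theorem~I.5.8), which finishes the proof.

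The main obstacle is technical rather than quantitative. One has to justify differentiating under the expectation, and the density arguments must respect the quotient by constants, in particular the identification of \(\dot H^{-1}\) with the dual of \(\dot H^1\) on mean-zero functions. The norm estimate itself is elementary, because the Jacobian bound is deterministic.
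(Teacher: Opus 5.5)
Your proposal is correct and follows the paper's proof. Duality reduces the claim to the gradient bound $\norm{D\e^{t\mathcal L}\varphi}_{L^2(\bm\pi)}\leqslant\e^{Ct}\norm{D\varphi}_{L^2(\bm\pi)}$, which the paper obtains exactly as you do, from the deterministic Gr\"onwall bound on the tangent flow, Jensen's inequality, and stationarity. The only variation is in strong continuity: the paper argues directly on a dense class of mean-zero functions (divergences of smooth compactly supported fields), using $L^2(\bm\pi)$-strong continuity together with Poincar\'e's inequality $\norm{r}_{\dot H^{-1}(\bm\pi)}\lesssim\norm{r}_{L^2(\bm\pi)}$. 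That is shorter than your detour through weak continuity and Engel--Nagel, and Poincar\'e is also the fact that places your bounded mean-zero functions in $\dot H^{-1}(\bm\pi)$ in the first place.
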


\begin{proof}
First let \(r\deq\delta_{\bm\pi}A\) with \(A\) smooth.  Write
\( (\Phi_t(z))_{t\ge 0} \) for the exact forward underdamped Langevin flow and
\(J_t(z)\deq D_z\Phi_t(z)\), with the Brownian path argument suppressed.
Then,
\[
    D\e^{t\mathcal L}\varphi(z)
    =\E\bigl[J_t(z)^\top D\varphi(\Phi_t(z))\bigr]\,.
\]

The drift Jacobian has universally bounded
operator norm because
\(0\preceq\nabla^2V\preceq\Id_d\), so
\(\norm{J_t}_{\mathrm{op}}\leqslant\e^{Ct}\).
Jensen's inequality and the bound on \(J_t\) give
\[
    \norm{D\e^{t\mathcal L}\varphi}_{L^2(\bm\pi)}^2
    \le \e^{2Ct}\,\norm{D\varphi}_{L^2(\bm\pi)}^2\,.
\]
Duality immediately gives
\[
    \norm{\e^{t\mathcal L^\dagger}r}_{\dot H^{-1}(\bm\pi)}
    \leqslant\e^{Ct}\,\norm r_{\dot H^{-1}(\bm\pi)}\,.
\]

Divergences of smooth compactly supported vector
fields are dense by \eqref{eq:uld-midpoint-Hminus1-variational}.  On these divergences,
\(L^2(\bm\pi)\)-strong continuity and Poincar\'e's inequality imply
strong continuity in \(\dot H^{-1}(\bm\pi)\).  The displayed bound
extends it to the completion.
\end{proof}

\subsection{Conditional mean of the exact midpoint local error}
\label{subsec:uld-defect}

As in \Cref{subsec:et-corrector,subsec:et-malliavin-stein}, we condition
the quadrature error on the endpoint of a stationary exact step and
separate its conditional mean from its centered fluctuation.  The
Gaussian derivative estimates in
\Cref{lem:uld-midpoint-local-differential-bounds} adapt the argument of
\Cref{prop:et-S4-derivative} to the underdamped dynamics: time reversal
uses a momentum flip, and noise reaches the position through the
momentum.  The conditional mean will be controlled by the negative
Sobolev estimate in \Cref{subsec:uld-cancellation}.

Let \((X_t,P_t)_{0\leqslant t\leqslant h}\) be a stationary underdamped Langevin
process and put
\(Z_h\deq(X_h,P_h)\).  Applying midpoint quadrature to the integrals of
\(a_{h-t}\nabla V(X_t)\) and \(\e^{-2(h-t)}\nabla V(X_t)\) in the exact
position and momentum formulas gives the midpoint local error
\(\mathcal E_h\), taken as numerical minus exact:
\begin{equation}
\begin{aligned}
    \mathcal E_h^x
    &\deq\int_0^h a_{h-t}\nabla V(X_t)\,\mathrm d t
       -h a_{h/2}\nabla V(X_{h/2})\,,\\
    \mathcal E_h^p
    &\deq\int_0^h \e^{-2(h-t)}\nabla V(X_t)\,\mathrm d t
       -h\e^{-h}\nabla V(X_{h/2})\,,\\
    \mathcal E_h&\deq(\mathcal E_h^x,\mathcal E_h^p)\,.
\end{aligned}
    \label{eq:exact-midpoint-defect}
\end{equation}
Decompose the local error into its conditional mean and centered
fluctuation:
\[
    m_h(Z_h)\deq\E[\mathcal E_h\mid Z_h]\,,
    \qquad
    R_h\deq\mathcal E_h-m_h(Z_h)\,.
\]
The numerical endpoint using the exact midpoint is $Z_h+m_h(Z_h)+R_h$.
To describe the randomness in \(R_h\), fix the endpoint
\(Z_h=z=(x,p)\).  By stationarity, the conditional law of
\((X_{h-t},P_{h-t})_{0\leqslant t\leqslant h}\) is the diffusion
generated by \(\mathcal L^\dagger\), started at \(z\).
The formulas in \eqref{eq:uld-generators} show that changing the sign
of momentum transforms these reverse dynamics into the forward
underdamped dynamics.  Thus
\((X_{h-t},-P_{h-t})_{0\leqslant t\leqslant h}\), conditionally on
\(Z_h=z\), has the law of the forward diffusion started at \((x,-p)\).

We can therefore construct this conditional path on canonical Wiener
space by solving the forward SDE from \((x,-p)\), driven by a fresh
standard Brownian motion \(\widetilde B\), and then undoing the momentum
flip.  Sampling \(Z_h\sim\bm\pi\) independently of \(\widetilde B\)
gives the required joint law of the endpoint and the reversed path.
Represent this Brownian noise by the isonormal Gaussian process
\[
    \mathcal H\deq L^2([0,h];\R^d)\,,
    \qquad
    G(f)\deq\int_0^h\ip{f_s}{\mathrm d\widetilde B_s}\,,
    \qquad f\in\mathcal H\,.
\]
Write \(\Omega\) for the probability space carrying the independent
pair \((Z_h,G)\); see \Cref{subsec:tb-malliavin} for the Gaussian
differentiation conventions.

Substituting the reconstructed positions into
\eqref{eq:exact-midpoint-defect} makes \(\mathcal E_h\) a function of
\((Z_h,G)\).  Averaging over \(G\) with \(Z_h=z\) fixed gives
\(m_h(z)\), so subtracting this average yields
\[
    R_h=R_h(Z_h,G)\,,
    \qquad
    \E_G[R_h(z,G)]=0
    \quad\text{for \(\bm\pi\)-almost every }z\,,
\]
where \(\E_G\) denotes expectation over the Gaussian noise alone.

We collect the local estimates.  Here
\(D_z\) differentiates this realization with respect to the terminal
state while holding \(G\) fixed, and \(D_G\) is the Malliavin derivative
with respect to \(G\) while holding the terminal state fixed.
In particular, \(D_GR_h=D_G\mathcal E_h\), since \(m_h(Z_h)\) depends
only on the terminal state.
The operator and Hilbert--Schmidt norms below are those used
in the unified displacement bound
\eqref{eq:uld-midpoint-random-displacement}.

\begin{lemma}[Local differential estimates]
\label{lem:uld-midpoint-local-differential-bounds}
With Euclidean norms on phase space, there is a universal constant
\(C>0\) such that, for every \(0<h\leqslant h_0\),
\begin{align}
    \norm{m_h}_2
    &\leqslant C\sqrt d\,h^2\,,
    &
    \norm{D_zm_h}_{L^\infty(\bm\pi;\mathrm{op})}
    &\leqslant Ch\,,
    \label{eq:uld-midpoint-mean-bounds}\\
    \norm{D_zR_h}_{L^2(\Omega;\mathrm{HS})}
    &\leqslant C\sqrt d\,h\,,
    &
    \norm{D_GR_h}_{L^\infty(\Omega;\mathrm{op})}
    &\leqslant Ch^{5/2}\,,
    \notag\\
    \norm{D_GR_h}_{L^2(\Omega;\mathrm{HS})}
    &\leqslant C\sqrt d\,h^{5/2}\,.
    \notag
\end{align}

The full local error also satisfies
\[
    \norm{D_z\mathcal E_h}_{L^\infty(\Omega;\mathrm{op})}
    \leqslant Ch\,.
\]

\end{lemma}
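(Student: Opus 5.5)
Everything is organized around the reversed-path realization. Conditionally on \(Z_h=z=(x,p)\), the path \((X_{h-t},-P_{h-t})\) is the forward underdamped diffusion from \((x,-p)\), driven by \(\widetilde B\). All estimates then reduce to two kinds of variation along this forward flow on a time interval of length at most \(h\leqslant h_0\):
\begin{itemize}
\item the terminal-state Jacobian \(D_z X_{h-t}\), and
\item the Malliavin derivative \(\mathcal J_t\deq D_G X_{h-t}\).
\end{itemize}
Since \(0\preceq\nabla^2V\preceq\Id_d\) and \(\gamma=2\), the linearized flow has universally bounded operator norm \(\e^{Ct}\leqslant C\) on \([0,h]\).

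For the position Jacobian, integrating \(\dot X=P\) twice gives \(D_zX_{h-t}=(\Id_d,\ -t\,\Id_d)+O(t^2)\) in operator norm. For the Malliavin derivative, noise enters only the momentum, so variation of constants gives
\[
\mathcal J_tf=\int_0^t \Psi_{t,s}\,f_s\,\mathrm d s,\qquad \norm{\Psi_{t,s}}_{\mathrm{op}}\leqslant C(t-s).
\]
Cauchy--Schwarz then yields \(\norm{\mathcal J_t}_{\mathrm{op}}\leqslant Ct^{3/2}\) and \(\norm{\mathcal J_t}_{\mathrm{HS}}\leqslant C\sqrt d\,t^{3/2}\).

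\emph{Step 1: the Malliavin derivative of the error.} Differentiate \eqref{eq:exact-midpoint-defect} under \(G\). Since \(D_GR_h=D_G\mathcal E_h\) and \(\mathcal J_0=0\), this gives
\[
D_G\mathcal E_h^x=\int_0^h a_{h-t}\nabla^2V(X_t)\,\mathcal J_{h-t}\,\mathrm d t-h\,a_{h/2}\nabla^2V(X_{h/2})\,\mathcal J_{h/2},
\]
and similarly for \(\mathcal E_h^p\). Using \(a_s\leqslant s\) and \(\norm{\nabla^2V}_{\mathrm{op}}\leqslant1\), each term is at most \(h\cdot h^{3/2}=h^{5/2}\) in operator norm and \(\sqrt d\,h^{5/2}\) in HS norm. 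These bounds are pointwise, so they hold in \(L^\infty\) and in \(L^2\) respectively. Note that this uses only the size \(h\) of the weights, not any quadrature cancellation.

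\emph{Step 2: the terminal-state derivative of the error.} Differentiating in \(z\) gives
\[
D_z\mathcal E_h=\int_0^h w(t)\,\nabla^2V(X_t)\,D_zX_t\,\mathrm d t-h\,w(h/2)\,\nabla^2V(X_{h/2})\,D_zX_{h/2},
\]
where \(w\) is \(a_{h-\cdot}\) or \(\e^{-2(h-\cdot)}\). For the momentum component \(w=O(1)\) and \(D_zX_t=O(1)\), so this is \(O(h)\) pointwise. That proves \(\norm{D_z\mathcal E_h}_{L^\infty(\Omega;\mathrm{op})}\leqslant Ch\).

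The conditional mean satisfies \(m_h(z)=\E_G\mathcal E_h(z,G)\), and differentiation commutes with \(\E_G\) because \(G\) is independent of \(z\) in this realization. Hence \(\norm{D_zm_h}_{L^\infty(\mathrm{op})}\leqslant Ch\). Then \(D_zR_h=D_z\mathcal E_h-D_zm_h\), and the bound \(\norm{\cdot}_{\mathrm{HS}}\leqslant\sqrt{2d}\,\norm{\cdot}_{\mathrm{op}}\) on \(2d\)-dimensional phase space gives \(\norm{D_zR_h}_{L^2(\mathrm{HS})}\leqslant C\sqrt d\,h\).

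\emph{Step 3: the size of the mean.} Conditional Jensen gives \(\norm{m_h}_2\leqslant\norm{\mathcal E_h}_{L^2}\), so it suffices to show \(\norm{\mathcal E_h}_{L^2}\leqslant C\sqrt d\,h^2\). Write \(\mathcal E_h\) as \(\int_0^h w(t)\{\nabla V(X_t)-\nabla V(X_{h/2})\}\,\mathrm d t\) plus the weight defect \(\{\int_0^h w-h\,w(h/2)\}\nabla V(X_{h/2})\).
\begin{itemize}
\item The weight defect is \(O(h^3)\), since \(w\) is smooth with bounded second derivative, and \(\norm{\nabla V}_{L^2(\pi)}\leqslant\sqrt d\). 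This contributes \(O(\sqrt d\,h^3)\).
\item For the first part, \(1\)-Lipschitzness reduces it to \(\norm{X_t-X_{h/2}}_{L^2}\). In the underdamped dynamics position moves at speed \(\norm{P}_{L^2}=\sqrt d\), so \(\norm{X_t-X_{h/2}}_{L^2}\leqslant C\sqrt d\,|t-h/2|\). Integrating gives \(C\sqrt d\,h^2\).
\end{itemize}

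The main obstacle is justifying Malliavin and terminal-state differentiability with \(V\) only \(C^2\). I would mollify \(V\) so that \(V_\epsilon\in C^\infty\) with the same Hessian bounds. For \(V_\epsilon\) the variational equations are classical, and every bound above depends only on \(\norm{\nabla^2V_\epsilon}_{\mathrm{op}}\leqslant1\). Then pass to the limit \(\epsilon\to0\), using closability of \(D_G\) together with uniform \(L^2\) bounds and weak compactness, and lower semicontinuity for the \(L^\infty\) operator bounds. A secondary point to check carefully is the \(t^{3/2}\) scaling of \(\mathcal J_t\). It comes from the noise entering through momentum: \(\Psi_{t,s}\) is the position–momentum block of the linearized flow, which vanishes to first order at \(t=s\).
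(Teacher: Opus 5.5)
Your proposal is correct, and for all the derivative estimates it is essentially the paper's proof. You use the same reversed-path realization with the momentum flip. You apply Gr\"onwall to the variational equation in \(z\), then use \(D_zm_h=\E_G D_z\mathcal E_h\) and \(\norm{\cdot}_{\mathrm{HS}}\leqslant\sqrt{2d}\,\norm{\cdot}_{\mathrm{op}}\). You obtain the \(t^{3/2}\) scaling of \(D_GX_{h-t}\) from noise entering only through momentum. The paper gets the Hilbert--Schmidt bound from the operator bound via rank at most \(d\) rather than your integral kernel, which is equivalent.

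The one genuine difference is the bound on \(\norm{m_h}_2\).
\begin{itemize}
\item The paper writes \(m_h=\int_0^hA_t\,\mathrm dt-hA_{h/2}\) with \(A_t=(a_tg_t,\e^{-2t}g_t)\) and \(g_t=\e^{t\mathcal L^\dagger}\nabla V\). It integrates by parts about \(h/2\) and bounds \(\norm{\dot A_t}_{L^2(\bm\pi)}\leqslant C\sqrt d\), using \(\norm{\mathcal L^\dagger\nabla V}_{L^2(\bm\pi)}^2=\E_\pi\operatorname{tr}((\nabla^2V)^2)\leqslant d\) and strong continuity of the adjoint semigroup.
\item You instead use conditional Jensen and a pathwise bound on the full error. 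This combines \(\norm{X_t-X_{h/2}}_{L^2}\leqslant\sqrt d\,\lvert t-h/2\rvert\) with the \(O(h^3)\) weight defect of the midpoint rule.
\end{itemize}

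Your route is more elementary, since it needs no semigroup differentiation. It also yields \(\norm{\mathcal E_h}_{L^2}\leqslant C\sqrt d\,h^2\) directly. The paper recovers that bound only later, in the proof of \Cref{prop:uld-midpoint-one-step}, by adding Gaussian Poincar\'e for \(R_h\).

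The paper's route does double duty, though. The quadrature representation of \(m_h\) along the curve \(t\mapsto A_t\) is exactly what \Cref{prop:uld-midpoint-weighted-divergence} reuses: there \(\delta_{\bm\pi}m_h\) is written as the midpoint error of \(\psi_t\) to gain the extra power of \(h\). Your pathwise argument gives the \(L^2\) size but not this structure.

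Your mollification step is harmless extra rigor. With \(V\in C^2\) and bounded Hessian, the coefficients are already \(C^1\) and Lipschitz, so the first-variation equations hold without it.
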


\begin{proof}
Stationarity and the adjoint semigroup give
\[
    \E[\nabla V(X_s)\mid Z_h]
    =\bigl(\e^{(h-s)\mathcal L^\dagger}\nabla V\bigr)(Z_h)\,,
    \qquad 0\leqslant s\leqslant h\,.
\]
Apply this conditional expectation to the formulas for
\(\mathcal E_h^x\) and \(\mathcal E_h^p\) in
\eqref{eq:exact-midpoint-defect}.  In those two integrals, write \(s\)
for forward time and then set \(t\deq h-s\).  This gives
\begin{equation}
\begin{aligned}
    m_h
    &=\int_0^hA_t\,\mathrm d t-hA_{h/2}
    =-\int_0^{h/2}t\dot A_t\,\mathrm d t
       +\int_{h/2}^h(h-t)\dot A_t\,\mathrm d t\,,
\end{aligned}
    \label{eq:uld-midpoint-mean-semigroup}
\end{equation}
where
\[
    g_t\deq\e^{t\mathcal L^\dagger}(\nabla V)\,,
    \qquad
    A_t\deq(a_tg_t,\e^{-2t}g_t)\,.
\]
The second equality in \eqref{eq:uld-midpoint-mean-semigroup} follows
by integration by parts on \([0,h/2]\) and \([h/2,h]\).
We bound
\(\norm{\dot A_t}_{L^2(\bm\pi)}\) by controlling \(g_t\) and its time derivative.
By \eqref{eq:uld-generators},
\(\mathcal L^\dagger(\nabla V)=-\nabla^2V\,p\), since \(\nabla V\)
depends only on \(x\).  Weighted integration by parts and averaging
\(p\sim\cN(0,\Id_d)\), respectively, give
\[
\begin{aligned}
    \norm{\nabla V}_{L^2(\pi)}^2
    &=\E_\pi\operatorname{tr}(\nabla^2V)\leqslant d\,,
    \qquad \norm{\mathcal L^\dagger(\nabla V)}_{L^2(\bm\pi)}^2
    =\E_\pi\operatorname{tr}\bigl((\nabla^2V)^2\bigr)\leqslant d\,.
\end{aligned}
\]
Applying the chain rule to \(\nabla V\) along the reverse position path
and taking conditional expectations gives, in \(L^2(\bm\pi)\),
\[
    g_t-\nabla V
    =\int_0^t\e^{s\mathcal L^\dagger}
       \mathcal L^\dagger(\nabla V)\,\mathrm d s\,.
\]
The \(L^2\) bounds on \(\nabla V\) and
\(\mathcal L^\dagger(\nabla V)\), together with strong continuity of
\(\e^{t\mathcal L^\dagger}\), justify differentiation:
\(\dot g_t=\e^{t\mathcal L^\dagger}\mathcal L^\dagger(\nabla V)\).
Contractivity of the semigroup gives
\(\norm{g_t}_{L^2(\bm\pi)}+\norm{\dot g_t}_{L^2(\bm\pi)}\leqslant2\sqrt d\).
Since \(\dot a_t=\e^{-2t}\), the product rule yields
\[
    \dot A_t=\bigl(\e^{-2t}g_t+a_t\dot g_t,\,
       \e^{-2t}\,(\dot g_t-2g_t)\bigr)\,,
    \qquad
    \norm{\dot A_t}_{L^2(\bm\pi)}\leqslant C\sqrt d
    \quad(0\leqslant t\leqslant h\leqslant h_0\leqslant1)\,.
\]
Substituting into \eqref{eq:uld-midpoint-mean-semigroup} gives
\(\norm{m_h}_2\leqslant C\sqrt d\,h^2\), as required in
\eqref{eq:uld-midpoint-mean-bounds}.

We next prove the terminal state derivative bounds for \(m_h\) and
\(R_h\), keeping \(G\) fixed.  Differentiating the SDE for
\((X_{h-t},-P_{h-t})\) with respect to \(z=(x,p)\) gives
\[
\begin{gathered}
    \frac{\mathrm d}{\mathrm d t}
    \begin{bmatrix}D_zX_{h-t}\\-D_zP_{h-t}\end{bmatrix}
    =\begin{bmatrix}
       0&\Id_d\\
       -\nabla^2V(X_{h-t})&-2\Id_d
     \end{bmatrix}
     \begin{bmatrix}D_zX_{h-t}\\-D_zP_{h-t}\end{bmatrix}\,,\qquad
    \begin{bmatrix}D_zX_h\\-D_zP_h\end{bmatrix}
    =\begin{bmatrix}\Id_d&0\\0&-\Id_d\end{bmatrix}\,.
\end{gathered}
\]
The initial matrix comes from the starting point \((x,-p)\) of the
reversed path after the momentum flip.  Since
\(\norm{\nabla^2V}_{\mathrm{op}}\leqslant1\), the coefficient matrix
in this variational equation has operator norm at most a universal
constant.  Gr\"{o}nwall's inequality therefore gives, uniformly in \(z\) and \(G\),
\[
    \norm{D_zX_{h-t}}_{\mathrm{op}}
       +\norm{D_zP_{h-t}}_{\mathrm{op}}
    \leqslant C\e^{Ct}\leqslant C\,,
    \qquad 0\leqslant t\leqslant h\leqslant h_0\leqslant1\,.
\]
The chain rule then bounds
\(D_z[\nabla V(X_{h-t})]=\nabla^2V(X_{h-t})D_zX_{h-t}\)
in operator norm by \(C\).  Applying this bound to each integral and
midpoint term in \eqref{eq:exact-midpoint-defect}, with \(t\) denoting
reverse time, yields
\[
\begin{aligned}
    \norm{D_z\mathcal E_h^x}_{\mathrm{op}}
    &\leqslant C\,\Bigl(\int_0^h a_t\,\mathrm d t+h a_{h/2}\Bigr)
       \leqslant Ch^2\,,\\
    \norm{D_z\mathcal E_h^p}_{\mathrm{op}}
    &\leqslant C\,\Bigl(\int_0^h\e^{-2t}\,\mathrm d t+h\e^{-h}\Bigr)
       \leqslant Ch\,.
\end{aligned}
\]
Here we used \(a_t\leqslant t\) and \(\e^{-2t}\leqslant1\).
Combining the two components gives
\(\norm{D_z\mathcal E_h}_{\mathrm{op}}\leqslant Ch\).
The uniform derivative bound and independence of \(G\) from the
terminal state justify differentiation under \(\E_G\):
\[
    D_zm_h(z)=\E_G[D_z\mathcal E_h(z,G)]\,,
    \qquad
    D_zR_h(z,G)=D_z\mathcal E_h(z,G)-D_zm_h(z)\,.
\]
Consequently,
\[
    \norm{D_zm_h}_{L^\infty(\bm\pi;\mathrm{op})}\leqslant Ch\,,
    \qquad
    \norm{D_zR_h}_{L^\infty(\Omega;\mathrm{op})}\leqslant Ch\,.
\]
Since \(D_zR_h\) is a \(2d\times2d\) matrix, its Hilbert--Schmidt
norm is at most \(\sqrt{2d}\) times its operator norm.  Taking the
\(L^2\) norm over \((Z_h,G)\) therefore gives
\[
    \norm{D_zR_h}_{L^2(\Omega;\mathrm{HS})}
    \leqslant\sqrt{2d}\,
       \norm{D_zR_h}_{L^\infty(\Omega;\mathrm{op})}
    \leqslant C\sqrt d\,h\,.
\]

We now estimate the Malliavin derivative, keeping the terminal state
fixed.  For a unit direction \(f\in\mathcal H\), let
\(\xi_t\deq D_GX_{h-t}f\) and \(\eta_t\deq-D_GP_{h-t}f\).
These variations of the path after the momentum flip satisfy
\[
    \dot\xi_t=\eta_t\,,
    \qquad
    \dot\eta_t=-2\eta_t-\nabla^2V(X_{h-t})\,\xi_t+2f_t\,,
    \qquad
    \xi_0=\eta_0=0\,.
\]
Integrating these equations gives
\(\eta_t=2\int_0^t\e^{-2(t-s)}f_s\,\mathrm d s
-\int_0^t\e^{-2(t-s)}\,\nabla^2V(X_{h-s})\,\xi_s\,\mathrm d s\) and
\(\xi_t=\int_0^t\eta_s\,\mathrm d s\).
Cauchy--Schwarz bounds
\(\norm{2\int_0^t\e^{-2(t-s)}f_s\,\mathrm d s}\) by \(2\sqrt t\).
For \(0\leqslant t\leqslant h\leqslant h_0\), the integral involving
\(\nabla^2V(X_{h-s})\,\xi_s\) is bounded by
\(Ct^2\sup_{0\leqslant s\leqslant t}\,\norm{\eta_s}\) and can be
absorbed in the estimate for \(\sup_{0\leqslant s\leqslant t}\,\norm{\eta_s}\), giving
\[
    \norm{D_GP_{h-t}}_{\mathrm{op}}\leqslant Ct^{1/2}\,,
    \qquad
    \norm{D_GX_{h-t}}_{\mathrm{op}}\leqslant Ct^{3/2}\,.
\]
Since \(D_GP_{h-t}\) and \(D_GX_{h-t}\) take values in \(\R^d\),
they have rank at most \(d\), so their Hilbert--Schmidt norms are
at most \(\sqrt d\) times the operator norm bounds above.
Differentiating the formulas for \(\mathcal E_h^p\) and
\(\mathcal E_h^x\) in \eqref{eq:exact-midpoint-defect} gives
\(\norm{D_G\mathcal E_h^p}_{\mathrm{op}}\leqslant Ch^{5/2}\) and
\(\norm{D_G\mathcal E_h^x}_{\mathrm{op}}\leqslant Ch^{7/2}\).
The extra power of \(h\) in the position component comes from
\(a_t=(1-\e^{-2t})/2\leqslant t\).  Since
\(D_GR_h=D_G\mathcal E_h\), this proves the operator and
Hilbert--Schmidt bounds for \(D_GR_h\).
\end{proof}

\subsection{\texorpdfstring{\(\dot H^{-1}\)}{H\textasciicircum\{-1\}} estimate for the weighted divergence}
\label{subsec:uld-cancellation}

We prove the weighted divergence bound for the conditional mean,
\(\norm{\delta_{\bm\pi}m_h}_{\dot H^{-1}(\bm\pi)}\leqslant C\sqrt d\,h^3\).
The extra power of \(h\) comes from cancellation of the constant and
linear terms under midpoint quadrature.  We bound the
quadrature remainder in \(\dot H^{-1}(\bm\pi)\) using only the bounded
Hessian assumption.

The proof uses an auxiliary estimate on
\((\mathcal L^\dagger)^2\zeta\), where
\(\zeta(x,p)\deq\ip p{\nabla V(x)}\).
This estimate controls the second-order semigroup term; we establish
it first.

\begin{lemma}[Second action of the adjoint generator]
\label{lem:uld-midpoint-second-generator}
There is a universal \(C>0\) such that
\[
    \norm{(\mathcal L^\dagger)^2\zeta}_{\dot H^{-1}(\bm\pi)}
    \leqslant C\sqrt d\,.
\]
\end{lemma}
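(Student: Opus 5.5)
We plan to compute \((\mathcal L^\dagger)^2\zeta\) explicitly and split it into pieces. Pieces free of third derivatives of \(V\) will be bounded in \(L^2(\bm\pi)\), which dominates \(\dot H^{-1}(\bm\pi)\) by \eqref{eq:uld-midpoint-Hminus1-variational}; a Poincar\'e-type inequality may be needed for non-centered pieces, or we recentre them using the fact that \((\mathcal L^\dagger)^2\zeta\) integrates to zero. Pieces containing third derivatives will be handled by duality, moving one \(x\)-derivative onto the test function.

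\emph{First application.} Write \(\mathcal L^\dagger=-\mathcal J+2\mathcal O\), where \(\mathcal J\deq p\cdot\nabla_x-\nabla V\cdot\nabla_p\) is skew-adjoint in \(L^2(\bm\pi)\) and \(\mathcal O\deq\Delta_p-p\cdot\nabla_p\) acts as \(-n\) on the \(n\)-th Hermite chaos in \(p\). Since \(\zeta=\ip p{\nabla V}\) lies in the first chaos, we get
\[
    \mathcal L^\dagger\zeta=-q-\operatorname{tr}\nabla^2V+\norm{\nabla V}^2-2\zeta\,,
    \qquad q\deq\ip p{\nabla^2V\,p}-\operatorname{tr}\nabla^2V\,.
\]
Here \(q\) is the second-chaos part and satisfies \(\norm q_{L^2(\bm\pi)}^2=2\,\E_\pi\norm{\nabla^2V}_{\mathrm{HS}}^2\leqslant2d\).

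\emph{Second application.} Apply \(\mathcal L^\dagger\) to each term above.
\begin{itemize}
\item On \(-2\zeta\), it gives \(-2\mathcal L^\dagger\zeta\), whose \(L^2(\bm\pi)\) norm is \(O(\sqrt d)\) after recentring, using the bounds on \(\norm{\nabla V}_{L^2(\pi)}\) and on \(\norm{\mathcal L^\dagger\nabla V}_{L^2(\bm\pi)}\) already used in \Cref{lem:uld-midpoint-local-differential-bounds}.
\item On the position-only terms \(\norm{\nabla V}^2-\operatorname{tr}\nabla^2V\), the operator \(\mathcal O\) vanishes and \(\nabla V\cdot\nabla_p\) vanishes. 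The term \(-p\cdot\nabla_x\norm{\nabla V}^2=-2\ip{p}{\nabla^2V\nabla V}\) is \(O(\sqrt d)\) in \(L^2\). The term \(p\cdot\nabla_x\operatorname{tr}\nabla^2V\) contains third derivatives. We write it as \(\mathcal J(\operatorname{tr}\nabla^2V)\) and treat it in the duality step below, together with \(\mathcal J q\). Since \(\operatorname{tr}\nabla^2 V\) is \(O(d)\) pointwise rather than \(O(\sqrt d)\), we will in fact recombine it with \(q\) into \(\ip p{\nabla^2Vp}\) and handle the pairing of the chaos-zero part with \(\mathcal J\varphi\) separately, noting that its \(p\)-average is \(-\ip{\nabla V}{\E_p\nabla_p\varphi}\)-type.
\item On \(-q\), it gives \(\mathcal Jq+4q\). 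The term \(4q\) is \(O(\sqrt d)\) in \(L^2\).
\end{itemize}

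\emph{Duality step (main obstacle).} The dangerous term is \(\mathcal Jq\), which hides \(\nabla^3V\); its naive divergence representation has \(L^2\) size \(d^{3/2}\). For a test function \(\varphi\) with \(\norm{D\varphi}_2\leqslant1\), skew-adjointness gives
\[
    \ip{\mathcal Jq}{\varphi}=-\ip q{\mathcal J\varphi}
    =-\ip q{\Pi_2(p\cdot\nabla_x\varphi)}+\ip q{\Pi_2(\nabla V\cdot\nabla_p\varphi)}\,,
\]
where \(\Pi_2\) is the projection onto the \(p\)-chaos of order exactly \(2\), since \(q\) lives there.

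It then suffices to show \(\norm{\Pi_2(p\cdot\nabla_x\varphi)}_2\lesssim\norm{\nabla_x\varphi}_2\) and \(\norm{\Pi_2(\nabla V\cdot\nabla_p\varphi)}_2\lesssim\norm{\nabla_p\varphi}_2\), uniformly in \(d\).
\begin{itemize}
\item For the first, write \(p_k=a_k+a_k^\dagger\) in creation/annihilation form. Then \(\Pi_2(\sum_kp_k\psi_k)\) only sees the chaos-\(1\) and chaos-\(3\) components of the \(\psi_k\). The standard estimates \(\norm{\sum_ka_k^\dagger\psi_k^{(1)}}_2^2\leqslant C\sum_k\norm{\psi_k^{(1)}}_2^2\) and \(\norm{\sum_ka_k\psi_k^{(3)}}_2^2\leqslant C\sum_k\norm{\psi_k^{(3)}}_2^2\), whose constants depend only on the chaos order, give a dimension-free bound.
\item For the second, we would try \(\Pi_2\nabla_p=\nabla_p\Pi_3\). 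Then \(\nabla V\cdot\nabla_p\Pi_3\varphi\) pairs with \(q\), and one more Gaussian integration by parts in \(p\) turns this pairing into \(\ip{\nabla^2V\,p}{\nabla V\,\cdot}\)-type terms, bounded via \(\norm{\nabla V}_{L^2(\pi)}\leqslant\sqrt d\) and \(\norm{\nabla^2V}_{\mathrm{op}}\leqslant1\). This is the part I am least sure closes with exactly \(\sqrt d\), so I would check it first.
\end{itemize}

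If both bounds hold, this yields \(\lvert\ip{\mathcal Jq}\varphi\rvert\leqslant C\norm q_2\leqslant C\sqrt d\). Collecting all pieces then proves \Cref{lem:uld-midpoint-second-generator}.
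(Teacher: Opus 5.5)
\textbf{There is a genuine gap, and it is in the duality step for \(\mathcal Jq\), which is the heart of the lemma.} You split \(\mathcal J\varphi=p\cdot\nabla_x\varphi-\nabla V\cdot\nabla_p\varphi\) and try to bound \(\Pi_2\) of each half by \(\norm{D\varphi}_2\) separately. Neither inequality holds uniformly in \(d\).

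The annihilation estimate you call standard is false. With \(a_k=\partial_{p_k}\) and \(a_k^\dagger=p_k-\partial_{p_k}\), one has \(\sum_ka_ka_k^\dagger=\mathcal A_p+d\). So for \(u_k\deq a_k^\dagger w\) with \(w=w(p)\) of degree \(n\), the ratio \(\norm{\sum_ka_ku_k}_2^2/\sum_k\norm{u_k}_2^2\) equals \(n+d\).

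Here is a concrete counterexample inside your own pairing. Take \(V=\norm x^2/2\), so \(q=w\deq\sum_j\mathrm{He}_2(p_j)\), and set \(\varphi(x,p)\deq\sum_kx_k\,a_k^\dagger w\). Then \(\norm{D\varphi}_2^2=8d(d+2)\). On the other hand, \(\Pi_2(p\cdot\nabla_x\varphi)=\sum_ka_ka_k^\dagger w=(d+2)\,w\). Both \(\ip q{\Pi_2(p\cdot\nabla_x\varphi)}\) and \(\ip q{\Pi_2(\nabla V\cdot\nabla_p\varphi)}\) equal \(2d(d+2)\).

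After normalizing \(\norm{D\varphi}_2=1\), each of your two pairings is of order \(d\), not \(\sqrt d\). They cancel exactly, consistent with \(\mathcal Jq=-2\ip xp\) being orthogonal to \(\varphi\). So your doubt about the second half is justified, but the first half fails too. No argument that estimates the two halves separately can reach \(\sqrt d\).

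\textbf{The missing idea is a regrouping.} The annihilation part of \(p\cdot\nabla_x\varphi\), namely the mixed term \(\nabla_x\cdot\nabla_p\varphi\), must be combined with \(-\nabla V\cdot\nabla_p\varphi\) into a weighted position divergence. This gives \(\mathcal J=\delta_p\nabla_x-\delta_x\nabla_p\), and hence \(\Pi_2\mathcal J\varphi=\delta_p\nabla_x\Pi_1\varphi-\delta_x\nabla_p\Pi_3\varphi\).

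The Bochner identity \eqref{eq:tb-weighted-divergence} then controls each term. In \(x\) it uses only \(\nabla^2V\preceq\Id\), giving \(\norm{\delta_xF}_2^2\leqslant\norm{\nabla_xF}_2^2+\norm F_2^2\), and the same bound holds for \(\delta_p\). On fixed momentum degree, \(\norm{\nabla_p\nabla_x\Pi_{\leqslant r}\varphi}_2^2\leqslant r\,\norm{\nabla_x\varphi}_2^2\). Together these give \(\norm{\Pi_2\mathcal J\varphi}_2\leqslant C\norm{D\varphi}_2\).

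This is exactly the paper's decomposition \(\mathcal L=\delta_p\nabla_x-\delta_x\nabla_p-2\mathcal A_p\). It yields \(\norm{\mathcal L^\dagger f}_{\dot H^{-1}(\bm\pi)}\leqslant C\sqrt{r+2}\,\norm f_2\) for every \(f\) of momentum degree at most \(r\). Applied to \(f=\mathcal L^\dagger\zeta\), which has \(\norm f_2\leqslant C\sqrt d\), it removes any need to expand \((\mathcal L^\dagger)^2\zeta\).

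That route also avoids two further flaws in your bookkeeping.
\begin{itemize}
\item \(L^2(\bm\pi)\) controls \(\dot H^{-1}(\bm\pi)\) with a universal constant only on momentum degree at least one, where \(\mathcal A\geqslant\mathcal A_p\geqslant1\). On degree zero, Poincar\'e costs \(\sqrt\kappa\), which the lemma does not allow. So the piece \(-2\delta_\pi(\nabla V)\) must be treated as a divergence, which gives \(\norm{\delta_\pi(\nabla V)}_{\dot H^{-1}(\bm\pi)}\leqslant\norm{\nabla V}_{L^2(\pi)}\leqslant\sqrt d\).
\item Splitting \(\delta_\pi(\nabla V)=\norm{\nabla V}^2-\operatorname{tr}\nabla^2V\) creates a term of size \(d\). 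Regrouping \(\operatorname{tr}\nabla^2V\) with \(q\) only produces \(\ip p{\nabla^2V\,p}\), which is also of size \(d\). You need the Bochner cancellation \(\norm{\delta_\pi(\nabla V)}_{L^2(\pi)}^2\leqslant2d\). After that, for any position-only \(g\) one has \(\ip{\mathcal L^\dagger g}\varphi=-\ip g{\delta_\pi M}\), where \(M\deq\E_p\nabla_p\varphi\) and \(\norm{\delta_\pi M}_2\leqslant\norm{D\varphi}_2\).
\end{itemize}
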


\begin{proof}
We first bound the generator on functions with bounded momentum
Hermite degree, then apply duality.  Let \(\Pi_{\leqslant r}\) be the
orthogonal projection onto momentum Hermite degrees at most
\(r\in\mathbb N_0\).  Here the momentum Hermite basis
\(H_\ell\) is the normalized basis defined in
\Cref{subsec:tb-malliavin}; the range of \(\Pi_{\leqslant r}\) is the
span of \(M_\ell(x)H_\ell(p)\) with \(\lvert\ell\rvert\leqslant r\).
With the component weighted divergences defined in
\Cref{subsec:uld-normalization},
\[
    \mathcal L
    =\delta_p\nabla_x-\delta_x\nabla_p-2\mathcal A_p\,.
\]
For a smooth function \(\psi\) of momentum degree at most \(r\), the
weighted divergence estimate \eqref{eq:tb-weighted-divergence}, in
the Gaussian and spatial variables respectively, gives
\[
\begin{aligned}
    \norm{\delta_p\nabla_x\psi}_{L^2(\bm\pi)}^2
    &\leqslant(r+1)\,\norm{\nabla_x\psi}_{L^2(\bm\pi)}^2\,,\\
    \norm{\delta_x\nabla_p\psi}_{L^2(\bm\pi)}^2
    &\leqslant r\,\norm{\nabla_x\psi}_{L^2(\bm\pi)}^2
       +\norm{\nabla_p\psi}_{L^2(\bm\pi)}^2\,,\\
    \norm{\mathcal A_p\psi}_{L^2(\bm\pi)}^2
    &\leqslant r\,\norm{\nabla_p\psi}_{L^2(\bm\pi)}^2\,.
\end{aligned}
\]
Indeed, the first two estimates use
\(\norm{\nabla_p\nabla_x\psi}_{L^2(\bm\pi)}^2
\leqslant r\,\norm{\nabla_x\psi}_{L^2(\bm\pi)}^2\), and the last
follows from \(\mathcal A_pH_\ell=\lvert\ell\rvert H_\ell\).
Taking square roots, the triangle inequality bounds
\(\norm{\mathcal L\psi}_{L^2(\bm\pi)}\) by
\((\sqrt{r+1}+\sqrt r)\,\norm{\nabla_x\psi}_{L^2(\bm\pi)}
+(1+2\sqrt r)\,\norm{\nabla_p\psi}_{L^2(\bm\pi)}\).
Since \(\Pi_{\leqslant r}\) contracts the full gradient seminorm,
\[
    \norm{\mathcal L\Pi_{\leqslant r}\varphi}_{L^2(\bm\pi)}
    \leqslant C\sqrt{r+1}\,\norm{D\varphi}_{L^2(\bm\pi)}\,,
\]
where \(C\) is a universal constant.
For \(f\in L^2(\bm\pi)\) of momentum degree at most \(r\), the
generator changes the momentum degree by at most one.  Consequently,
\[
\begin{aligned}
    \bigl\lvert\ip{\mathcal L^\dagger f}{\varphi}\bigr\rvert
    &=\bigl\lvert\ip f{\mathcal L\Pi_{\leqslant r+1}\varphi}_{L^2(\bm\pi)}\bigr\rvert
    \leqslant C\sqrt{r+2}\,\norm f_{L^2(\bm\pi)}\,
       \norm{D\varphi}_{L^2(\bm\pi)}\,.
\end{aligned}
\]
Thus, \(\norm{\mathcal L^\dagger f}_{\dot H^{-1}(\bm\pi)}
\leqslant C\sqrt{r+2}\,\norm f_{L^2(\bm\pi)}\), with the left side
interpreted distributionally.
Apply this estimate with \(r=2\) and
\(f\deq\mathcal L^\dagger\zeta\).  Direct calculation gives
\begin{equation}
    \mathcal L^\dagger\zeta
    =-\bigl\{\ip p{\nabla^2V\,p}
       -\operatorname{tr}(\nabla^2V)\bigr\}
       +\delta_\pi(\nabla V)-2\zeta\,.
    \label{eq:uld-midpoint-first-generator}
\end{equation}
The three terms are Hermite polynomials in momentum of exact degrees \(2\), \(0\), and \(1\), so
they are orthogonal.  Gaussian moments,
\(\norm{\nabla V}_{L^2(\pi)}^2\leqslant d\), and
\eqref{eq:tb-weighted-divergence} give
\[
\begin{aligned}
    \norm{\ip p{\nabla^2V\,p}
       -\operatorname{tr}(\nabla^2V)}_{L^2(\bm\pi)}^2
    &=2\,\norm{\nabla^2V}_{L^2(\pi;\mathrm{HS})}^2\leqslant2d\,,\\
    \norm{\delta_\pi(\nabla V)}_{L^2(\pi)}^2
    &=\norm{\nabla^2V}_{L^2(\pi;\mathrm{HS})}^2
       +\E_\pi\ip{\nabla V}{\nabla^2V\,\nabla V}\leqslant2d\,,\\
    \norm\zeta_{L^2(\bm\pi)}^2
    &=\norm{\nabla V}_{L^2(\pi)}^2\leqslant d\,.
\end{aligned}
\]
It follows that \(\norm f_{L^2(\bm\pi)}\leqslant C\sqrt d\), which
proves the lemma by the dual estimate.  Approximation of the spatial
coefficients of \(f\) in \(L^2(\pi)\), keeping their momentum degrees
fixed, extends that estimate to the present \(C^2\) potential.
\end{proof}

By \eqref{eq:uld-midpoint-mean-bounds}, the conditional mean satisfies
\(\norm{m_h}_{L^2(\bm\pi)}\leqslant C\sqrt d\,h^2\).  The following
proposition gains an additional power of \(h\) when bounding its
weighted divergence in \(\dot H^{-1}(\bm\pi)\).

\begin{proposition}[\(\dot H^{-1}\) estimate for the weighted divergence]
\label{prop:uld-midpoint-weighted-divergence}
There is a universal constant \(C>0\) such that, for
every \(0<h\leqslant h_0\),
\[
    \norm{\delta_{\bm\pi}m_h}_{\dot H^{-1}(\bm\pi)}
    \leqslant C\sqrt d\,h^3\,.
\]
\end{proposition}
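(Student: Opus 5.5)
The plan is to write \(\delta_{\bm\pi}m_h\) itself as a midpoint quadrature error of a curve in \(\dot H^{-1}(\bm\pi)\), and then use that the midpoint rule is exact on affine functions. Start from \eqref{eq:uld-midpoint-mean-semigroup}, \(m_h=\int_0^hA_t\,\mathrm d t-hA_{h/2}\) with \(A_t=(a_tg_t,\e^{-2t}g_t)\) and \(g_t=\e^{t\mathcal L^\dagger}\nabla V\). Since \(\delta_{\bm\pi}\) is linear and continuous from \(L^2(\bm\pi)\) fields into \(\dot H^{-1}(\bm\pi)\), with norm at most one by \eqref{eq:uld-midpoint-Hminus1-variational}, this gives
\[
    \delta_{\bm\pi}m_h=\int_0^h\psi_t\,\mathrm d t-h\psi_{h/2}\,,
    \qquad
    \psi_t\deq\delta_{\bm\pi}A_t=a_t\,\delta_x g_t+\e^{-2t}\,\delta_p g_t\,.
\]
At \(t=0\) we have \(a_0=0\) and \(\delta_p\nabla V=\ip p{\nabla V}=\zeta\), so \(\psi_0=\zeta\). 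The Peano kernel form of the midpoint error then yields
\[
    \norm{\delta_{\bm\pi}m_h}_{\dot H^{-1}(\bm\pi)}\leqslant\frac{h^3}{24}\,\sup_{0\leqslant t\leqslant h}\,\norm{\ddot\psi_t}_{\dot H^{-1}(\bm\pi)}\,.
\]
The task therefore reduces to proving \(\norm{\ddot\psi_t}_{\dot H^{-1}(\bm\pi)}\leqslant C\sqrt d\) for \(t\leqslant h\leqslant h_0\). This has to be done weakly, since \(\ddot\psi_t\) involves \((\mathcal L^\dagger)^2\nabla V\), and hence third derivatives of \(V\).

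To make \(\ddot\psi_t\) tractable, I would commute \(\delta_{\bm\pi}\) through the semigroup, tested against \(\varphi\). By definition,
\[
    \ip{\psi_t}{\varphi}=\ip{a_t\nabla_x\varphi+\e^{-2t}\nabla_p\varphi}{g_t}=\ip{\e^{t\mathcal L}(a_t\nabla_x\varphi+\e^{-2t}\nabla_p\varphi)}{\nabla V}\,.
\]
The coefficients \((a_t,\e^{-2t})\) are exactly the linear response of \((X_t,P_t)\) to a momentum kick at time zero in the free dynamics. This suggests the identity \(\psi_t=\e^{t\mathcal L^\dagger}\zeta+\rho_t\), where the commutator remainder \(\rho_t\) comes only from \([\mathcal L,\nabla_p]=\nabla_x+2\nabla_p\) and \([\mathcal L,\nabla_x]=-\nabla^2V\,\nabla_p\). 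The first commutator is absorbed by the choice of \((a_t,\e^{-2t})\). The second leaves terms of the form \(\delta_{\bm\pi}\) of fields like \(\int_0^t(\cdots)\,\e^{s\mathcal L^\dagger}(\nabla^2V\,\cdot)\,\mathrm d s\), carrying a prefactor \(a_s=O(s)\). I would verify this identity by differentiating \(t\mapsto\ip{\psi_t}{\varphi}\) and using \(\dot a_t=\e^{-2t}\). Where the identity is not exact, \(\rho_t\) should still be a divergence of an \(L^2\) field with explicit time dependence.

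The two pieces are then estimated separately. For the main piece, the second time derivative is \(\e^{t\mathcal L^\dagger}(\mathcal L^\dagger)^2\zeta\). By \Cref{lem:uld-midpoint-Hminus1-stability} and \Cref{lem:uld-midpoint-second-generator}, its \(\dot H^{-1}\) norm is at most \(C\sqrt d\) for \(t\leqslant1\).

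For \(\rho_t\), I would not differentiate twice in \(L^2\). Instead I would exploit that it is \(\delta_{\bm\pi}\) of a field \(F_t\), with \(\norm{\delta_{\bm\pi}F}_{\dot H^{-1}}\leqslant\norm F_2\), and that \(F_t\) is of the form \(c(t)\,\e^{t\mathcal L^\dagger}G\) or a time integral of such terms. Here \(G\) is \(\nabla V\) or \(\nabla^2V\,p\), both bounded by \(\sqrt d\) in \(L^2\), and \(c\) is smooth. Each time derivative falling on the semigroup produces \(\mathcal L^\dagger G\). When \(G=\nabla^2V\,p\), this is again controlled in \(\dot H^{-1}\) only after applying \(\delta_{\bm\pi}\), by the Hermite degree duality from the proof of \Cref{lem:uld-midpoint-second-generator}: if \(f\) has momentum degree at most \(r\), then \(\norm{\mathcal L^\dagger f}_{\dot H^{-1}}\leqslant C\sqrt{r+2}\,\norm f_2\).

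I expect the main obstacle to be the bookkeeping of \(\rho_t\): checking that every second time derivative of \(\psi_t\) lands either on \(\e^{t\mathcal L^\dagger}(\mathcal L^\dagger)^2\zeta\) or on a term of the form \(\mathcal L^\dagger\) applied to an \(L^2\) function of bounded momentum degree with norm \(O(\sqrt d)\). In particular, no bare \(\delta_x\) of \(\mathcal L^\dagger(\nabla^2V\,p)\) may appear without an available integration by parts. If the exact identity proves awkward, my fallback is to apply the same duality argument directly to \(\ip{\ddot\psi_t}{\varphi}=\partial_t^2\ip{\e^{t\mathcal L}(a_t\nabla_x\varphi+\e^{-2t}\nabla_p\varphi)}{\nabla V}\). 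I would move every \(\mathcal L\) onto \(\nabla V\) except one, which is placed on a degree-truncated \(\Pi_{\leqslant r}\)-projection of the test function. The bound should then follow from \(\norm{\mathcal L\Pi_{\leqslant r}\varphi}_2\leqslant C\sqrt{r+1}\,\norm{D\varphi}_2\), together with \(\norm{D\e^{t\mathcal L}\varphi}_2\leqslant C\norm{D\varphi}_2\).
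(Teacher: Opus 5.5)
\textbf{There is a genuine gap, in the treatment of the Duhamel remainder.}

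Your set-up is right, and the identity you anticipate is exact. The commutators give $\dot\psi_t=\mathcal L^\dagger\psi_t+a_t\,\delta_p(\nabla^2V\,g_t)$ with $\psi_0=\zeta$ (two signs are off: in fact $[\mathcal L,\nabla_p]=2\nabla_p-\nabla_x$ and $[\mathcal L,\nabla_x]=\nabla^2V\,\nabla_p$). Hence $\psi_t=\e^{t\mathcal L^\dagger}\zeta+\rho_t$ with $\rho_t=\int_0^t\e^{(t-s)\mathcal L^\dagger}\{a_s\,\delta_p(\nabla^2V\,g_s)\}\,\mathrm d s$, and your treatment of $\e^{t\mathcal L^\dagger}\zeta$ is fine.

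The gap is the Peano-kernel reduction. It requires $\sup_{t\leqslant h}\norm{\ddot\rho_t}_{\dot H^{-1}(\bm\pi)}\leqslant C\sqrt d$, and the tools you list do not give this. Differentiating the Duhamel formula twice produces one of two terms:
\begin{itemize}
\item $\mathcal L^\dagger\{a_t\,\delta_p(\nabla^2V\,g_t)\}$, whose pairing with $\varphi$ is $a_t\ip{\nabla^2V\,g_t}{\nabla_p\mathcal L\varphi}_{L^2(\bm\pi)}$;
\item the forcing derivative $a_s\,\delta_p\bigl(\nabla^2V\,\e^{s\mathcal L^\dagger}(\mathcal L^\dagger)^2\nabla V\bigr)$, whose pairing is $-a_s\ip{\nabla^2V\,p}{\mathcal L\,\e^{s\mathcal L}(\nabla^2V\,\nabla_p\varphi)}_{L^2(\bm\pi)}$.
\end{itemize}
In both, two derivatives effectively sit on the test side. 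The only way to get down to $\norm{D\varphi}_2$ is to move one of them onto a factor containing $\nabla^2V$, which produces $\nabla^3V$. Hermite-degree duality does not help: it converts the $\mathcal L$ on the test side into a further $D$ on the test side, and in any case $g_t$ is not of bounded momentum degree. These are exactly the ``bare'' terms you hoped would not appear, and they do appear.

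Your fallback fails for the same reason. In $\ip{\e^{t\mathcal L}(a_t\nabla_x\varphi+\e^{-2t}\nabla_p\varphi)}{\nabla V}$ the test function already carries one derivative. Placing one $\mathcal L$ on $\Pi_{\leqslant r}$ of it therefore yields $\norm{D^2\varphi}_2$. Removing that derivative is precisely what commuting $\delta_{\bm\pi}$ through the semigroup achieves.

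\textbf{The missing observation is that $\rho_t$ never needs to be differentiated.}

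Its forcing already carries the factor $a_s\leqslant s$, and $\norm{\delta_p(\nabla^2V\,g_s)}_{\dot H^{-1}(\bm\pi)}\leqslant\norm{\nabla^2V\,g_s}_{L^2(\bm\pi)}\leqslant\sqrt d$. So \Cref{lem:uld-midpoint-Hminus1-stability} gives $\norm{\rho_t}_{\dot H^{-1}(\bm\pi)}\leqslant C\sqrt d\,t^2$ directly. Hence $\int_0^h\rho_t\,\mathrm d t$ and $h\rho_{h/2}$ are each $O(\sqrt d\,h^3)$ by the triangle inequality, with no cancellation needed.

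Affine exactness of the midpoint rule is used only for $\e^{t\mathcal L^\dagger}\zeta$. It enters through the integral-remainder expansion $\e^{t\mathcal L^\dagger}\zeta=\zeta+t\mathcal L^\dagger\zeta+\int_0^t(t-s)\,\e^{s\mathcal L^\dagger}(\mathcal L^\dagger)^2\zeta\,\mathrm d s$. This also spares you from proving that $t\mapsto\psi_t$ is $C^2$ in $\dot H^{-1}(\bm\pi)$.

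In short, replace the Peano bound by the weaker estimate $\norm{\psi_t-\zeta-t\mathcal L^\dagger\zeta}_{\dot H^{-1}(\bm\pi)}\leqslant C\sqrt d\,t^2$. That is all the midpoint argument uses, and it is exactly what the paper proves.
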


\begin{proof}
We express \(\delta_{\bm\pi}m_h\) as a midpoint quadrature error and separate
the adjoint semigroup contribution from a remainder.
From \eqref{eq:uld-midpoint-mean-semigroup},
\begin{equation}
    \delta_{\bm\pi}m_h=\int_0^h\psi_t\,\mathrm d t-h\psi_{h/2}\,,
    \qquad
    \psi_t\deq a_t\delta_xg_t+\e^{-2t}\delta_pg_t\,.
    \label{eq:uld-midpoint-divergence-quadrature}
\end{equation}
The commutator identities
\begin{align*}
    \delta_x\mathcal L^\dagger F-\mathcal L^\dagger\delta_xF
    &=\delta_p(\nabla^2V\,F)\,, \qquad
    \delta_p\mathcal L^\dagger F-\mathcal L^\dagger\delta_pF
    =-\delta_xF+2\delta_pF
\end{align*}
involve only \(\nabla^2V\).  Combining them with
\(\dot a_t=\e^{-2t}\), \(\frac{\mathrm d}{\mathrm d t}\,\e^{-2t}=-2\e^{-2t}\), and
\(\dot g_t=\mathcal L^\dagger g_t\) gives
\[
    \dot\psi_t-\mathcal L^\dagger\psi_t
    =a_t\delta_p(\nabla^2V\,g_t)\,,
    \qquad
    \psi_0=\zeta\,.
\]
Hence, in \(\dot H^{-1}(\bm\pi)\),
\[
    \psi_t=\e^{t\mathcal L^\dagger}\zeta+\mathcal R_t\,,
    \qquad
    \mathcal R_t
    \deq\int_0^t\e^{(t-s)\mathcal L^\dagger}
       \bigl\{a_s\delta_p(\nabla^2V\,g_s)\bigr\}\,\mathrm d s\,.
\]
Since
\(\delta_{\bm\pi}(0,\nabla^2V\,g_s)=\delta_p(\nabla^2V\,g_s)\),
\eqref{eq:uld-midpoint-Hminus1-variational} and the
\(L^2(\bm\pi)\) contractivity of the adjoint semigroup give
\[
    \norm{\delta_p(\nabla^2V\,g_s)}_{\dot H^{-1}(\bm\pi)}
    \leqslant\norm{\nabla^2V\,g_s}_{L^2(\bm\pi)}
    \leqslant\norm{g_s}_{L^2(\bm\pi)}
    \leqslant\norm{\nabla V}_{L^2(\pi)}\leqslant\sqrt d\,.
\]
By \Cref{lem:uld-midpoint-Hminus1-stability} and \(a_s\leqslant s\),
\[
    \norm{\mathcal R_t}_{\dot H^{-1}(\bm\pi)}
    \leqslant C\sqrt d\,t^2\,.
\]
Integrating the bound on \(\norm{\mathcal R_t}_{\dot H^{-1}(\bm\pi)}\) over
\([0,h]\) and evaluating it at \(h/2\) gives
\begin{equation*}
    \Bigl\lVert \int_0^h\mathcal R_t\,\mathrm d t-h\mathcal R_{h/2} \Bigr\rVert
       _{\dot H^{-1}(\bm\pi)}
    \leqslant C\sqrt d\,h^3\,.
\end{equation*}

It remains to bound the contribution of \(\e^{t\mathcal L^\dagger}\zeta\)
to \(\delta_{\bm\pi}m_h\).  We use the second-order integral expansion in
\(\dot H^{-1}(\bm\pi)\):
\begin{equation}
    \e^{t\mathcal L^\dagger}\zeta
    =\zeta+t\mathcal L^\dagger\zeta
       +\int_0^t(t-s)\e^{s\mathcal L^\dagger}
          (\mathcal L^\dagger)^2\zeta\,\mathrm d s\,.
    \label{eq:uld-midpoint-extrapolation-Duhamel}
\end{equation}
To justify \eqref{eq:uld-midpoint-extrapolation-Duhamel}, first note
that \eqref{eq:uld-midpoint-first-generator} can be written as
\[
 \mathcal L^\dagger\zeta
 =-\{\ip p{\nabla^2V\,p}-\operatorname{tr}(\nabla^2V)\}
       +\delta_\pi(\nabla V)-2\zeta\,.
\]
Gaussian moments and \eqref{eq:tb-weighted-divergence} give
\[
 \norm{\delta_\pi(\nabla V)}_{L^2(\pi)}^2
 =\E_\pi\operatorname{tr}((\nabla^2V)^2)
   +\E_\pi\ip{\nabla V}{\nabla^2V\nabla V}\leqslant2d\,,
 \qquad \norm{\mathcal L^\dagger\zeta}_{L^2(\bm\pi)}\leqslant C\sqrt d\,.
\]
\Cref{lem:uld-midpoint-second-generator} gives
\((\mathcal L^\dagger)^2\zeta\in\dot H^{-1}(\bm\pi)\).
Testing against a smooth function in the generator core and
integrating the weak generator identity twice proves the expansion.
The flow derivative bound in the proof of
\Cref{lem:uld-midpoint-Hminus1-stability} controls the gradient of
the forward semigroup in these pairings; approximation in the
generator core extends the identity to \(\dot H^{-1}(\bm\pi)\).

The bound on \(\mathcal R_t\),
\eqref{eq:uld-midpoint-extrapolation-Duhamel}, and
\Cref{lem:uld-midpoint-Hminus1-stability,lem:uld-midpoint-second-generator}
give the affine approximation
\begin{equation}
    \norm{\psi_t-\zeta-t\mathcal L^\dagger\zeta}
       _{\dot H^{-1}(\bm\pi)}
    \leqslant C\sqrt d\,t^2\,,
    \qquad 0\leqslant t\leqslant h\,.
    \label{eq:uld-midpoint-affine-remainder}
\end{equation}
The midpoint rule is exact on the affine function
\(t\mapsto\zeta+t\mathcal L^\dagger\zeta\), so subtracting this function
from both the integrand and the midpoint evaluation in
\eqref{eq:uld-midpoint-divergence-quadrature} leaves the error unchanged:
\[
\begin{aligned}
    \delta_{\bm\pi}m_h
    ={}&\int_0^h\bigl(\psi_t-\zeta-t\mathcal L^\dagger\zeta\bigr)
       \,\mathrm d t-h\bigl(\psi_{h/2}-\zeta-\tfrac h2\mathcal L^\dagger\zeta\bigr)\,.
\end{aligned}
\]
Applying \eqref{eq:uld-midpoint-affine-remainder} to these two remainders
and using the triangle inequality gives
\[
    \norm{\delta_{\bm\pi}m_h}_{\dot H^{-1}(\bm\pi)}
    \leqslant C\sqrt d\,\Bigl(\int_0^h t^2\,\mathrm d t
       +h\,(h/2)^2\Bigr)
    \leqslant C\sqrt d\,h^3\,.\qedhere
\]

\end{proof}

\subsection{Transport bounds for the conditional mean and fluctuation}
\label{subsec:uld-transport}

We convert the conditional mean cancellation and centered fluctuation
bounds into transport bounds. We first remove a divergence-free
field, which keeps only the part of the mean that moves the distribution; the
remaining terms are products of the local error and its derivatives.
The following lemma extends the Gaussian transport estimate of
\cite[Lemma~4.11]{HigherOrderRHMC} to a non-Gaussian base distribution.

For a base distribution \(\mu\) on \(\R^q\), write \(D\) for the
gradient and \(\delta_\mu\) for its \(L^2(\mu)\) adjoint.  Define
\(\dot H^{-1}(\mu)\) by duality against the gradient seminorm
\(\norm{D\varphi}_{L^2(\mu)}\), as in
\Cref{subsec:uld-normalization} for \(\dot H^{-1}(\bm\pi)\).

\begin{lemma}[Transport bound for a random displacement]
\label{lem:uld-midpoint-random-displacement}
Let \(\mu\in\mathcal P_2(\R^q)\) have density proportional to
\(\e^{-\mathcal V_\mu}\), where \(\mathcal V_\mu\in C^2(\R^q)\) and
\(\nabla^2\mathcal V_\mu\preceq\Lambda\Id_q\) for some \(\Lambda>0\).
On a probability space \(\Omega\), let \(Y\sim\mu\) and let \(G\)
be an independent isonormal Gaussian process with separable
Cameron--Martin space \(\mathcal H\).
Let \(U(Y,G)\in L^2(\Omega;\R^q)\) be weakly differentiable in \(Y\)
and Malliavin-differentiable in \(G\), with $D_YU,D_GU\in L^\infty(\Omega;\mathrm{op})$.
Each derivative holds the other variable fixed, and norms over
\(\Omega\) include both variables.  Define
\(m(Y)\deq\E[U\mid Y]\) and \(R\deq U-m(Y)\).  Then,
\begin{equation}
\begin{aligned}
    \W_2\bigl(\Law(Y+U),\mu\bigr)
    &\leqslant\norm{\delta_\mu m}_{\dot H^{-1}(\mu)}
       +\frac12\,\norm{D_YU}_{L^\infty(\Omega;\mathrm{op})}\,
          \norm U_{L^2(\Omega)}\\
    &\qquad+\frac12\,\norm{D_GU}_{L^\infty(\Omega;\mathrm{op})}\,
       \bigl(\norm{D_YR}_{L^2(\Omega;\mathrm{HS})}^2
          +\Lambda\,\norm R_{L^2(\Omega)}^2\bigr)^{1/2}\,.
\end{aligned}
\label{eq:uld-midpoint-random-displacement}
\end{equation}
\end{lemma}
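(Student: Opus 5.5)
We plan to use the Benamou--Brenier (minimal-flux) form of the metric speed bound, as in the proof of \Cref{lem:et-gaussian-convolution}. Set \(Y_s\deq Y+sU\) and \(\rho_s\deq\Law(Y_s)\) for \(0\leqslant s\leqslant1\). For any velocity field \(v_s\) solving the continuity equation for \((\rho_s)\), we have \(\W_2(\rho_1,\rho_0)\leqslant\int_0^1\norm{v_s}_{L^2(\rho_s)}\,\mathrm d s\). Among all such \(v_s\), the minimal one satisfies
\[
    \norm{v_s}_{L^2(\rho_s)}
    =\sup\bigl\{\tfrac{\mathrm d}{\mathrm d s}\E\varphi(Y_s):\norm{D\varphi}_{L^2(\rho_s)}\leqslant1\bigr\}\,.
\]
It therefore suffices to bound, for each \(s\) and each test function \(\varphi\) with \(\norm{D\varphi}_{L^2(\rho_s)}\leqslant1\), the derivative
\[
    \tfrac{\mathrm d}{\mathrm d s}\E\varphi(Y_s)=\E\ip{D\varphi(Y_s)}U\,.
\]
We split this as
\[
    \E\ip{D\varphi(Y_s)}{m(Y)}+\E\ip{D\varphi(Y_s)}R
\]
and aim to show that each piece is bounded by the corresponding line of \eqref{eq:uld-midpoint-random-displacement}.

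\emph{Mean term.} We write \(m=D\mathcal A_\mu^{-1}\delta_\mu m+w\), where \(\mathcal A_\mu\deq\delta_\mu D\) and \(\delta_\mu w=0\); by \eqref{eq:uld-midpoint-Hminus1-variational}, the gradient part has \(L^2(\mu)\) norm \(\norm{\delta_\mu m}_{\dot H^{-1}(\mu)}\). At \(s=0\), the pairing \(\E\ip{D\varphi(Y)}{m(Y)}\) kills \(w\) and is at most \(\norm{\delta_\mu m}_{\dot H^{-1}(\mu)}\,\norm{D\varphi}_{L^2(\mu)}\). For \(s>0\) we use the pathwise chain rule
\[
    D_Y[\varphi(Y_s)]=(\Id+sD_YU)^\top D\varphi(Y_s)\,.
\]
This lets us replace \(D\varphi(Y_s)\) by \(D_Y[\varphi(Y_s)]\), which is a gradient in \(Y\) and is therefore orthogonal to \(w\). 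The price is an error of size \(s\,\norm{D_YU}_{\mathrm{op}}\,\norm{D\varphi(Y_s)}\,\norm{m}\). Integrating over \(s\) produces the factor \(\tfrac12\), and we bound \(\norm m_2\) by \(\norm U_2\). Identifying \(\norm{D_Y\varphi(Y_s)}_{L^2}\) with \(\norm{D\varphi}_{L^2(\rho_s)}\) up to this same error yields the first two terms of \eqref{eq:uld-midpoint-random-displacement}.

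\emph{Fluctuation term.} Since \(\E_G R=0\), Gaussian integration by parts in \(G\) gives
\[
    R=\delta_G\Xi\,,\qquad \Xi\deq-D_G\mathcal L_{\mathrm{OU}}^{-1}R\,,
\]
and \(\norm{\Xi}_{L^\infty(\mathrm{op})}\leqslant\norm{D_GU}_{L^\infty(\mathrm{op})}\) by the Mehler contraction used in \Cref{lem:et-malliavin-stein}. Consequently,
\[
    \E\ip{D\varphi(Y_s)}R=s\,\E\operatorname{tr}\{\nabla^2\varphi(Y_s)\,D_GU\,\Xi^*\}\,.
\]
The Hessian of \(\varphi\) is not controlled, so we convert it into a \(Y\)-gradient by writing \(\nabla^2\varphi(Y_s)(\Id+sD_YU)=D_Y[D\varphi(Y_s)]\). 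We then integrate by parts in \(Y\) under \(\mu\), which moves the derivative onto the field built from \(D_GU\,\Xi^*\). Estimating this weighted divergence via \eqref{eq:tb-weighted-divergence}, in the form \(\norm{\delta_\mu F}_2^2\leqslant\norm{DF}_{\mathrm{HS}}^2+\Lambda\norm F_2^2\), applied with \(F\) essentially \(R\) contracted against the bounded operator \(D_GU\), should produce the term
\[
    \norm{D_GU}_{\mathrm{op}}\,\bigl(\norm{D_YR}_{\mathrm{HS}}^2+\Lambda\norm R_2^2\bigr)^{1/2}\,.
\]
Integrating in \(s\) gives the factor \(\tfrac12\).

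The main obstacle will be the fluctuation term. The Gaussian and \(Y\) integrations by parts must be arranged so that only one derivative ever lands on \(\varphi\). In addition, the leftover \(sD_YU\) correction from the chain rule must either be absorbed or shown to be of lower order, and no derivative of \(\Xi\) in \(Y\) beyond \(D_YR\) may appear. I would first try to perform the \(Y\)-integration by parts directly on \(\E\ip{D\varphi(Y_s)}{R}\), testing against the field \(R\) before any Gaussian manipulation and using the conditional centering only to discard the \(\mu\)-mean, and fall back on the Malliavin route if the commutator terms prove uncontrollable. A density argument, using smooth compactly supported \(\varphi\) and the \(L^\infty\) derivative hypotheses, justifies all interchanges.
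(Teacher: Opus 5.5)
Your framework is fine. The minimal-flux form of the metric speed bound is dual to the paper's explicit velocity $v_\theta$. Your mean-term argument also works, provided the chain-rule substitution is applied only to the divergence-free part of $m$. The gradient part $g\deq D\mathcal A_\mu^{-1}\delta_\mu m$ should be paired directly with $D\varphi(Y_s)$.

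The genuine gap is the fluctuation term. Integrating by parts in $G$ first, via $R=\delta_G\Xi$, necessarily puts $\nabla^2\varphi(Y_s)$ on the test function, and your repair does not close under the hypotheses:
\begin{itemize}
\item Rewriting $\nabla^2\varphi(Y_s)$ through $D_Y[D\varphi(Y_s)]$ requires $(\Id+sD_YU)^{-1}$, which need not exist.
\item The subsequent integration by parts in $Y$ lands a derivative on $D_GU\,\Xi^*$. This produces $D_YD_GU$ and $D_Y\Xi=-D_G\mathcal L_{\mathrm{OU}}^{-1}D_YR$, which involves the mixed derivative $D_GD_YR$.
\end{itemize}
None of these quantities is assumed, and none appears in \eqref{eq:uld-midpoint-random-displacement}. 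So the weighted divergence estimate cannot be applied to ``$R$ contracted against $D_GU$''.

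The missing idea is to reverse the order, so that the only divergence ever formed is $\delta_\mu R$. At fixed $G$, the chain rule and integration by parts in $Y$ give
\[
 \E\ip{D\varphi(Y_s)}{R}
 =\E\bigl[\varphi(Y_s)\,\delta_\mu R\bigr]
  -s\,\E\ip{D\varphi(Y_s)}{D_YU\,R}\,.
\]
Now use $\E[R\mid Y]=0$, which gives $\E_G[\delta_\mu R]=0$ for a.e.\ $Y$. This lets you replace $\varphi(Y_s)$ by $\varphi(Y_s)-\E_G\varphi(Y_s)$ in the first term. Then apply the Gaussian Poincar\'e inequality in $G$ at fixed $Y$. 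Since $D_G[\varphi(Y_s)]=s\,(D_GU)^*D\varphi(Y_s)$, the first term is at most $s\,\norm{D_GU}_{L^\infty(\Omega;\mathrm{op})}\,\norm{D\varphi}_{L^2(\rho_s)}\,\norm{\delta_\mu R}_{L^2(\Omega)}$. Finally, \eqref{eq:tb-weighted-divergence} bounds $\norm{\delta_\mu R}_{L^2(\Omega)}$ by the bracket in the lemma.

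This is the dual form of the paper's step. The paper sets $F\deq-D_G(-\mathcal L_{\mathrm{OU}})^{-1}\delta_\mu R$, so that $\delta_GF=-\delta_\mu R$ and $(U-g,F)$ is divergence-free on the product space (the paper writes $w$ for $g$). It then tests against $\varphi(Y+\theta U)$, whose product-space gradient involves only first derivatives of $\varphi$ and $U$.

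Your fallback points in this direction, but using the centering only to remove a constant mean is not enough. The quantity $\norm{\varphi(Y_s)-c}_2$ is not controlled by $\norm{D\varphi}_{L^2(\rho_s)}$ without a Poincar\'e inequality, which is not assumed. It would also not produce the small factor $s\,\norm{D_GU}$.

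One smaller point concerns the constant. To obtain $\tfrac12$ on the middle term, combine the two $s\,D_YU$ corrections into $-s\,\E\ip{D\varphi(Y_s)}{D_YU\,(U-g)}$ and use $\norm{U-g}_2^2=\norm U_2^2-\norm g_2^2$. Bounding them separately only gives $\norm{m-g}_2+\norm R_2\leqslant\sqrt2\,\norm U_2$.
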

\begin{proof}
We subtract a divergence-free field on the product of the base and
Gaussian spaces from the velocity of the linear interpolation.
Let \(w\) be the orthogonal projection of \(m\) onto the closure of
gradients in \(L^2(\mu;\R^q)\).  By
\Cref{subsec:tb-sobolev},
\[
    \delta_\mu w=\delta_\mu m\,,
    \qquad
    \norm w_{L^2(\mu)}=\norm{\delta_\mu m}_{\dot H^{-1}(\mu)}\,.
\]
The weighted divergence estimate \eqref{eq:tb-weighted-divergence}
in the base variable gives
\[
    \norm{\delta_\mu R}_{L^2(\Omega)}^2
    \leqslant\norm{D_YR}_{L^2(\Omega;\mathrm{HS})}^2
       +\Lambda\,\norm R_{L^2(\Omega)}^2\,.
\]
Here \(\delta_\mu R\) is defined by closure from smooth fields.
Since \(\E[R\mid Y]=0\), it also integrates to zero in the Gaussian
variable for almost every \(Y\).  Write \(\delta_G\) for the adjoint
of \(D_G\).  The Gaussian spectral gap and the inverse on functions
that integrate to zero, recalled in \Cref{subsec:tb-malliavin}, give
\[
\begin{gathered}
    F\deq-D_G(-\mathcal L_{\mathrm{OU}})^{-1}\delta_\mu R\,,
    \qquad \delta_GF=-\delta_\mu R\,,
    \qquad \norm F_{L^2(\Omega;\mathcal H)}
    \leqslant\norm{\delta_\mu R}_{L^2(\Omega)}\,.
\end{gathered}
\]
Thus, the product field \((U-w,F)\) has zero weighted divergence:
\[
    \delta_\mu(U-w)+\delta_GF
    =\delta_\mu(m-w)+\delta_\mu R-\delta_\mu R=0\,.
\]
In this formula and below, \(w\) is evaluated at \(Y\).
Set \(\mathsf X_\theta\deq Y+\theta U\), for \(0\leqslant\theta\leqslant1\).
For a smooth compactly supported test function \(\varphi\), test the
preceding divergence identity against
\(\varphi_\theta(Y,G)\deq\varphi(Y+\theta U)\).  By the adjoint definitions of the
divergences,
\[
    0=\E\bigl[\ip{D_Y\varphi_\theta}{U-w}
       +\ip{D_G\varphi_\theta}{F}_{\mathcal H}\bigr]\,.
\]
The chain rule then gives
\[
    0=\E\ip{D\varphi(\mathsf X_\theta)}
       {(\Id_q+\theta D_YU)\,(U-w)+\theta D_GU\,F}\,.
\]
Subtracting this identity from the derivative of
\(\E\varphi(\mathsf X_\theta)\) gives
\[
    \frac{\mathrm d}{\mathrm d\theta}\,\E\varphi(\mathsf X_\theta)
    =\E\ip{D\varphi(\mathsf X_\theta)}{w-\theta D_YU\,(U-w)-\theta D_GU\,F}\,.
\]
Consequently, a velocity for the continuity equation is
\[
    v_\theta(\mathsf X_\theta)\deq
       \E\bigl[w-\theta D_YU\,(U-w)-\theta D_GU\,F\mid \mathsf X_\theta\bigr]\,.
\]
Projection and conditional centering give
\[
    \norm{U-w}_{L^2(\Omega)}^2
    =\norm U_{L^2(\Omega)}^2-\norm w_{L^2(\mu)}^2
    \leqslant\norm U_{L^2(\Omega)}^2\,.
\]
Conditional Jensen's inequality therefore bounds the velocity by
\[
\begin{aligned}
    \norm{v_\theta}_{L^2(\Law(\mathsf X_\theta))}
    &\leqslant\norm w_{L^2(\mu)}
       +\theta\,\norm{D_YU}_{L^\infty(\Omega;\mathrm{op})}\,
          \norm U_{L^2(\Omega)}
    +\theta\,\norm{D_GU}_{L^\infty(\Omega;\mathrm{op})}\,
       \norm F_{L^2(\Omega;\mathcal H)}\,.
\end{aligned}
\]
The metric speed bound
\cite[Theorem~8.3.1]{AmbrosioGigliSavare2008}, integrated over
\([0,1]\), proves \eqref{eq:uld-midpoint-random-displacement}.
All integration by parts identities extend to the stated weak
derivatives by the Sobolev chain rule and closure: the composition
\(\varphi(Y+\theta U)\) has square integrable derivatives in both variables,
and \(F\) belongs to the domain of \(\delta_G\).
\end{proof}

\begin{remark}[Comparison with the Picard HMC transport estimate]
The proof of \Cref{lem:uld-midpoint-random-displacement} recovers and
strengthens \cite[Lemma~4.11]{HigherOrderRHMC}.  In its notation, let
\(Z\sim\cN(0,\Id_m)\) and \(G\sim\cN(0,\Id_n)\) be independent,
let \(f(z,g)\) have Lipschitz constants \(A\) in \(z\) and \(B\) in
\(g\), and write \(b(z)\deq\E_Gf(z,G)\) and
\(R\deq f(Z,G)-b(Z)\).  Suppose \(R\) takes values in a fixed
subspace \(E\subseteq\R^m\), with orthogonal projection \(\Pi_E\),
and let \(L\) be a linear map into a Hilbert space.  Then
\[
\begin{aligned}
    &\W_2\bigl(\Law(L\,(Z+f(Z,G))),\Law(L\,(Z+b(Z)))\bigr)
    \leqslant\sqrt m\,\Bigl\{
       \frac12\,\norm{L\Pi_E}_{\mathrm{op}}B^2
       +\frac32\,\norm L_{\mathrm{op}}AB\Bigr\}\,.
\end{aligned}
\]
This improves the coefficient \(5\) of the mixed term in that lemma
to \(3/2\).  Indeed, use the field \(F\) from the preceding proof
with Gaussian measure \(\mu=\cN(0,\Id_m)\), and interpolate by
\(\mathsf X_\theta\deq Z+b(Z)+\theta R\), for \(0\leqslant\theta\leqslant1\).
Integration by parts with the divergence-free field
\((R,F)\) allows us to subtract
\(D_Z(L\mathsf X_\theta)R+D_G(L\mathsf X_\theta)F\) from the original velocity \(LR\)
without changing the continuity equation.  The chain rule gives the
resulting velocity, before conditioning on \(L\mathsf X_\theta\):
\[
    -L\,(Db+\theta D_zR)\,R-\theta LD_GR\,F\,.
\]
Since \(Db+\theta D_zR=(1-\theta)\,Db+\theta D_zf\), its operator norm is at most
\(A\).  Gaussian Poincar\'e and conditional variance give, on a joint probability space $\Omega$ for \((Z,G)\),
\[
    \norm R_{L^2(\Omega)}\leqslant\sqrt m\,B\,,
    \qquad
    \norm{D_zR}_{L^2(\Omega;\mathrm{HS})}\leqslant\sqrt m\,A\,,
    \qquad
    \norm F_{L^2(\Omega;\R^n)}\leqslant\sqrt m\,\sqrt{A^2+B^2}\,.
\]
Also, \(D_GR\) takes values in \(E\), so
\(\norm{LD_GR}_{\mathrm{op}}
\leqslant\norm{L\Pi_E}_{\mathrm{op}}B\).
Integrating the velocity bound and using
\(\sqrt{A^2+B^2}\leqslant A+B\) proves the claim.
\end{remark}

\subsection{Auxiliary proofs for the exponential midpoint upper bound}
\label{subsec:uld-upper-auxiliary-proofs}

We first combine the transport bounds for the conditional mean and
centered fluctuation with the predictor replacement estimate to prove
\Cref{prop:uld-midpoint-one-step}.  We then establish
\Cref{prop:uld-midpoint-numerical-contraction}.

\begin{proof}[Proof of \Cref{prop:uld-midpoint-one-step}]
Use \Cref{lem:uld-midpoint-random-displacement} with
\(Y=Z_h\), \(U=\mathcal E_h\), \(\mu=\bm\pi\), and \(\Lambda=1\).
Here the base potential on phase space is
\(\mathcal V_{\bm\pi}(x,p)=V(x)+\norm p^2/2\), whose Hessian is
bounded above by \(\Id_{2d}\).

We have
\(D_z\mathcal E_h=D_zm_h+D_zR_h\).  The first bound below is proved
directly for this full derivative in
\Cref{lem:uld-midpoint-local-differential-bounds}.
Since \(m_h(z)\) is independent of \(G\), we have \(D_Gm_h=0\) and
\(D_G\mathcal E_h=D_GR_h\), so the second bound follows from the
Gaussian derivative estimate in the same lemma.  Thus,

\[
    \norm{D_z\mathcal E_h}_{L^\infty(\Omega;\mathrm{op})}
    \leqslant Ch\,,
    \qquad
    \norm{D_G\mathcal E_h}_{L^\infty(\Omega;\mathrm{op})}
    \leqslant Ch^{5/2}\,.
\]
Gaussian Poincar\'e at fixed \(Z_h\), followed by the
Hilbert--Schmidt derivative estimate, yields
\[
    \norm{R_h}_{L^2(\Omega)}
    \leqslant\norm{D_GR_h}_{L^2(\Omega;\mathrm{HS})}
    \leqslant C\sqrt d\,h^{5/2}\,.
\]
Together with \eqref{eq:uld-midpoint-mean-bounds} and conditional
centering, this gives
\[
    \norm{\mathcal E_h}_{L^2(\Omega)}^2
    =\norm{m_h}_{L^2(\bm\pi)}^2+\norm{R_h}_{L^2(\Omega)}^2
    \leqslant Cd h^4\,.
\]
Finally,
\(\norm{D_zR_h}_{L^2(\Omega;\mathrm{HS})}\leqslant C\sqrt d\,h\)
by \Cref{lem:uld-midpoint-local-differential-bounds}, and
\(\norm{\delta_{\bm\pi}m_h}_{\dot H^{-1}(\bm\pi)}
\leqslant C\sqrt d\,h^3\) by
\Cref{prop:uld-midpoint-weighted-divergence}.  Substitution into
\eqref{eq:uld-midpoint-random-displacement} gives directly
\[
    \W_2\bigl(\Law(Z_h+\mathcal E_h),\bm\pi\bigr)
    \leqslant C\sqrt d\,(h^3+h^{7/2})
    \leqslant C\sqrt d\,h^3\,.
\]
Now, we want to replace the exact path in our analysis with the predictor. Compare the exact midpoint update \(Z_h+\mathcal E_h\) with
the implementable update \eqref{eq:EM-x}--\eqref{eq:EM-v}, using the
same initial state and Brownian path.  The predictor
\eqref{eq:uld-predictor} satisfies
\[
    X_{h/2}-X_{h/2}^{\mathrm{pred}}
    =-\int_0^{h/2}a_{h/2-s}\,
       \bigl\{\nabla V(X_s)-\nabla V(X_0)\bigr\}\,\mathrm d s\,.
\]
Since \(a_t\leqslant t\), \(\nabla V\) is \(1\)-Lipschitz, and
stationarity gives \(\norm{X_s-X_0}_2\leqslant\sqrt d\,s\),
\[
    \norm{X_{h/2}-X_{h/2}^{\mathrm{pred}}}_2
    \leqslant\sqrt d\int_0^{h/2}(h/2-s)\,s\,\mathrm d s
    =\frac{\sqrt d\,h^3}{48}\,.
\]
The force coefficients in the position and momentum updates are
\(h a_{h/2}\) and \(h\e^{-h}\), respectively.  This coupling therefore gives
\begin{equation*}
\begin{aligned}
    \W_2\bigl(\bm\pi K_h^{\mathrm{ExpMid}},\Law(Z_h+\mathcal E_h)\bigr)
    &\leqslant h\sqrt{a_{h/2}^2+\e^{-2h}}\,
       \norm{X_{h/2}-X_{h/2}^{\mathrm{pred}}}_2
    \leqslant C\sqrt d\,h^4\,.
\end{aligned}
\end{equation*}
Finally, the triangle inequality and
\(\norm{T}_{\mathrm{op}}\leqslant C\), for the metric transformation
defined in \Cref{sec:underdamped-upper}, give
\[
    \W_{2,M}(\bm\pi K_h^{\mathrm{ExpMid}},\bm\pi)
    \leqslant C\,\bigl\{
       \W_2\bigl(\Law(Z_h+\mathcal E_h),\bm\pi\bigr)
       +\sqrt d\,h^4\bigr\}
    \leqslant C\sqrt d\,h^3\,.\qedhere
\]
\end{proof}

A direct expansion with an unweighted \(O(h^2)\) remainder
would restrict the step size to \(h\lesssim\kappa^{-1}\).
Instead, we keep the dissipation associated with the Hessian at the
predictor, which reduces the possible loss in squared distance to
\(O(h^5)\) and yields \(h\lesssim\kappa^{-1/4}\).
The sharpness construction following the proof shows that this range
cannot be improved by uniform contraction in the metric \(M\) under
only the Hessian bounds.

\begin{proof}[Proof of \Cref{prop:uld-midpoint-numerical-contraction}]
Use a synchronous coupling and put
\(\Delta x\deq x-x'\), \(\Delta p\deq p-p'\), and
\(\Delta z\deq(\Delta x,\Delta p)\).
Let \(y,y'\) be the predictor positions
\eqref{eq:uld-predictor} from \((x,p)\) and \((x',p')\), using the
same Brownian path.  Define the Hessians averaged along the segments
joining the initial and predictor positions by
\[
\begin{aligned}
 H_{\mathrm{init}}&\deq\int_0^1\nabla^2V(x'+s\Delta x)\,\mathrm d s\,,\\
 H_{\mathrm{pred}}&\deq\int_0^1\nabla^2V(y'+s(y-y'))\,\mathrm d s\,.
\end{aligned}
\]
Each lies between \(\kappa^{-1}\Id_d\) and \(\Id_d\);
\(H_{\mathrm{pred}}\) may depend on the noise.
The predictor difference and the output differences satisfy exactly
\[
 \begin{aligned}
 g&\deq H_{\mathrm{pred}}\,
       \{\Delta x+a_{h/2}\Delta p-c_{h/2}H_{\mathrm{init}}\, \Delta x\}\,,\\
 \Delta x^+&=\Delta x+a_h\Delta p-h a_{h/2}g\,,\qquad
 \Delta p^+=\e^{-2h}\Delta p-h\e^{-h}g\,.
 \end{aligned}
\]
Since \(a_t=(1-\e^{-2t})/2\),
\begin{equation}
 \begin{split}
 \norm{(\Delta x^+,\Delta p^+)}_M^2-\norm{\Delta z}_M^2
 &=-\frac{1-\e^{-4h}}2\,\norm{\Delta p}^2
   -2h\,\Bigl\langle\Delta x+\frac{1+\e^{-3h}}2\,\Delta p, g\Bigr\rangle
 \\
   &\qquad+\frac{h^2\,(1+\e^{-2h})}2\,\norm g^2\,.
 \end{split}
 \label{eq:uld-midpoint-exact-coupling-energy}
\end{equation}
The following quantity will be useful when computing the dissipation,
\[
 \mathcal D\deq2\,\norm{\Delta p}^2+
     2\,\ip{\Delta x+\Delta p}{H_{\mathrm{pred}}\,\Delta x}\,.
\]
Use the normalized metric transformation
\(T(\Delta x,\Delta p)=(\Delta x,\Delta x+\Delta p)\), and set
\(\xi\deq T\Delta z\), so that
\(\norm\xi^2=\norm{\Delta z}_M^2\).
The deterministic part of the update in the \(T\)-coordinates is
\[
 A\deq T\begin{bmatrix}
       0&\Id_d\\
       -H_{\mathrm{pred}}&-2\Id_d
     \end{bmatrix}T^{-1}
   =\begin{bmatrix}
       -\Id_d&\Id_d\\
       \Id_d-H_{\mathrm{pred}}&-\Id_d
     \end{bmatrix}\,.
\]
Its symmetric part is
\[
 \frac{A+A^\top}{2}
 =\begin{bmatrix}
    -\Id_d&\Id_d-\frac12\,H_{\mathrm{pred}}\\
    \Id_d-\frac12\,H_{\mathrm{pred}}&-\Id_d
   \end{bmatrix}\,.
\]
For each eigenvalue \(\lambda\in[\kappa^{-1},1]\) of
\(H_{\mathrm{pred}}\), the corresponding two-dimensional block
has eigenvalues \(-1\pm(1-\lambda/2)\), namely
\(-\lambda/2\) and \(-2+\lambda/2\). Hence
\((A+A^\top)/2\preceq-\Id_{2d}/(2\kappa)\),
\[
 \mathcal D=-2\,\ip{\xi}{A\xi}
 \geqslant\kappa^{-1}\,\norm\xi^2
 =\kappa^{-1}\,\norm{\Delta z}_M^2\,.
\]
We also have
\[
 \mathcal D\geqslant\norm{\Delta p}^2+
           \ip{\Delta x}{H_{\mathrm{pred}}\,\Delta x}\,.
\]
To see this, subtract its right-hand side from
\(\mathcal D\) and complete the square using
\((H_{\mathrm{pred}})^2\preceq
H_{\mathrm{pred}}\).

First substitute
\(\bar g\deq H_{\mathrm{pred}}\,(\Delta x+a_{h/2}\Delta p)\)
for \(g\) in the right-hand side of
\eqref{eq:uld-midpoint-exact-coupling-energy}.
For brevity, write
\(a\deq a_{h/2}\) and
\(b\deq(1+\e^{-3h})/2\). Then
\[
 \ip{\Delta x+b\Delta p}{\bar g}
 =\ip{\Delta x}{H_{\mathrm{pred}}\Delta x}
  +(a+b)\,\ip{\Delta p}{H_{\mathrm{pred}}\Delta x}
  +ab\,\ip{\Delta p}{H_{\mathrm{pred}}\Delta p}\,.
\]
Since \((1-\e^{-4h})/2=2h+O(h^2)\) and \(a+b=1+O(h)\), the
leading terms in \eqref{eq:uld-midpoint-exact-coupling-energy} give
\(-h\mathcal D\); the last term in the preceding display contributes
\(-2hab\,\ip{\Delta p}{H_{\mathrm{pred}}\Delta p}\leqslant0\).
The coefficient errors are controlled by the preceding lower bound on
\(\mathcal D\), since \(H_{\mathrm{pred}}^2\preceq H_{\mathrm{pred}}\) gives
\[
 \bigl|\ip{\Delta p}{H_{\mathrm{pred}}\Delta x}\bigr|
 \leqslant\frac12\bigl(\norm{\Delta p}^2+\ip{\Delta x}{H_{\mathrm{pred}}\Delta x}\bigr)
 \leqslant\frac12\,\mathcal D\,.
\]
The positive term involving \(\norm{\bar g}^2\) is also controlled by
\(\mathcal D\):
\[
\begin{aligned}
 \norm{\bar g}^2
 &\leqslant2\,\norm{H_{\mathrm{pred}}\Delta x}^2
       +2a^2\,\norm{H_{\mathrm{pred}}\Delta p}^2\\
 &\leqslant2\,\ip{\Delta x}{H_{\mathrm{pred}}\Delta x}
       +2a^2\,\norm{\Delta p}^2
 \leqslant C\mathcal D\,.
\end{aligned}
\]

Thus, this substitution gives an upper bound
\(-h\mathcal D+Ch^2\mathcal D\).

The predictor correction satisfies
\[
 g-\bar g=-c_{h/2}H_{\mathrm{pred}}
                          H_{\mathrm{init}}\,\Delta x\,,\qquad
 \norm{g-\bar g}\leqslant Ch^2\,\norm{\Delta x}\,.
\]
We also have
\[
 \Bigl\lvert\ip{\Delta x+\frac{1+\e^{-3h}}2\,\Delta p}{g-\bar g}\Bigr\rvert
 \leqslant Ch^2\sqrt{\mathcal D}\,\norm{\Delta x}\,.
\]
Together with \(\norm{\bar g}\leqslant C\sqrt{\mathcal D}\),
\eqref{eq:uld-midpoint-exact-coupling-energy} consequently yields
\[
 \begin{split}
 \norm{(\Delta x^+,\Delta p^+)}_M^2-\norm{\Delta z}_M^2
 &\leqslant-h\mathcal D+Ch^2\mathcal D
       +Ch^3\sqrt{\mathcal D}\,\norm{\Delta x}+Ch^6\,\norm{\Delta x}^2\\
 &\leqslant-\frac34\,h\mathcal D
       +Ch^5\,\norm{\Delta z}_M^2\,.
 \end{split}
\]
Here the last line uses Young's inequality and a universal upper bound
on \(h\). If \(h\leqslant c\kappa^{-1/4}\), choose \(c\) small
enough to absorb \(Ch^5\) into \(h/(4\kappa)\). We obtain
\[
 \norm{(\Delta x^+,\Delta p^+)}_M^2
 \leqslant\Bigl(1-\frac h{2\kappa}\Bigr)\,
                      \norm{\Delta z}_M^2\,.
\]
This holds for every common noise realization. Integrating over an
optimal initial coupling and taking square roots gives the stated
contraction factor \(1-h/(4\kappa)\).
\end{proof}

\begin{remark}[Sharpness of the step size condition]
\label{rem:uld-midpoint-sharpness}
The range \(h\leqslant c\kappa^{-1/4}\) cannot be improved in general:
when the averaged Hessians at the initial and predictor positions do
not commute, the exponential midpoint kernel can expand \(\W_{2,M}\) by
a term of order \(h^5\).  Choose a smooth convex function
\(\psi:\R\to\R\) such that \(0\leqslant\psi''\leqslant1/4\), with
\(\psi''=1/4\) near zero and \(\psi''=0\) on \([2,\infty)\).  For
\(0<\alpha\leqslant1/4\), let
\[
 V_\alpha(x)\deq\frac\alpha2\,\norm x^2+\frac{x_2^2}{8}
                   +\psi(x_1+x_2)\,,\qquad x\in\R^2\,.
\]
Then, \(\alpha\Id_2\preceq\nabla^2V_\alpha\preceq\Id_2\), and
\(\nabla^2V_\alpha\) is constant on the strip near \(x_1+x_2=0\) where
\(\psi''=1/4\), and on the half-plane \(x_1+x_2\geqslant2\).  Let
\(e_1\) be the first coordinate vector, and let
\[
 \Delta x\deq\begin{bmatrix}
  1\\[0.4em]
  \displaystyle\frac{c_{h/2}}{8\,\{1-c_{h/2}\,(\alpha+1/2)\}}
 \end{bmatrix}\,.
\]
Start the synchronous coupling at \((0,L e_1)\) and
\((\varepsilon_0\,\Delta x,L e_1)\), where \(\varepsilon_0>0\) is small
enough that the segment joining the initial positions lies in the strip and $L$ is a constant.
For either initial position
\(x\in\{0,\varepsilon_0\Delta x\}\), the predictor
\eqref{eq:uld-predictor} is
\[
 X_{h/2}^{\mathrm{pred}}(x)
 =x+a_{h/2}L e_1-c_{h/2}\nabla V_\alpha(x)+\eta\,,
\]
where \(\eta\) is the common Gaussian noise, whose law does not depend
on \(L\). In particular, the predictor's coordinate sum satisfies
\[
 \sum_{i=1}^2\bigl(X_{h/2}^{\mathrm{pred}}(x)\bigr)_i
 =a_{h/2}L+\sum_{i=1}^2
       \bigl(x_i-c_{h/2}\partial_iV_\alpha(x)+\eta_i\bigr)\,.
\]
For fixed \(h>0\), we have \(a_{h/2}>0\), while the remaining terms
do not depend on \(L\). Thus both predictors lie in the half-plane
\(\{y\in\R^2:y_1+y_2\geqslant2\}\) with probability tending to one
as \(L\to\infty\). By convexity of this half-plane, the segment
joining them also lies there.
On this event, the averaged Hessians are
\[
 H_{\mathrm{init}}
 =\begin{bmatrix}\alpha+1/4&1/4\\1/4&\alpha+1/2\end{bmatrix}\,,\qquad
 H_{\mathrm{pred}}
 =\begin{bmatrix}\alpha&0\\0&\alpha+1/4\end{bmatrix}\,,
\]
and, since the momentum difference vanishes, the output difference is
\(\varepsilon_0\,(\Delta x-h a_{h/2}g,-h\e^{-h}g)\) with
\(g\deq H_{\mathrm{pred}}\,
(\Id_2-c_{h/2}H_{\mathrm{init}})\,\Delta x\).
The gradient is \(1\)-Lipschitz, so dominated convergence shows that
the difference of the output means tends to this vector.

For \(0<h\leqslant1\), the second component of
\((\Id_2-c_{h/2}H_{\mathrm{init}})\,\Delta x\) is
\(-c_{h/2}/8\), and its first component lies in \((0,1]\).
Using \eqref{eq:uld-midpoint-exact-coupling-energy} with \(\Delta p=0\)
and \(c_{h/2}=(h-1+\e^{-h})/4\geqslant h^2/12\) gives
\[
 \begin{split}
 &\norm{(\Delta x-h a_{h/2}g,-h\e^{-h}g)}_M^2-\norm{(\Delta x,0)}_M^2\\
 &\qquad=-2h\,\ip{\Delta x}g+
              \frac{h^2\,(1+\e^{-2h})}2\,\norm g^2
 \geqslant-2\alpha h+\frac{h c_{h/2}^2}{128}
 \geqslant-2\alpha h+\frac{h^5}{18432}\,.
 \end{split}
\]
The \(\W_{2,M}\) distance between the output distributions is at least
the \(M\)-norm of the difference of their means.  Hence, if
\(\alpha<h^4/36864\), it exceeds the initial distance
\(\varepsilon_0\,\norm{(\Delta x,0)}_M\) for sufficiently large finite
\(L\).  Taking \(\alpha=\kappa^{-1}\) shows that a uniform contraction
range must satisfy \(h\lesssim\kappa^{-1/4}\).
\end{remark}

\subsection{Completion of the underdamped upper bound}
\label{subsec:uld-completion}

\begin{proof}[Proof of \Cref{thm:uld-midpoint-main}]
We first return to the original parameters
\(\alpha,\beta\) and \(\gamma=2\sqrt\beta\).  Let
\[
    \overline V(y)\deq V(y/\sqrt\beta)\,,
    \qquad
    \mathsf S_\beta(x,p)\deq(\sqrt\beta\,x,p)\,.
\]
Write \(\overline K_s^{\mathrm{ExpMid}}\) for the exponential midpoint
kernel with potential \(\overline V\), friction \(2\), and step
\(s\).  Let \(\overline{\bm\pi}\) be the phase space target
associated with \(\overline V\).
The update formulas \eqref{eq:EM-x}--\eqref{eq:EM-v} give
\[
    \mathsf S_{\beta\#}(\mu K_h^{\mathrm{ExpMid}})
    =(\mathsf S_{\beta\#}\mu)\,\overline K_{\sqrt\beta\,h}^{\mathrm{ExpMid}}\,,
    \qquad
    \mathsf S_{\beta\#}\bm\pi=\overline{\bm\pi}\,.
\]
The definition of \(M_\gamma\) also gives
\[
    \norm{\mathsf S_\beta z}_{M}
    =\sqrt\beta\,\norm z_{M_\gamma}\,.
\]
For \(0<\sqrt\beta\,h\leqslant{c\kappa^{-1/4}}\), with the universal
constant \(c\) chosen so that \(c\leqslant h_0\),
\Cref{prop:uld-midpoint-numerical-contraction,prop:uld-midpoint-one-step}
therefore give, for all \(\mu,\nu\in\mathcal P_2(\R^{2d})\),
\[
\begin{aligned}
    \W_{2,M_\gamma}(\mu K_h^{\mathrm{ExpMid}},\nu K_h^{\mathrm{ExpMid}})
    &\leqslant\Bigl(1-\frac{\sqrt\beta\,h}{4\kappa}\Bigr)\,
       \W_{2,M_\gamma}(\mu,\nu)\,,\\
    \sqrt\beta\,\W_{2,M_\gamma}
       (\bm\pi K_h^{\mathrm{ExpMid}},\bm\pi)
    &\leqslant C\sqrt d\,(\sqrt\beta\,h)^3\,.
\end{aligned}
\]

Since \(\nabla V\) is globally Lipschitz, the update formulas show
that \(K_h^{\mathrm{ExpMid}}\) preserves finite second moments.
Together with the contraction bound, completeness of
\(\mathcal P_2(\R^{2d})\) under \(\W_{2,M_\gamma}\)
\cite[Proposition~1.3.15]{ChewiBook} allows us to apply Banach's fixed
point theorem.  This gives a unique invariant distribution
\(\bm\pi_h^{\mathrm{ExpMid}}\in\mathcal P_2(\R^{2d})\).
Dividing the stationary one step error by the contraction gap gives
\[
    \sqrt\beta\,\W_{2,M_\gamma}
       (\bm\pi_h^{\mathrm{ExpMid}},\bm\pi)
    \leqslant C\kappa\sqrt d\,(\sqrt\beta\,h)^2\,.
\]

Position projection is \(1\)-Lipschitz in the
\(M_\gamma\)-norm.  Multiplying the preceding bound by
\(\sqrt\alpha/\sqrt\beta=\kappa^{-1/2}\) proves
\eqref{eq:uld-midpoint-position-bias}.

For \(\widehat X_0=x_\star\) and
\(\widehat P_0\sim\cN(0,\Id_d)\), strong convexity and
integration by parts give
\(\E_\pi\norm{X-x_\star}^2\leqslant d/\alpha\).
Couple the initial momentum identically with the independent Gaussian
momentum under \(\bm\pi\). The definition of \(M_\gamma\) then bounds the
initial squared distance by
\[
    \beta\,\W_{2,M_\gamma}
       (\delta_{x_\star}\otimes\cN(0,\Id_d),\bm\pi)^2
    \leqslant2\beta\,\E_\pi\norm{X-x_\star}^2
    \leqslant2\kappa d\,.
\]
Iterating the rescaled contraction and stationary error bounds against
\(\bm\pi\) yields
\[
    \sqrt\beta\,\W_{2,M_\gamma}
       \bigl((\delta_{x_\star}\otimes\cN(0,\Id_d))
                    (K_h^{\mathrm{ExpMid}})^n,\bm\pi\bigr)
    \leqslant C\sqrt{\kappa d}\,\e^{-cn\sqrt\beta\,h/\kappa}
       +C\kappa\sqrt d\,(\sqrt\beta\,h)^2\,.
\]
After projection to position and multiplication by
\(\sqrt\alpha/\sqrt\beta=\kappa^{-1/2}\), this proves
\eqref{eq:uld-midpoint-finite-time-explicit}.
To reach the accuracy criterion \eqref{eq:global-absolute-criterion},
choose
\[
    \sqrt\beta\,h={c\kappa^{-1/4}d^{-1/4}\varepsilon^{1/2}}\,, \qquad
    n=\Bigl\lceil
       C\,\frac\kappa{\sqrt\beta\,h}\,
       \log\Bigl(\e+\frac d{\varepsilon^2}\Bigr)
      \Bigr\rceil\,,
\]
with \(c\) sufficiently small and \(C\) sufficiently large.
Since \(\varepsilon\leqslant\sqrt d/2\), this step lies in the
range \(\sqrt\beta\,h\leqslant c\kappa^{-1/4}\).
The step choice makes the stationary term in
\eqref{eq:uld-midpoint-finite-time-explicit} at most
\(\varepsilon/2\); the iteration count makes the transient term at
most \(\varepsilon/2\).
Since
\[
    \frac\kappa{\sqrt\beta\,h}
    {=\frac1c\,\kappa^{5/4}d^{1/4}\varepsilon^{-1/2}}\,,
\]
and since \(d/\varepsilon^2\geqslant4\) gives
\(\log(\e+d/\varepsilon^2)\leqslant2\log(d/\varepsilon^2)\),
the stated choice of \(n\) has the order claimed in
\eqref{eq:uld-midpoint-complexity}.
Each step evaluates the gradient at the current point and at the
predictor, so the query count is at most \(2n\).  Absorbing this
factor of two into the universal constant proves
\eqref{eq:uld-midpoint-complexity}.
\end{proof}

\section{Proofs of the deterministic lower bounds}
\label{app:deterministic-lower}

This appendix proves the deterministic quadrature lower bounds in
\Cref{sec:deterministic-lower}, which are separated into the three theorems in that section (overdamped with $\kappa = O(1)$, $\kappa \gg 1$ respectively, and the underdamped case).
\begin{enumerate}
\item \emph{$\kappa = O(1)$ lower bound.} Recall that in this construction, we have a quadratic potential perturbed by an oscillation of amplitude $h$ and wavelength $\sqrt h$. This yields a high-frequency error, which is captured by
\Cref{lem:et-one-step-defect}. This lemma computes the expectation change of
\(\sqrt h\cos(\cdot/\sqrt h)\) after one Heun update started from the target.
Invariance and contractivity turn this change into the stationary
\(h^{3/2}\) lower bound in
\Cref{subsec:common-overdamped-proof}.

\item \emph{Coupling to a weakly confined coordinate.}
Generalizing the above to the $\kappa \gg 1$ case, we consider, in addition to the oscillatory coordinates, an additional coordinate with small curvature, coupled to the others. \Cref{lem:heun-modulated-block} shows how the oscillation in the other coordinates shifts the mean of this final, weakly confined coordinate, which creates the leading discretization error.
We leverage this phenomenon to prove
\Cref{thm:et-linear-kappa-lower} in
\Cref{subsec:heun-product-lower-proof}.

\item \emph{A Gaussian obstruction for exponential midpoint.}
\Cref{subsec:exponential-midpoint-Gaussian-proof} performs some elementary calculations in the underdamped case to obtain the invariant covariance. Its position
variance can be solved explicitly, and differs from the target's by order \(h^2\); the Gaussian
Wasserstein formula then proves
\Cref{prop:exponential-midpoint-Gaussian-lower}. This is generalized, via a simple anisotropic quadratic, to the case where $\kappa \gg 1$.
\end{enumerate}
For the application of these bounds to get iteration lower bounds in
\Cref{cor:midpoint-iteration-lower}, the proof is deferred to
\Cref{subsec:lower-iteration-proof}, which combines these bias estimates to obtain iteration lower bounds for some initialization.

\subsection{Heun at bounded condition number}
\label{subsec:overdamped-bounded-proof}

The oscillation wavelength \(\sqrt h\) matches the Brownian displacement
in one step, while its amplitude \(h\) keeps the Hessian bounded. We
compute the change of a Lipschitz observable in one update from the target;
this avoids expanding the numerical invariant distribution.
For a probability distribution \(\mu\), write
\(\mu(f)\deq\int f\,\mathrm d\mu\).

\subsubsection{The expectation change in one update}
\label{subsec:et-one-step-proof}

The following lemma supplies the scalar stationary obstruction.

\begin{lemma}[Oscillatory expectation change for Heun]
\label{lem:et-one-step-defect}
Fix a numerical \(0<a_\star<1/4\), and let
\[
 V(x)\deq\frac{x^2}{2}+a_\star h\cos(x/\sqrt h)\,,\qquad
 \pi\propto\e^{-V}\,,\qquad f(x)\deq\sqrt h\cos(x/\sqrt h)\,.
\]
For all sufficiently small \(h>0\), the Heun kernel \(K_h^{\mathrm{Heun}}\)
has a unique invariant distribution \(\mu\) with finite second moment, and
\[
 \W_2(\mu,\pi)\geqslant
 \frac12\,\bigl\lvert\pi(K_h^{\mathrm{Heun}}f-f)\bigr\rvert
 \geqslant ch^{3/2}\,.
\]
The constants are numerical once \(a_\star\) is fixed.
\end{lemma}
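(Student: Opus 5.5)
The plan has three parts: existence of $\mu$, the first (soft) inequality, and an explicit Gaussian computation for the second (quantitative) inequality.

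\emph{Existence and uniqueness of $\mu$.} Since $\lvert V''-1\rvert\leqslant a_\star<1/4$, the one-dimensional analogue of \Cref{prop:et-contractivity} applies (after the trivial rescaling $\beta=1+a_\star$). It shows that $K_h^{\mathrm{Heun}}$ is a strict $\W_2$ contraction for small $h$, so a unique invariant $\mu\in\mathcal P_2$ exists. The same synchronous coupling shows more: pathwise, the map $x\mapsto X^+$ is a contraction. Hence $K_h^{\mathrm{Heun}}g$ is $1$-Lipschitz whenever $g$ is.

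\emph{First inequality.} The observable $f=\sqrt h\cos(\cdot/\sqrt h)$ is $1$-Lipschitz. By invariance, $\mu(K_h^{\mathrm{Heun}}f-f)=0$, so
\[
 \pi(K_h^{\mathrm{Heun}}f-f)=(\pi-\mu)(K_h^{\mathrm{Heun}}f-f)\,.
\]
The function $K_h^{\mathrm{Heun}}f-f$ is $2$-Lipschitz. Kantorovich duality then gives $\lvert\pi(K_h^{\mathrm{Heun}}f-f)\rvert\leqslant 2\W_1(\mu,\pi)\leqslant2\W_2(\mu,\pi)$. This settles everything except the lower bound $\lvert\pi(K_h^{\mathrm{Heun}}f-f)\rvert\geqslant ch^{3/2}$.

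\emph{Reducing to an explicit expansion.} For the quantitative bound I would use the exact identity $\pi(P_hf-f)=0$ only as a sanity check, and otherwise compute directly in the fast variable $u=x/\sqrt h$. Write $B_h=\sqrt h\,G$, and note that $V'(x)=x-a_\star\sqrt h\sin u$. The scaled output is
\[
X^+/\sqrt h=u+\sqrt2\,G+\Delta\,,
\qquad
\Delta=-\tfrac{\sqrt h}{2}\bigl(x+X_h^{\mathrm{pred}}\bigr)+\tfrac{a_\star h}{2}\bigl(\sin u+\sin u^{\mathrm{pred}}\bigr)\,,
\]
where $u^{\mathrm{pred}}=u+\sqrt2G+O(\sqrt h)$. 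We have $\Delta=O(\sqrt h)$, with its only large random part being the slow variable $x$. I would Taylor expand $\cos(u+\sqrt2G+\Delta)$ to second order in $\Delta$, with a remainder of size $O(\E\lvert\Delta\rvert^3)=O(h^{3/2})$. I would then evaluate the Gaussian averages in closed form using $\E\cos(\theta+\sqrt2G)=\e^{-1}\cos\theta$ and its derivatives, so that every term becomes an explicit trigonometric polynomial in $u$ times a polynomial in $x$.

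To integrate against $\pi$, write $\pi(\mathrm dx)\propto\e^{-x^2/2}\bigl(1-a_\star h\cos u+O(h^2)\bigr)\,\mathrm dx$. Terms that are products of oscillatory functions of $u$ with smooth functions of $x$ average out, since $\int\e^{-x^2/2}\cos(kx/\sqrt h)\,\mathrm dx$ is exponentially small. Only the non-oscillating (``zero-frequency'') parts survive, for instance $\sin^2u\to1/2$ and $\cos^2u\to1/2$, together with products of $x$ with $\sin u$, which must be integrated by parts against the Gaussian. After multiplying by the prefactor $\sqrt h$ from $f$, the orders $h^{1/2}$ and $h$ should cancel: these are the contributions that also appear for the exact dynamics, and the cancellation is the check against $\pi(P_hf-f)=0$. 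The surviving $h^{3/2}$ coefficient should be an explicit function $\Phi(a_\star)$ built from $\e^{-1},\e^{-2},\dots$.

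\emph{Main obstacle.} The main obstacle is certifying that $\Phi(a_\star)\neq0$ on $(0,1/4)$, or at least for the fixed numerical $a_\star$ that the paper chooses. I would first expand $\Phi$ in $a_\star$: the $a_\star^0$ term vanishes, because Heun is exact in law for the pure Gaussian case up to $O(h^2)$ in this observable. So I would compute the linear coefficient $\Phi'(0)$, which comes from the interaction between the $a_\star h\sin$ force terms and the $\sqrt2G$ Gaussian phase. I would check that it is a nonzero combination such as $\tfrac12(\e^{-1}-\e^{-2})$-type constants. Then I would take $a_\star$ small enough that the $O(a_\star^2)$ correction cannot cancel it. This yields $\lvert\pi(K_h^{\mathrm{Heun}}f-f)\rvert\geqslant c\,a_\star h^{3/2}$, and the final bound follows with constants depending only on $a_\star$.
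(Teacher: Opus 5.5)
Your handling of existence, uniqueness and the first inequality matches the paper exactly: pathwise contraction of $x\mapsto X^+$ makes $K_h^{\mathrm{Heun}}f-f$ $2$-Lipschitz, then invariance and duality finish. The gap is in the quantitative part, which is the whole content of the lemma. You never evaluate the $h^{3/2}$ coefficient. Your fallback, shrinking $a_\star$ until an ``$O(a_\star^2)$ correction'' cannot cancel $\Phi'(0)$, rests on a misconception. It would also prove the lemma only for small $a_\star$, not for every fixed $a_\star\in(0,1/4)$ as stated. In the fast variable every occurrence of $a_\star$ carries a factor $h$: the force perturbation is $a_\star h\sin u$, and the density factor is $\e^{-a_\star h\cos u}=1-a_\star h\cos u+O(h^2)$. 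Hence all $a_\star^2$ contributions to $\pi(K_h^{\mathrm{Heun}}\varphi-\varphi)$, with $\varphi\deq\cos(\cdot/\sqrt h)$, are $O(h^2)$, i.e.\ $O(h^{5/2})$ for $f=\sqrt h\,\varphi$. So the leading coefficient is exactly linear in $a_\star$, and there is nothing to absorb. (The $a_\star^0$ term vanishes because $\cos(\cdot/\sqrt h)$ has mean $O(\e^{-c/h})$ under any centered Gaussian of variance $\asymp1$, not because of an exactness property of Heun.) What the proof needs is the value of that linear coefficient.

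Your expansion does deliver it. In $\varphi$-units there are three pieces.
\begin{itemize}
\item The zeroth-order term is $\E\cos(u+\sqrt2G)=\e^{-1}\pi(\cos u)=-\tfrac{a_\star h}{2}\e^{-1}+O(h^2)$.
\item The first-order force term is $-\tfrac{a_\star h}{2}\E[(\sin u+\sin u^{\mathrm{pred}})\sin(u+\sqrt2G)]=-\tfrac{a_\star h}{4}(\e^{-1}+1)+O(h^{3/2})$. This uses $\E_G\sin(u+\sqrt2G)=\e^{-1}\sin u$ and $u^{\mathrm{pred}}=u+\sqrt2G+O(\sqrt h)$.
\item The target term is $\pi(\varphi)=-\tfrac{a_\star h}{2}+O(h^2)$.
\end{itemize}
Therefore $\pi(K_h^{\mathrm{Heun}}f-f)=-\tfrac{a_\star}{4}(3\e^{-1}-1)h^{3/2}+O(h^2)$, which is the paper's constant. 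It is nonzero because $\tfrac12(3\e^{-1}-1)=\tfrac12(1+\e^{-1})-\int_0^1\e^{s-1}\,\mathrm d s$ is the trapezoidal error for a convex integrand. This is the precise form of your comparison with the exact dynamics: the exact diffusion replaces Heun's two-point average of the force--phase correlations by the integral.

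There are no lower orders to cancel, since $\pi(f)$ and $\pi(K_h^{\mathrm{Heun}}f)$ are each $O(h^{3/2})$. The care your route needs lies elsewhere. By size alone, $\sqrt h\,\E[x\sin(u+\sqrt2G)]$ and $\tfrac h2\E[x^2\cos(u+\sqrt2G)]$ are of order $\sqrt h$ and $h$, no smaller than the main term. They drop out only after averaging against $\pi$: $\pi(x\sin u)$ is exponentially small and $\pi(x^2\cos u)=O(h)$. The paper avoids this bookkeeping by absorbing the linear drift into the exact Gaussian $U=(A_h^{\mathrm{Heun}}Z+\sigma_h^{\mathrm{Heun}}G)/\sqrt h$. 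The remaining perturbation is then $O(h)$, a first-order Taylor bound suffices, and the correlations $\E[\cos U\cos\theta]$, $\E[\sin U\sin\theta]$, $\E[\sin U\sin W]$ follow from Gaussian characteristic functions.
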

\begin{proof}
For fixed \(g=\sqrt{2h}\,G\), put \(E_g(x)\deq x-hV'(x)+g\).
The Heun update is \(\{x+E_g(E_g(x))\}/2\). Since
\(V''\in[1-a_\star,1+a_\star]\), for \(h\,(1+a_\star)\leqslant1\)
its Lipschitz constant is at most
\(\{1+(1-h\,(1-a_\star))^2\}/2<1\).
Also, $K_h^{\rm Heun}(0,\cdot)$ has finite second moment, so contraction gives the
unique invariant distribution in \(\mathcal P_2\). In particular,
\(K_h^{\mathrm{Heun}}f\) is \(1\)-Lipschitz. Invariance therefore gives
\[
 \lvert\pi(K_h^{\mathrm{Heun}}f-f)\rvert
 =\lvert(\pi-\mu)(K_h^{\mathrm{Heun}}f-f)\rvert\leqslant2\W_2(\mu,\pi)\,.
\]

Let \(Z,G\) be independent standard Gaussians and define
\begin{equation*}
 A_h^{\mathrm{Heun}}\deq1-h+\frac{h^2}{2}\,,\qquad
 \sigma_h^{\mathrm{Heun}}\deq\sqrt{2h}\,\Bigl(1-\frac h2\Bigr)\,.
\end{equation*}
Write
\[
 \theta\deq Z/\sqrt h\,,\qquad
 U\deq(A_h^{\mathrm{Heun}}Z+\sigma_h^{\mathrm{Heun}}G)/\sqrt h\,,\qquad
 W\deq((1-h)\,Z+\sqrt{2h}\,G)/\sqrt h\,.
\]
The exact output of Heun started at $Z$, divided by \(\sqrt h\), is
\[
 U+\frac{a_\star h}{2}\,
       \{(1-h)\sin\theta+\sin(W+a_\star h\sin\theta)\}\,.
\]
Since
\[
 \lvert\sin(W+a_\star h\sin\theta)-\sin W\rvert\leqslant a_\star h\,,
 \qquad
 \bigl\lvert\e^{-a_\star h\cos\theta}-1+a_\star h\cos\theta\bigr\rvert
       \leqslant Ca_\star^2h^2\,,
\]
the target normalizing constant satisfies
\[
 \E\e^{-a_\star h\cos\theta}
    =1-a_\star h\e^{-1/(2h)}+O(h^2)=1+O(h^2)\,.
\]
The Gaussian variances are
\[
 \begin{split}
 \operatorname{Var}(U-\theta)
   &=(A_h^{\mathrm{Heun}}-1)^2/h+2\,(1-h/2)^2=2+O(h)\,,\\
 \operatorname{Var}(U-W)&=h^3/4+h^2/2\,,\\
 \operatorname{Var}(U+\theta)&\geqslant(A_h^{\mathrm{Heun}}+1)^2/h\geqslant c/h\,,\\
 \operatorname{Var}(U+W)&\geqslant(A_h^{\mathrm{Heun}}+1-h)^2/h\geqslant c/h\,.
 \end{split}
\]
Thus, the Gaussian characteristic function and the sine and cosine
product identities give
\[
 \begin{split}
 \E[\cos U\cos\theta]&=\tfrac12\,\e^{-1}+O(h)\,,\qquad
 \E[\sin U\sin\theta]=\tfrac12\,\e^{-1}+O(h)\,,\\
 \E[\sin U\sin W]&=\tfrac12+O(h)\,,\qquad
 \E\cos U\,,\ \E\cos\theta=O(\e^{-c/h})\,.
 \end{split}
\]
For \(\varphi(x)\deq\cos(x/\sqrt h)\), Taylor's inequality
\(\lvert\cos(s+t)-\cos s+t\sin s\rvert\leqslant t^2/2\) yields
\[
 \begin{split}
 (1+O(h^2))\,\pi(K_h^{\mathrm{Heun}}\varphi)
  &=\E\cos U-a_\star h\,\E[\cos U\cos\theta]\\
   &\qquad -\frac{a_\star h}{2}\,
     \{(1-h)\,\E[\sin U\sin\theta]+\E[\sin U\sin W]\}+O(h^2)\\
     &=-\frac{a_\star h}{4}\,(1+3\e^{-1})+O(h^2)\,,\\
 (1+O(h^2))\,\pi(\varphi)
 &=\E\cos\theta-a_\star h\,\E\cos^2\theta+O(h^2)
   =-\frac{a_\star h}{2}+O(h^2)\,.
 \end{split}
\]
Thus,
\(\pi(K_h^{\mathrm{Heun}}f-f)
 =-a_\star\,(3\e^{-1}-1)\,h^{3/2}/4+O(h^{5/2})\).
The leading coefficient is non-zero, proving the claim.
\end{proof}

\subsubsection{Comparison with a Gaussian target}
\label{subsec:heun-gaussian-proof}

On a purely Gaussian target, the same affine recursion has only an
\(h^2\) bias. This explains why the oscillatory perturbation is needed
for the larger \(h^{3/2}\) Heun lower bound.

\begin{proposition}[Gaussian lower bound for Heun]
\label{prop:heun-gaussian-lower}
For every \(d\geqslant1\) and \(0<h\leqslant h_0\), Heun on
\(V=\norm \cdot^2/2\) has a unique invariant distribution with finite second
moment, and
\begin{equation}
    \W_2(\widehat\pi_h^{\mathrm{Heun}},\cN(0,\Id_d))
       {\asymp\sqrt d\,h^2}\,.
    \label{eq:et-gaussian-lower}
\end{equation}
\end{proposition}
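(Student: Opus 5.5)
On $V=\norm\cdot^2/2$ we have $\nabla V(x)=x$, and the Heun step \eqref{eq:et-step} becomes an affine map with scalar coefficients. The plan is to compute its invariant law exactly and read off the Gaussian Wasserstein distance.

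\textbf{Step 1: the linear recursion.} The predictor is $X_h^{\mathrm{pred}}=(1-h)x+\sqrt2 B_h$. Substituting into \eqref{eq:et-step} gives
\[
X^+=x-\frac h2\{x+(1-h)x+\sqrt2B_h\}+\sqrt2B_h
=A x+\sigma G\,,
\]
where $A\deq1-h+h^2/2$, $\sigma\deq\sqrt{2h}\,(1-h/2)$ and $G\sim\cN(0,\Id_d)$. These are the constants $A_h^{\mathrm{Heun}}$ and $\sigma_h^{\mathrm{Heun}}$ from the proof of \Cref{lem:et-one-step-defect}.

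For $0<h\leqslant h_0$ we have $0<A<1$, so the map is a contraction. Existence and uniqueness of the invariant law in $\mathcal P_2$ then follow from \Cref{prop:et-contractivity}, or directly from Banach's fixed point theorem.

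\textbf{Step 2: the invariant law.} The fixed point of $\mu\mapsto\Law(AX+\sigma G)$ is the centered Gaussian $\cN(0,s\Id_d)$ with
\[
s=\frac{\sigma^2}{1-A^2}\,.
\]
To expand $s$, compute
\[
\sigma^2=2h\Bigl(1-h+\frac{h^2}{4}\Bigr)=2h-2h^2+\frac{h^3}{2}\,.
\]
Since $A=1-h+h^2/2$, we get $A^2=1-2h+2h^2-h^3+h^4/4$, and hence
\[
1-A^2=2h-2h^2+h^3-\frac{h^4}{4}\,.
\]
Therefore
\[
s=\frac{1-h+h^2/4}{1-h+h^2/2-h^3/8}=1-\frac{h^2}{4}+O(h^3)\,,
\]
with a universal remainder. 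Both the denominator and the numerator equal $1+O(h)$, so the leading deviation is $-h^2/4$. The constant factor is not essential; only its nonvanishing matters.

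\textbf{Step 3: the Wasserstein distance.} For the isotropic centered Gaussians $\cN(0,s\Id_d)$ and $\cN(0,\Id_d)$, the exact formula gives
\[
\W_2=\sqrt d\,\lvert\sqrt s-1\rvert=\sqrt d\,\frac{\lvert s-1\rvert}{\sqrt s+1}\,.
\]
Plugging in $s-1=-h^2/4+O(h^3)$ yields $\W_2\asymp\sqrt d\,h^2$ for $h\leqslant h_0$ small enough that the $O(h^3)$ term is at most half the leading term. This proves both directions of \eqref{eq:et-gaussian-lower}.

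\textbf{Main obstacle.} The only delicate point is the Taylor expansion in Step 2: one must check that the $O(h)$ terms cancel exactly between $\sigma^2$ and $1-A^2$, so that the first surviving correction is a nonzero multiple of $h^2$. Everything else is routine.
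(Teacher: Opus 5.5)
Your proposal is correct and is essentially the paper's proof: reduce Heun on \(\norm\cdot^2/2\) to the scalar affine recursion \(X^+=(1-h+h^2/2)X+\sqrt{2h}\,(1-h/2)\,G_d\), identify the invariant law as \(\cN(0,s\Id_d)\) with \(s=\sigma^2/(1-A^2)=1-h^2/4+O(h^3)\), and apply the exact formula \(\W_2=\sqrt d\,\lvert\sqrt s-1\rvert\). Your extra smallness condition on \(h_0\) is not needed, since the exact identity \(1-s=(h^2/4-h^3/8)/(1-h+h^2/2-h^3/8)\) already gives \(1-s\asymp h^2\) uniformly on \((0,1/3]\).
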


\begin{proof}
For the Heun method, one step is
\[
    X^+=\Bigl(1-h+\frac{h^2}{2}\Bigr)\,X
       +\sqrt{2h}\,\Bigl(1-\frac h2\Bigr)\,G_d\,,
\]
where \(G_d\sim\cN(0,\Id_d)\) is independent of \(X\).
Its invariant distribution is $\cN(0,s_h^{\mathrm{Heun}}\Id_d)$, where
\begin{equation*}
    s_h^{\mathrm{Heun}}
    \deq\frac{2h-2h^2+h^3/2}
       {2h-2h^2+h^3-h^4/4}
    =1-\frac{h^2}{4}+O(h^3)\,.
\end{equation*}
The exact formula
$\W_2(\cN(0,s\Id_d),\cN(0,\Id_d))=\sqrt d\,\lvert\sqrt s-1\rvert$
gives \eqref{eq:et-gaussian-lower}.
\end{proof}

\subsubsection{Stationary bias}
\label{subsec:common-overdamped-proof}

\begin{proof}[Proof of \Cref{prop:heun-oscillatory-lower}]
Let \(\pi^{(1)},K,\mu\) be the scalar target, Heun kernel, and invariant
distribution in \Cref{lem:et-one-step-defect}, and put
\(\varphi(x)\deq\cos(x/\sqrt h)\). The scalar calculation gives
\[
 c h\leqslant\lvert\pi^{(1)}(K\varphi-\varphi)\rvert\,,
 \qquad \operatorname{Lip}(\sqrt h\varphi)\leqslant1\,,
 \qquad \operatorname{Lip}(K(\sqrt h\varphi))\leqslant1\,.
\]
The Lipschitz expectation bound and \(\mu K=\mu\) imply
\[
 \W_2(\pi^{(1)}K,\pi^{(1)})\geqslant ch^{3/2}\,,\qquad
 2\W_2(\mu,\pi^{(1)})\geqslant ch^{3/2}\,.
\]
For any bounded \(g\), Pinsker's inequality gives
\( \lvert(\nu-\pi^{(1)})(g)\rvert
 \leqslant\norm g_\infty\sqrt{2\operatorname{KL}(\nu\mmid\pi^{(1)})}\).
Apply this with \((\nu,g)=(\pi^{(1)}K,\varphi)\) and
\((\mu,K\varphi-\varphi)\), using
\(\norm\varphi_\infty\leqslant1\),
\(\norm{K\varphi-\varphi}_\infty\leqslant2\), and invariance. Then
\[
 \min\{\operatorname{KL}(\pi^{(1)}K\mmid\pi^{(1)}),\,
        \operatorname{KL}(\mu\mmid\pi^{(1)})\}\geqslant ch^2\,.
\]
The target and both numerical laws are products. Additivity of squared
\(\W_2\) and KL over products, summed over the \(d-1\geqslant d/2\)
oscillatory coordinates, proves \eqref{eq:overdamped-common-ET} and the KL bound.
\end{proof}

\subsection{The condition number dependence for Heun}
\label{subsec:overdamped-condition-number-proof}

We use blocks with \(m\) coordinates \(x\in\R^m\) and one coordinate
\(y\in\R\), where \(mh\leqslant1\). We take \(\ell^2\deq h/\tau\), with a fixed numerical \(\tau\) chosen
large enough, and use \(\theta\deq x/\ell\), \(\eta\deq y/\sqrt K\).

\subsubsection{A modulated block and its stationary mean}
\label{subsec:heun-modulated-block-proof}

We first record the stationary expansion used for the moment estimates.

\begin{lemma}[Stationary affine recursion with a bounded perturbation]
\label{lem:stationary-affine-perturbation}
Let \((U_n,G_n,\xi_n)_{n\in\mathbb Z}\) be jointly stationary, with
\(U_n,\xi_n\in\R^p\), \(\E\norm{U_0}^2<\infty\), and independent
\(G_n\sim\cN(0,\Id_p)\). Suppose
\[
 U_{n+1}=\Lambda U_n+\Sigma G_n+\xi_n\,,\qquad
 \Lambda\deq\operatorname{diag}(\lambda_1,\ldots,\lambda_p)\,,\quad
 \Sigma\deq\operatorname{diag}(\sigma_1,\ldots,\sigma_p)\,,
\]
where \(0\leqslant\lambda_i<1\) and \(\lvert\xi_{n,i}\rvert\leqslant b_i\)
almost surely for \(1\leqslant i\leqslant p\).
Then,
for every \(1\leqslant i\leqslant p\),
\[
 \norm{U_{0,i}}_{L^2}\leqslant
 \frac{\lvert\sigma_i\rvert}{\sqrt{1-\lambda_i^2}}+\frac{b_i}{1-\lambda_i}\,.
\]
\end{lemma}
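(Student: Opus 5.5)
The recursion is diagonal, so each coordinate can be treated separately; fix \(i\) and write \(u_n\deq U_{n,i}\), \(g_n\deq G_{n,i}\), \(\xi_n\) for the \(i\)-th perturbation, with \(\lambda=\lambda_i\), \(\sigma=\sigma_i\), \(b=b_i\). Iterating the scalar recursion \(k\) times backward from time \(0\) gives
\[
 u_0=\lambda^k u_{-k}+\sum_{j=1}^{k}\lambda^{j-1}\bigl(\sigma g_{-j}+\xi_{-j}\bigr)\,.
\]
The plan is to let \(k\to\infty\), split the resulting representation into a Gaussian part and a perturbation part, and bound each in \(L^2\) by the triangle (Minkowski) inequality.

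For the limit, I will use stationarity: \(\norm{\lambda^k u_{-k}}_{L^2}=\lambda^k\norm{u_0}_{L^2}\to0\), since \(\lambda<1\) and \(\E\norm{U_0}^2<\infty\). Hence
\[
 u_0=\sum_{j\geqslant1}\lambda^{j-1}\sigma g_{-j}+\sum_{j\geqslant1}\lambda^{j-1}\xi_{-j}
\]
in \(L^2\). Both series converge: the first in \(L^2\), and the second almost surely and in \(L^2\), because its terms are bounded by \(\lambda^{j-1}b\).

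For the Gaussian series, I will use that the \(g_{-j}\) are independent standard Gaussians, so its \(L^2\) norm is exactly \(\lvert\sigma\rvert(\sum_{j}\lambda^{2(j-1)})^{1/2}=\lvert\sigma\rvert/\sqrt{1-\lambda^2}\). The perturbation series is bounded almost surely by \(b\sum_j\lambda^{j-1}=b/(1-\lambda)\), hence also in \(L^2\). Minkowski's inequality then gives the stated bound.

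The only point requiring care is that no independence between \(\xi\) and \(G\) or \(U\) is assumed. That is exactly why the two parts must be combined by the triangle inequality rather than by adding variances. Independence is needed only among the \(G_n\) themselves, for the Gaussian series; I take the hypothesis "independent \(G_n\)" to mean mutually independent in \(n\). If it instead meant only that each \(G_n\) is marginally standard, the Gaussian series could still be bounded crudely by \(\lvert\sigma\rvert/(1-\lambda)\), but the stated \(\sqrt{1-\lambda^2}\) requires orthogonality of the \(g_{-j}\), which mutual independence provides.
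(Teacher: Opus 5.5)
Your proof is correct and follows the paper's argument. You iterate the recursion backward, kill \(\lambda_i^N U_{-N,i}\) by stationarity, and combine the Gaussian series (whose \(L^2\) norm uses the mutual independence of the \(G_n\)) with the almost surely bounded perturbation series via Minkowski's inequality. The paper does exactly this, except that it applies the triangle inequality at finite \(N\) and then lets \(N\to\infty\), rather than passing to the limiting series first.
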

\begin{proof}
For every integer \(N\geqslant1\), iteration gives
\[
 U_0=\Lambda^NU_{-N}
       +\sum_{j=0}^{N-1}\Lambda^j\Sigma G_{-1-j}
       +\sum_{j=0}^{N-1}\Lambda^j\xi_{-1-j}\,.
\]
For \(1\leqslant i\leqslant p\), stationarity and independence give $\norm{\lambda_i^NU_{-N,i}}_{L^2}
   =\lambda_i^N\,\norm{U_{0,i}}_{L^2}\to 0$.
Also, \(\sum_{j=0}^{\infty}\lambda_i^j\,\lvert\xi_{-1-j,i}\rvert
\leqslant b_i/(1-\lambda_i)\) almost surely.
The asserted bounds follow from the triangle inequality, by letting $N\to \infty$.
\end{proof}

We now estimate the mean error of one block.

\begin{lemma}[Mean error in a modulated block]
\label{lem:heun-modulated-block}
There are numerical \(a,c,C,h_0>0\), \(\tau>1\), and \(K_0\geqslant2\)
such that,
for every integer \(m\geqslant1\), \(K\geqslant K_0\), and
\(0<h\leqslant h_0\) with \(mh\leqslant1\), the potential with
\(\ell^2=h/\tau\),
\begin{equation}
 V(x,y)\deq\frac{\norm x^2}{2}+\frac{y^2}{2K}
       +\ell^2\mathcal A(y/\sqrt K)\sum_{i=1}^m\cos(x_i/\ell)\,,
 \qquad \mathcal A(t)\deq a\,(2+\sin t)\,,
 \label{eq:heun-modulated-potential}
\end{equation}
satisfies
\((1-5a)D\preceq\nabla^2V\preceq(1+5a)D\), where
\(D\deq\operatorname{diag}(\Id_m,K^{-1})\).
Its Heun invariant distribution \(\mu\in\mathcal P_2\) and target
\(\pi\propto\e^{-V}\) satisfy
\begin{equation}
 \frac{\lvert\E_\mu Y-\E_\pi Y\rvert}{\sqrt K}\geqslant cmh^2\,,
 \qquad
 \E_\mu\bigl[\norm X^2+Y^2/K\bigr]\leqslant Cm\,.
 \label{eq:heun-modulated-bias}
\end{equation}
\end{lemma}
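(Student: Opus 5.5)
The proof splits into three parts: the Hessian bounds, a stationary moment bound, and the mean shift of \(Y\) under \(\mu\).

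\emph{Hessian bounds.} Write \(\theta_i=x_i/\ell\) and \(\eta=y/\sqrt K\). The second derivatives of the perturbation are as follows.
\begin{itemize}
\item The \(xx\) block is \(-\mathcal A(\eta)\cos\theta_i\) on the diagonal, of size at most \(3a\).
\item The \(xy\) entries are \(-\ell\,\mathcal A'(\eta)\sin\theta_i/\sqrt K\).
\item The \(yy\) entry is \(\ell^2\mathcal A''(\eta)\sum_i\cos\theta_i/K\).
\end{itemize}
Since \(\ell^2 m\leqslant mh\leqslant1\), the column of \(xy\) entries has Euclidean norm at most \(a\ell\sqrt m/\sqrt K\leqslant a/\sqrt K\). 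The \(yy\) entry is at most \(a/K\). In the metric \(D\), the mixed block therefore has relative size \(\leqslant a\), and Schur or Cauchy--Schwarz gives \((1-5a)D\preceq\nabla^2V\preceq(1+5a)D\).

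\emph{Moment bound and existence of \(\mu\).} After rescaling \(y\), the potential is \(\beta\)-smooth and \(\alpha\)-convex with \(\kappa\asymp K\). \Cref{prop:et-contractivity} then gives the unique invariant \(\mu\in\mathcal P_2\). For the second moment, I write the stationary Heun recursion coordinatewise as \(U_{n+1}=\Lambda U_n+\Sigma G_n+\xi_n\), with \(U=(X,Y)\) and \(\Lambda=\operatorname{diag}((1-h+h^2/2)\Id_m,\,1-h/K+\dots)\). The perturbation \(\xi_n\) collects the oscillatory force terms:
\begin{itemize}
\item in each \(x_i\) coordinate they are bounded by \(Ch\ell\);
\item in the \(y\) coordinate they are bounded by \(Ch\,m\ell^2/\sqrt K\leqslant Ch/\sqrt K\).
\end{itemize}
\Cref{lem:stationary-affine-perturbation} gives \(\norm{X_i}_{L^2}\lesssim1\) and \(\norm{Y}_{L^2}/\sqrt K\lesssim1+1/\sqrt K\cdot K/\sqrt K\cdot\) (bounded). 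This yields \(\E_\mu[\norm X^2+Y^2/K]\leqslant Cm\).

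\emph{The mean shift.} This is the core step. Integrating the stationarity identity \(\E_\mu[Y^+-Y]=0\) gives
\[
 \frac hK\,\E_\mu Y=-\E_\mu[\text{Heun }y\text{-force from the perturbation}],
\]
and under \(\pi\) the exact identity is \(\E_\pi\partial_yV=0\). Since \(\partial_y\) of the perturbation equals \(\ell^2\mathcal A'(\eta)K^{-1/2}\sum_i\cos\theta_i\), the mean of \(Y\) is driven by the correlation \(\E[\cos\eta\sum_i\cos\theta_i]\). The plan is:
\begin{itemize}
\item Use the one-dimensional computation of \Cref{lem:et-one-step-defect}, which shows that \(\E\cos\theta_i\) under Heun differs from its target value by order \(h\) relative, i.e.\ by \(c\,\mathcal A(\eta)\) times a nonzero constant \(\times h\).
\item Condition on the slow variable \(\eta\), which moves only \(O(\sqrt{h/K})\) per step and hence is essentially frozen over the \(O(1/h)\) relaxation time of the fast \(x\) coordinates.
\item Conclude that the fast coordinates equilibrate to a Heun-biased law whose \(\cos\theta_i\)-average depends on \(\mathcal A(\eta)\) with an extra term \(c_1 h\,\mathcal A(\eta)\).
\item Sum over \(i\) and multiply by \(\ell^2/\sqrt K\). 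The resulting extra \(y\)-force, relative to the target, is \(\asymp m h\ell^2\mathcal A'(\eta)\mathcal A(\eta)/\sqrt K\), whose \(\eta\)-average is nonzero because \(\mathcal A\mathcal A'=a^2(2+\sin)\cos\) has nonzero Gaussian correlation with the mode.
\item Divide by the confinement \(h/K\) (the \(h\) cancels). Scaled by \(1/\sqrt K\), this should give the shift \(\gtrsim m\ell^2 h=mh^2/\tau\).
\end{itemize}

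\emph{Main obstacle.} The difficult part is making the averaging rigorous. The errors from the slow drift of \(\eta\), from non-frozen correlations, and from the \(O(h^2)\) remainders of the Taylor expansion must all be \(o(mh^2\sqrt K\cdot h/K)\). I would handle this by writing \(\mu\)-expectations via a Poisson-type corrector in the fast variables: solve \((\Id-K_h)\phi=\cos\theta_i-\text{(target)}\) uniformly in \(\eta\). I would also choose \(\tau\) large so that the \(\cos\theta_i\) fluctuations are sub-Gaussian-negligible, as in the \(\e^{-c/h}\) terms of \Cref{lem:et-one-step-defect}. The second-moment bound above controls the remainders.
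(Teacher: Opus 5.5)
Your Hessian and second-moment bounds are correct and essentially match the paper, which applies \Cref{lem:stationary-affine-perturbation} in the phase variables $\theta_j=X_j/\ell$, $\eta=Y/\sqrt K$. The gap is in the mean shift, which is the whole content of the lemma, and it is more than a matter of rigor.

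\begin{itemize}
\item \Cref{lem:et-one-step-defect} does not supply the coefficient you need. It computes the one-step change $\pi(K_h^{\mathrm{Heun}}\varphi-\varphi)$ at the scale $\ell^2=h$ (i.e.\ $\tau=1$). It says nothing about the stationary conditional mean $\E_\mu[\cos\theta_i\mid\eta]$ at $\ell^2=h/\tau$.
\item The Heun $y$-force averages the force at the current point and at the predictor. With $r\deq h/K$, the quantity that must be shown to be nonzero is
\[
 \frac{(1-r)\,C_\mu+C_{\mathrm{pred}}}{2-r}-C_\pi\,,\qquad
 C_\mu\deq\E_\mu[\mathcal A'(\eta)\cos\theta_1]\,,\quad
 C_{\mathrm{pred}}\deq\E[\mathcal A'(\eta^{\mathrm{pred}})\cos\theta_1^{\mathrm{pred}}]\,,\quad
 C_\pi\deq\E_\pi[\mathcal A'(\eta)\cos\theta_1]\,.
\]
For $\tau=O(1)$ these three terms are generically of the same order $a^2h$. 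The predictor correlation and the target's own bias can therefore cancel the term you track, and your sketch never controls $C_{\mathrm{pred}}$ or $C_\pi$.
\item ``The $\eta$-average of $\mathcal A\mathcal A'$ is nonzero'' must hold under $\mu$, not under a Gaussian. With $w\deq\mathcal A\mathcal A'$, the paper gets $\E_\mu w(\eta)\geqslant a^2/4$ from the sharp bound $\norm\eta_{L^2(\mu)}\leqslant1+2a/\tau$ and the inequality $2\cos t+\tfrac12\sin(2t)\geqslant\tfrac32-t^2$. Your bound $\norm\eta_{L^2}\lesssim1$ does not give this.
\end{itemize}

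The missing idea is what a large $\tau$ buys. The per-step noise in each phase $\theta_j$ has variance $2\tau$, so phase information is destroyed in a \emph{single} step, not over the $O(1/h)$ relaxation time. No separation of time scales or Poisson corrector is needed. A corrector for $\cos(x_i/\ell)$, which has Lipschitz constant $1/\ell$, controlled only through contraction at rate $h$, would be very hard to bound anyway. The paper proceeds in three steps.
\begin{enumerate}
\item[(i)] It bootstraps the stationarity relation for $\chi(u,v)\deq\E_\mu\e^{\boldsymbol i(u\theta_1+v\eta)}$ to get $\lvert\chi\rvert\leqslant Cah$ for $\tfrac12\leqslant\lvert u\rvert\leqslant2$, $\lvert v\rvert\leqslant2$. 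This is needed because the law of $\theta_1$ under $\mu$ is unknown. It makes $C_{\mathrm{pred}}$ and $\E[\mathcal A'(\bar\eta)\cos\bar\theta_1]$ of size $O(a^2h\e^{-\tau/2})$.
\item[(ii)] It writes $C_\mu=\E[\mathcal A'(\eta^+)\cos\theta_1^+]$ by stationarity and expands to first order in the force perturbation. The only surviving term is $-\tfrac h2\,\E[w(\eta)\sin\bar\theta_1\sin\bar\theta_1^{\mathrm{pred}}]\approx-\tfrac h4\,\E_\mu w(\eta)$. It survives because the output and the predictor share the same Brownian increment.
\item[(iii)] It bounds $\lvert C_\pi\rvert\leqslant Ca^2h/\tau$.
\end{enumerate}
Together these show that the displayed combination equals $-\tfrac h8\,\E_\mu w(\eta)+O(a^2h\{\e^{-\tau/2}+\tau^{-1}+\sqrt h\})$. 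Choosing $\tau$ large and then $h_0$ small fixes the sign.

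Your final step matches the paper's: stationarity of $\E_\mu Y$ compared against $\E_\pi\partial_yV=0$, with the $h$ cancelling against the confinement $h/K$. Without (i)--(iii), however, the core estimate remains a heuristic.
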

\begin{proof}
Fix \(0<a\leqslant1/100\); we choose \(\tau\) large and then
\(h_0\leqslant1/\tau\) small. Constants \(C\) below are independent of
\(\tau,m,K,h\). Since \(m\ell^2\leqslant1/\tau\leqslant1\),
\[
 \norm{D^{-1/2}\nabla^2V D^{-1/2}-\Id_{m+1}}_{\mathrm{op}}
 \leqslant3a+a\sqrt{m\ell^2}+am\ell^2\leqslant5a\,.
\]
Thus, \Cref{prop:et-contractivity} gives \(\mu\in\mathcal P_2(\R^{m+1})\).
Put \(\theta_j\deq X_j/\ell\) for \(j\in[m]\), \(\eta\deq Y/\sqrt K\),
\(r\deq h/K\).
We use the Heun coefficients
\(A_t^{\mathrm{Heun}}\deq1-t+t^2/2\) and
\(\sigma_t^{\mathrm{Heun}}\deq\sqrt{2t}\,(1-t/2)\).
For fresh independent standard
Gaussians \(G_0,\ldots,G_m\), the Heun predictors, for every \(j\in[m]\), are

\[
 \begin{aligned}
 \theta_j^{\mathrm{pred}}
 &=\underbrace{(1-h)\,\theta_j+\sqrt{2\tau}\,G_j}_{\bar\theta_j^{\mathrm{pred}}}
   +h\mathcal A(\eta)\sin\theta_j\,,\\
 \eta^{\mathrm{pred}}
 &=\underbrace{(1-r)\,\eta+\sqrt{2r}\,G_0}_{\bar\eta^{\mathrm{pred}}}
   -r\ell^2\mathcal A'(\eta)\sum_{k\in[m]}\cos\theta_k\,.
 \end{aligned}
\]
The corrected outputs are
\begin{equation}
 \begin{aligned}
 \theta_j^+
 &=\underbrace{A_h^{\mathrm{Heun}}\theta_j+\frac{\sigma_h^{\mathrm{Heun}}}{\ell}\,G_j}_{\bar\theta_j}
   +\underbrace{\frac h2\,\Bigl\{(1-h)\,\mathcal A(\eta)\sin\theta_j
             +\mathcal A(\eta^{\mathrm{pred}})\sin\theta_j^{\mathrm{pred}}\Bigr\}}_{\zeta_j}\,,\\
 \eta^+
 &=\underbrace{A_r^{\mathrm{Heun}}\eta+\sigma_r^{\mathrm{Heun}}\,G_0}_{\bar\eta}
   -{\underbrace{\frac{r\ell^2}{2}\,\Bigl\{(1-r)\,\mathcal A'(\eta)
          \sum_{k\in[m]}\cos\theta_k
          +\mathcal A'(\eta^{\mathrm{pred}})
          \sum_{k\in[m]}\cos\theta_k^{\mathrm{pred}}\Bigr\}}_{\zeta_0}}\,.
 \end{aligned}
 \label{eq:heun-modulated-update}
\end{equation}
Since \(\lvert\mathcal A\rvert\leqslant3a\) and
\(\lvert\mathcal A'\rvert\leqslant a\), these perturbations satisfy
\(\lvert\zeta_j\rvert\leqslant3ah\) for \(j\in[m]\) and
\(\lvert\zeta_0\rvert\leqslant am\ell^2r\).
To apply \Cref{lem:stationary-affine-perturbation}, take \(p=m+1\) and the
stationary vector \((\theta_1,\ldots,\theta_m,\eta)\).
The perturbation vector in that lemma is
\((\zeta_1,\ldots,\zeta_m,-\zeta_0)\): the weak coordinate is indexed
by \(m+1\) in the lemma and by \(0\) in the block update.
Its first \(m\) affine
coefficients are \(A_h^{\mathrm{Heun}}\), its last is \(A_r^{\mathrm{Heun}}\), and the corresponding
Gaussian coefficients are \(\sigma_h^{\mathrm{Heun}}/\ell\) and
\(\sigma_r^{\mathrm{Heun}}\). The perturbation bounds are \(b_j=3ah\)
for \(j\in[m]\) and \(b_{m+1}=am\ell^2r\). Here,
\(1-A_t^{\mathrm{Heun}}=t\,(1-t/2)\) and
\(s_t^{\mathrm{Heun}}\deq(\sigma_t^{\mathrm{Heun}})^2/(1-(A_t^{\mathrm{Heun}})^2)=(2-t)/(2-t+t^2/2)\leqslant1\).
Thus, for every \(j\in[m]\), the lemma gives
\[
 \norm{\theta_j}_{L^2(\mu)}
 \leqslant\frac{\sqrt{s_h^{\mathrm{Heun}}}}{\ell}+\frac{3a}{1-h/2}
 \leqslant C\sqrt{\tau/h}\,,\qquad
 \norm\eta_{L^2(\mu)}
 \leqslant\sqrt{s_r^{\mathrm{Heun}}}+\frac{am\ell^2}{1-r/2}
 \leqslant1+\frac{2a}{\tau}\,.
\]
Consequently, \(\E_\mu[\norm X^2+Y^2/K]\leqslant Cm\).
In particular, for \(w(t)\deq\mathcal A(t)\mathcal A'(t)=a^2\,(2\cos t+\tfrac12\sin(2t))\),
since
\(2\cos t+\tfrac12\,\sin(2t)\geqslant\tfrac32-t^2\),
\begin{equation}
 \E_\mu w(\eta)
 \geqslant a^2\,\Bigl\{\frac32-\E_\mu\eta^2\Bigr\}
 \geqslant a^2\,\Bigl\{\frac32-(1+2a/\tau)^2\Bigr\}
 \geqslant\frac{a^2}{4}\,.
 \label{eq:heun-block-positive}
\end{equation}

By permutation symmetry, we fix coordinate \(1\) for the scalar
calculations below.

\emph{Gaussian damping.}
For \(\chi(u,v)\deq\E_\mu\e^{\boldsymbol i(u\theta_1+v\eta)}\), stationarity gives
\[
 \lvert\chi(u,v)\rvert\leqslant
 \e^{-\tau(1-h/2)^2u^2-r(1-r/2)^2v^2}\,\lvert\chi(A_h^{\mathrm{Heun}}u,A_r^{\mathrm{Heun}}v)\rvert
       +3ah\,\lvert u\rvert+am\ell^2r\,\lvert v\rvert\,.
\]
If \(1/2\leqslant\lvert u\rvert\leqslant2\), \(\lvert v\rvert\leqslant2\), and
\(J\deq\lfloor1/(8h)\rfloor\), then \(\lvert (A_h^{\mathrm{Heun}})^ju\rvert\geqslant1/4\) for
\(0\leqslant j\leqslant J\). Iteration yields, for small enough \(h\),
\begin{equation}
 \lvert\chi(u,v)\rvert\leqslant\e^{-c\tau/h}
       +Ca h\sum_{j=0}^{J-1}\e^{-\tau j/32}\leqslant Ca h\,.
 \label{eq:heun-block-fourier}
\end{equation}
Define
\(C_\mu\deq\E_\mu[\mathcal A'(\eta)\cos\theta_1]\) and
\(C_{\mathrm{pred}}\deq\E[\mathcal A'(\eta^{\mathrm{pred}})
                                      \cos\theta_1^{\mathrm{pred}}]\).
Then, Gaussian integration and \eqref{eq:heun-block-fourier} give
\begin{equation}
 \begin{split}
 \lvert\E[\mathcal A'(\bar\eta)\cos\bar\theta_1]\rvert
 &\leqslant a\e^{-\tau(1-h/2)^2}
                  \max_{v\in\{-A_r^{\mathrm{Heun}},A_r^{\mathrm{Heun}}\}}\lvert\chi(A_h^{\mathrm{Heun}},v)\rvert
       \leqslant Ca^2h\e^{-\tau/2}\,,\\
 \lvert C_{\mathrm{pred}}\rvert
 &\leqslant a\e^{-\tau-r}\,\Bigl\{
       \max_{v\in\{r-1,1-r\}}\lvert\chi(1-h,v)\rvert
                  +3ah+am\ell^2r\Bigr\}
       \leqslant Ca^2h\e^{-\tau}\,.
 \end{split}
 \label{eq:heun-block-damped-terms}
\end{equation}
For the second line, the two predictor components have independent
Gaussian noises conditional on the input; their phase perturbations
are bounded by \(3ah\) and \(am\ell^2r\).

\emph{The surviving term.}
Equation \eqref{eq:heun-modulated-update} implies
\[
 \begin{split}
 \E\lvert\eta^{\mathrm{pred}}-\eta\rvert+\E\lvert\bar\eta-\eta\rvert&\leqslant C\sqrt r\,,\\
 \E\bigl\lvert\zeta_1-\tfrac h2\,\mathcal A(\eta)\,
             \{(1-h)\sin\theta_1+\sin\bar\theta_1^{\mathrm{pred}}\}\bigr\rvert
 &\leqslant\tfrac{ah}{2}\,\E\lvert\eta^{\mathrm{pred}}-\eta\rvert
             +\tfrac{9a^2h^2}{2}\,.
 \end{split}
\]
Expanding the output cosine and using stationarity gives
\begin{align*}
    C_\mu &=\E[\mathcal A'(\bar\eta)\cos\bar\theta_1]
       -\tfrac h2\,\E[w(\eta)\sin\bar\theta_1\,
                    \{(1-h)\sin\theta_1+\sin\bar\theta_1^{\mathrm{pred}}\}]+R\,, \\
 \lvert R\rvert
 &\leqslant Ca^2\,(m\ell^2r+h\sqrt r+ah^2)\,.
 \end{align*}

Replace \(\bar\theta_1\) by
\(\bar\theta_1^{\mathrm{pred}}\) in the second term in the preceding Taylor expansion of
\(C_\mu\). Indeed,
\[
 \bar\theta_1-\bar\theta_1^{\mathrm{pred}}
 =\frac{h^2}{2}\,\theta_1-\frac h2\,\sqrt{2\tau}\,G_1\,,
 \qquad
 \E\lvert\bar\theta_1-\bar\theta_1^{\mathrm{pred}}\rvert
 \leqslant C\sqrt\tau\,h\,.
\]
Thus this replacement costs at most
\(Ca^2\sqrt\tau\,h^2\leqslant Ca^2h^{3/2}\).
Conditional on the input, \(\bar\theta_1^{\mathrm{pred}}\) is Gaussian
with variance \(2\tau\).
Consequently,
\[
 \begin{split}
 \bigl\lvert\E[w(\eta)\sin\bar\theta_1^{\mathrm{pred}}\sin\theta_1]\bigr\rvert
 &\leqslant3a^2\e^{-\tau}\,,\\
 \E[w(\eta)\sin^2\bar\theta_1^{\mathrm{pred}}]
 &=\frac12\,\E_\mu w(\eta)
   -\frac{\e^{-4\tau}}2\,
       \E[w(\eta)\cos(2(1-h)\theta_1)]\,.
 \end{split}
\]
Combining these two estimates with the Taylor expansion and
\eqref{eq:heun-block-damped-terms}, \(mh\leqslant1\), and
\(\tau h\leqslant1\) proves
\begin{equation}
 \bigl\lvert C_\mu+\tfrac h4\,\E_\mu w(\eta)\bigr\rvert
 \leqslant Ca^2h\,\Bigl\{\e^{-\tau/2}+\frac1{K\tau}
                  +\sqrt h\Bigr\}\,.
 \label{eq:heun-block-response}
\end{equation}
\emph{Comparison of means.}
Let \(C_\pi\deq\E_\pi[\mathcal A'(\eta)\cos\theta_1]\).
The conditional target density relative to \(Z\sim\cN(0,1)\) satisfies
\[
 \Bigl\lvert\frac{\e^{-\ell^2\mathcal A(\eta)\cos(Z/\ell)}}
                  {\E\e^{-\ell^2\mathcal A(\eta)\cos(Z/\ell)}}-1\Bigr\rvert
 \leqslant\e^{6a\ell^2}-1\leqslant Ca\ell^2\,,
 \qquad \lvert C_\pi\rvert\leqslant a\e^{-1/(2\ell^2)}+Ca^2\ell^2
                 \leqslant Ca^2h/\tau\,.
\]
Consequently \eqref{eq:heun-block-damped-terms} and
\eqref{eq:heun-block-response} imply
\[
 \frac{(1-r)\,C_\mu+C_{\mathrm{pred}}}{2-r}-C_\pi
 =-\tfrac h8\,\E_\mu w(\eta)
      +O\bigl(a^2h\,\{\e^{-\tau/2}+\tau^{-1}
                         +\sqrt h\}\bigr)\,.
\]
Choose the fixed numerical \(\tau\) so large, and then \(h_0\) so small,
that the error is at most \(a^2h/64\).
By \eqref{eq:heun-block-positive}, the displayed difference is at most
\(-a^2h/64\). Take expectations in
\(\eta^+=\bar\eta-\zeta_0\) from \eqref{eq:heun-modulated-update}
under stationarity. Since \(\E_\mu\eta^+=\E_\mu\eta\),
\(\E\bar\eta=A_r^{\mathrm{Heun}}\,\E_\mu\eta\), and, by permutation symmetry,
\(\E\zeta_0=\frac{r\ell^2m}{2}\,\{(1-r)\,C_\mu+C_{\mathrm{pred}}\}\),
we obtain
\(\E_\mu\eta=-m\ell^2\,\{(1-r)\,C_\mu+C_{\mathrm{pred}}\}/(2-r)\).
Integrating \(\partial_yV\) against the target gives
\(\E_\pi\eta=-m\ell^2C_\pi\). Together these give
\[
 \E_\mu\eta-\E_\pi\eta
   =-m\ell^2\,\Bigl\{\frac{(1-r)\,C_\mu+C_{\mathrm{pred}}}{2-r}-C_\pi\Bigr\}
   \geqslant\frac{a^2}{64\tau}\,mh^2\,.
\]
This proves the mean error in \eqref{eq:heun-modulated-bias}.
\end{proof}

\subsubsection{Products and the iteration lower bound}
\label{subsec:heun-product-lower-proof}

\begin{proof}[Proof of \Cref{thm:et-linear-kappa-lower}]
\emph{Products of blocks.}

Reserve one coordinate with potential \(z^2/(2K)\). Set
\[
 m\deq\min\{\lfloor(d-1)/2\rfloor,\lfloor h^{-1}\rfloor\}\,,\qquad
 q\deq\lfloor(d-1)/(m+1)\rfloor\,,
\]
Let \(r\in\R^{d-1-q(m+1)}\) denote the quadratic padding
coordinates and \(z\in\R\) the reserved coordinate. With
\(x^{(j)}\in\R^m\), \(y_j\in\R\), and \(\ell^2=h/\tau\), the full
normalized potential is explicitly
\begin{equation}
 \begin{aligned}
 V_h\bigl((x^{(j)},y_j)_{j=1}^q,r,z\bigr)
 &\deq\sum_{j=1}^q\Bigl\{
       \frac{\norm{x^{(j)}}^2}{2}+\frac{y_j^2}{2K}\\
 &\qquad\qquad+a\ell^2\bigl(2+\sin(y_j/\sqrt K)\bigr)
          \sum_{i=1}^m\cos(x_i^{(j)}/\ell)\Bigr\}
       +\frac{\norm r^2}{2}+\frac{z^2}{2K}\,.
 \end{aligned}
 \label{eq:heun-product-potential}
\end{equation}
Thus it consists of \(q\) independent blocks from
\eqref{eq:heun-modulated-potential}, the padding coordinates, and the
reserved coordinate. The full potential satisfies
\((1-5a)\Id_d/K\preceq\nabla^2V_h\preceq(1+5a)\Id_d\).
Tensorization of the invariant distribution follows from independence
of the coordinate blocks and uniqueness. Its vector of \(y\)-coordinate
means and
\eqref{eq:heun-modulated-bias} imply
\[
 K^{-1/2}\,\W_2(\widehat\pi_h^{\mathrm{Heun}},\pi_h)
 \geqslant c\sqrt q\,mh^2
 \asymp\min\{dh^2,\sqrt d\,h^{3/2}\}
 \asymp\frac{dh^2}{\sqrt{1+dh}}\,.
\]
Indeed, \(m\asymp\min\{d,h^{-1}\}\) and \(q\asymp d/m\).

\emph{Initialization and relaxation.}
Initialize all but the reserved coordinate at their numerical invariant
distributions, and set \(Z_0\deq\sqrt K\). For each coupled block,
\eqref{eq:heun-modulated-bias} gives
\[
 \E_\mu[V(X,Y)-\min V]
 \leqslant\tfrac12\,\E_\mu[\norm X^2+Y^2/K]+6am\ell^2\leqslant {Cm}\,.
\]
Thus, with \(d-1-q\,(m+1)\) quadratic padding coordinates,
\[
 \E[V_h(X_0)-V_h(x_\star)]
 \leqslant Cq\,(m+1)+C\,\{d-1-q\,(m+1)\}+\tfrac12\leqslant Cd\,.
\]
The block means remain stationary, while the reserved mean is
\(\sqrt K (A_{h/K}^{\mathrm{Heun}})^N\). Consequently, by projection onto these coordinates,
\[
 K^{-1/2}\,\W_2(\nu_h(K_h^{\mathrm{Heun}})^N,\pi_h)
 \geqslant c\,\frac{dh^2}{\sqrt{1+dh}}\,,\qquad
 K^{-1/2}\,\W_2(\nu_h(K_h^{\mathrm{Heun}})^N,\pi_h)
 \geqslant (A_{h/K}^{\mathrm{Heun}})^N\,.
\]

\emph{Original units and the step restriction.}
For prescribed \(\alpha,\beta\) with \(\beta/\alpha=\kappa\), take
\(K=\kappa\,(1-5a)/(1+5a)\) and replace each constructed potential
\(V\) by \(x\mapsto V(\sqrt{\beta/(1+5a)}\,x)\).
Thus the block parameter \(K\) is comparable to the prescribed condition number \(\kappa\).
Increase \(\kappa_0\) so that \(K\geqslant K_0\).
The Hessian bounds become \(\alpha\Id_d,\beta\Id_d\), the step used
in the construction becomes \(\beta h/(1+5a)\), and the normalized
Wasserstein distance becomes
\(\sqrt{1-5a}\,K^{-1/2}\,\W_2\). The expected energy is unchanged.
Since \(-\log A_t^{\mathrm{Heun}}\asymp t\) for small \(t>0\), sufficiently small
normalized accuracy \(\varepsilon\) therefore requires
\[
 \frac{d\,(\beta h)^2}{\sqrt{1+d\beta h}}\leqslant C\varepsilon\,,
 \qquad
 N\geqslant c\,\frac\kappa{\beta h}\,\log\frac1\varepsilon\,.
\]
Splitting the first inequality at \(d\beta h=1\) gives
\[
 \beta h\leqslant C\,\bigl\{
      (\varepsilon/d)^{1/2}+(\varepsilon^2/d)^{1/3}\bigr\}\,.
\]
The two terms balance at \(\varepsilon=d^{-1}\). Combining this bound
with the second inequality and the assumed small-step bound proves
\eqref{eq:linear-kappa-endpoint-rate}, hence also
\eqref{eq:linear-kappa-third-root}.
\end{proof}

\subsection{Underdamped exponential midpoint}
\label{subsec:exponential-midpoint-Gaussian-proof}

Unlike the oscillatory Heun construction, the underdamped lower bound
already holds for \(V(x)=\norm x^2/2\). Here \(\alpha=\beta=1\),
\(\gamma=2\), and \(\pi=\cN(0,\Id_d)\); the rescaling to general units
is given in \Cref{subsec:exponential-midpoint-lower}. We compute the invariant
covariance of a position--momentum pair and compare its position variance
with that of the target.

We use the following coefficients here and in the randomized underdamped
proofs in \Cref{app:randomized-lower}, obtained by specializing
\eqref{eq:uld-predictor} to \(\gamma=2\):
\begin{equation}
    a_t=\frac{1-\e^{-2t}}2\,,\qquad c_t=\frac{t-a_t}2\,,
    \qquad
    E_t\deq\begin{bmatrix}1&a_t\\[0.25em] 0&\e^{-2t}\end{bmatrix}\,,
    \qquad g_t\deq\begin{bmatrix}c_t\\a_t\end{bmatrix}\,.
    \label{eq:quadratic-flow-coefficients}
\end{equation}
The exponential Euler update on one coordinate pair \(z=(x,p)\), with
the gradient evaluated at the initial position \(x\), is
\(E_tz-g_tx+\eta_t\), where
\begin{equation}
    \eta_t\deq2\int_0^t
    \begin{bmatrix}a_{t-s}\\[0.25em] \e^{-2(t-s)}\end{bmatrix}\,\mathrm d B_s\,.
    \label{eq:quadratic-flow-noise}
\end{equation}

The exact quadratic drift matrix and the normalized hypocoercive metric
from \Cref{sec:underdamped-upper}, restricted to one coordinate pair, are
\[
    A\deq\begin{bmatrix}0&1\\-1&-2\end{bmatrix}\,,
    \qquad H_{\mathrm{hyp}}\deq\begin{bmatrix}2&1\\1&1\end{bmatrix}\,.
\]
For matrices, \(\norm{\cdot}_{H_{\mathrm{hyp}}}\) denotes the operator norm induced by
\(\norm{\cdot}_{H_{\mathrm{hyp}}}\) on \(\R^2\). All constants and remainders below are
independent of \(d\).

For later reference, the unit-curvature update has the
following exact state matrix and noise covariance, obtained from
\eqref{eq:EM-x}--\eqref{eq:EM-v}:
\begin{equation}
    M_h\deq
    \begin{bmatrix}
        1-ha_{h/2}\,(1-c_{h/2})&a_h-ha_{h/2}^2\\[0.5em]
      -h\e^{-h}\,(1-c_{h/2})&\e^{-2h}-h\e^{-h}a_{h/2}
    \end{bmatrix}\,,
    \label{eq:exponential-midpoint-matrix}
\end{equation}
The lower-bound kernel \(\mathcal J_h^{\mathrm{ExpMid}}(t)\) below
is a deterministic Brownian integrand; it is distinct from the Malliavin
variation \(\mathcal J\) in the upper-bound analysis.
The additive noise covariance is
\(Q_h\deq\int_0^h\mathcal J_h^{\mathrm{ExpMid}}(t)
\mathcal J_h^{\mathrm{ExpMid}}(t)^\top\,\mathrm d t\), where
\begin{equation*}
    \mathcal J_h^{\mathrm{ExpMid}}(t)\deq
    2\,\begin{bmatrix}
        a_{h-t}-h a_{h/2}a_{h/2-t}\mathbf 1_{\{t\leqslant h/2\}}\\[0.25em]
       \e^{-2(h-t)}-h\e^{-h}a_{h/2-t}\mathbf 1_{\{t\leqslant h/2\}}
    \end{bmatrix}\,.
\end{equation*}

We prove the isotropic and anisotropic statements together by
expanding the invariant covariance uniformly over the curvature.

\begin{proof}[Proof of \Cref{prop:exponential-midpoint-Gaussian-lower}]
We normalize \(\beta=1\) and \(\gamma=2\), and consider both
unit curvature and the anisotropic target with \(\alpha=\kappa^{-1}\).
We first compute a variance expansion uniform
over the curvature \(\lambda\in(0,1]\). We then show that the variance
error persists at every iteration, while a displaced coordinate of
curvature \(\kappa^{-1}\) forces the required elapsed time.

\emph{Uniform scalar covariance expansion.}
For the potential \(\lambda x^2/2\), the state matrix is
\[
 M_{h,\lambda}\deq
 \begin{bmatrix}
  1-h a_{h/2}\lambda\,(1-c_{h/2}\lambda)
       &a_h-h a_{h/2}^2\lambda\\[0.4em]
  -h\e^{-h}\lambda\,(1-c_{h/2}\lambda)
       &\e^{-2h}-h\e^{-h}a_{h/2}\lambda
 \end{bmatrix}\,.
\]
Its additive Gaussian noise, using the common Brownian path in
\eqref{eq:quadratic-flow-noise}, is
\[
 \eta_h-h\lambda\,
    \begin{bmatrix}a_{h/2}\\[0.25em]\e^{-h}\end{bmatrix}
       \, (\eta_{h/2})_x\,.
\]
Denote its noise covariance by \(Q_{h,\lambda}\), its
invariant position--momentum covariance by \(C_{h,\lambda}^{\mathrm{ExpMid}}\),
and its scalar position variance by
\(s_{h,\lambda}^{\mathrm{ExpMid}}\deq(C_{h,\lambda}^{\mathrm{ExpMid}})_{11}\).
The invariant covariance equation is
\[
 C_{h,\lambda}^{\mathrm{ExpMid}}
 =M_{h,\lambda}C_{h,\lambda}^{\mathrm{ExpMid}}M_{h,\lambda}^\top
     +Q_{h,\lambda}\,.
\]
At \(\lambda=1\), write \(C_h\deq C_{h,1}^{\mathrm{ExpMid}}\) and
\(M_h\deq M_{h,1}\), so this specializes to
\begin{equation}
 C_h=M_hC_hM_h^\top+Q_h\,.
 \label{eq:exponential-midpoint-Lyapunov}
\end{equation}
To control the remainder uniformly as \(\lambda\downarrow0\), write
the three covariance equations in the unknowns
\(\lambda(C_{h,\lambda}^{\mathrm{ExpMid}})_{11}\),
\((C_{h,\lambda}^{\mathrm{ExpMid}})_{12}\), and \((C_{h,\lambda}^{\mathrm{ExpMid}})_{22}\), and divide
each equation by \(h\). The coefficient matrix is analytic in
\((h,\lambda)\), including at \(h=\lambda=0\), and at \(h=0\) it is
\[
 \begin{bmatrix}
  0&-2&0\\1&2&-1\\0&2\lambda&4
 \end{bmatrix}\,.
\]
Its determinant is \(8\), independently of \(\lambda\).
The right-hand side is analytic as well. Hence these three unknowns
are analytic on a uniform neighborhood of
\(\{0\}\times[0,1]\). At \(\lambda=0\), their values are exactly
\[
 1\,,\qquad
 \frac{a_h-h\e^{-h}}{2a_h}\,,\qquad 1\,,
\]
with the middle expression extended continuously to zero at \(h=0\).
Thus \(\lambda(C_{h,\lambda}^{\mathrm{ExpMid}})_{11}-1\) and
\((C_{h,\lambda}^{\mathrm{ExpMid}})_{22}-1\) both have a factor \(\lambda\).
Taylor expansion of the covariance equations now gives
\begin{equation}
 \begin{aligned}
 (C_{h,\lambda}^{\mathrm{ExpMid}})_{11}&=\lambda^{-1}-\frac{h^2}{24}+O(h^3)\,,\\
 (C_{h,\lambda}^{\mathrm{ExpMid}})_{12}&=\frac{h^2}{12}+O(h^3)\,,\\
 (C_{h,\lambda}^{\mathrm{ExpMid}})_{22}&=1+\frac{\lambda h^2}{12}+O(\lambda h^3)\,.
 \end{aligned}
 \label{eq:em-uniform-curvature-covariance}
\end{equation}
All remainders are uniform over \(0<\lambda\leqslant1\).
In particular, \(C_{h,\lambda}^{\mathrm{ExpMid}}\) is positive definite for a universal
range \(0<h\leqslant h_0\). The noise covariance is also positive
definite: the unmodified noise on the second half of the step already
has this property. The covariance equation therefore implies that
every eigenvalue of \(M_{h,\lambda}\) has modulus less than one.
This proves existence and uniqueness of the Gaussian invariant
distribution throughout this universal step size range.
Indeed, stability represents the stationary state as a convergent
sum of independent Gaussian noise variables, with covariance \(C_{h,\lambda}^{\mathrm{ExpMid}}\).

For independent coordinates of curvatures
\(\lambda_1,\ldots,\lambda_d\in(0,1]\), the Gaussian Wasserstein
formula and \eqref{eq:em-uniform-curvature-covariance} give
\[
 \W_2\bigl((\bm\pi_h^{\mathrm{ExpMid}})_x,\pi\bigr)
 =\frac{h^2}{48}\,\Bigl(\sum_{i=1}^d\lambda_i\Bigr)^{1/2}
       \,\{1+O(h)\}\,.
\]
With all curvatures equal to one, this proves
\(s_h\deq s_{h,1}^{\mathrm{ExpMid}}=(C_h)_{11}
=1-h^2/24+O(h^3)\) and the isotropic lower bound
\eqref{eq:exponential-midpoint-lower}.
Taking \(\lambda_1=\cdots=\lambda_{d-1}=1\) and
\(\lambda_d=\kappa^{-1}\) proves the claimed stationary bias in
normalized units.

\emph{The variance error persists at every iteration.}
The initialization in the proposition gives each coordinate pair
covariance \(\operatorname{diag}(0,1)\). Equation
\eqref{eq:em-uniform-curvature-covariance} implies, uniformly in
\(\lambda\),
\[
 \det\bigl(C_{h,\lambda}^{\mathrm{ExpMid}}-\operatorname{diag}(0,1)\bigr)
 =\frac{h^2}{12}+O(h^3)>0\,,\qquad
 C_{h,\lambda}^{\mathrm{ExpMid}}-\operatorname{diag}(0,1)\succeq0\,.
\]
Consequently, the covariance after \(N\) updates is
\[
 C_{h,\lambda}^{\mathrm{ExpMid}}
   -M_{h,\lambda}^N\,
       \{C_{h,\lambda}^{\mathrm{ExpMid}}-\operatorname{diag}(0,1)\}
          (M_{h,\lambda}^N)^\top
 \preceq C_{h,\lambda}^{\mathrm{ExpMid}}\,.
\]
Each position variance is therefore smaller than its target variance,
with a deficit at least as large as at stationarity. Applying the
Gaussian Wasserstein formula to the first \(d-1\geqslant d/2\)
coordinates yields
\[
 \kappa^{-1/2}\,\W_2\bigl(\Law(\widehat X_N),\pi\bigr)
 \geqslant c\sqrt{\frac d\kappa}\,h^2\,.
\]

\emph{Relaxation of the displaced coordinate.}
For \(0<\lambda\leqslant1/2\) and \(h\leqslant h_0\), the two
eigenvalues of \(M_{h,\lambda}\), denoted by
\(\rho_+\) and \(\rho_-\), satisfy uniformly
\[
 0<\rho_-<\rho_+<1\,,\qquad
 1-\rho_+\asymp\lambda h\,,\qquad
 (M_{h,\lambda})_{11}\geqslant\rho_+\,.
\]
Indeed, the discriminant of the characteristic polynomial is
\(4\,(1-\lambda)h^2+O(h^3)\), and
\[
 (1-\rho_+)(1-\rho_-)
 =\det(\Id_2-M_{h,\lambda})
 =h a_h\lambda\,(1-c_{h/2}\lambda)\,.
\]
These formulas give \(1-\rho_-\asymp h\) and
\(1-\rho_+\asymp\lambda h\). The final inequality follows from
\((M_{h,\lambda})_{11}>(M_{h,\lambda})_{22}\) and
\((M_{h,\lambda})_{12}(M_{h,\lambda})_{21}<0\).
Since the initial momentum has expectation zero,
\[
 \begin{split}
 \frac{\E\widehat X_{N,d}}{\E X_{0,d}}
 &=\frac{(M_{h,\lambda})_{11}-\rho_-}{\rho_+-\rho_-}\,
            \rho_+^N
   +\frac{\rho_+-(M_{h,\lambda})_{11}}{\rho_+-\rho_-}\,
            \rho_-^N\\
 &\geqslant\rho_+^N\geqslant\e^{-CN\lambda h}\,,
 \qquad \lambda=\kappa^{-1}\,.
 \end{split}
\]
The initial position in this coordinate is \(\sqrt\kappa\).
The lower bound on Wasserstein distance by the distance of means
therefore supplies the second term in
\eqref{eq:em-anisotropic-finite-time}.

\emph{Original units and iteration count.}
The normalization in \Cref{sec:normalization} replaces the step by
\(\sqrt\beta\,h\) and the position distance by
\(\sqrt\beta\,\W_2\). Thus multiplying normalized position distances
by \(\kappa^{-1/2}\) gives the accuracy
\(\sqrt\alpha\,\W_2\) in the proposition, proving its stationary
formula and \eqref{eq:em-anisotropic-finite-time}.
Accuracy \(\varepsilon\leqslant1/2\) requires both
\[
 \sqrt\beta\,h
 \leqslant C\kappa^{1/4}d^{-1/4}\varepsilon^{1/2}\,,\qquad
 N\sqrt\beta\,h\geqslant c\kappa\log\frac1\varepsilon\,.
\]
Combining these bounds with \(\sqrt\beta\,h\leqslant h_0\)
proves \eqref{eq:em-anisotropic-iteration}.
\end{proof}

\section{Proofs of the randomized lower bounds}
\label{app:randomized-lower}

This appendix proves
\Cref{prop:overdamped-common-construction,prop:randomized-underdamped-Gaussian-lower}
on the standard Gaussian target, then gives the common proof of the
iteration lower bounds in \Cref{cor:midpoint-iteration-lower}.
The stationary position vector has a scale-Gaussian decomposition
\(\sqrt S\,G_d\), where \(S\) is a random variable independent of the standard Gaussian
\(G_d\). Its conditional covariance solves a fixed-point equation
derived from the update. Expanding this equation gives the variance
and centered fourth moment of \(S\), which
\Cref{lem:gaussian-scale-perturbation} converts into a Wasserstein lower
bound. We then obtain iteration lower bounds from initial distributions
with controlled energy and Wasserstein gaps. The roadmap is as follows.
\begin{enumerate}
\item \emph{A common covariance calculation.}
\Cref{lem:scalar-linear-mixture,lem:random-linear-mixture} give a scale-Gaussian decomposition of a stationary random vector
and compute moments of its conditional covariance, assuming it satisfies a fixed point equation at stationarity.

\item \emph{From covariance moments to transport.}
\Cref{lem:gaussian-scale-perturbation} converts the moments in the previous lemmas into the
position lower bound \(c\min\{dh^3,\sqrt{dh^3}\}\), using only the variance
and centered fourth moment, via some Gaussian analysis.

\item \emph{The coefficients for each quadrature rule.}
\Cref{subsec:randomized-overdamped-proof} computes scalar coefficients
for RLMC and PLMC. The matrix calculations for RULMC and PULMC are in
\Cref{subsec:randomized-underdamped-proof,subsec:poisson-underdamped-proof}.
These are then combined with the previous results to give $W_2$ bounds at stationarity.

\item {\emph{From bias to iteration lower bounds.}
The common proof in
\Cref{subsec:lower-iteration-proof} combines the stationarity lower bounds with a particular choice of initialization, converting the error bounds into iteration lower bounds
for all the methods.}
\end{enumerate}
All constants are independent of \(d\) and, for the Poisson methods,
of \(k\).

\subsection{Scale-Gaussian decompositions from random linear recursions}
\label{subsec:random-linear-mixtures}

A scale-Gaussian decomposition represents a random vector as an independent
Gaussian multiplied by a random scalar or matrix. Conditional
on this scale, the vector is Gaussian; its unconditional distribution
need not be Gaussian. We first state the scalar case used for RLMC and PLMC.
\par
Henceforth throughout this appendix, \(S\) denotes a random conditional
state covariance matrix, \(s\) a scalar conditional position variance,
and bars denote their expectations. Each algorithm has its own
\(S\) or \(s\).

\begin{lemma}[Scalar scale-Gaussian decomposition]
\label{lem:scalar-linear-mixture}
Let \(X^+\deq mX+\sqrt q\,G_d\), where \(q\geqslant0\), the pair
\((m,q)\) is fresh, and \(G_d\sim\cN(0,\Id_d)\) is independent of
\((X,m,q)\). If \(\E m^8<1\) and \(\E q^4<\infty\), the unique
invariant distribution in \(\mathcal P_2(\R^d) \) is
\(\Law(\sqrt s\,G_d)\), with \(s\geqslant0\) independent of \(G_d\), and
\[
 s\stackrel{\mathrm d}=m^2s'+q\,,\qquad s'\perp(m,q)\,,\qquad s'\stackrel{\mathrm d}=s\,.
\]
The moments satisfy
\begin{equation}
 \begin{split}
 \overline s\deq\E s&=\frac{\E q}{1-\E m^2}\,,\qquad
 \operatorname{Var}(s)=\frac{\operatorname{Var}(m^2\,\overline s+q)}{1-\E m^4}\,, \qquad
 \E\lvert s-\overline s\rvert^4\leqslant\frac{C\,\E\lvert m^2\,\overline s+q-\overline s\rvert^4}{(1-\sqrt{\E m^8})^2}\,.
 \end{split}
 \label{eq:linear-mixture-scalar-moments}
\end{equation}
\end{lemma}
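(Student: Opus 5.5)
We first construct the scale-Gaussian decomposition by iterating the scalar recursion. Let \((m_n,q_n)_{n\geqslant1}\) be i.i.d.\ copies of \((m,q)\) and define the backward series
\[
 s\deq\sum_{j=0}^{\infty}\Bigl(\prod_{i=1}^{j}m_i^2\Bigr)q_{j+1}\,.
\]
By Minkowski's inequality in \(L^4\) and independence, the \(j\)-th term has \(L^4\) norm \((\E m^8)^{j/4}\,\norm{q}_{L^4}\). Since \(\E m^8<1\), the series converges in \(L^4\) and almost surely, so \(s\geqslant0\) has a finite fourth moment. Splitting off the first term gives \(s=m_1^2s'+q_1\), where \(s'\) is the shifted series. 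The variable \(s'\) is independent of \((m_1,q_1)\) and has the same law as \(s\), which is the stated distributional identity.

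If \(X=\sqrt{s'}\,G'\) with \(G'\perp s'\), then conditionally on \((s',m,q)\) the vector \(mX+\sqrt q\,G_d\) is centered Gaussian with covariance \((m^2s'+q)\Id_d\). Hence its law equals that of \(\sqrt{s}\,G_d\), and \(\Law(\sqrt s\,G_d)\) is invariant. For uniqueness in \(\mathcal P_2\), we couple two chains with the same fresh \((m,q,G_d)\). Their difference evolves by \(X_n-Y_n=\prod_{i\leqslant n}m_i\,(X_0-Y_0)\). This has second moment \((\E m^2)^n\E\norm{X_0-Y_0}^2\to0\), because \(\E m^2\leqslant(\E m^8)^{1/4}<1\). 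So any two invariant laws in \(\mathcal P_2\) coincide.

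For the moments, take expectations in \(s=m^2s'+q\) with \(s'\) independent. This gives \(\overline s=\E m^2\,\overline s+\E q\), which is the formula for \(\overline s\). Next write
\[
 s-\overline s=m^2(s'-\overline s)+(m^2\overline s+q-\overline s)\,,
\]
and note that \(\E[m^2\overline s+q]=\overline s\). Squaring and taking expectations, the cross term vanishes, because \(s'-\overline s\) is centered and independent of \((m,q)\). We obtain \(\operatorname{Var}(s)=\E m^4\operatorname{Var}(s)+\operatorname{Var}(m^2\overline s+q)\). Since \(\E m^4<1\), rearranging gives the variance identity.

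For the fourth moment, put \(\xi\deq m^2\overline s+q-\overline s\) and \(u\deq s-\overline s\). Minkowski in \(L^4\) gives
\[
 \norm u_{L^4}\leqslant\norm{m^2u'}_{L^4}+\norm\xi_{L^4}=(\E m^8)^{1/4}\norm u_{L^4}+\norm\xi_{L^4}\,.
\]
Since \(\norm u_{L^4}<\infty\), we can rearrange to \(\norm u_{L^4}\leqslant\norm\xi_{L^4}/(1-(\E m^8)^{1/4})\). This already yields a valid bound, but with \((1-(\E m^8)^{1/4})^4\) in the denominator.

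The main obstacle is matching the stated denominator \((1-\sqrt{\E m^8})^2\). To get it, we would expand \(\E u^4=\E(m^2u'+\xi)^4\) exactly instead of using Minkowski. The term linear in \(u'\) vanishes because \(u'\) is centered and independent of \((m,\xi)\). The remaining cross terms \(\E[m^6\xi]\,\E u'^3\) and \(\E[m^4\xi^2]\,\E u'^2\) we would bound by Cauchy--Schwarz and Young, absorbing part of them into \(\E m^8\,\E u^4\). If \(\E m^8\) is close to \(1\), Young's inequality must be applied with weights tied to the gap \(1-\sqrt{\E m^8}\), and this calibration is the delicate step. If it fails, we would fall back on the Minkowski bound above, which has the same form up to the power of the gap.
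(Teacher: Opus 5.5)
Your construction of \(s\) as a backward series, the invariance and synchronous-coupling uniqueness arguments, and the mean and variance identities are correct and agree with the paper. The gap is the fourth-moment bound, which is the part of the lemma used later. Write \(\rho\deq\E m^8\), \(u\deq s-\overline s\), \(\xi\deq m^2\overline s+q-\overline s\), and \(\delta\deq1-\sqrt\rho\).

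Your Minkowski fallback gives \(\E u^4\leqslant\E\xi^4/(1-\rho^{1/4})^4\). Since \(\delta/2\leqslant1-\rho^{1/4}\leqslant\delta\), this is weaker than the claim by an unbounded factor of order \(\delta^{-2}\). It is not ``the same form'', and it proves the statement for no universal \(C\). This matters downstream: in every application \(1-\rho\asymp h\) and \(\E\xi^4\asymp h^8\). The claim then gives \(\E u^4\lesssim h^6\asymp\operatorname{Var}(s)^2\), exactly the hypothesis of \Cref{lem:gaussian-scale-perturbation}, whereas the fallback gives only \(h^4\).

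Your primary route cannot be saved by calibrating Young's inequality either. Consider the identity \((1-\rho)\,\E u^4=4\E[m^6\xi]\,\E u'^3+6\E[m^4\xi^2]\,\E u'^2+\E\xi^4\). The cubic term carries three powers of the large variable. With the termwise bounds \(\lvert\E[m^6\xi]\rvert\leqslant\norm\xi_{L^4}\) and \(\lvert\E u'^3\rvert\leqslant\norm u_{L^4}^3\), the resulting inequality for \(x\deq\norm u_{L^4}\) is satisfied by \(x=4\norm\xi_{L^4}/(1-\rho)\). So no choice of weights beats the Minkowski rate. Even inserting the exact variance through \(\lvert\E u'^3\rvert\leqslant\norm u_{L^2}\norm u_{L^4}^2\) only reaches \(\delta^{-3}\).

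The missing idea is to accumulate the small increments one at a time instead of peeling off one step. Telescoping your series gives \(u=\sum_{j\geqslant0}D_j\), with \(D_j\deq m_1^2\cdots m_j^2\,\xi_{j+1}\) and \(\xi_{j+1}\deq m_{j+1}^2\overline s+q_{j+1}-\overline s\). These are martingale differences for \(\mathcal F_j\deq\sigma((m_i,q_i)_{i\leqslant j})\), with \(\norm{D_j}_{L^4}=\rho^{j/4}\norm\xi_{L^4}\). For an \(\mathcal F_j\)-measurable partial sum \(U\) and the next difference \(D\), one has \(\E[U^3D]=0\). H\"older then gives \(\E(U+D)^4\leqslant a^4+6a^2b^2+4ab^3+b^4\leqslant(a^2+4b^2)^2\), with \(a\deq\norm U_{L^4}\) and \(b\deq\norm D_{L^4}\). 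The only surviving odd term, \(\E[UD^3]\), carries three powers of the small increment.

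Iterating gives \(\norm u_{L^4}^2\leqslant4\sum_{j\geqslant0}\rho^{j/2}\norm\xi_{L^4}^2=4\norm\xi_{L^4}^2/(1-\sqrt\rho)\), which is the claim with \(C=16\); this is the paper's argument. Your split \(u=\xi+m^2u'\) is an instance of the same inequality, since \(m^2u'\) is itself a martingale increment after \(\xi\). But there the roles are reversed and the increment is the large term, which is why it produces the uncontrollable term \(\E[m^6\xi]\,\E u'^3\).
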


This is the \(p=1\) case of the following matrix statement,
with \(M=m\) and \(Q=q\).

\begin{lemma}[Matrix scale-Gaussian decomposition]
\label{lem:random-linear-mixture}
Consider \(Z_i^+\deq MZ_i+Q^{1/2}G_i\), \(1\leqslant i\leqslant d\),
with a fresh common pair \((M,Q)\), \(Q\succeq0\), and independent
\(G_i\sim\cN(0,\Id_p)\), independent of the pair and the input.
Suppose \(\E\norm M_{\mathrm{op}}^8<1\) and
\(\E\norm Q_{\mathrm{HS}}^4<\infty\), allowing a fixed linear change
of state coordinates before checking these conditions.
Under the unique invariant distribution in
\(\mathcal P_2((\R^p)^d)\), the random vectors admit the joint decomposition
\((Z_i)_{i=1}^d\stackrel{\mathrm d}=(S^{1/2}G_i)_{i=1}^d\),
where the \(G_i\) are independent standard
Gaussians, independent of the random matrix \(S\succeq0\). Thus,
conditional on \(S\), their joint distribution is
\(\cN(0,S)^{\otimes d}\), and
\begin{equation}
 S\stackrel{\mathrm d}=MS'M^\top+Q\,,\qquad
 S'\perp(M,Q)\,,\qquad
 \overline S\deq\E S=\E[M\overline S M^\top+Q]\,.
 \label{eq:linear-mixture-fixed-point}
\end{equation}
Here \(S'\stackrel{\mathrm d}=S\). For a symmetric \(p\times p\)
matrix \(T\), write \({\operatorname{vech}(T)}\) for the column vector of its upper triangular
entries, listed row by row.
This convention retains each off-diagonal entry once, without
rescaling it; in particular,
\(\operatorname{vech}(T)=(T_{11},T_{12},T_{22})^\top\) when \(p=2\). Let \(L\) be the random matrix that represents the linear transformation
\(T\mapsto MTM^\top\) on these vectors, determined by
\[
 L\operatorname{vech}(T)=\operatorname{vech}(MTM^\top)\qquad\text{for every symmetric matrix }T\,.
\]
Define the vector
\[
 \mathfrak d\deq \operatorname{vech}(M\overline S M^\top+Q-\overline S)\,.
\]
Write
\(\Gamma\deq\operatorname{Cov}(\operatorname{vech}(S))\) for the covariance
of the random covariance coordinates. Then, the covariance and
centered fourth moment satisfy
\begin{equation}
 \Gamma=\E[L\Gamma\,L^\top]+\E[\mathfrak d\mathfrak d^\top]\,,\qquad
 \E{\norm{\operatorname{vech}(S-\overline S)}}^4\leqslant\frac{C\,\E\norm{\mathfrak d}^4}{\bigl(1-\sqrt{\E\norm M_{\mathrm{op}}^8}\bigr)^2}\,.
 \label{eq:linear-mixture-centered-moments}
\end{equation}
\end{lemma}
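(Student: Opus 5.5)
We first construct the invariant distribution and its scale-Gaussian form, then read off the moment identities from the fixed-point equation. Iterate the recursion backward from time \(-N\): conditional on the fresh pairs \((M_{-j},Q_{-j})_{j\geqslant1}\), the state \(Z_i^{(0)}\) equals \(\Phi_N Z_i^{(-N)}+\sum_{j=1}^{N}\Phi_{j-1}Q_{-j}^{1/2}G_{i,-j}\), where \(\Phi_j\deq M_{-1}\cdots M_{-j}\). The noise parts are exactly Gaussian with the common conditional covariance \(S_N\deq\sum_{j=1}^N\Phi_{j-1}Q_{-j}\Phi_{j-1}^\top\), and they are independent across \(i\). Since \(\E\norm{\Phi_j}_{\mathrm{op}}^8\leqslant(\E\norm M_{\mathrm{op}}^8)^j\) decays geometrically (after the fixed linear change of coordinates), \(S_N\) converges almost surely and in \(L^4\) to \(S\deq\sum_{j\geqslant1}\Phi_{j-1}Q_{-j}\Phi_{j-1}^\top\). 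Minkowski together with Cauchy--Schwarz, \(\E\norm{\Phi}^4\norm Q^2\leqslant(\E\norm\Phi^8)^{1/2}(\E\norm Q^4)^{1/2}\), gives this. Meanwhile \(\Phi_NZ^{(-N)}\to0\) in \(L^2\). Contraction in \(\W_2\) on \(\mathcal P_2((\R^p)^d)\) (coupling with the same \((M,Q,G)\), using \(\E\norm M_{\mathrm{op}}^2<1\)) gives uniqueness, so the invariant law is that of \((S^{1/2}G_i)_i\) with \(G_i\) independent of \(S\). Splitting off the first factor gives \(S=M_{-1}S'M_{-1}^\top+Q_{-1}\), with \(S'\) built from the earlier pairs; \(S'\) is independent of \((M_{-1},Q_{-1})\) and equal in law to \(S\). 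Taking expectations gives \eqref{eq:linear-mixture-fixed-point}.

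For the covariance identity, write \(S-\overline S=M(S'-\overline S)M^\top+(M\overline SM^\top+Q-\overline S)\). In \(\operatorname{vech}\) coordinates this reads \(\operatorname{vech}(S-\overline S)=L\,\operatorname{vech}(S'-\overline S)+\mathfrak d\), with \((L,\mathfrak d)\) a function of \((M,Q)\) and hence independent of \(S'\). Form the outer product and take expectations. The cross terms vanish after conditioning on \((M,Q)\), because \(\E[\operatorname{vech}(S'-\overline S)]=0\) by the mean identity. This leaves \(\Gamma=\E[L\Gamma L^\top]+\E[\mathfrak d\mathfrak d^\top]\); it is a genuine identity because all terms are finite by the \(L^4\) bound above. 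In the scalar case \(L=m^2\), which yields \(\operatorname{Var}(s)=\operatorname{Var}(m^2\overline s+q)/(1-\E m^4)\) once we note \(\E\mathfrak d=0\).

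The fourth moment is the main obstacle, since cross terms of odd order in \(S'-\overline S\) no longer vanish. We will avoid expanding them and instead use the series representation directly. Write \(\operatorname{vech}(S-\overline S)=\sum_{j\geqslant1}L_{-1}\cdots L_{-(j-1)}\mathfrak d_{-j}\), which is obtained by iterating the affine recursion; the tail vanishes in \(L^4\). Minkowski's inequality in \(L^4\) gives \(\norm{\operatorname{vech}(S-\overline S)}_{L^4}\leqslant\sum_j\norm{L_{-1}\cdots L_{-(j-1)}\mathfrak d_{-j}}_{L^4}\). By independence each summand is at most \((\E\norm{L}^4_{\mathrm{op}})^{(j-1)/4}\norm{\mathfrak d}_{L^4}\). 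Since \(\norm L_{\mathrm{op}}\leqslant C_p\norm M_{\mathrm{op}}^2\) (the vech map is a fixed linear isomorphism), this is at most \((\E\norm M_{\mathrm{op}}^8)^{(j-1)/4}\) up to constants.

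The constant \(C_p\) is the remaining concern: it is absorbed into \(C\) only after the coordinate change and a careful choice of norm on vech. We will choose the norm so that \(\norm{L}\leqslant\norm M_{\mathrm{op}}^2\) exactly, for instance the Hilbert--Schmidt norm on symmetric matrices, which is equivalent to the vech norm up to a factor \(\sqrt2\). With that choice, summing the geometric series gives \(\norm{\cdot}_{L^4}\leqslant C\norm{\mathfrak d}_{L^4}/(1-(\E\norm M_{\mathrm{op}}^8)^{1/4})\). Finally, \(1-x^{1/4}\geqslant(1-x^{1/2})/2\) for \(x\in[0,1]\), and raising the bound to the fourth power would give the denominator \((1-\sqrt{\E\norm M^8})^4\). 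The stated exponent \(2\) is stronger than this, so the cruder Minkowski bound would suffice only if we accept \((1-\cdot)^{-4}\). To reach the squared denominator, we will instead apply Minkowski in \(L^2\) to the squared norm of the series, \(\norm{\sum_j Y_j}^2\) with \(Y_j\deq L_{-1}\cdots L_{-(j-1)}\mathfrak d_{-j}\). This should give \(\norm{\sum_jY_j}_{L^4}^2\leqslant\sum_{j,k}\norm{Y_j}_{L^4}\norm{Y_k}_{L^4}\), combined with martingale orthogonality, since each \(\mathfrak d_{-j}\) is centered given the later factors. The target is a Burkholder-type bound \(\E\norm{\sum_j Y_j}^4\lesssim\E(\sum_j\norm{Y_j}^2)^2\), and applying Minkowski in \(L^2\) to that sum gives exactly the denominator \((1-\sqrt{\E\norm M_{\mathrm{op}}^8})^2\).
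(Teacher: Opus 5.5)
Your proposal is correct and follows the paper's proof essentially step for step. The shared steps are the backward series for \(Z_i\) and \(S\), synchronous contraction for uniqueness, and the centered identity \(\operatorname{vech}(S-\overline S)=L\operatorname{vech}(S'-\overline S)+\mathfrak d\) for \(\Gamma\). For the fourth moment, both also use the Hilbert--Schmidt norm (so that \(\norm{L}\leqslant\norm M_{\mathrm{op}}^2\)), the martingale-difference series \(\sum_jL_{-1}\cdots L_{-(j-1)}\mathfrak d_{-j}\), and Minkowski in \(L^2\) on the square function.

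The one difference is the square-function step. Your intermediate double-sum display is just the Minkowski bound again and does not help; you instead rely on Burkholder's inequality. The paper proves the needed bound by hand via the one-step estimate \(\E\norm{U+D}^4\leqslant(\norm U_{L^4}^2+4\norm D_{L^4}^2)^2\), which holds because \(\E[\norm U^2\ip UD]=0\). This yields \(\norm{\sum_jY_j}_{L^4}^2\leqslant4\sum_j\norm{Y_j}_{L^4}^2\) directly.
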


\begin{proof}[Proof of both lemmas]
Put \(\rho\deq\E\norm M_{\mathrm{op}}^8\).
For independent copies \((M_j,Q_j)_{j\geqslant0}\), put
\(P_0\deq\Id\), \(P_j\deq M_0\cdots M_{j-1}\). The series
\[
 Z_i\deq\sum_{j=0}^\infty P_jQ_j^{1/2}G_{i,j}\,,\qquad
 S\deq\sum_{j=0}^\infty P_jQ_jP_j^\top
\]
converge in \(L^8\) and \(L^4\), since
\(\E\norm{P_j}_{\mathrm{op}}^8\leqslant\rho^j\).
Conditioning on the coefficient pairs gives the mixture and
\eqref{eq:linear-mixture-fixed-point}; synchronous coupling contracts
squared distances by \(\E\norm M_{\mathrm{op}}^2<1\).
Put \(R\deq\operatorname{vech}(S-\overline S)\). Centering gives \(R\stackrel{\mathrm d}=LR'+\mathfrak d\), with
\(R'\perp(L,\mathfrak d)\) and \(\E R'=\E\mathfrak d=0\).
Taking expectations and covariances proves the moment identities in
\eqref{eq:linear-mixture-scalar-moments} and
\eqref{eq:linear-mixture-centered-moments}.

For the fourth moment, use the Hilbert--Schmidt norm
in the contracting state coordinates. Write \(R=\sum_{j=0}^\infty D_j\),
where \(D_j\deq L_0\cdots L_{j-1}\mathfrak d_j\) are martingale differences satisfying
\(\norm{D_j}_{L^4}\leqslant\rho^{j/4}\,\norm{\mathfrak d}_{L^4}\).
For a partial sum \(U\) and its next difference \(D\), expansion gives,
with \(u\deq\norm U_{L^4}\), \(v\deq\norm D_{L^4}\),
\[
 \E\norm{U+D}^4\leqslant u^4+6u^2v^2+4uv^3+v^4
 \leqslant(u^2+4v^2)^2\,.
\]
Thus,
\(\norm R_{L^4}^2\leqslant4\sum_{j=0}^\infty\norm{D_j}_{L^4}^2
\leqslant4\,\norm{\mathfrak d}_{L^4}^2/(1-\sqrt\rho)\).
Equivalence of norms proves
\eqref{eq:linear-mixture-centered-moments}.
\end{proof}

For the small step calculations, suppose that \(\E\norm M_{\mathrm{op}}^8\leqslant1-ch\) and
\[
 L=\Id+hF+O_{L^4}(h^2)\,,\qquad
 \mathfrak d=h^2\xi+O_{L^4}(h^3)\,,\qquad \E\xi=0\,,\qquad\norm\xi_{L^4}\leqslant C\,,
\]
where \(F\) is fixed and stable; \(L^q\) refers to the coefficient
randomness. Equation \eqref{eq:linear-mixture-centered-moments} gives
\begin{equation}
 \Gamma=h^3\int_0^\infty\e^{tF}\,\E[\xi\xi^\top]\,\e^{tF^\top}\,\mathrm d t
       +O(h^4)\,,\qquad \E{\norm{\operatorname{vech}(S-\overline S)}}^4\leqslant Ch^6\,.
 \label{eq:linear-mixture-asymptotic-moments}
\end{equation}
Indeed,
\[
 \frac{T-\E[LTL^\top]}h=-FT-TF^\top+O(h)\,\norm T\,,\qquad
 \E[\mathfrak d\mathfrak d^\top]=h^4\,\E[\xi\xi^\top]+O(h^5)\,,
\]
and \((T\mapsto-FT-TF^\top)^{-1}(Q_0)
=\int_0^\infty\e^{tF}\,Q_0\,\e^{tF^\top}\,\mathrm d t\).

\subsection{Wasserstein distance for Gaussian scale perturbations}
\label{subsec:gaussian-scale-perturbation}

The next estimate compares a Gaussian scale mixture with the
standard Gaussian target, without any assumption on the mean scale.

\begin{lemma}[Wasserstein lower bound for Gaussian scale perturbations]
\label{lem:gaussian-scale-perturbation}
Let \(X\) have a scale-Gaussian decomposition
\(X\stackrel{\mathrm d}=\sqrt s\,G_d\), where \(s\geqslant0\) is independent
of \(G_d\sim\cN(0,\Id_d)\), with
\(v_{\sqrt s}\deq\operatorname{Var}(\sqrt s)<\infty\). Then,
\[
 \W_2(\Law(X),\cN(0,\Id_d))\geqslant c\min\{d v_{\sqrt s},\sqrt{d v_{\sqrt s}}\}\,.
\]
In particular, if
\[
 v_s\deq\operatorname{Var}(s)\in(0,1]\,,\qquad
 \E\lvert s-\E s\rvert^4\leqslant C_4 v_s^2\,,
\]
then, with \(c_{C_4}>0\) depending only on \(C_4\),
\begin{equation}
 \W_2(\Law(X),\cN(0,\Id_d))
 \geqslant c_{C_4}\min\{d v_s,\sqrt{d v_s}\}\,.
 \label{eq:mixture-variance-transfer}
\end{equation}
\end{lemma}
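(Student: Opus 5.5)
We test against the $1$-Lipschitz observable $x\mapsto\norm x$, so that any coupling yields $\W_2(\Law(X),\cN(0,\Id_d))\geqslant\norm{\norm X-\norm{G'}}_{L^2}$ for some $G'\sim\cN(0,\Id_d)$. Write $\norm X=\sqrt s\,\norm{G_d}$ with $s\perp G_d$. Let $\rho\deq\norm{G_d}$ and $\bar\rho\deq\E\rho$; recall $\bar\rho\asymp\sqrt d$ and $\operatorname{Var}(\rho)\leqslant1$ by Gaussian Poincar\'e. The one-dimensional Wasserstein distance between $\Law(\norm X)$ and $\Law(\rho)$ is bounded below by a difference of standard deviations. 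Indeed, for any coupling, $\norm{A-B}_2\geqslant\lvert\operatorname{sd}(A)-\operatorname{sd}(B)\rvert$ (triangle inequality in $L^2$ for centered variables), and in addition $\norm{A-B}_2\geqslant\lvert\E A-\E B\rvert$.

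\emph{First claim.} Compute
\[
\operatorname{Var}(\sqrt s\,\rho)=\E s\,\E\rho^2-(\E\sqrt s)^2\bar\rho^2=v_{\sqrt s}\,\E\rho^2+(\E\sqrt s)^2\operatorname{Var}\rho\geqslant v_{\sqrt s}\,d\,.
\]
Hence
\[
\W_2\geqslant\operatorname{sd}(\sqrt s\rho)-\operatorname{sd}(\rho)\geqslant\sqrt{d v_{\sqrt s}}-\operatorname{sd}(\rho)\,.
\]
This is useful when $d v_{\sqrt s}\geqslant4$, because $\operatorname{sd}(\rho)\leqslant1$; in that regime we get $\W_2\geqslant\tfrac12\sqrt{dv_{\sqrt s}}$.

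In the regime $dv_{\sqrt s}<4$ we need a lower bound of order $dv_{\sqrt s}$, and the sharper comparison comes from the exact variance $\operatorname{Var}(\rho)=d-\bar\rho^2$. Write $\operatorname{Var}(\sqrt s\rho)=v_{\sqrt s}d+(\E\sqrt s)^2\operatorname{Var}\rho$. If $(\E\sqrt s)^2\geqslant1-\tfrac12 v_{\sqrt s}d/\operatorname{Var}\rho$, then the variance exceeds $\operatorname{Var}\rho+\tfrac12 v_{\sqrt s}d$. Using $\operatorname{Var}\rho\asymp1$ (it lies between $1/2$ and $1$ for all $d$), the standard deviation gap is then $\gtrsim v_{\sqrt s}d$. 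Otherwise $\E\sqrt s\leqslant1-c\,v_{\sqrt s}d$, and the mean gap is
\[
\bar\rho\,(1-\E\sqrt s)\gtrsim\sqrt d\,v_{\sqrt s}d\geqslant dv_{\sqrt s}\,.
\]
Either way, $\W_2\gtrsim dv_{\sqrt s}$. This case split, namely that a scale mixture either inflates the spread of $\norm X$ or shifts its mean, is the only delicate point. I expect the main care to be in checking the constants $\operatorname{Var}\rho\in[1/2,1]$ uniformly in $d$, which follows from standard chi moments.

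\emph{Second claim.} It remains to show $v_{\sqrt s}\gtrsim v_s$ under the fourth moment hypothesis; this is the main obstacle, since $\sqrt s$ could in principle be nearly constant when $\E s$ is large or small. Set $\bar s=\E s$. The identity $\sqrt s-\sqrt{\bar s}=(s-\bar s)/(\sqrt s+\sqrt{\bar s})$ gives a lower bound of the form $v_{\sqrt s}\gtrsim v_s/(\text{scale})$ only once $s$ is controlled.

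\begin{itemize}
\item If $\bar s$ is bounded, say $\bar s\leqslant C'$: Paley--Zygmund, using $\E(s-\bar s)^4\leqslant C_4v_s^2$, gives an event of probability $\geqslant c$ on which $\lvert s-\bar s\rvert\geqslant c\sqrt{v_s}$ and $s\leqslant C''$. On it, $\lvert\sqrt s-\sqrt{\bar s}\rvert\gtrsim\sqrt{v_s}$, so $\E(\sqrt s-\sqrt{\bar s})^2\gtrsim v_s$. One still has to pass from the second moment about $\sqrt{\bar s}$ to the variance. For this I would compare against the median of $\sqrt s$: by the two-sided Paley--Zygmund events, $s$ is at least $c\sqrt{v_s}$ above and below $\bar s$ with probability $\geqslant c$ each, which forces $\operatorname{Var}\sqrt s\gtrsim v_s$ after dividing by $\sqrt{C''}$.
\item If $\bar s$ is large: since $v_s\leqslant1$, $s$ concentrates near $\bar s$, giving $\sqrt s+\sqrt{\bar s}\asymp\sqrt{\bar s}$ and $v_{\sqrt s}\gtrsim v_s/\bar s$. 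In this case, however, the mean gap $\sqrt d(\sqrt{\bar s}-1)\gtrsim\sqrt d$ directly dominates $\min\{dv_s,\sqrt{dv_s}\}\leqslant\sqrt d$.
\end{itemize}

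Combining the two cases with the first claim yields \eqref{eq:mixture-variance-transfer}.
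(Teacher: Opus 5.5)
Your argument is correct and follows the paper's overall plan. Both use the \(1\)-Lipschitz test \(x\mapsto\norm x\) and the identity \(\operatorname{Var}(\sqrt s\,\norm{G_d})=d\,v_{\sqrt s}+(\E\sqrt s)^2\operatorname{Var}\norm{G_d}\), combined with lower bounds from the radial mean and the standard deviation. For the second claim, both split on the size of \(\E s\) and handle the large case by \(\sqrt d\,(\sqrt{\E s}-1)\). The differences are in how two steps are carried out.

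\emph{First claim.} The paper avoids your case analysis by inserting the mean bound into the variance bound through \(b^2\geqslant2b-1\geqslant1-2\delta/\rho_d\), where \(b=\E\sqrt s\) and \(\rho_d=\E\norm{G_d}\). This gives \(d\,v_{\sqrt s}\leqslant\delta^2+C\delta\) in one line. Your dichotomy (either the spread is inflated or the mean is shifted) is the same mechanism made explicit.

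\emph{The step \(v_{\sqrt s}\gtrsim v_s\) when \(\E s\leqslant2\).} The paper uses an independent copy \(s'\). H\"older gives \((\E\lvert s-s'\rvert)^2\geqslant(\E\lvert s-s'\rvert^2)^3/\E\lvert s-s'\rvert^4\geqslant4v_s/(C_4+3)\). Cauchy--Schwarz gives \((\E\lvert s-s'\rvert)^2\leqslant8\,\E s\,\operatorname{Var}(\sqrt s)\), because \(\E(\sqrt s-\sqrt{s'})^2=2\operatorname{Var}(\sqrt s)\). This produces the variance directly with an explicit constant. So the issue you had to work around (the second moment about \(\sqrt{\E s}\) versus the variance) never arises.

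Your Paley--Zygmund route also works, but the two-sided events need a line you omitted. Since \(s-\E s\) is centered, H\"older gives \(\E(s-\E s)_\pm=\tfrac12\E\lvert s-\E s\rvert\geqslant\tfrac12\sqrt{v_s/C_4}\). Paley--Zygmund applied to each part separately then gives both tails probability \(\gtrsim1/C_4\). On one of the two events, every constant lies at distance \(\gtrsim\sqrt{v_s}\) from \(\sqrt s\). So neither the median nor the cap \(s\leqslant C''\) is needed.

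\emph{Two minor slips.}
\begin{itemize}
\item \(\operatorname{Var}\norm{G_d}\) is always strictly below \(1/2\) (it equals \(1-2/\pi\) for \(d=1\)), so it does not lie in \([1/2,1]\). This is harmless, since your argument only uses \(\operatorname{Var}\norm{G_d}\leqslant1\).
\item \(\sqrt d\,(\sqrt{\E s}-1)\) is the gap between \(L^2\) norms, with \(\norm X_{L^2}=\sqrt{d\,\E s}\). It is not the gap between the means of \(\norm X\) and \(\norm{G_d}\).
\end{itemize}
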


\begin{proof}
Put \(\rho_d\deq\E\norm{G_d}\), \(v_d\deq\operatorname{Var}(\norm{G_d})\),
\(b\deq\E \sqrt s\), and \(\delta\deq\W_2(\Law(\sqrt s\,G_d),\cN(0,\Id_d))\).
Gaussian Poincar\'e gives \(v_d\leqslant1\)
\cite[Theorem~2.2.9]{ChewiBook}; hence, \(\rho_d^2=d-v_d\geqslant d-1\)
for \(d\geqslant2\), while \(\rho_1^2=2/\uppi\).
The norm is \(1\)-Lipschitz, so the differences of radial means and
standard deviations give
\[
 \rho_d\,\lvert b-1\rvert\leqslant\delta\,,\qquad
 \sqrt{d v_{\sqrt s}+b^2 v_d}\leqslant\sqrt{v_d}+\delta\,.
\]
Since \(b^2\geqslant2b-1\geqslant1-2\delta/\rho_d\),
\[
 d v_{\sqrt s}\leqslant\delta^2+2\,(\sqrt{v_d}+v_d/\rho_d)\,\delta
 \leqslant\delta^2+C\delta\,,
\]
which proves the first assertion.

For the second, let \(\overline s\deq\E s\) and let \(s'\) be an independent copy.
H\"older and Cauchy--Schwarz give
\[
 \begin{split}
 (\E\lvert s-s'\rvert)^2
 &\geqslant\frac{(\E\lvert s-s'\rvert^2)^3}{\E\lvert s-s'\rvert^4}
 =\frac{8v_s^3}{2\,\E\lvert s-\overline s\rvert^4+6v_s^2}\geqslant\frac{4v_s}{C_4+3}\,,\\
 (\E\lvert s-s'\rvert)^2
 &\leqslant\E(\sqrt s-\sqrt{s'})^2\,
                \E(\sqrt s+\sqrt{s'})^2
 \leqslant8\,\overline s\,\operatorname{Var}(\sqrt s)\,.
 \end{split}
\]
If \(\overline s\leqslant2\), then \(\operatorname{Var}(\sqrt s)\geqslant v_s/(4\,(C_4+3))\),
and the first assertion applies. If \(\overline s>2\), reverse Minkowski's inequality gives
\[
 \W_2(\Law(\sqrt s G_d),\cN(0,\Id_d))
 \geqslant\sqrt d\,(\sqrt{\overline s}-1)
 \geqslant(\sqrt2-1)\sqrt d
 \geqslant(\sqrt2-1)\min\{d v_s,\sqrt{d v_s}\}\,,
\]
since \(v_s\leqslant1\).
\end{proof}

\subsection{Overdamped randomized midpoint methods}
\label{subsec:randomized-overdamped-proof}

For \(V=\norm \cdot^2/2\), the conditional update has scalar
coefficients. We calculate their moments and apply
\Cref{lem:scalar-linear-mixture,lem:gaussian-scale-perturbation}.

\subsubsection{RLMC}
\label{subsec:rlmc-gaussian-proof}

For one uniform evaluation time, the leading centered covariance defect
is \(h^2\,(1-2\Theta)\).

\begin{proposition}[RLMC radial lower bound on a Gaussian target]
\label{prop:gaussian-radial-lower}
For every \(d\geqslant1\) and \(0<h<1\), RLMC has a unique invariant
distribution with finite second moment. For \(0<h\leqslant h_0\),
\begin{equation}
    \W_2(\widehat\pi_h^{\mathrm{RLMC}},\cN(0,\Id_d))
    \geqslant c\min\{dh^3,\sqrt{dh^3}\}\,.
    \label{eq:gaussian-radial-lower}
\end{equation}
In particular, if $dh^3\leqslant1$, then the lower bound is $cdh^3$.

More precisely, there is a random variable $s_h^{\mathrm{RLMC}} \ge 0$, independent of
$G_d\sim\cN(0,\Id_d)$, such that
\begin{equation}
    \widehat\pi_h^{\mathrm{RLMC}}=\Law(\sqrt{s_h^{\mathrm{RLMC}}}\,G_d)\,,
    \label{eq:gaussian-scale-mixture}
\end{equation}
and
\begin{align}\label{eq:gaussian-scale-mean}
    \overline s_h^{\mathrm{RLMC}}\deq\E s_h^{\mathrm{RLMC}}
    &=\frac{2-2h+h^2}{2-2h+h^2-h^3/3}
      =1+\frac{h^3}{6}+O(h^4)\,, \qquad
    \operatorname{Var}(s_h^{\mathrm{RLMC}})
    =\frac{h^3}{12}+O(h^4)\,.
\end{align}
\end{proposition}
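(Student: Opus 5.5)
The plan is to reduce everything to \Cref{lem:scalar-linear-mixture} and \Cref{lem:gaussian-scale-perturbation}, after computing the conditional linear coefficients of one RLMC step on \(V=\norm\cdot^2/2\) explicitly.

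\emph{Step 1: the coefficients.} With \(\nabla V(x)=x\), the predictor is \(X_{\Theta h}^{\mathrm{pred}}=(1-\Theta h)\,x+\sqrt2\,B_{\Theta h}\). Substituting into \eqref{eq:rm-step} gives
\[
 X^+=(1-h+\Theta h^2)\,x+\sqrt2\,\bigl(B_h-hB_{\Theta h}\bigr)\,.
\]
Conditionally on \(\Theta\), the noise is centered Gaussian with covariance \(q(\Theta)\Id_d\). Here
\[
 m(\Theta)\deq1-h+\Theta h^2\,,\qquad
 q(\Theta)\deq2\bigl(h-2\Theta h^2+\Theta h^3\bigr)\,,
\]
and the variance computation uses \(\operatorname{Var}(B_h-hB_{\Theta h})=h-2h\cdot\Theta h+h^2\Theta h\) per coordinate. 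So the step has the form \(X^+=mX+\sqrt q\,G_d\), with a fresh pair \((m,q)\) shared by all coordinates.

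For \(0<h<1\) and \(\Theta\in[0,1]\) we have \(0<1-h\leqslant m\leqslant1-h(1-h)<1\), so \(\E m^8<1\). Also \(q\geqslant0\) is bounded. \Cref{lem:scalar-linear-mixture} then gives existence and uniqueness of the invariant law and the scale-Gaussian decomposition \eqref{eq:gaussian-scale-mixture}.

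\emph{Step 2: the moments.} I will expand the formulas \eqref{eq:linear-mixture-scalar-moments}. First,
\[
 \E m^2=1-2h+2h^2-h^3+h^4/3\,,\qquad \E q=2h-2h^2+h^3\,.
\]
Hence \(\overline s=\E q/(1-\E m^2)\) equals the stated rational expression, which expands to \(1+h^3/6+O(h^4)\).

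Next, a direct expansion gives the defect \(m^2+q-1=h^2(1-2\Theta)+h^4\Theta^2\). Therefore
\[
 m^2\overline s+q-\overline s=h^2(1-2\Theta)+(\overline s-1)(m^2-1)+h^4\Theta^2\,,
\]
and the last two terms are \(O(h^4)\) uniformly in \(\Theta\). This yields \(\operatorname{Var}(m^2\overline s+q)=h^4\operatorname{Var}(1-2\Theta)+O(h^5)=h^4/3+O(h^5)\). Since \(1-\E m^4=4h+O(h^2)\), we get \(\operatorname{Var}(s)=h^3/12+O(h^4)\), proving \eqref{eq:gaussian-scale-mean}.

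For the fourth moment, the same expansion gives \(\E\lvert m^2\overline s+q-\overline s\rvert^4\leqslant Ch^8\). Because \(m\leqslant1-h/2\) for small \(h\), we also have \(1-\sqrt{\E m^8}\geqslant ch\). Together these give \(\E\lvert s-\overline s\rvert^4\leqslant Ch^6\leqslant C_4v_s^2\), where \(v_s\asymp h^3\) and \(C_4\) is universal.

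\emph{Step 3: transport.} For \(h\leqslant h_0\) we have \(v_s\in(0,1]\). The bound \eqref{eq:mixture-variance-transfer} of \Cref{lem:gaussian-scale-perturbation} then gives \(\W_2\geqslant c\min\{dv_s,\sqrt{dv_s}\}\). Since \(v_s\geqslant h^3/24\), this yields \eqref{eq:gaussian-radial-lower}. The case \(dh^3\leqslant1\) follows because the minimum is then attained by \(dh^3\).

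The only delicate point is bookkeeping in Step 2. We must check that the \(O(h^3)\) mean shift \(\overline s-1\) does not contaminate the leading \(h^4/3\) variance of the defect, and that the fourth-moment ratio stays bounded uniformly as \(h\to0\). Both follow from the explicit polynomial forms of \(m\) and \(q\).
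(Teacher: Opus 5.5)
Your proposal is correct and follows essentially the same route as the paper's proof: the same conditional coefficients (your \(q(\Theta)=2(h-2\Theta h^2+\Theta h^3)\) coincides with the paper's \(2h\{(1-h)^2\Theta+1-\Theta\}\)), the same defect identity \(m^2+q-1=h^2(1-2\Theta)+h^4\Theta^2\) fed into \Cref{lem:scalar-linear-mixture}, and the same transfer to \(\W_2\) via \eqref{eq:mixture-variance-transfer}. Your bookkeeping of the \((\overline s-1)(m^2-1)=O(h^4)\) term and your bound \(1-\sqrt{\E m^8}\geqslant h/2\) from \(m\leqslant1-h/2\) are the checks the paper carries out more tersely.
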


\begin{proof}
In this proof, write \(s_h=s_h^{\mathrm{RLMC}}\),
\(m=m_h^{\mathrm{RLMC}}\), and \(q=q_h^{\mathrm{RLMC}}\), omitting
algorithm labels from the local coefficient calculations.
Conditionally on \(\Theta\sim\mathsf{Unif}[0,1]\), splitting the
Brownian increment at \(\Theta h\) in \eqref{eq:rm-step} gives
\[
 X^+=m(\Theta)\,X+\sqrt{q(\Theta)}\,G_d\,,\qquad
 m(u)\deq1-h+uh^2\,,\qquad q(u)\deq2h\,\{(1-h)^2u+1-u\}\,,
\]
with \(G_d\sim\cN(0,\Id_d)\) independent of \((X,\Theta)\).
Since \(0\leqslant m(u)\leqslant1-h+h^2<1\),
\Cref{lem:scalar-linear-mixture} gives existence, uniqueness, and
\eqref{eq:gaussian-scale-mixture}. Its scalar mean formula gives
\[
 \overline s_h=\frac{\E q(\Theta)}{1-\E m(\Theta)^2}
     =\frac{2-2h+h^2}{2-2h+h^2-h^3/3}
     =1+\frac{h^3}{6}+O(h^4)\,.
\]
For the centered covariance forcing, the identity $m(u)^2+q(u)-1=h^2\,(1-2u)+h^4u^2$
implies
\[
 \mathfrak d\deq m(\Theta)^2\,\overline s_h+q(\Theta)-\overline s_h=h^2\,(1-2\Theta)+O(h^3)\,,\qquad
 1-\E m(\Theta)^{2j}=2jh+O(h^2)\quad(j=2,4)\,.
\]
Equation \eqref{eq:linear-mixture-scalar-moments} gives
\[
 \operatorname{Var}(s_h)
   =\frac{h^4/3+O(h^5)}{4h+O(h^2)}=\frac{h^3}{12}+O(h^4)\,,\qquad
 \E\lvert s_h-\overline s_h\rvert^4\leqslant\frac{Ch^8}{h^2}=Ch^6\,.
\]
Equation \eqref{eq:mixture-variance-transfer} proves
\eqref{eq:gaussian-radial-lower}.
For \(V=\lambda\,\norm \cdot^2/2\), scaling replaces
\(h\) by \(\lambda h\) and \(\W_2\) by \(\sqrt\lambda\,\W_2\).
\end{proof}

\subsubsection{PLMC}
\label{subsec:plmc-gaussian-proof}

For PLMC, the calculation reduces to the variance of a weighted
Bernoulli sum; all required moments are uniform in \(k\geqslant2\).

\begin{proposition}[PLMC lower bound on a Gaussian target]
\label{prop:plmc-gaussian-lower}
For every \(d\geqslant1\), integer \(k\geqslant1\), and
\(0<h\leqslant h_0\), PLMC on \(V=\norm \cdot^2/2\) has a unique invariant
distribution with finite second moment which satisfies
\begin{equation}
    \W_2(\widehat\pi_h^{\mathrm{PLMC},k},\cN(0,\Id_d))
       \geqslant c\min\{dh^3,\sqrt{dh^3}\}
       \qquad(k\geqslant2)\,.
    \label{eq:poisson-gaussian-radial-lower}
\end{equation}
For \(k=1\), Poisson midpoint is ULA and the stronger bound
\begin{equation}
    \W_2(\widehat\pi_h^{\mathrm{PLMC},1},\cN(0,\Id_d))
       \geqslant c\sqrt d\,h
    \label{eq:poisson-k-one-lower}
\end{equation}
holds. The constants are uniform in \(k\).
\end{proposition}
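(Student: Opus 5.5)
We follow the template of the proof of \Cref{prop:gaussian-radial-lower}. On \(V=\norm\cdot^2/2\) the predictor at grid time \(s_ih\) is \(X_{s_ih}^{\mathrm{pred}}=(1-s_ih)\,x+\sqrt2\,B_{s_ih}\). Hence each correction is
\[
\nabla V(x)-\nabla V(X_{s_ih}^{\mathrm{pred}})=s_ih\,x-\sqrt2\,B_{s_ih}\,.
\]
Conditionally on the indicators \(\mathsf B=(\mathsf B_i)_{i=1}^{k-1}\), the update \eqref{eq:poisson-step} therefore reads
\[
X^+=m(\mathsf B)\,X+\sqrt2\,\Bigl(B_h-h\sum_i\mathsf B_iB_{s_ih}\Bigr)\,,\qquad m(\mathsf B)=1-h+h^2S_1\,,\qquad S_1\deq\sum_{i=1}^{k-1}\mathsf B_is_i\,.
\]
The noise is a centered Gaussian, isotropic in \(\R^d\), with scalar variance
\[
q(\mathsf B)=2h\Bigl(1-2h\sum_i\mathsf B_is_i+h^2\sum_{i,j}\mathsf B_i\mathsf B_j\min(s_i,s_j)\Bigr)=2h-4h^2S_1+O(h^3)\,.
\]
This gives exactly the form \(X^+=mX+\sqrt q\,G_d\) of \Cref{lem:scalar-linear-mixture}, with a common fresh pair \((m,q)\).

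First we check the hypotheses uniformly in \(k\). Since \(S_1\leqslant N\deq\sum_i\mathsf B_i\) and \(N\) is a sum of \(k-1\) Bernoulli\((1/k)\) variables, all moments of \(N\) are bounded uniformly in \(k\) (they are dominated by Poisson\((1)\) moments). Taylor expansion then gives \(\E m^8=1-8h+O(h^2)<1\) and \(\E q^4<\infty\). Here we use that \(m\geqslant0\), because \(1-h+h^2S_1>0\). Note also that \(m\) may exceed one on rare events, so we rely on moment bounds only, not on \(\norm m_\infty<1\). This gives existence and uniqueness and the scale-Gaussian form \(\Law(\sqrt s\,G_d)\).

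Next we compute the forcing. Since \(\overline s=\E q/(1-\E m^2)=1+O(h^2)\), we have
\[
\mathfrak d\deq m^2\overline s+q-\overline s=m^2+q-1+O(h^3)=2h^2S_1-4h^2S_1+O(h^3)=h^2\,(\E S_1-S_1)\cdot 2+O(h^3)\,,
\]
up to the constant needed for centering. More carefully, we write \(\mathfrak d=-2h^2(S_1-\E S_1)+O_{L^4}(h^3)\), absorbing the mean into \(\overline s\); the \(L^4\) remainder is uniform in \(k\) by the moment bounds on \(N\). We have \(\E S_1=\sum_i s_i/k=(k-1)/(2k)\) and
\[
\operatorname{Var}(S_1)=\sum_{i=1}^{k-1}\frac1k\Bigl(1-\frac1k\Bigr)\frac{i^2}{k^2}=\frac{(k-1)^2(2k-1)}{6k^4}\,.
\]
For \(k\geqslant2\) this is at least a universal constant (it equals \(1/32\) at \(k=2\) and tends to \(1/3\)), and it is bounded above uniformly in \(k\). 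Formula \eqref{eq:linear-mixture-scalar-moments} with \(1-\E m^4=4h+O(h^2)\) then yields
\[
\operatorname{Var}(s)=\frac{4h^4\operatorname{Var}(S_1)+O(h^5)}{4h+O(h^2)}\asymp h^3\,,\qquad \E\lvert s-\overline s\rvert^4\leqslant Ch^8/h^2=Ch^6\,.
\]
So \(\E\lvert s-\overline s\rvert^4\leqslant C_4v_s^2\) with a universal \(C_4\). Applying \eqref{eq:mixture-variance-transfer} proves \eqref{eq:poisson-gaussian-radial-lower}.

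For \(k=1\), the update is ULA, \(X^+=(1-h)X+\sqrt{2h}\,G_d\). Its invariant distribution is \(\cN(0,\frac{2}{2-h}\Id_d)\), and the Gaussian formula gives \(\sqrt d\,\lvert\sqrt{2/(2-h)}-1\rvert\geqslant c\sqrt d\,h\), which is \eqref{eq:poisson-k-one-lower}.

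The main obstacle is controlling the \(O(h^3)\) remainders in \(\mathfrak d\) and in \(1-\E m^{2j}\) uniformly in \(k\). These involve unbounded products such as \(h^4S_1^2\) and the double sum in \(q\). The plan is to bound every remainder by \(Ch^3(1+N)^4\) pointwise and use the uniform Poisson-type moment bounds on \(N\).
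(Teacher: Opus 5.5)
Your proposal follows the paper's proof essentially step for step. Both use the conditional scalar recursion with \(m=1-h+h^2S_1\) and \(q=2h-4h^2S_1+2h^3R\), control all moments uniformly in \(k\) through the Bernoulli count \(N\), apply \Cref{lem:scalar-linear-mixture} for the variance and fourth moment of \(s\), and conclude with \Cref{lem:gaussian-scale-perturbation}; the \(k=1\) case uses the explicit ULA variance \((1-h/2)^{-1}\). Two computational slips need correcting.

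\emph{The variance of \(S_1\).} Since \(\sum_{i=1}^{k-1}i^2=(k-1)k(2k-1)/6\), one has \(\operatorname{Var}(S_1)=(k-1)^2(2k-1)/(6k^3)\). This equals \(1/16\) at \(k=2\) and increases to \(1/3\). The \(6k^4\) you wrote would decay like \(1/(3k)\). That would destroy the \(k\)-uniform lower bound \(v_s\gtrsim h^3\), on which both the hypothesis \(\E\lvert s-\overline s\rvert^4\leqslant C_4v_s^2\) and the uniformity of the final constant rest.

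\emph{The mean \(\overline s\).} It is not \(1+O(h^2)\). Indeed \(\E[m^2+q-1]=h^2\,\E(1-2S_1)+O(h^3)=h^2/k+O(h^3)\), so \(\overline s-1=h/(2k)+O(h^2)\), which is of order \(h\) for bounded \(k\). Consequently the identity \(\mathfrak d=m^2+q-1+O(h^3)\) is false: it is off by the deterministic amount \(-h^2/k+O(h^3)\). The correct justification, which is the one the paper uses, is that \(\E\mathfrak d=0\) by the fixed-point identity. Hence
\[
 \mathfrak d=(m^2+q)-\E(m^2+q)+(\overline s-1)\,(m^2-\E m^2)\,,
\]
and the last term is \(O(h)\cdot O_{L^4}(h^2)=O_{L^4}(h^3)\), uniformly in \(k\). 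This yields the expansion \(\mathfrak d=-2h^2(S_1-\E S_1)+O_{L^4}(h^3)\) that you state, and with these two fixes the rest of your argument goes through unchanged.
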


\begin{proof}
For this proof, write
\(m_{\mathsf B}=m_{h,k}^{\mathrm{PLMC}}(\mathsf B)\),
\(q_{\mathsf B}=q_{h,k}^{\mathrm{PLMC}}(\mathsf B)\), and
\(s_{h,k}=s_{h,k}^{\mathrm{PLMC}}\); the algorithm and fixed parameters
are omitted from the local coefficients.
For the Bernoulli array \(\mathsf B\) in \eqref{eq:poisson-step}, put
\[
 N_{\mathsf B}\deq\sum_{i=1}^{k-1}\mathsf B_i\,,\qquad
 U_{\mathsf B}\deq\sum_{i=1}^{k-1}\frac ik\,\mathsf B_i\,,\qquad
 R\deq\sum_{i,j=1}^{k-1}\min\{i/k,j/k\}\,\mathsf B_i\mathsf B_j\,.
\]
Conditional on \(\mathsf B\), the quadratic update is
\begin{equation*}
 X^+=m_{\mathsf B}X+\sqrt{q_{\mathsf B}}G_d\,,\qquad
 m_{\mathsf B}\deq1-h+h^2U_{\mathsf B}\,,\qquad
 q_{\mathsf B}\deq2h-4h^2U_{\mathsf B}+2h^3R\,.
\end{equation*}
Here \(G_d\sim\cN(0,\Id_d)\) is independent of \((X,\mathsf B)\),
and \(q_{\mathsf B}\geqslant0\) is the coordinate variance of
\(\sqrt2\,(B_h-h\sum_{i=1}^{k-1}\mathsf B_iB_{hi/k})\).
For \(t\geqslant0\),
\begin{equation}
 \E\e^{tN_{\mathsf B}}
 =\Bigl(1+\frac{\e^t-1}{k}\Bigr)^{k-1}
 \leqslant\exp(\e^t-1)\,,\qquad U_{\mathsf B}\leqslant N_{\mathsf B}\,,\quad
 R\leqslant N_{\mathsf B}^2\,.
 \label{eq:bernoulli-count-moments}
\end{equation}
Consequently all fixed moments below are uniform in \(k\), and
\[
 \E m_{\mathsf B}^{2j}=1-2jh+O(h^2)\quad(1\leqslant j\leqslant4)\,.
\]
\Cref{lem:scalar-linear-mixture} gives the invariant distribution
\(\Law(\sqrt{s_{h,k}}G_d)\). Since
\[
 m_{\mathsf B}^2+q_{\mathsf B}-1
 =h^2\,(1-2U_{\mathsf B})+2h^3\,(R-U_{\mathsf B})+h^4U_{\mathsf B}^2\,,\qquad
 \E U_{\mathsf B}=\frac{k-1}{2k}\,,
\]
\eqref{eq:linear-mixture-scalar-moments} gives
\[
 \overline s_{h,k}\deq\E s_{h,k}\,,\qquad \overline s_{h,k}-1
 =\frac{\E[m_{\mathsf B}^2+q_{\mathsf B}-1]}{1-\E m_{\mathsf B}^2}
 =O(h/k+h^2)\,.
\]
For \(k\geqslant2\), the centered forcing satisfies
\[
 \mathfrak d\deq m_{\mathsf B}^2\,\overline s_{h,k}+q_{\mathsf B}-\overline s_{h,k}
       =-2h^2\,(U_{\mathsf B}-\E U_{\mathsf B})+O_{L^4}(h^3)\,.
\]
Indeed, centering the defect removes its mean, and
\((\overline s_{h,k}-1)(m_{\mathsf B}^2-\E m_{\mathsf B}^2)=O_{L^4}(h^3)\).
Independence of the indicators gives
\begin{equation}
 \operatorname{Var}(U_{\mathsf B})
 =\frac1k\,\Bigl(1-\frac1k\Bigr)\sum_{i=1}^{k-1}\frac{i^2}{k^2}
 =\frac{(k-1)^2\,(2k-1)}{6k^3}\in[1/16,1/3)\,.
 \label{eq:poisson-quadrature-variance}
\end{equation}
Equation \eqref{eq:linear-mixture-scalar-moments} now gives
\[
 \operatorname{Var}(s_{h,k})
 =h^3\operatorname{Var}(U_{\mathsf B})+O(h^4)\,,\qquad
 \E\lvert s_{h,k}-\overline s_{h,k}\rvert^4\leqslant Ch^6\,.
\]
\Cref{lem:gaussian-scale-perturbation} proves
\eqref{eq:poisson-gaussian-radial-lower} uniformly in \(k\geqslant2\).

For \(k=1\), \(m_{\mathsf B}=1-h\), \(q_{\mathsf B}=2h\), and
\eqref{eq:linear-mixture-scalar-moments} gives the deterministic variance
\(s_{h,1}=(1-h/2)^{-1}\). Thus
\(\W_2=\sqrt d\,((1-h/2)^{-1/2}-1)\geqslant c\sqrt d\,h\),
proving \eqref{eq:poisson-k-one-lower}.
\end{proof}

\subsubsection{Stationary bias}
\label{subsec:randomized-overdamped-completion}

The scale estimates give the randomized stationary bounds immediately.
Their common conversion to iteration lower bounds is in
\Cref{subsec:lower-iteration-proof}.

\begin{proof}[Proof of \Cref{prop:overdamped-common-construction}]
Apply \Cref{prop:gaussian-radial-lower,prop:plmc-gaussian-lower} in
dimension \(d\). They give existence, uniqueness, finite second moments,
and \eqref{eq:overdamped-common-RLMC}--\eqref{eq:overdamped-common-ULA},
with constants uniform in \(k\). A fixed sufficiently small stationary
accuracy forces \(h\lesssim d^{-1/3}\) for RLMC and PLMC with
\(k\geqslant2\), and \(h\lesssim d^{-1/2}\) for PLMC with \(k=1\).
\end{proof}

\subsection{Underdamped randomized midpoint}
\label{subsec:randomized-underdamped-proof}

We apply \Cref{lem:random-linear-mixture} to a position--momentum
covariance matrix. Here and in \Cref{subsec:poisson-underdamped-proof},
\(\alpha=\beta=1\), \(\gamma=2\), and \(\pi=\cN(0,\Id_d)\).
We use the coefficients, drift matrix \(A\), and metric \(H_{\mathrm{hyp}}\) defined in
\Cref{subsec:exponential-midpoint-Gaussian-proof}, including
\eqref{eq:quadratic-flow-coefficients}--\eqref{eq:quadratic-flow-noise}.
The rescaling to general units is stated in \Cref{subsec:exponential-midpoint-lower}.
We distinguish the covariance \(\overline S_h^{\mathrm{RULMC}}\) of an invariant
coordinate pair from its random conditional covariance \(S_h^{\mathrm{RULMC}}\)
given the past quadrature variables. They satisfy
\(\overline S_h^{\mathrm{RULMC}}=\E S_h^{\mathrm{RULMC}}\).

\subsubsection{Quadratic coefficients}

Condition on the common quadrature variable \(\Theta=\theta\in[0,1]\).
The predictor is
\(X_{\theta h}^{\mathrm{pred}}=(1-c_{\theta h})x+a_{\theta h}p+(\eta_{\theta h})_1\).
Substitution into the RULMC update in \Cref{sec:underdamped-algorithms} gives
\begin{equation}
    M_h^{\mathrm{RULMC}}(\theta)\deq
    \begin{bmatrix}
        1-ha_{(1-\theta)h}\,(1-c_{\theta h})&a_h-ha_{(1-\theta)h}a_{\theta h}\\[0.5em]
      -h\e^{-2(1-\theta)h}\,(1-c_{\theta h})&\e^{-2h}-h\e^{-2(1-\theta)h}a_{\theta h}
    \end{bmatrix}\,.
    \label{eq:M-RULMC}
\end{equation}
The additive noise is
\(\eta_h-h\,\begin{bmatrix}a_{(1-\theta)h}&\e^{-2(1-\theta)h}\end{bmatrix}^{\top}\,(\eta_{\theta h})_1\).
By \eqref{eq:quadratic-flow-noise}, its Brownian kernel at time
\(t\in[0,h]\) is
\begin{equation}
    \mathcal J_{h,\theta}^{\mathrm{RULMC}}(t)\deq
    \begin{bmatrix}
        2a_{h-t}-2h a_{(1-\theta)h} a_{\theta h-t}\mathbf 1_{\{t\leqslant \theta h\}}\\[0.25em]
       2\e^{-2(h-t)}-2h\e^{-2(1-\theta)h}a_{\theta h-t}\mathbf 1_{\{t\leqslant \theta h\}}
    \end{bmatrix}\,,
    \label{eq:noise-kernel-lower}
\end{equation}
Here \(\mathcal J_{h,\theta}^{\mathrm{RULMC}}\) denotes the
deterministic Brownian noise integrand, distinct from the
Malliavin variation \(\mathcal J\) in the upper-bound analysis. We write
\(Q_h(\theta)=Q_h^{\mathrm{RULMC}}(\theta)\) within this calculation, so
\[
    Q_h(\theta)\deq\int_0^h\mathcal J_{h,\theta}^{\mathrm{RULMC}}(t)\mathcal J_{h,\theta}^{\mathrm{RULMC}}(t)^\top\,\mathrm d t\,.
\]
The state matrix is a uniform \(O(h^2)\) perturbation of
\(\e^{hA}\). Since \(A^\top H_{\mathrm{hyp}}+H_{\mathrm{hyp}}A\preceq-H_{\mathrm{hyp}}\), the exact flow satisfies
\(\norm{\e^{hA}}_{H_{\mathrm{hyp}}}\leqslant\e^{-h/2}\). Hence
\[
    \sup_{\theta\in[0,1]}\,\norm{M_h^{\mathrm{RULMC}}(\theta)}_{H_{\mathrm{hyp}}}
    \leqslant1-ch
\]
for \(0<h\leqslant h_0\). Since \(\sup_{\theta\in[0,1]}\,\norm{Q_h(\theta)}\leqslant Ch\),
\Cref{lem:random-linear-mixture} gives a unique invariant distribution
\(\bm\pi_h^{\mathrm{RULMC}}\), with conditional covariance \(S_h^{\mathrm{RULMC}}\).

\subsubsection{Conditional covariance fluctuations}

We need only second-order expansions of the update coefficients.

\begin{lemma}[Conditional covariance fluctuations]
\label{lem:conditional-covariance-fluctuations}
For every \(d\geqslant1\) and \(0<h\leqslant h_0\), there is a non-negative random variable
\(s_h^{\mathrm{RULMC}}\), independent of \(G_d\sim\cN(0,\Id_d)\), such that
\begin{equation}
    (\bm\pi_h^{\mathrm{RULMC}})_x
    =\Law\bigl(\sqrt{s_h^{\mathrm{RULMC}}}\,G_d\bigr)\,.
    \label{eq:Gaussian-scale-mixture}
\end{equation}
It satisfies
\[
    \operatorname{Var}(s_h^{\mathrm{RULMC}})
       =\frac{7h^3}{96}+O(h^4)\,, \qquad
    \E\lvert s_h^{\mathrm{RULMC}}-\E s_h^{\mathrm{RULMC}}\rvert^4\leqslant Ch^6\,.
\]
\end{lemma}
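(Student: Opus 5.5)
We apply \Cref{lem:random-linear-mixture} with \(p=2\), state \(Z_i=(X_i,P_i)\), and the fresh common pair \((M,Q)=(M_h^{\mathrm{RULMC}}(\Theta),Q_h(\Theta))\).

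\emph{Hypotheses and the scale-Gaussian form.} The contraction \(\sup_\theta\norm{M_h^{\mathrm{RULMC}}(\theta)}_{H_{\mathrm{hyp}}}\leqslant1-ch\) and \(\norm{Q_h(\theta)}\leqslant Ch\), established above in the \(H_{\mathrm{hyp}}\)-coordinates, verify the lemma's hypotheses after the allowed linear change of state coordinates. The lemma gives the joint decomposition \(Z_i=S^{1/2}G_i\). Projecting to positions, the position vector is, conditionally on \(S\), distributed as \(\cN(0,S_{11}\Id_d)\). Setting \(s_h^{\mathrm{RULMC}}\deq S_{11}\) therefore yields \eqref{eq:Gaussian-scale-mixture}, and \(\operatorname{Var}(s_h^{\mathrm{RULMC}})=\Gamma_{11}\) in \(\operatorname{vech}\) coordinates.

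\emph{Expansion.} Next we put the coefficients in the form required by \eqref{eq:linear-mixture-asymptotic-moments}.
\begin{enumerate}
\item Expanding \eqref{eq:M-RULMC} to second order gives \(M=\Id+hA+h^2B(\Theta)+O(h^3)\). The induced operator on symmetric matrices is then \(L=\Id+hF+O(h^2)\), where \(FT\deq AT+TA^\top\) is deterministic and stable because \(A\) is Hurwitz.
\item We expand \(\overline S\), the deterministic solution of \(\overline S=\E[M\overline SM^\top+Q]\). It satisfies \(\overline S=\Id+O(h^2)\), and its correction enters \(\mathfrak d\) only through the fluctuating part of \(M\), so that contribution is \(O(h^3)\).
\item The centered forcing is then \(\mathfrak d=\operatorname{vech}(M M^\top+Q-\Id)-\E[\cdot]+O_{L^4}(h^3)\). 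Its leading term is \(h^2\xi\), where \(\xi\) is an affine function of \(\Theta\): the linear-in-\(\theta\) part of the \(h^2\) coefficients of \(M M^\top+Q\), centered.
\end{enumerate}
We compute \(\xi\) explicitly by Taylor expanding \(a_t,c_t,\e^{-2t}\) in \eqref{eq:M-RULMC} and \(Q_h(\theta)\) from the kernel \eqref{eq:noise-kernel-lower}. By analogy with RLMC, we expect \(\xi=(1-2\Theta)\,w\) for a fixed vector \(w\in\R^3\), which would give \(\E[\xi\xi^\top]=\tfrac13ww^\top\).

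\emph{Moments.} Formula \eqref{eq:linear-mixture-asymptotic-moments} gives
\[
\Gamma=h^3\int_0^\infty\e^{tF}\,\E[\xi\xi^\top]\,\e^{tF^\top}\,\mathrm dt+O(h^4)\,,
\]
together with the fourth-moment bound \(Ch^6\). The fourth-moment bound uses \(1-\sqrt{\E\norm M^8_{\mathrm{op}}}\gtrsim h\) and \(\E\norm{\mathfrak d}^4\lesssim h^8\). To evaluate the integral we solve the \(3\times3\) Lyapunov system \(-F\Gamma_0-\Gamma_0F^\top=\E[\xi\xi^\top]\) on \(\operatorname{vech}\) coordinates and read off \((\Gamma_0)_{11}\), which should equal \(7/96\).

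\emph{Main obstacle.} The delicate step is the exact second-order bookkeeping of \(Q_h(\theta)\) and of \(M M^\top\), in particular the cross terms between the predictor noise \((\eta_{\theta h})_1\) and \(\eta_h\). Getting \(w\) right determines the constant \(7/96\). Everything else is routine given \Cref{lem:random-linear-mixture}.
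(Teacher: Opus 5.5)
Your plan is the paper's proof. You apply \Cref{lem:random-linear-mixture} with $s_h^{\mathrm{RULMC}}=(S_h^{\mathrm{RULMC}})_{11}$ and expand $L=\Id_3+hF+O(h^2)$, where $F$ represents $T\mapsto AT+TA^\top$ in vech coordinates. You write the centered forcing as $h^2\xi+O_{L^4}(h^3)$ and read $\Gamma_{11}$ off \eqref{eq:linear-mixture-asymptotic-moments}; the fourth-moment bound is handled the same way. As written, however, it does not prove the stated constant: the computation that determines $7/96$ is left as ``we expect'' and ``should equal''. Two smaller points. The form $\xi=(1-2\Theta)w$ needs no analogy: once the $h^2$ coefficient is affine in $\theta$, centering against $\Theta\sim\mathsf{Unif}[0,1]$ forces it. Also, the fact that this $h^2$ term has mean zero is what justifies your step~2, $\overline S=\Id+O(h^2)$, through the mean equation and the $1-ch$ contraction; an $O(h)$ bound would in any case suffice.

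To close the gap, expand \eqref{eq:M-RULMC} to get $M_h^{\mathrm{RULMC}}(\theta)=\Id_2+hA+h^2B(\theta)+O(h^3)$ with $B(\theta)=\begin{bmatrix}\theta-1&-1\\2-2\theta&2-\theta\end{bmatrix}$. In the kernel \eqref{eq:noise-kernel-lower}, the predictor corrections are $O(h^3)$ in the first component and $O(h^2)$ in the second. Hence all their contributions to $Q_h(\theta)$, including the cross terms you flag as the main obstacle, are $O(h^3)$, and
\[
Q_h(\theta)=h\begin{bmatrix}0&0\\0&4\end{bmatrix}+h^2\begin{bmatrix}0&2\\2&-8\end{bmatrix}+O(h^3)\,,
\]
with no $\theta$-dependence through order $h^2$. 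The order-$h$ terms cancel against $A+A^\top$. Adding the $h^2$ coefficient $AA^\top+B(\theta)+B(\theta)^\top$ of $MM^\top$ gives
\[
MM^\top+Q_h(\theta)-\Id_2=h^2(2\theta-1)\begin{bmatrix}1&-1\\-1&-1\end{bmatrix}+O(h^3)\,,
\]
that is, $w=-(1,-1,-1)^\top$. You do not need to solve the $3\times3$ Lyapunov system. $F$ has the triple eigenvalue $-2$, so $e_1^\top\e^{tF}=\e^{-2t}\bigl[(1+t)^2,\,2t(1+t),\,t^2\bigr]$ and $e_1^\top\e^{tF}w=-\e^{-2t}(1-2t^2)$. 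Therefore $\frac13\int_0^\infty(1-2t^2)^2\e^{-4t}\,\mathrm dt=\frac13\cdot\frac7{32}=\frac7{96}$, as claimed.
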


\begin{proof}
By \Cref{lem:random-linear-mixture}, \(s_h^{\mathrm{RULMC}}\deq(S_h^{\mathrm{RULMC}})_{11}\)
gives \eqref{eq:Gaussian-scale-mixture}. Put
\[
 \Psi_{h,\theta}^{\mathrm{RULMC}}(S)\deq M_h^{\mathrm{RULMC}}(\theta)S M_h^{\mathrm{RULMC}}(\theta)^\top+Q_h(\theta)\,.
\]
Uniformly in \(\theta\in[0,1]\), expansion of
\eqref{eq:M-RULMC} and \eqref{eq:noise-kernel-lower} gives
\[
 \begin{aligned}
 M_h^{\mathrm{RULMC}}(\theta)&=\Id_2+hA+
 h^2\,\begin{bmatrix}\theta-1&-1\\2-2\theta&2-\theta\end{bmatrix}+O(h^3)\,,\\
 Q_h(\theta)&=\begin{bmatrix}O(h^3)&2h^2+O(h^3)\\
                2h^2+O(h^3)&4h-8h^2+O(h^3)\end{bmatrix}\,.
 \end{aligned}
\]
Consequently,
\[
 \Psi_{h,\theta}^{\mathrm{RULMC}}(\Id_2)-\Id_2
 =h^2\,(2\theta-1)\,\begin{bmatrix}1&-1\\-1&-1\end{bmatrix}+O(h^3)\,.
\]
Since \(\E(2\Theta-1)=0\), the mean equation
\eqref{eq:linear-mixture-fixed-point} and the contraction bound give
\[
 \norm{\overline S_h^{\mathrm{RULMC}}-\Id_2}
 \leqslant\frac Ch\,\norm{\E\Psi_{h,\Theta}^{\mathrm{RULMC}}(\Id_2)-\Id_2}
 \leqslant Ch^2\,.
\]
Use \(\operatorname{vech}(S)=(S_{11},S_{12},S_{22})^\top\). Define
\(L_h^{\mathrm{RULMC}}(\theta)\) as the \(3\times3\) matrix whose action is
specified, for every symmetric \(2\times2\) matrix \(T\), by
\[
 L_h^{\mathrm{RULMC}}(\theta)\operatorname{vech}(T)
 =\operatorname{vech}\bigl(M_h^{\mathrm{RULMC}}(\theta)T M_h^{\mathrm{RULMC}}(\theta)^\top\bigr)\,.
\]
The identity \(\operatorname{vech}(AS+SA^\top)=F\operatorname{vech}(S)\) gives the expansion
\begin{equation}
 L_h^{\mathrm{RULMC}}(\theta)=\Id_3+hF+O(h^2)\,,\qquad
 F\deq\begin{bmatrix}0&2&0\\-1&-2&1\\0&-2&-4\end{bmatrix}\,.
 \label{eq:covariance-generator}
\end{equation}
Since \(\overline S_h^{\mathrm{RULMC}}=\Id_2+O(h^2)\), the centered forcing
\(\mathfrak d_h^{\mathrm{RULMC}}(\theta)\deq \operatorname{vech}(\Psi_{h,\theta}^{\mathrm{RULMC}}(\overline S_h^{\mathrm{RULMC}})
-\overline S_h^{\mathrm{RULMC}})\) satisfies
\[
 \mathfrak d_h^{\mathrm{RULMC}}(\theta)=h^2\,(2\theta-1)\,v+O(h^3)\,,\qquad
 v\deq(1,-1,-1)^\top\,.
\]
The eigenvalues of \(F\) are all \(-2\), and
\begin{equation}
 e_1^\top\e^{tF}
 =\e^{-2t}\,\begin{bmatrix}(1+t)^2&2t\,(1+t)&t^2\end{bmatrix}\,.
 \label{eq:covariance-position-row}
\end{equation}
Thus, \eqref{eq:linear-mixture-asymptotic-moments} and
\(\E(2\Theta-1)^2=1/3\) give
 \begin{align*}
 \operatorname{Var}(s_h^{\mathrm{RULMC}})
 &=\frac{h^3}{3}\,\int_0^\infty(1-2t^2)^2\e^{-4t}\,\mathrm d t+O(h^4)
   =\frac{7h^3}{96}+O(h^4)\,,\\
 \E\lvert s_h^{\mathrm{RULMC}}-\E s_h^{\mathrm{RULMC}}\rvert^4&\leqslant Ch^6\,. \qedhere
 \end{align*}
\end{proof}

\begin{proof}[Proof of \Cref{prop:randomized-underdamped-Gaussian-lower} (RULMC)]
Apply \eqref{eq:mixture-variance-transfer} to
\Cref{lem:conditional-covariance-fluctuations}.
\end{proof}

\subsection{Underdamped Poisson midpoint}
\label{subsec:poisson-underdamped-proof}

Let \(\Delta\deq h/k\), \(t_i\deq i\Delta\), and let
\(\mathsf B_i\sim\mathsf{Bernoulli}(1/k)\), \(1\leqslant i\leqslant k-1\),
be independent of one another and of the Brownian motion, as in
\eqref{eq:uld-PULMC-x}--\eqref{eq:uld-PULMC-v}. Write
\(\mathsf B\deq(\mathsf B_1,\ldots,\mathsf B_{k-1})\). The PULMC definitions
\eqref{eq:uld-PULMC-x}--\eqref{eq:uld-PULMC-v} can be written
\begin{equation}
    z^+=E_hz-g_hx+\eta_h
       +k\sum_{i=1}^{k-1}\mathsf B_i E_{h-t_{i+1}}g_\Delta
                (x-X_{t_i}^{\mathrm{pred}})\,,
    \qquad
    X_{t_i}^{\mathrm{pred}}=(1-c_{t_i})x+a_{t_i}p+(\eta_{t_i})_1\,.
    \label{eq:Poisson-quadratic-macrostep}
\end{equation}
Composing the \(k\) substeps with the gradient evaluated at the batch's
initial point and the predictor corrections selected by \(\mathsf B_i\)
gives this formula, using the same Brownian path as the predictors.
The \(i=0\) correction is zero. Reusing the gradient at the initial point,
a batch therefore uses
\(2-1/k\) gradient queries in expectation.

The PULMC covariance calculation uses the Bernoulli variance computed
for PLMC.

\begin{lemma}[PULMC conditional covariance fluctuations]
\label{lem:pulmc-covariance-fluctuations}
For every \(d\geqslant1\), integer \(k\geqslant1\), and
\(0<h\leqslant h_0\), PULMC has a unique invariant distribution
\(\bm\pi_h^{\mathrm{PULMC},k}\in\mathcal P_2\), whose position marginal is
\(\Law(\sqrt{s_{h,k}^{\mathrm{PULMC}}}G_d)\), with
\(s_{h,k}^{\mathrm{PULMC}}\geqslant0\) independent of
\(G_d\sim\cN(0,\Id_d)\). For \(k\geqslant2\),
\[
 \begin{split}
 \operatorname{Var}(s_{h,k}^{\mathrm{PULMC}})
 &=\frac{(k-1)^2\,(2k-1)}{64k^3}\,h^3+O(h^4)\,, \qquad
 \E\lvert s_{h,k}^{\mathrm{PULMC}}-\E s_{h,k}^{\mathrm{PULMC}}\rvert^4
\leqslant Ch^6\,.
 \end{split}
\]
For \(k=1\), the scale is deterministic and equals \(1+h/4+O(h^2)\).
All constants and remainders are uniform in \(k\).
\end{lemma}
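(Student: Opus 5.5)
The plan follows the RULMC proof in \Cref{lem:conditional-covariance-fluctuations}, with the Bernoulli array \(\mathsf B\) in place of \(\Theta\).

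\emph{Coefficients.} Condition on \(\mathsf B\) and substitute the predictor formula from \eqref{eq:Poisson-quadratic-macrostep} into the batch update. For each coordinate pair this gives a linear update \(z^+=M_{h,k}(\mathsf B)z+\text{noise}\), where
\[
 M_{h,k}(\mathsf B)=E_h-g_he_1^\top+k\sum_{i=1}^{k-1}\mathsf B_iE_{h-t_{i+1}}g_\Delta\,\bigl(c_{t_i}e_1^\top-a_{t_i}e_2^\top\bigr)\,,
\]
and the Gaussian noise has covariance \(Q_{h,k}(\mathsf B)\), read off from the Brownian kernel. Since \(kg_\Delta=O(h^2/k)\cdot k/h\cdot h\), each selected correction has size \(O(h^2)\) in \(M\) and \(O(h^2)\) in \(Q\). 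By \eqref{eq:bernoulli-count-moments}, all moments of \(N_{\mathsf B}\) are bounded uniformly in \(k\). This gives \(M_{h,k}=\Id_2+hA+O_{L^8}(h^2)\) uniformly in \(k\). Together with \(\norm{\e^{hA}}_{H_{\mathrm{hyp}}}\leqslant\e^{-h/2}\), we get \(\E\norm{M_{h,k}}_{H_{\mathrm{hyp}}}^8\leqslant1-ch\) and \(\E\norm{Q_{h,k}}^4\leqslant Ch^4\). Then \Cref{lem:random-linear-mixture}, applied in the \(H_{\mathrm{hyp}}\) coordinates, gives existence, uniqueness, and the scale-Gaussian form of the position marginal, with \(s_{h,k}^{\mathrm{PULMC}}\deq S_{11}\).

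\emph{Centered forcing.} Expand to second order in \(h\). The selected corrections enter only through the linear statistics \(U_{\mathsf B}=\sum_i(i/k)\mathsf B_i\) and \(N_{\mathsf B}\), plus \(O_{L^4}(h^3)\) terms. To leading order, \(k\,E_{h-t_{i+1}}g_\Delta\approx(0,h)^\top\) up to \(O(h^2)\), and \(c_{t_i}\approx0\), \(a_{t_i}\approx t_i\). So the correction is approximately \(-h^2(i/k)\mathsf B_i\,e_2e_2^\top\) in \(M\), with matching cross terms in \(Q\).

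Compute \(\Psi_{\mathsf B}(\Id_2)-\Id_2\), centered by its mean. Its first-order part is \(h^2(U_{\mathsf B}-\E U_{\mathsf B})\,w\) for an explicit vector \(w\). The \(N_{\mathsf B}\)-terms cancel at order \(h^2\), because the correction vanishes at \(t=0\) and is linear in \(t_i\). The mean \(\overline S\) satisfies \(\overline S-\Id_2=O(h)\) (uniformly in \(k\), as in PLMC, with error \(O(h/k+h^2)\)). Its effect on \(\mathfrak d\) is \((\overline S-\Id)\)-linear times the \(O(h^2)\) fluctuation of \(L\), hence \(O_{L^4}(h^3)\). So
\[
 \mathfrak d=h^2(U_{\mathsf B}-\E U_{\mathsf B})\,w+O_{L^4}(h^3)\,.
\]
I expect \(w=-\tfrac12(1,-1,-1)^\top\) up to sign, i.e. \(v/2\) with \(v\) from the RULMC proof. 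This matches the RULMC forcing \(h^2(2\theta-1)v\) with \(2\theta-1\) replaced by \(-2U_{\mathsf B}+\mathrm{const}\) and the kernel weight adjusted.

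\emph{Variance.} Apply \eqref{eq:linear-mixture-asymptotic-moments} with the same \(F\) and the row formula \eqref{eq:covariance-position-row}:
\[
 \operatorname{Var}(s_{h,k}^{\mathrm{PULMC}})=h^3\operatorname{Var}(U_{\mathsf B})\int_0^\infty\bigl(e_1^\top\e^{tF}w\bigr)^2\,\mathrm dt+O(h^4)\,.
\]
Use \(\operatorname{Var}(U_{\mathsf B})=(k-1)^2(2k-1)/(6k^3)\) from \eqref{eq:poisson-quadrature-variance}. The target constant \(1/64\) requires the integral to equal \(6/64=3/32\). With \(w=v/2\), the integral is \(\tfrac14\cdot\tfrac{7}{32}\), which is not \(3/32\). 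So the exact forcing vector must be computed carefully. The main obstacle is this precise second-order expansion of \(M\) and \(Q\), which fixes \(w\). Uniform-in-\(k\) control of the remainders follows from \eqref{eq:bernoulli-count-moments}. The fourth-moment bound \(Ch^6\) follows from \eqref{eq:linear-mixture-centered-moments}.

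\emph{Case \(k=1\).} Here the scheme is ULMC, the update is deterministic, and solving the \(3\times3\) Lyapunov equation gives \(s=1+h/4+O(h^2)\).
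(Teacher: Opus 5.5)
Your route is the paper's: condition on $\mathsf B$, apply the matrix scale-Gaussian lemma, recentre at $\overline S$, and read off the variance from the asymptotic moment formula with the same $F$ and the position row $e_1^\top\e^{tF}$. The gap is the one step that carries the content of the lemma, namely the forcing vector $w$. You leave it undetermined, and your guess $w=v/2$ is wrong. As you compute yourself, it gives $7/128$ instead of the required $3/32$, so the proposal does not establish the stated coefficient.

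The RULMC analogy is what misleads you. In RULMC the random time enters the weights $a_{(1-\Theta)h}$ and $\e^{-2(1-\Theta)h}$. These multiply the full gradient at the predictor, whose position coefficient is $1+O(h^2)$. So the first column of $M$ fluctuates at order $h^2$, and all three covariance entries are forced. In PULMC the part $-g_hx$ is deterministic, and only the increments $\nabla V(X^{\mathrm{pred}}_{t_i})-\nabla V(x)$ are randomized. Since $\ell_i=kE_{h-t_{i+1}}g_\Delta=h\,e_2+O(h^2)$ and $c_{t_i}=O(h^2)$, the only order-$h^2$ random entry of $M$ is the one you already found, $-h^2U_{\mathsf B}E_{22}$.

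Your claims that the corrections are $O(h^2)$ in $Q$ and come \emph{with matching cross terms in $Q$} are also incorrect. This is where the PLMC intuition breaks: there $q_{\mathsf B}$ does contain $-4h^2U_{\mathsf B}$. In the underdamped scheme the predictor noise $(\eta_{t_i})_1$ has variance $O(h^3)$, and its covariance with $(\eta_h)_2$ is about $2t_i^2$. After multiplication by $(\ell_i)_2\approx h$, the cross terms are $O(h^3)$. Hence $Q_{h,k}-\E Q_{h,k}=O_{L^q}(h^3)$, uniformly in $k$ by the moment bounds on $N_{\mathsf B}$.

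Write $\delta M\deq M_{h,k}-\E M_{h,k}$. Since $M_{h,k}=\Id_2+O(h)$ and $\delta M=O(h^2)$, you get $\Psi_{\mathsf B}(\Id_2)-\E\Psi_{\mathsf B}(\Id_2)=\delta M+\delta M^\top+O_{L^4}(h^3)=-2h^2(U_{\mathsf B}-\E U_{\mathsf B})E_{22}+O_{L^4}(h^3)$. That is, $w=-2e_3$ in the coordinates $(S_{11},S_{12},S_{22})$. This is the paper's forcing $h^2\xi_ke_3$ with $\xi_k=-2(U_{\mathsf B}-\E U_{\mathsf B})$. Then $e_1^\top\e^{tF}w=-2t^2\e^{-2t}$ and $\int_0^\infty4t^4\e^{-4t}\,\mathrm dt=3/32$. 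Combined with $\operatorname{Var}(U_{\mathsf B})=(k-1)^2(2k-1)/(6k^3)$, this gives exactly the stated coefficient.

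The rest of your plan matches the paper: contraction in $H_{\mathrm{hyp}}$, the $O_{L^4}(h^3)$ recentring cost, and the $Ch^6$ fourth moment. For $k=1$ you still owe the computation. It is the defect $h^2E_{22}+O(h^3)$ together with $\int_0^\infty\e^{tA}E_{22}\e^{tA^\top}\,\mathrm dt=\Id_2/4$.
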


\begin{proof}
In this proof, write
\(M_{h,k}(\mathsf B)=M_{h,k}^{\mathrm{PULMC}}(\mathsf B)\) and
\(Q_{h,k}(\mathsf B)=Q_{h,k}^{\mathrm{PULMC}}(\mathsf B)\), suppressing the
algorithm label in the local coefficients and covariance map.
Put \(\ell_i\deq kE_{h-t_{i+1}}g_\Delta\) and
\(N_{\mathsf B}\deq\sum_{i=1}^{k-1}\mathsf B_i\). Substitution of the
predictor in \eqref{eq:Poisson-quadratic-macrostep} gives the conditional
state matrix and additive noise
\begin{equation*}
 \begin{split}
 M_{h,k}(\mathsf B)
 &\deq E_h-g_he_1^\top+\sum_{i=1}^{k-1}\mathsf B_i\ell_i\,
                  \begin{bmatrix}c_{t_i}&-a_{t_i}\end{bmatrix}\,, \qquad
 \text{noise}=\eta_h-\sum_{i=1}^{k-1}\mathsf B_i\ell_i(\eta_{t_i})_1\,.
 \end{split}
\end{equation*}
Let \(Q_{h,k}(\mathsf B)\) be this noise's conditional covariance, and
\(Q_h^0\deq\operatorname{Cov}(\eta_h)\).
Writing \(E_{22}\deq e_2e_2^\top\), the coefficients in
\eqref{eq:quadratic-flow-coefficients} satisfy, uniformly in \(i,k\),
\[
 \norm{\ell_i}\leqslant Ch\,,\qquad
 (\ell_i)_2=h+O(h^2)\,,\qquad
 \ell_i\,\begin{bmatrix}c_{t_i}&-a_{t_i}\end{bmatrix}
       =-h^2\,\frac ik\, E_{22}+O(h^3)\,.
\]
Also, \eqref{eq:quadratic-flow-noise} gives
\(\norm{\eta_h}_{L^2}\leqslant C\sqrt h\) and
\(\norm{(\eta_{t_i})_1}_{L^2}\leqslant Ch^{3/2}\). Hence
\[
 \begin{split}
 \norm{Q_{h,k}(\mathsf B)-Q_h^0}
   &\leqslant C\,(N_{\mathsf B}h^3+N_{\mathsf B}^2h^5)\,,\\
 \norm{M_{h,k}(\mathsf B)}_{H_{\mathrm{hyp}}}
   &\leqslant1-ch+Ch^2\,(1+N_{\mathsf B})\,,\qquad
 \operatorname{tr}Q_{h,k}(\mathsf B)
   \leqslant Ch\,(1+N_{\mathsf B})^2\,.
 \end{split}
\]
The exponential moment bound \eqref{eq:bernoulli-count-moments} implies
\(\E\norm{M_{h,k}(\mathsf B)}_{H_{\mathrm{hyp}}}^8\leqslant1-ch\), uniformly in
\(k\), after reducing \(h_0\). Thus \Cref{lem:random-linear-mixture}
gives existence, uniqueness, and the position marginal
\(\Law(\sqrt{s_{h,k}^{\mathrm{PULMC}}}G_d)\), where
\(s_{h,k}^{\mathrm{PULMC}}\deq(S_{h,k}^{\mathrm{PULMC}})_{11}\).

For \(k=1\), the correction sum vanishes. Expansion of
\eqref{eq:quadratic-flow-coefficients}--\eqref{eq:quadratic-flow-noise}
gives
\[
 (E_h-g_he_1^\top)(E_h-g_he_1^\top)^\top+Q_h^0-\Id_2
 =h^2E_{22}+O(h^3)\,.
\]
The mean equation \eqref{eq:linear-mixture-fixed-point} and
\(A\Id_2/4+\Id_2 A^\top/4=-E_{22}\) imply
\(S_{h,1}^{\mathrm{PULMC}}=\Id_2+h\Id_2/4+O(h^2)\).
Indeed, the covariance operator divided by \(h\) converges to
\(T\mapsto-AT-TA^\top\), whose inverse is
\(Q\mapsto\int_0^\infty\e^{tA}Q\e^{tA^\top}\,\mathrm d t\).

For \(k\geqslant2\), write
\(\Psi_{h,k,\mathsf B}(S)\deq M_{h,k}(\mathsf B)S M_{h,k}(\mathsf B)^\top
+Q_{h,k}(\mathsf B)\).
The preceding coefficient bounds give, in every fixed \(L^q\) norm
over \(\mathsf B\),
\[
 \begin{split}
 M_{h,k}-\E M_{h,k}
 &=-h^2\sum_{i=1}^{k-1}\Bigl(\mathsf B_i-\frac1k\Bigr)
                      \frac ik\, E_{22}+O_{L^q}(h^3)\,,\\
 Q_{h,k}-\E Q_{h,k}&=O_{L^q}(h^3)\,,\qquad
 M_{h,k}=\Id_2+hA+O_{L^q}(h^2)\,.
 \end{split}
\]
In particular,
\[
 \begin{split}
 \Psi_{h,k,\mathsf B}(\Id_2)-\E\Psi_{h,k,\mathsf B}(\Id_2)
 &=h^2\xi_k(\mathsf B)E_{22}+O_{L^4}(h^3)\,, \qquad
 \xi_k(\mathsf B)\deq-2\sum_{i=1}^{k-1}
             \Bigl(\mathsf B_i-\frac1k\Bigr)\,\frac ik\,.
 \end{split}
\]
The mean defect is \(O(h^2)\). Since
\(\E\norm{M_{h,k}}_{H_{\mathrm{hyp}}}^2\leqslant1-ch\), the mean equation in
\eqref{eq:linear-mixture-fixed-point} gives
\(\overline S\deq\E S_{h,k}^{\mathrm{PULMC}}=\Id_2+O(h)\).
Centering at \(\overline S\) instead of \(\Id_2\) adds only
\[
 M_{h,k}\,(\overline S-\Id_2)M_{h,k}^\top
 -\E[M_{h,k}\,(\overline S-\Id_2)M_{h,k}^\top]=O_{L^4}(h^3)\,.
\]
In the coordinates \(\operatorname{vech}(S)=(S_{11},S_{12},S_{22})^\top\), define
\(L_{h,k}\) as the random \(3\times3\) matrix whose action on every
symmetric \(2\times2\) matrix \(T\) is specified by $L_{h,k}\operatorname{vech}(T)
 =\operatorname{vech}(M_{h,k}(\mathsf B)T M_{h,k}(\mathsf B)^\top)$.
Define the centered covariance forcing $\mathfrak d_{h,k}\deq\operatorname{vech}(\Psi_{h,k,\mathsf B}(\overline S)-\overline S)$.
Both depend on the Bernoulli array \(\mathsf B\). The coefficient bounds give
\[
 L_{h,k}=\Id_3+hF+O_{L^4}(h^2)\,,\qquad
 \mathfrak d_{h,k}=h^2\xi_k e_3+O_{L^4}(h^3)\,,
\]
with \(F\) from \eqref{eq:covariance-generator}. By
\eqref{eq:poisson-quadrature-variance},
\[
 \sigma_k^2\deq\E\xi_k^2
 =\frac{2\,(k-1)^2\,(2k-1)}{3k^3}\,.
\]
Equations \eqref{eq:linear-mixture-asymptotic-moments} and
\eqref{eq:covariance-position-row} now give
\begin{align*}
 \operatorname{Var}(s_{h,k}^{\mathrm{PULMC}})
 &=h^3\sigma_k^2\int_0^\infty t^4\e^{-4t}\,\mathrm d t+O(h^4)
   =\vartheta_kh^3+O(h^4)\,,\\
 \vartheta_k&\deq\frac{(k-1)^2(2k-1)}{64k^3}\in[3/512,1/32)\,,\qquad
 \E\lvert s_{h,k}^{\mathrm{PULMC}}-\E s_{h,k}^{\mathrm{PULMC}}\rvert^4
       \leqslant Ch^6\,. \qedhere
\end{align*}
\end{proof}

\begin{proof}[Proof of \Cref{prop:randomized-underdamped-Gaussian-lower} (PULMC)]
For \(k\geqslant2\), apply \Cref{lem:gaussian-scale-perturbation} to
\Cref{lem:pulmc-covariance-fluctuations}, using
\((k-1)^2\,(2k-1)/(64k^3)\geqslant3/512\).
For \(k=1\), the Gaussian formula gives
\(\W_2=\sqrt d\,\lvert\sqrt{s_{h,1}^{\mathrm{PULMC}}}-1\rvert
\geqslant c\sqrt d\,h\).
\end{proof}

\subsection{Covariance fluctuations before stationarity}
\label{subsec:randomized-underdamped-completion}

To keep the initial momentum exactly Gaussian and independent of the
position in~\Cref{cor:midpoint-iteration-lower}, we start the covariance recursion at \(\Id_2\) and quantify
its fluctuations before stationarity.

\begin{lemma}[Covariance fluctuations before stationarity]
\label{lem:finite-time-underdamped-covariance}
Start a Gaussian coordinate pair from \(\cN(0,\Id_2)\), independently
of the future updates, and let \(S_n\) be its conditional covariance
after \(n\) steps, given the quadrature variables. Put \(s_n\deq(S_n)_{11}\).
These are the covariances of the method under consideration;
we suppress its algorithm label and the fixed parameters \(h\) and \(k\)
in this lemma.
For RULMC and PULMC with \(k\geqslant2\),
\[
 \begin{split}
 \E S_n&=\Id_2+O(h)\,,\qquad
 \operatorname{Var}(s_n)
 =h^3\sigma^2\int_0^{nh}
       (e_1^\top\e^{tF}v)^2\,\mathrm d t+O(h^4)\,,\qquad
 \E\lvert s_n-\E s_n\rvert^4\leqslant Ch^6\,,
 \end{split}
\]
where \(F\) is defined in \eqref{eq:covariance-generator}, and we have:
\[
 \begin{array}{c|c|c}
 \text{Method}&v&\sigma^2\\ \hline
 \mathrm{RULMC}&(1,-1,-1)^\top&1/3\\
 \mathrm{PULMC}\,,\ k\geqslant2&e_3&2\,(k-1)^2(2k-1)/(3k^3)
 \end{array}
\]
For PULMC with \(k=1\), \(S_n\) is deterministic and
\[
 s_n-1=h\int_0^{nh}t^2\e^{-2t}\,\mathrm d t+O(h^2)\,.
\]
The constants and remainders are uniform over \(n\geqslant0\),
\(k\), and \(0<h\leqslant h_0\).
\end{lemma}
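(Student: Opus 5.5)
The plan is to run the stationary argument of \Cref{lem:random-linear-mixture} with a finite horizon. Starting from \(S_0=\Id_2\), the recursion is
\[
S_{n+1}=M_nS_nM_n^\top+Q_n
\]
with fresh coefficient pairs. I will center this recursion at the \emph{deterministic} mean path \(\overline S_n\deq\E S_n\) rather than at the stationary mean. The mean satisfies the affine recursion \(\overline S_{n+1}=\E[M\overline S_nM^\top+Q]\), whose linear part is \(\E L\). In the contracting coordinates (the \(H_{\mathrm{hyp}}\) metric, with \(\E\norm M_{H_{\mathrm{hyp}}}^2\leqslant1-ch\)), this linear part contracts by \(1-ch\).

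\textbf{Mean.} For RULMC the one-step defect \(\E\Psi(\Id_2)-\Id_2\) is \(O(h^3)\), since the \(h^2(2\theta-1)\) term has zero mean. For PULMC it is \(O(h^2)\). Summing the defects geometrically therefore gives \(\overline S_n-\Id_2=O(h^2)\) and \(O(h)\) respectively, uniformly in \(n\), which yields \(\E S_n=\Id_2+O(h)\).

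For PULMC with \(k=1\), everything is deterministic. The recursion reads
\[
S_{n+1}-\Id_2=L(S_n-\Id_2)+h^2E_{22}+O(h^3),\qquad L=\Id+hF+O(h^2).
\]
Unrolling it and comparing the Riemann sum \(\sum_{j<n}h\,\e^{jhF}\) with \(\int_0^{nh}\e^{tF}\,\mathrm dt\) gives \(s_n-1=h\int_0^{nh}e_1^\top\e^{tF}e_3\,\mathrm dt+O(h^2)\). By \eqref{eq:covariance-position-row}, \(e_1^\top\e^{tF}e_3=t^2\e^{-2t}\). The error is uniform in \(n\) because \(\norm{\e^{tF}}\leqslant C\e^{-t}\) and each step's error is damped geometrically.

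\textbf{Fluctuations.} Set \(R_n\deq\operatorname{vech}(S_n-\overline S_n)\). Then
\[
R_{n+1}=L_nR_n+\mathfrak d_n+(L_n-\E L_n)\operatorname{vech}(\overline S_n),
\]
where \(\mathfrak d_n\deq\operatorname{vech}(\Psi_n(\overline S_n)-\E\Psi_n(\overline S_n))\) is centered and independent of \(R_n\), and \(R_0=0\). The expansions already computed in the stationary proofs (\Cref{lem:conditional-covariance-fluctuations,lem:pulmc-covariance-fluctuations}) show that the total forcing is \(h^2\xi v+O_{L^4}(h^3)\). This uses only \(\overline S_n=\Id_2+O(h)\) and the fact that the fluctuation of \(M\) is \(O_{L^4}(h^2)\). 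Here the variable \(\xi\) is \(2\Theta-1\) for RULMC and \(\xi_k\) for PULMC, and \(\E\xi^2=\sigma^2\).

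Iterating, \(R_n=\sum_{j<n}L_{n-1}\cdots L_{j+1}\mathfrak d'_j\) is a martingale-difference sum. Its covariance \(\Gamma_n\) satisfies
\[
\Gamma_{n+1}=\E[L\Gamma_nL^\top]+h^4\sigma^2vv^\top+O(h^5),\qquad \Gamma_0=0.
\]
Replacing \(\E[L\,\cdot\,L^\top]\) by \(T\mapsto\e^{hF}T\e^{hF^\top}\) costs \(O(h^2)\norm{\Gamma_n}\), and \(\norm{\Gamma_n}=O(h^3)\) uniformly in \(n\). Unrolling then gives
\[
\Gamma_n=h^4\sigma^2\sum_{j<n}\e^{jhF}vv^\top\e^{jhF^\top}+O(h^4).
\]
A Riemann-sum comparison turns this into \(h^3\sigma^2\int_0^{nh}\e^{tF}vv^\top\e^{tF^\top}\,\mathrm dt+O(h^4)\), and the \((1,1)\) entry is the claimed variance. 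The fourth moment bound comes from the same martingale \(L^4\) argument as in the proof of \Cref{lem:random-linear-mixture}. That argument gives \(\norm{R_n}_{L^4}^2\leqslant C\sum_j(1-ch)^{j/2}h^4\leqslant Ch^3\), hence \(\E\lvert s_n-\E s_n\rvert^4\leqslant Ch^6\), uniformly in \(n\).

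\textbf{Main obstacle.} The hard step is keeping every remainder uniform in \(n\) and in \(k\). The accumulated \(O(h^5)\) forcing errors and \(O(h^2)\) generator errors must be absorbed by the \(1-ch\) contraction, which yields \(O(h^5/h)=O(h^4)\), rather than growing linearly in \(n\). This requires working throughout in the contracting \(H_{\mathrm{hyp}}\)-induced norm on \(\operatorname{vech}\) coordinates. For PULMC it also requires the exponential Bernoulli moment bound \eqref{eq:bernoulli-count-moments} to control the higher moments of \(M\) and \(Q\) uniformly in \(k\).
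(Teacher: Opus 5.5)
Your proposal is correct and follows essentially the paper's argument. You center at the deterministic mean path $\overline S_n$, get the mean by geometrically summing the one-step defect, and expand the centered forcing as $h^2\xi v+O_{L^4}(h^3)$ using $\overline S_n=\Id_2+O(h)$ and $M-\E M=O_{L^4}(h^2)$. You then compare $\Gamma_{n+1}=\E[L\Gamma_nL^\top]+h^4\sigma^2vv^\top+O(h^5)$ with the integral, and reuse the backward-martingale $L^4$ bound. The one difference is cosmetic: the paper checks that the integral satisfies the $\E[L\,\cdot\,L^\top]$ recursion up to $O(h^5)$, whereas you replace that map by the exact flow and then compare a Riemann sum.

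There is one bookkeeping slip in your $R_{n+1}$ identity. With $\mathfrak d_n\deq\operatorname{vech}(\Psi_n(\overline S_n)-\E\Psi_n(\overline S_n))$, the exact recursion is simply $R_{n+1}=L_nR_n+\mathfrak d_n$. Your extra term $(L_n-\E L_n)\operatorname{vech}(\overline S_n)$ therefore double-counts the $M$-fluctuation, which is precisely the leading $h^2\xi v$ forcing, so taken literally it would quadruple the variance. The correct form is $(L_n-\E L_n)\operatorname{vech}(\overline S_n-\Id_2)$ with $\mathfrak d_n$ built from $\Psi_n(\Id_2)$, and your next sentence clearly uses this.
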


\begin{proof}
Write \(\Psi_j(S)\deq M_jSM_j^\top+Q_j\) for the fresh covariance
map at step \(j\), so \(S_0=\Id_2\) and \(S_{j+1}=\Psi_j(S_j)\).
The coefficient calculations in
\Cref{lem:conditional-covariance-fluctuations,lem:pulmc-covariance-fluctuations}
give
\[
 \E\norm{M_j}_{H_{\mathrm{hyp}}}^8\leqslant1-ch\,,\qquad
 \E\Psi_j(\Id_2)-\Id_2=O(h^2)\,.
\]
Thus the operator \(\mathcal T(T)\deq\E[M_jTM_j^\top]\) has
\(\norm{\mathcal T^j}\leqslant C\e^{-cjh}\) in fixed matrix coordinates.
With \(\overline S_j\deq\E S_j\), iteration gives
\[
 \overline S_n-\Id_2
 =\sum_{j=0}^{n-1}\mathcal T^j
       \bigl(\E\Psi_0(\Id_2)-\Id_2\bigr)=O(h)
\]
uniformly in \(n\).

Use the coordinates \(\operatorname{vech}(T)=(T_{11},T_{12},T_{22})^\top\), and set
\[
 R_j\deq \operatorname{vech}(S_j-\overline S_j)\,,\qquad
 L_j\operatorname{vech}(T)= \operatorname{vech}(M_jTM_j^\top)\,,\qquad
 \mathfrak d_j\deq \operatorname{vech}\bigl(\Psi_j(\overline S_j)-\overline S_{j+1}\bigr)\,.
\]
Then, \(R_{j+1}=L_jR_j+\mathfrak d_j\), with \(\E R_j=\E\mathfrak d_j=0\),
and \(R_j\) is independent of \((L_j,\mathfrak d_j)\).
The expansions in
\eqref{eq:M-RULMC} and
\eqref{eq:Poisson-quadratic-macrostep} give
\(M_j-\E M_j=O_{L^8}(h^2)\). Together with
\(\overline S_j-\Id_2=O(h)\), this shows that centering the
covariance map at \(\overline S_j\) instead of \(\Id_2\) changes its
centered forcing by only \(O_{L^4}(h^3)\). The centered defects
computed in the two cited lemmas therefore give, uniformly in \(j\),
\[
 L_j=\Id_3+hF+O_{L^4}(h^2)\,,\qquad
 \mathfrak d_j=h^2\xi_jv+O_{L^4}(h^3)\,,\qquad
 \E\xi_j=0\,,\quad \E\xi_j^2=\sigma^2\,,\quad
 \norm{\xi_j}_{L^4}\leqslant C\,.
\]
Here \(\xi_j\deq2\Theta_j-1\) for RULMC and
\(\xi_j\deq-2\sum_{i=1}^{k-1}(\mathsf B_{j,i}-1/k)\,i/k\) for PULMC.

Let \(\Gamma_j\deq\E[R_jR_j^\top]\) and
\(\mathcal C(T)\deq\E[L_jTL_j^\top]\).
Independence and centering give
\[
 \Gamma_{j+1}=\mathcal C(\Gamma_j)
                   +h^4\sigma^2vv^\top+O(h^5)\,,\qquad \Gamma_0=0\,.
\]
The contraction estimate implies
\(\norm{\mathcal C^j}\leqslant C\e^{-cjh}\), and
\(\mathcal C(T)=T+h\,(FT+TF^\top)+O(h^2)\,\norm T\).
The matrices
\[
 \widetilde\Gamma_j\deq h^3\sigma^2\int_0^{jh}
          \e^{tF}vv^\top\e^{tF^\top}\,\mathrm d t
\]
satisfy the same recursion with a residual \(O(h^5)\): stability of
\(F\) gives \(\widetilde\Gamma_j=O(h^3)\) uniformly, so the
\(O(h^2)\) difference between \(\mathcal C\) and the exact covariance
flow contributes only \(O(h^5)\).
Summing these residuals using contraction gives
\(\Gamma_j=\widetilde\Gamma_j+O(h^4)\) uniformly in \(j\), proving the
variance formula.

For the fourth moment, the finite expansion
\[
 R_n=\sum_{j=0}^{n-1}L_{n-1}\cdots L_{j+1}\mathfrak d_j
\]
has centered martingale differences when the coefficient pairs are
revealed in the order \(n-1,n-2,\ldots,0\). The fourth moment
argument in \Cref{lem:random-linear-mixture}, applied to this finite
sum, gives
\[
 \norm{R_n}_{L^4}^2
 \leqslant C\sum_{j=0}^{n-1}
       \e^{-c(n-1-j)h}\,\norm{\mathfrak d_j}_{L^4}^2
 \leqslant Ch^3\,.
\]

For PULMC with \(k=1\), the deterministic covariance defect computed
in \Cref{lem:pulmc-covariance-fluctuations} is
\(\Psi_j(\Id_2)-\Id_2=h^2e_2e_2^\top+O(h^3)\), and
\(M_j=\Id_2+hA+O(h^2)\).
The same comparison of recursions, now for \(S_n-\Id_2\), gives
\[
 S_n-\Id_2
 =h\int_0^{nh}\e^{tA}e_2e_2^\top\e^{tA^\top}\,\mathrm d t
       +O(h^2)
\]
uniformly in \(n\). Since \(e_1^\top\e^{tA}e_2=t\e^{-t}\), its
position entry is the asserted formula.
\end{proof}

\subsection{From bias to iteration lower bounds}
\label{subsec:lower-iteration-proof}

The idea of the following proof is that the displaced final
coordinate forces the elapsed time \(Nh\) to exceed a universal constant
\(\tau\) before the accuracy can be reached. The remaining coordinates
start at, or close to, numerical stationarity; their bias, quantified in
the preceding results, then forces \(h\) to be small. The underdamped
case requires more work, since \(P_0\sim\cN(0,\Id_d)\) is independent
of \(X_0\) and the initial distribution is therefore not numerically
stationary, but the principle is the same.

\begin{proof}[Proof of \Cref{cor:midpoint-iteration-lower}]
We first prove the bounds at bounded condition number.
For each overdamped method, let \(\mu_h\) be the numerical invariant
distribution of its first \(d-1\) coordinates. Use the position starts
\[
 \Law(X_0)=
 \begin{cases}
  \mu_h\otimes\delta_1\,,&\text{Heun, RLMC, and PLMC}\,,\\
  \delta_{e_d}\,,&\text{exponential midpoint}\,,\\
  \cN(e_d,\Id_d)\,,&\text{RULMC and PULMC}\,.
 \end{cases}
\]
For every underdamped method take \(P_0\sim\cN(0,\Id_d)\)
independently of \(X_0\), and make every initialization independent of
future innovations. All the targets are centered, and \(\alpha\asymp1\), so
normalized and unnormalized accuracy differ only by universal constants.

\emph{A common lower bound on elapsed time.}
For any algorithm, let \(\overline M_h\) denote the expected
one-step coefficient of the final coordinate: in the overdamped case,
the mean of the scalar coefficient multiplying the current \(d\)-th
position; in the underdamped case, the mean of the \(2\times2\) state
matrix acting on the \(d\)th position--momentum pair. For Heun and RLMC it is \(1-h+h^2/2\);
for PLMC it is \(1-h+(k-1)h^2/(2k)\). The underdamped formulas
\eqref{eq:exponential-midpoint-matrix}, \eqref{eq:M-RULMC},
and \eqref{eq:Poisson-quadratic-macrostep}, together
with \eqref{eq:bernoulli-count-moments}, give
\(\norm{\overline M_h-\Id}\leqslant Ch\), uniformly in \(k\).
The same bound holds for the overdamped case. Independence of fresh
coefficients from the current state therefore gives, for the difference of the mean in the $d$-th position coordinate from the initial mean,
\[
 \lvert\E X_{N,d}-1\rvert
 \leqslant\norm{\overline M_h^N-\Id}
 \leqslant(1+Ch)^N-1\leqslant\e^{CNh}-1\,.
\]
Fix a universal \(\tau>0\) so that this is at most \(1/2\) when
\(Nh\leqslant\tau\). Since $W_2$ is lower bounded by the difference
of means, accuracy \(\varepsilon\leqslant\varepsilon_0\) for a small
enough universal \(\varepsilon_0\) requires \(Nh>\tau\).

\emph{Error bounds after that elapsed time.}
For the overdamped methods, the first \(d-1\) block of coordinates retain the stationary
marginal distribution \(\mu_h\) at every iteration. Indeed,
the updates of this block do not involve the final coordinate; after
averaging over the quadrature variables, they form the same method in
dimension \(d-1\), for which \(\mu_h\) is invariant. Conditional on the
quadrature variables the law of the block does change; only its average
over these variables is stationary.
Projection onto this block, the scalar Heun bound from the proof of
\Cref{prop:heun-oscillatory-lower} summed over its \(d-1\geqslant d/2\)
oscillatory coordinates, and
\Cref{prop:gaussian-radial-lower,prop:plmc-gaussian-lower}
in dimension \(d-1\geqslant d/2\)
give errors at least \(c\sqrt d\,h^{3/2}\) for Heun,
\(c\min\{dh^3,\sqrt{dh^3}\}\) for RLMC and PLMC with \(k\geqslant2\),
and \(c\sqrt d\,h\) for PLMC with \(k=1\).

For exponential midpoint, let \(C_{N,h}^{\mathrm{ExpMid}}\) be the covariance of each
position--momentum pair after \(N\) steps, so that
\(C_{0,h}^{\mathrm{ExpMid}}=\operatorname{diag}(0,1)\). At the fixed step size \(h\), denote
its invariant covariance by
\(C_h^{\mathrm{ExpMid}}\deq C_{h,1}^{\mathrm{ExpMid}}\). The invariant covariance expansion in
the proof of \Cref{prop:exponential-midpoint-Gaussian-lower} gives
\[
 (C_h^{\mathrm{ExpMid}}-C_{0,h}^{\mathrm{ExpMid}})_{11}=1+O(h^2)\,,\qquad
 \det(C_h^{\mathrm{ExpMid}}-C_{0,h}^{\mathrm{ExpMid}})=h^2/12+O(h^3)\,.
\]
Thus \(C_h^{\mathrm{ExpMid}}-C_{0,h}^{\mathrm{ExpMid}}\succeq0\) for sufficiently small \(h\), and
\eqref{eq:exponential-midpoint-Lyapunov} yields
\[
 C_{N,h}^{\mathrm{ExpMid}}=C_h^{\mathrm{ExpMid}}-M_h^N\,(C_h^{\mathrm{ExpMid}}-C_{0,h}^{\mathrm{ExpMid}})(M_h^N)^\top\preceq C_h^{\mathrm{ExpMid}}\,.
\]
The position covariance at time $N$ is \(s_{N,h}^{\mathrm{ExpMid}}\Id_d\), with
\(s_{N,h}^{\mathrm{ExpMid}}\deq(C_{N,h}^{\mathrm{ExpMid}})_{11}\leqslant(C_h^{\mathrm{ExpMid}})_{11}=s_{h,1}^{\mathrm{ExpMid}}=1-h^2/24+O(h^3)<1\). The Gaussian Wasserstein
formula consequently gives an error at least
\(\sqrt d\,(1-\sqrt{s_{h,1}^{\mathrm{ExpMid}}})\geqslant c\sqrt d\,h^2\) at every iteration.

For the randomized underdamped methods, the first \(d-1\) positions
are conditionally independent centered Gaussians with common variance
\(s_N\) from \Cref{lem:finite-time-underdamped-covariance}. Every
vector \(v\) in that lemma satisfies $\int_0^\tau(e_1^\top\e^{tF}v)^2\,\mathrm d t>0$
by \eqref{eq:covariance-position-row}, and \(\sigma^2\) is uniformly
bounded below. The universal \(\tau\) was fixed small by the
mean calculation for the final coordinate. Since the integrand is non-negative, for \(Nh>\tau\) the
integral in \Cref{lem:finite-time-underdamped-covariance} is at least
its value over \([0,\tau]\), which is positive and depends only on
\(\tau\). It is also at most the integral over \((0,\infty)\), which
is finite because \(\e^{tF}\) decays exponentially. After shrinking the
universal \(h_0\) so that the \(O(h^4)\) remainder is at most half the
leading term, the lemma gives
\(\operatorname{Var}(s_N)\asymp h^3\) and
\(\E\lvert s_N-\E s_N\rvert^4\lesssim h^6\) whenever \(Nh>\tau\).
Projection and \Cref{lem:gaussian-scale-perturbation}
in dimension \(d-1\geqslant d/2\) give an error at
least \(c\min\{dh^3,\sqrt{dh^3}\}\). For PULMC with \(k=1\), the
lemma instead gives \(s_N-1\geqslant ch\) for \(Nh>\tau\), by
the same comparison with the integral over \([0,\tau]\), and the Gaussian
Wasserstein formula gives \(c\sqrt d\,h\).

The preceding errors impose the step restrictions
\[
 h\lesssim
 \begin{cases}
  \varepsilon^{2/3}d^{-1/3}\,,&\text{Heun}\,,\\
  \varepsilon^{1/2}d^{-1/4}\,,&\text{exponential midpoint}\,,\\
  \varepsilon^{1/3}d^{-1/3}\,,&\text{randomized methods, excluding }k=1\,,\\
  \varepsilon d^{-1/2}\,,&\text{PLMC/PULMC with }k=1\,.
 \end{cases}
\]
Combining each restriction with \(N>\tau/h\) proves all the asserted
iteration bounds, uniformly in \(k\).

\emph{Initialization energy.}
For Heun, each scalar update differs from its quadratic update (\(a_\star=0\)) by at
most \(Ch^{3/2}\), because
\(V'(x)-x=-a_\star\sqrt h\sin(x/\sqrt h)\) on an oscillatory
coordinate, and we take a step of size $h$. The quadratic coefficient has square at most \(1-ch\)
and noise variance at most \(Ch\). Young's inequality absorbs the oscillatory
perturbation into half the contraction, giving
\[
 \E[(X_i^+)^2]\leqslant(1-ch)\,\E X_i^2+Ch\,.
\]
Thus each invariant coordinate has second moment at most \(C\);
the bounded oscillatory potential contributes at most \(Cdh\).
For RLMC and PLMC, the invariant second moments are uniformly bounded
by \eqref{eq:gaussian-scale-mean} and the mean calculation in
\Cref{prop:plmc-gaussian-lower}. Hence all overdamped starts have
\(\E[V(X_0)-V(x_\star)]\leqslant Cd\). The two underdamped starts
have energy gaps \(1/2\) and \((d+1)/2\), respectively, and the
randomized position start has \(\W_2(\Law(X_0),\pi)=1\).
Finally, strong convexity gives
\(\alpha\,\E\norm{X_0-x_\star}^2\leqslant2\,\E[V(X_0)-V(x_\star)]\),
while integration by parts gives
\(\alpha\,\E_\pi\norm{X-x_\star}^2\leqslant d\). The triangle inequality through \(\delta_{x_\star}\) therefore yields \(\alpha\W_2^2(\nu_0,\pi)\leqslant Cd\).

The final Heun assertion follows from \Cref{thm:et-linear-kappa-lower},
using \eqref{eq:linear-kappa-endpoint-rate} and deducing
the initial Wasserstein bound from its energy bound as above.
\end{proof}

\section{Analytic background and notation}
\label{app:toolbox}

This appendix records the conventions needed for the upper bounds.
For Wasserstein geometry, functional inequalities, and kinetic
contraction, we follow \cite[Sections~1.3, 2.2, and~5.3]{ChewiBook}.
For Gaussian differentiation and heat interpolation, the closest
reference is \cite[Appendix~C]{HigherOrderRHMC}.  We recall the formulas
used here, including motivation for the Witten semigroup representation;
for the other standard proofs, we refer to these references and the
monographs cited below.

The tools enter the upper bounds in three groups.
\begin{enumerate}
\item \emph{Derivative norms and Gaussian smoothing.}
\Cref{subsec:tb-schatten,subsec:tb-malliavin,subsec:tb-stein} recall
Schatten norms, Gaussian differentiation, and Stein kernels. Together
they turn bounds on derivatives of centered fluctuations into transport
bounds in \Cref{app:overdamped-proof,app:underdamped-proof}.

\item \emph{Regularity of the overdamped corrector.}
\Cref{subsec:tb-witten,subsec:tb-interpolation} explain the Witten
semigroup and the interpolation and functional calculus used in
\Cref{subsec:et-S4-multiplier}. The key identity passes a derivative
through the scalar semigroup to the semigroup on vector fields.

\item \emph{The transport cost of the underdamped conditional mean.}
\Cref{subsec:tb-sobolev} relates weighted divergence and negative Sobolev
norms to Wasserstein transport. This explains why
\Cref{subsec:uld-cancellation} estimates the divergence of the conditional
mean rather than only its size.
\end{enumerate}

\subsection{Schatten norms}
\label{subsec:tb-schatten}

For Schatten norms and their ideal and H\"older inequalities, see \cite{Simon2005} for a general reference.

For a compact operator \(A\) with singular values
\((\sigma_j)_{j\geqslant1}\), set
\[
    \norm A_{\mathfrak S_p}
    \deq\Bigl(\sum_{j\geqslant1}\sigma_j^p\Bigr)^{1/p}\,,
    \qquad
    \norm A_{\mathfrak S_\infty}\deq\norm A_{\mathrm{op}}\,.
\]
Thus, \(\mathfrak S_2\) is the Hilbert--Schmidt norm.  We use the ideal
property and the Schatten H\"older inequality:
\[
    \norm{AB}_{\mathfrak S_p}
    \leqslant\norm A_{\mathrm{op}}\,\norm B_{\mathfrak S_p}\,,
    \qquad
    \norm{AB^*}_{\mathfrak S_2}
    \leqslant\norm A_{\mathfrak S_4}\,\norm B_{\mathfrak S_4}\,.
\]
The ideal inequality also holds with the two norms interchanged.
In particular,
\[
    \norm A_{\mathfrak S_4}
    \leqslant\norm A_{\mathrm{op}}^{1/2}\,\norm A_{\mathfrak S_2}^{1/2}\,.
\]
For an operator \(\mathcal H\to\R^d\), an operator norm bound of order
one and a Hilbert--Schmidt bound of order \(\sqrt d\) therefore give
an \(\mathfrak S_4\) bound of order \(d^{1/4}\).  Squaring the latter
in the Stein kernel estimate gives the required \(\sqrt d\) dependence.

\subsection{Gaussian differentiation}
\label{subsec:tb-malliavin}

For Gaussian differentiation and Wiener chaos, see \cite[Chapter~1]{Nualart2006} for a general reference.

Let \(G\) be an isonormal Gaussian process over a separable Hilbert
space \(\mathcal H\): the random variables \(G(f)\), \(f\in\mathcal H\),
are jointly centered Gaussian, with covariance
\(\E[G(f)G(f')]=\ip f{f'}_{\mathcal H}\).  For Brownian noise on
\([0,h]\), we use
\[
    \mathcal H\deq L^2([0,h];\R^d)\,,
    \qquad G(f)\deq\int_0^h\ip{f_s}{\mathrm d B_s}\,.
\]
A direction \(f\in\mathcal H\) acts by the path shift
\(B\mapsto B+\varepsilon\int_0^\cdot f_s\,\mathrm d s\).
The Malliavin derivative \(D_GF\) is the derivative along these shifts,
initially defined for smooth cylindrical functionals and then closed
in the Gaussian Sobolev spaces \(\mathbb D^{1,p}\);
see \cite{Nualart2006,NourdinPeccati2012}.  For a vector-valued
functional \(R\), it is an operator \(D_GR:\mathcal H\to\R^d\).
Under our Hessian assumptions, differentiating the SDE gives the
usual first variation equation.  In the overdamped case,
\[
    D_GX_t f=\sqrt2\int_0^tJ_{t,s}f_s\,\mathrm d s\,,
    \qquad
    \partial_tJ_{t,s}=-\nabla^2V(X_t)J_{t,s}\,,
    \quad J_{s,s}=\Id_d\,.
\]
Convexity gives \(\norm{J_{t,s}}_{\mathrm{op}}\leqslant1\).
Only first variations, and hence only the Hessian of \(V\), enter
our Malliavin estimates.

Write \(\mathcal L_{\mathrm{OU}}\) for the Gaussian
Ornstein--Uhlenbeck generator and \( (\mathcal Q_t)_{t\ge 0} \) for its semigroup.
The Mehler formula and derivative commutation are
\[
    \mathcal Q_tF(G)
    =\E'F\bigl(\e^{-t}G+\sqrt{1-\e^{-2t}}\,G'\bigr)\,,
    \qquad
    D_G\mathcal Q_tF=\e^{-t}\mathcal Q_tD_GF\,.
\]
Here, \(G'\) is an independent copy of $G$.  Conditional Jensen makes
\(\mathcal Q_t\) a contraction also for vector and operator values,
including in \(L^4(\Omega;\mathfrak S_4)\).
On the \(n\)-th Wiener chaos, \(-\mathcal L_{\mathrm{OU}}\) acts as
multiplication by \(n\).  Its inverse is taken on centered functionals.
The formula used in the Malliavin--Stein argument is
\begin{equation}
    -D_G\mathcal L_{\mathrm{OU}}^{-1}F
    =\int_0^\infty\e^{-t}\mathcal Q_tD_GF\,\mathrm d t\,.
    \label{eq:tb-ou-gradient-representation}
\end{equation}
It follows by integrating the semigroup and commuting the derivative;
the same identity holds for cylindrical approximations on Wiener space
and passes to the limit by closability.

\paragraph{Normalized Hermite polynomials.}
For \(P\sim\cN(0,\Id_d)\) and a multi-index
\(\ell\in\mathbb N_0^d\), write
\(\lvert\ell\rvert\deq\sum_{i\in[d]}\ell_i\) and
\(\ell!\deq\prod_{i\in[d]}\ell_i!\).  The orthonormal Hermite basis is
\[
    H_\ell(p)\deq\prod_{i\in[d]}
       \frac{\mathrm{He}_{\ell_i}(p_i)}{\sqrt{\ell_i!}}\,,
    \qquad
    \E[H_\ell(P)H_k(P)]=\mathbf1_{\{\ell=k\}}\,,
\]
where the probabilists' Hermite polynomials are defined by
\(\exp(tq-t^2/2)=\sum_{n\in\mathbb N_0}\mathrm{He}_n(q)\,t^n/n!\).
In particular,
\(\mathrm{He}_0=1\), \(\mathrm{He}_1(q)=q\),
\(\mathrm{He}_2(q)=q^2-1\), and
\(\mathrm{He}_3(q)=q^3-3q\).
The \(n\)-th momentum Wiener chaos means the span of the polynomials
\(H_\ell\) with \(\lvert\ell\rvert=n\); different degrees are orthogonal.

These polynomials diagonalize the weighted Laplacian on phase space.
Let \(\pi(\mathrm d x)\propto\e^{-V(x)}\,\mathrm d x\), let
\(\bm\pi\deq\pi\otimes\cN(0,\Id_d)\) on phase space
\(\R^d\times\R^d\) with coordinates \((x,p)\), and set
\[
    \mathcal A_x\deq-\Delta_x+\nabla V\cdot\nabla_x\,,
    \qquad
    \mathcal A_p\deq-\Delta_p+p\cdot\nabla_p\,,
    \qquad
    \mathcal A\deq\mathcal A_x+\mathcal A_p\,.
\]
Integration by parts gives
\(\int\varphi\,\mathcal A\psi\,\mathrm d\bm\pi
=\int (\ip{\nabla_x\varphi}{\nabla_x\psi}
+\ip{\nabla_p\varphi}{\nabla_p\psi})\,\mathrm d\bm\pi\),
so \(\mathcal A\) is non-negative and self-adjoint in
\(L^2(\bm\pi)\); these are the operators of
\eqref{eq:uld-weighted-laplacian}.  The operator \(\mathcal A_p\) is the
negative of the Ornstein--Uhlenbeck generator in the momentum variable
\(p\), and \(\mathcal A_x\) is the weighted Laplacian of \(\pi\).
The Hermite differential equation gives
\[
    \mathcal A_pH_\ell=\lvert\ell\rvert H_\ell\,.
\]
Consequently, if \(M_\ell\colon\R^d\to\R\) is a function of position
alone, then
\[
    \mathcal A\bigl(M_\ell(x)H_\ell(p)\bigr)
    =\bigl((\mathcal A_x+\lvert\ell\rvert)M_\ell\bigr)(x)\,H_\ell(p)\,.
\]
Thus, \(\mathcal A\) acts on the coefficient of \(H_\ell\) through the
shifted spatial operator \(\mathcal A_x+\lvert\ell\rvert\).
The momentum degree projections therefore contract the full
gradient seminorm, as used in
\Cref{lem:uld-midpoint-second-generator}; see also \cite{Nualart2006}.

\subsection{Stein kernels and Gaussian convolution}
\label{subsec:tb-stein}

For Malliavin--Stein theory and Gaussian convolution bounds, see \cite{NourdinPeccati2012,PaulinWhalley2026} for general references.

A Stein kernel of a centered vector \(R\in\R^d\) is a matrix-valued
function \(\tau_R\) satisfying
\[
    \E\ip R{\varphi(R)}
    =\E\operatorname{tr}\bigl\{\tau_R(R)\nabla\varphi(R)^\top\bigr\}
\]
for smooth test fields, with \(\nabla\varphi=[\nabla\varphi_1,\ldots,\nabla\varphi_d]\)
as in \eqref{eq:et-stein-identity}. This extends Gaussian integration by parts:
if \(R\sim\cN(0,\Sigma)\), one can take \(\tau_R=\Sigma\).
For a centered Malliavin-differentiable functional, the standard
construction is
\[
    \tau_R(R)=\E\bigl[(D_GR)
       (-D_G\mathcal L_{\mathrm{OU}}^{-1}R)^*\mid R\bigr]\,;
\]
see \cite{NourdinPeccati2012}.  The Schatten inequalities and
\eqref{eq:tb-ou-gradient-representation} give the quantitative bound
used in \Cref{lem:et-malliavin-stein}.

The relevance to discretization is a gain under Gaussian convolution.
For independent \(R\sim\cN(0,\tau)\) and
\(G\sim\cN(0,\Id_d)\), with \(\sigma>0\),
\[
    \W_2\bigl(\Law(R+\sigma G),\Law(\sigma G)\bigr)
    =\norm{(\sigma^2\Id_d+\tau)^{1/2}-\sigma\Id_d}_{\mathfrak S_2}
    \leqslant\frac{\norm\tau_{\mathfrak S_2}}{2\sigma}\,.
\]
The cost depends on the covariance, so it is quadratic in the size of
\(R\).  \Cref{lem:et-gaussian-convolution}, from
\cite{PaulinWhalley2026}, replaces the covariance by a Stein kernel.
The direct interpolation proof in
\Cref{lem:et-gaussian-convolution} retains the sharp coefficient
\(1/(2\sigma)\) from the Gaussian example.
For a perturbation that depends on the Gaussian variable,
\cite[Lemma~4.11]{HigherOrderRHMC} bounds the transport cost through products of
its size and derivatives.  \Cref{lem:uld-midpoint-random-displacement}
extends this principle to a non-Gaussian base measure by removing a
divergence-free field from the interpolation current.

\subsection{The Witten Laplacian}
\label{subsec:tb-witten}

For the Witten Laplacian and its relation to diffusion operators, see \cite{HelfferNier2005} for a general reference.

We explain the representation used in \Cref{subsec:et-S4-multiplier}.
Throughout this subsection, assume
\(\alpha\Id\preceq\nabla^2V\preceq\beta\Id\), as in the overdamped
setting; the normalized bound in \Cref{app:overdamped-proof} takes \(\alpha=\kappa^{-1}\).

Recall that \(P_tf(x)=\E[f(X_t^x)]\) and
\(Lf=\Delta f-\ip{\nabla V}{\nabla f}\), where \(X_t^x\) is the
Langevin diffusion started at \(x\).  The Jacobian
\(J_t^x\deq D_xX_t^x\), with entries
\((J_t^x)_{ij}=\partial_{x_j}(X_t^x)_i\) for \(i,j\in[d]\), satisfies
\[
    \partial_tJ_t^x=-\nabla^2V(X_t^x)J_t^x\,,\qquad J_0^x=\Id\,.
\]
Differentiating under the expectation gives the chain rule identity
\[
    \nabla(P_tf)(x)
    =\E\bigl[(J_t^x)^\top\nabla f(X_t^x)\bigr]\,.
\]
This motivates averaging vector fields with the transposed Jacobian.
Differentiating the drift in \(L\) identifies the corresponding generator:
\[
    \nabla Lf=(L-\nabla^2V)\nabla f\,,
\]
where \(L\) acts componentwise and \(\nabla^2V\) by matrix multiplication.
We therefore define
\[
    L^{(1)}\deq L-\nabla^2V\,,\qquad P_t^{(1)}\deq\e^{tL^{(1)}}\,.
\]
Equivalently, \(u(t,x)\deq(P_t^{(1)}M)(x)\) solves
\[
    \partial_tu=Lu-\nabla^2V\,u\,,\qquad u(0,x)=M(x)\,.
\]
For \(M=\nabla f\), this is exactly the equation obtained by
spatially differentiating \(\partial_tP_tf=LP_tf\).  Thus
\(\nabla P_tf=P_t^{(1)}\nabla f\).  A $1$-form in Euclidean coordinates
is a column vector, and \(-L^{(1)}\) is the Witten
Laplacian on $1$-forms.  For matrix fields \(M\), these operators
act on each column separately.

The chain rule motivates the representation for gradient fields.
For arbitrary bounded $1$-forms, the Feynman--Kac representation is given in
\cite[Proposition~3.2, equation~(3.2)]{Huguet2021}; applying it to each
column gives, for bounded matrix fields \(M\),
\begin{equation}
    (P_t^{(1)}M)(x)
    =\E\bigl[(J_t^x)^\top M(X_t^x)\bigr]\,.
    \label{eq:tb-witten-feynman-kac}
\end{equation}
In Euclidean space, the deformed parallel transport in that reference
is \(J_t^x\), since its defining equation reduces to the Jacobian
equation above.

The Jacobian equation also gives the contraction used in
\Cref{lem:et-witten-endpoints}.
For \(v\in\R^d\), the lower Hessian bound gives
\[
    \frac{\mathrm d}{\mathrm d t}\,\norm{J_t^xv}^2
    =-2\,\ip{J_t^xv}{\nabla^2V(X_t^x)J_t^xv}
    \leqslant-2\alpha\,\norm{J_t^xv}^2\,,
\]
so \(\norm{J_t^x}_{\mathrm{op}}\leqslant\e^{-\alpha t}\).
Thus, \eqref{eq:tb-witten-feynman-kac} combines
averaging along the Langevin diffusion with a contracting matrix factor.
\Cref{lem:et-witten-endpoints} makes this observation quantitative in the
normalized setting of \Cref{app:overdamped-proof} and extends the
representation to \(M\in L^p(\pi;\mathfrak S_p)\), for
\(\pi\)-almost every starting point.

\subsection{Interpolation and the sectorial calculus}
\label{subsec:tb-interpolation}

For complex interpolation and sectorial functional calculus, see \cite{BerghLofstrom1976,Haase2006} for general references.

The spectral theorem bounds multipliers on
\(L^2(\pi;\mathfrak S_2)\).  To obtain the
\(L^4(\pi;\mathfrak S_4)\) bound needed for the Stein kernel, we use
complex interpolation:
\[
    [L^\infty(\pi;\mathfrak S_\infty),
       L^2(\pi;\mathfrak S_2)]_{1/2}
    =L^4(\pi;\mathfrak S_4)
\]
with equal norms.  The complex interpolation method
\cite{BerghLofstrom1976} and Stein's theorem for analytic operator
families \cite{Stein1956} transfer the endpoint semigroup estimates to
complex times on this intermediate space.

\paragraph{Why a functional calculus is needed.}
The proof of \Cref{lem:et-S4-corrector} writes the corrector gradient as
a function of the Witten Laplacian,
\(\nabla\chi=h\,r(-hL^{(1)})\,\nabla^2V\), and requires a bound on
\(r(-hL^{(1)})\) on \(L^4(\pi;\mathfrak S_4)\) that is uniform in
\(h\).  On \(L^2(\pi;\mathfrak S_2)\), the operator \(-L^{(1)}\) is
non-negative and self-adjoint (\Cref{lem:et-witten-endpoints}), and the
spectral theorem gives directly
\[
    \norm{r(-hL^{(1)})}_{L^2(\pi;\mathfrak S_2)\to L^2(\pi;\mathfrak S_2)}
    \leqslant\sup_{\lambda\geqslant0}\,\lvert r(\lambda)\rvert\,.
\]
The space \(L^4(\pi;\mathfrak S_4)\) is not a Hilbert space, so there is
no spectral theorem there.  The sectorial calculus replaces the spectral
theorem by the Cauchy integral formula; its only input is a bound on the
resolvent.

\paragraph{The Cauchy integral formula.}
For \(0<\eta<\uppi\), let
\(\Sigma_\eta\deq\{z\ne0:\lvert\arg z\rvert<\eta\}\) be the open sector
of half-angle \(\eta\) around the positive real axis.  Orient its
boundary \(\partial\Sigma_\eta\) from \(\infty\e^{\boldsymbol i\eta}\) to
\(0\) and then to \(\infty\e^{-\boldsymbol i\eta}\), so that
\(\Sigma_\eta\) lies to the left of the boundary.  Let \(g\) be holomorphic on a larger
sector \(\Sigma_\vartheta\), \(\vartheta>\eta\), with
\(\lvert g(z)\rvert\leqslant C\min\{\lvert z\rvert^s,\lvert z\rvert^{-s}\}\)
for some \(s>0\).  Cauchy's formula gives, for every
\(\lambda\in\Sigma_\eta\),
\[
    g(\lambda)
    =\frac1{2\uppi\boldsymbol i}\int_{\partial\Sigma_\eta}
       \frac{g(z)}{z-\lambda}\,\mathrm d z\,.
\]
Substituting a closed operator \(\mathcal T\) on a Banach space
\(\mathcal X\) for \(\lambda\) turns \(1/(z-\lambda)\) into the resolvent
\((z-\mathcal T)^{-1}\) and suggests the definition
\begin{equation}
    g(\mathcal T)
    \deq\frac1{2\uppi\boldsymbol i}\int_{\partial\Sigma_\eta}
       g(z)\,(z-\mathcal T)^{-1}\,\mathrm d z\,.
    \label{eq:tb-contour-calculus}
\end{equation}
This makes sense provided that the resolvent exists on
\(\partial\Sigma_\eta\) and the integral converges; by Cauchy's theorem,
it does not depend on the choice of \(\eta\).  For a
non-negative self-adjoint operator on a Hilbert space,
\eqref{eq:tb-contour-calculus} agrees with the spectral theorem, by
applying the scalar formula inside the spectral integral.

\paragraph{Sectorial operators.}
Convergence of \eqref{eq:tb-contour-calculus} is guaranteed by the
following resolvent condition.  A closed operator \(\mathcal T\) on a
Banach space \(\mathcal X\) is \emph{sectorial of angle}
\(\omega\in[0,\uppi)\) if its spectrum lies in the closed sector
\(\overline{\Sigma_\omega}\), and, for every
\(\vartheta\in(\omega,\uppi)\),
\[
    M_\vartheta(\mathcal T)
    \deq\sup\bigl\{\lvert z\rvert\,
       \norm{(z-\mathcal T)^{-1}}_{\mathcal X\to\mathcal X}:
       z\ne0\,,\ \lvert\arg z\rvert\geqslant\vartheta\bigr\}
    <\infty\,.
\]
The size \(1/\lvert z\rvert\) is that of the scalar resolvent
\(1/(z-\lambda)\) when \(z\) and \(\lambda\geqslant0\) are separated by a
fixed angle.  Since
\((z-h\mathcal T)^{-1}=h^{-1}\,(z/h-\mathcal T)^{-1}\), the constants are
invariant under scaling: \(M_\vartheta(h\mathcal T)=M_\vartheta(\mathcal T)\)
for every \(h>0\).  For example, on \(L^2(\pi;\mathfrak S_2)\), the
resolvent norm of the self-adjoint operator \(-L^{(1)}\) is the
reciprocal of the distance from \(z\) to its spectrum in \([0,\infty)\),
which is at least \(\lvert z\rvert\sin\vartheta\) when
\(\lvert\arg z\rvert\geqslant\vartheta\); so \(-L^{(1)}\) is sectorial of
angle \(0\) there.  The angle \(\uppi/4\) obtained on
\(L^4(\pi;\mathfrak S_4)\) in \Cref{lem:et-witten-analyticity} is worse,
but any angle below \(\uppi/2\) suffices here: the multiplier \(r\) has
poles on the imaginary axis, so the contour must stay in the right
half-plane.

\paragraph{From analytic semigroups to sectoriality.}
We describe a standard equivalence used in the proof of
\Cref{lem:et-witten-analyticity}; see \cite{Haase2006} for the general
statement.  The resolvent is the Laplace transform of the semigroup.  If
\(\operatorname{Re}w>0\), contractivity of \(\e^{tL^{(1)}}\) on
\(L^4(\pi;\mathfrak S_4)\) gives
\[
    (w-L^{(1)})^{-1}=\int_0^\infty\e^{-wt}\,\e^{tL^{(1)}}\,\mathrm d t\,,
    \qquad
    \norm{(w-L^{(1)})^{-1}}\leqslant\frac1{\operatorname{Re}w}\,,
\]
with operator norms on \(L^4(\pi;\mathfrak S_4)\).  With \(z=-w\), this
covers only \(\operatorname{Re}z<0\), and the bound \(1/\operatorname{Re}w\)
degenerates near the imaginary axis.  Analyticity lets us rotate the
integration ray.  By \Cref{lem:et-witten-analyticity},
\(\zeta\mapsto\e^{\zeta L^{(1)}}\) is holomorphic and contractive in
\(\lvert\arg\zeta\rvert<\uppi/4\). By Cauchy's theorem, for
\(\lvert\phi\rvert<\uppi/4\) and
\(\operatorname{Re}(w\e^{\boldsymbol i\phi})>0\),
\begin{equation}
\begin{aligned}
    (w-L^{(1)})^{-1}
    &=\e^{\boldsymbol i\phi}\int_0^\infty
       \exp(-w\e^{\boldsymbol i\phi}s)\,
       \exp(s\e^{\boldsymbol i\phi}L^{(1)})\,\mathrm d s\,,\\
    \norm{(w-L^{(1)})^{-1}}
    &\leqslant\frac1{\lvert w\rvert\cos(\arg w+\phi)}\,.
\end{aligned}
    \label{eq:tb-rotated-laplace}
\end{equation}
Now fix \(\vartheta\in(\uppi/4,\uppi/2)\) and let
\(\lvert\arg z\rvert\geqslant\vartheta\), so that
\(\lvert\arg w\rvert\leqslant\uppi-\vartheta\).  Choose \(\phi\) with
sign opposite to \(\arg w\) and
\(\lvert\phi\rvert=\min\{\lvert\arg w\rvert,(3\uppi/4-\vartheta)/2\}\).
Then \(\lvert\arg w+\phi\rvert\leqslant\uppi/2-(\vartheta-\uppi/4)/2\),
and \eqref{eq:tb-rotated-laplace} gives
\[
    \norm{(z+L^{(1)})^{-1}}
    \leqslant\frac1{\sin((\vartheta-\uppi/4)/2)\,\lvert z\rvert}\,.
\]
In particular, every \(z\) outside \(\overline{\Sigma_{\uppi/4}}\) lies in
the resolvent set, so the spectrum of \(-L^{(1)}\) lies in
\(\overline{\Sigma_{\uppi/4}}\), and the resolvent bound
\eqref{eq:et-witten-resolvent} holds with a constant depending only on
\(\vartheta\).  In general, the negative of the generator of a bounded analytic semigroup
on the sector \(\lvert\arg\zeta\rvert<\delta\) is sectorial of angle
\(\uppi/2-\delta\), and the converse is also true.

\paragraph{The multiplier estimate.}
The following estimate is standard.  It is the elementary part of
the holomorphic functional calculus for sectorial operators constructed
in \cite[Section~2.3]{Haase2006}, applied to functions that decay
polynomially at zero and at infinity after subtracting a constant and a
multiple of \((1+z)^{-1}\) (the extended Dunford--Riesz class).  For
completeness, we include the short proof, which also makes the
uniformity under scaling explicit.

\begin{lemma}[Uniform multiplier bound for sectorial operators]
\label{lem:tb-sectorial-multiplier}
Let \(\mathcal T\) be a closed operator on a Banach space
\(\mathcal X\) that is sectorial of angle \(\omega\in[0,\uppi)\), and let
\(\vartheta\in(\omega,\uppi)\).  Let \(f\colon\Sigma_\vartheta\to\mathbb C\)
be holomorphic, and suppose that there are
\(f(0),f(\infty)\in\mathbb C\) and constants \(C_f,s>0\) such that, for
\(z\in\Sigma_\vartheta\),
\[
    \lvert f(z)-f(0)\rvert\leqslant C_f\,\lvert z\rvert^s
    \quad\text{if }\lvert z\rvert\leqslant1\,,
    \qquad
    \lvert f(z)-f(\infty)\rvert\leqslant C_f\,\lvert z\rvert^{-s}
    \quad\text{if }\lvert z\rvert\geqslant1\,.
\]
For \(h>0\), define
\[
    f(h\mathcal T)
    \deq f(\infty)\,\Id+(f(0)-f(\infty))\,(\Id+h\mathcal T)^{-1}
       +g(h\mathcal T)\,,
    \quad
    g(z)\deq f(z)-f(\infty)-\frac{f(0)-f(\infty)}{1+z}\,,
\]
where \(g(h\mathcal T)\) is given by \eqref{eq:tb-contour-calculus} with
\(\eta\deq(\omega+\vartheta)/2\).  Then,
\[
    \sup_{h>0}\,\norm{f(h\mathcal T)}_{\mathcal X\to\mathcal X}
    \leqslant\lvert f(\infty)\rvert
       +\lvert f(0)-f(\infty)\rvert\,M_\eta(\mathcal T)
       +\frac{2C_gM_\eta(\mathcal T)}{\uppi\min\{s,1\}}\,,
    \quad
    C_g\deq C_f+\frac{\lvert f(0)-f(\infty)\rvert}{\cos(\vartheta/2)}\,.
\]
\end{lemma}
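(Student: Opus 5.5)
The plan is to bound the three terms in the definition of \(f(h\mathcal T)\) separately, using only the resolvent condition and the scale invariance \(M_\eta(h\mathcal T)=M_\eta(\mathcal T)\) recalled above. The constant term contributes \(\lvert f(\infty)\rvert\). For the second term, sectoriality evaluated at \(z=-1\), where \(\lvert\arg z\rvert=\uppi\geqslant\eta\), gives \(\norm{(\Id+h\mathcal T)^{-1}}=\norm{(-1-h\mathcal T)^{-1}}\leqslant M_\eta(h\mathcal T)=M_\eta(\mathcal T)\). This yields the middle term of the bound. All remaining work concerns \(g(h\mathcal T)\).

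\emph{Decay of \(g\).} I will first show \(\lvert g(z)\rvert\leqslant C_g\min\{\lvert z\rvert^{s'},\lvert z\rvert^{-s'}\}\) on \(\Sigma_\vartheta\) with \(s'\deq\min\{s,1\}\). For \(\lvert z\rvert\leqslant1\), write
\[
g(z)=(f(z)-f(0))+(f(0)-f(\infty))\frac{z}{1+z}\,.
\]
For \(\lvert z\rvert\geqslant1\), write
\[
g(z)=(f(z)-f(\infty))-\frac{f(0)-f(\infty)}{1+z}\,.
\]
Both cases need a lower bound on \(\lvert1+z\rvert\). For \(z\in\Sigma_\vartheta\), the angle between \(1\) and \(z\) is below \(\vartheta<\uppi\), so the law of cosines gives
\[
\lvert1+z\rvert\geqslant\cos(\vartheta/2)\,(1+\lvert z\rvert)\geqslant\cos(\vartheta/2)\max\{1,\lvert z\rvert\}\,.
\]
This gives \(\lvert z/(1+z)\rvert\leqslant\lvert z\rvert/\cos(\vartheta/2)\) and \(\lvert1/(1+z)\rvert\leqslant\lvert z\rvert^{-1}/\cos(\vartheta/2)\). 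Together with \(\lvert z\rvert^{1}\leqslant\lvert z\rvert^{s'}\) for \(\lvert z\rvert\leqslant1\), and the symmetric statement at infinity, this produces exactly the constant \(C_g\). The same estimate on the boundary ray \(\partial\Sigma_\eta\subset\Sigma_\vartheta\) is what the contour integral needs.

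\emph{Contour estimate.} Next I insert this into \eqref{eq:tb-contour-calculus} with \(\mathcal T\) replaced by \(h\mathcal T\). On \(\partial\Sigma_\eta\), the condition \(\lvert\arg z\rvert=\eta\) gives \(\norm{(z-h\mathcal T)^{-1}}\leqslant M_\eta(h\mathcal T)/\lvert z\rvert=M_\eta(\mathcal T)/\lvert z\rvert\). I parametrize the two rays by \(r=\lvert z\rvert\), so that \(\lvert\mathrm dz\rvert=\mathrm dr\). Then
\[
\norm{g(h\mathcal T)}\leqslant\frac{1}{2\uppi}\cdot2\int_0^\infty C_g\min\{r^{s'},r^{-s'}\}\,\frac{M_\eta(\mathcal T)}{r}\,\mathrm dr=\frac{C_gM_\eta(\mathcal T)}{\uppi}\cdot\frac{2}{s'}\,.
\]
This is the last term of the bound. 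The integral converges absolutely in operator norm, so \(g(h\mathcal T)\) is a well-defined bounded operator. None of the estimates depend on \(h\), so taking the supremum over \(h\) gives the claim.

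\emph{Main obstacle.} The only delicate points are bookkeeping. One is to make sure the resolvent bound is applied in the correct sign convention: in \eqref{eq:tb-contour-calculus} the resolvent is \((z-\mathcal T)^{-1}\), and the second term uses \(z=-1\). The other is to make sure the constant in the \(\lvert1+z\rvert\) lower bound matches the stated \(\cos(\vartheta/2)\). Well-definedness, meaning that the integral converges and is independent of \(\eta\) by Cauchy's theorem, is already noted in the text, so it need not be reproved.
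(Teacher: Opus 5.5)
Your proposal is correct and follows the paper's proof essentially line for line. It uses the same lower bound $\lvert 1+z\rvert\geqslant\cos(\vartheta/2)(1+\lvert z\rvert)$ and the same case split to get $\lvert g(z)\rvert\leqslant C_g\min\{\lvert z\rvert^{s'},\lvert z\rvert^{-s'}\}$ with $s'=\min\{s,1\}$, the same ray integral via scale invariance of $M_\eta$, and the same treatment of $(\Id+h\mathcal T)^{-1}$ as the resolvent at $z=-1$. The one step you compress is the law-of-cosines bound: passing from $(1+\lvert z\rvert)^2-4\lvert z\rvert\sin^2(\arg z/2)$ to $\cos^2(\vartheta/2)(1+\lvert z\rvert)^2$ also requires $4\lvert z\rvert\leqslant(1+\lvert z\rvert)^2$.
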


\begin{proof}
First, \(g\) decays at zero and at infinity.  For
\(z\in\Sigma_\vartheta\),
\[
    \lvert1+z\rvert^2
    =(1+\lvert z\rvert)^2-4\,\lvert z\rvert\sin^2\frac{\arg z}2
    \geqslant\cos^2\frac\vartheta2\,(1+\lvert z\rvert)^2\,,
\]
since \(4\,\lvert z\rvert\leqslant(1+\lvert z\rvert)^2\).  If
\(\lvert z\rvert\leqslant1\), write
\(g(z)=f(z)-f(0)+(f(0)-f(\infty))\,z/(1+z)\); if
\(\lvert z\rvert\geqslant1\), write
\(g(z)=f(z)-f(\infty)-(f(0)-f(\infty))/(1+z)\).  With the lower bound on
\(\lvert1+z\rvert\), both cases give
\[
    \lvert g(z)\rvert
    \leqslant C_g\min\{\lvert z\rvert^{s'},\lvert z\rvert^{-s'}\}\,,
    \qquad s'\deq\min\{s,1\}\,,\qquad z\in\Sigma_\vartheta\,.
\]
Next, on the rays \(z=t\e^{\pm\boldsymbol i\eta}\), \(t>0\), of
\(\partial\Sigma_\eta\), scale invariance gives
\(\norm{(z-h\mathcal T)^{-1}}\leqslant M_\eta(h\mathcal T)/t
=M_\eta(\mathcal T)/t\).  The integral in
\eqref{eq:tb-contour-calculus} therefore converges absolutely, and
\[
    \norm{g(h\mathcal T)}
    \leqslant\frac{2}{2\uppi}\int_0^\infty
       C_g\min\{t^{s'},t^{-s'}\}\,\frac{M_\eta(\mathcal T)}t\,\mathrm d t
    =\frac{2C_gM_\eta(\mathcal T)}{\uppi s'}\,.
\]
The decay of \(g\) at zero compensates for the growth of the resolvent
near the origin, and its decay at infinity makes the integral over the
unbounded contour converge.  Finally,
\((\Id+h\mathcal T)^{-1}=-(-1-h\mathcal T)^{-1}\) is a resolvent at
\(z=-1\), where \(\lvert\arg z\rvert=\uppi\geqslant\eta\), so its norm is
at most \(M_\eta(h\mathcal T)=M_\eta(\mathcal T)\).  The triangle
inequality completes the proof.
\end{proof}

For
\(\mathcal T=-L^{(1)}\), the resolvents on \(L^4(\pi;\mathfrak S_4)\) and
\(L^2(\pi;\mathfrak S_2)\) are Laplace transforms of the same semigroup,
so they agree on the intersection of the two spaces; hence so do the
operators \(f(-hL^{(1)})\) defined on either space.
\Cref{lem:et-S4-multiplier} applies \Cref{lem:tb-sectorial-multiplier}
with \(\mathcal T=-L^{(1)}\) on \(L^4(\pi;\mathfrak S_4)\) and \(f=r\).

\subsection{Weighted divergence, negative Sobolev norms, and transport}
\label{subsec:tb-sobolev}

For weighted differential operators and Wasserstein tangent norms, see \cite{HelfferNier2005} and \cite[Section~1.3]{ChewiBook} for general references.

For \(\pi(\mathrm d x)\propto\e^{-V(x)}\,\mathrm d x\), integration by
parts gives \(\partial_i^*=-\partial_i+\partial_iV\) in \(L^2(\pi)\).
If \(\Psi=(\Psi_1,\ldots,\Psi_d)\), with each \(\Psi_i\) valued in a finite-dimensional Euclidean space \(E\), the weighted divergence is
\[
    \delta_\pi\Psi\deq\sum_{j\in[d]}\partial_j^*\Psi_j
       =\sum_{j\in[d]}\bigl(-\partial_j\Psi_j+(\partial_jV)\Psi_j\bigr)\,.
\]
The Bochner--Witten identity used in
\Cref{lem:uld-midpoint-second-generator,lem:uld-midpoint-random-displacement}
is the following weighted integration by parts formula for smooth
compactly supported fields:
\begin{equation}
\begin{split}
    \norm{\delta_\pi\Psi}_2^2
    ={}&\int\sum_{i,j\in[d]}
       \ip{\partial_i\Psi_j}{\partial_j\Psi_i}_E\,\mathrm d\pi+\int\sum_{i,j\in[d]}(\partial_i\partial_jV)\,
          \ip{\Psi_i}{\Psi_j}_E\,\mathrm d\pi\,.
\end{split}
    \label{eq:tb-weighted-divergence}
\end{equation}
Indeed, differentiating the weight term gives
\(\partial_i\partial_j^*=\partial_j^*\partial_i+\partial_i\partial_jV\).
Applying the adjoint relation twice therefore yields
\[
\begin{split}
    \norm{\delta_\pi\Psi}_2^2
    &=\sum_{i\in[d]}\ip{\Psi_i}{\partial_i\delta_\pi\Psi}_{L^2(\pi;E)}\\
    &=\sum_{i,j\in[d]}\ip{\Psi_i}{\partial_j^*\partial_i\Psi_j}_{L^2(\pi;E)}
       +\int\sum_{i,j\in[d]}(\partial_i\partial_jV)\,
          \ip{\Psi_i}{\Psi_j}_E\,\mathrm d\pi\\
    &=\sum_{i,j\in[d]}\ip{\partial_j\Psi_i}{\partial_i\Psi_j}_{L^2(\pi;E)}
       +\int\sum_{i,j\in[d]}(\partial_i\partial_jV)\,
          \ip{\Psi_i}{\Psi_j}_E\,\mathrm d\pi\,.
\end{split}
\]
The second integral is abbreviated as
\(\int\ip\Psi{\nabla^2V\,\Psi}_E\,\mathrm d\pi\): the Hessian acts on the
spatial indices.  If \(\nabla^2V\preceq\beta\Id\), Cauchy--Schwarz
bounds the first sum by \(\norm{\nabla_x\Psi}_2^2\), giving
\(\norm{\delta_\pi\Psi}_2^2\leqslant\norm{\nabla_x\Psi}_2^2+\beta\,\norm\Psi_2^2\).
Thus, the divergence is controlled using the bounded Hessian, even
though its definition contains the possibly unbounded gradient of \(V\).

Let \(\mu\) have a positive density, let \(D\) be the gradient, and let
\(\delta_\mu\deq D^*\) be its adjoint in \(L^2(\mu)\).
The homogeneous space \(\dot H^1(\mu)\) is the completion of smooth
functions modulo constants under \(\norm{D\varphi}_2\).  For a
continuous functional \(r\) which is zero on constants, its dual norm has
the minimum flux characterization
\[
    \norm r_{\dot H^{-1}(\mu)}
    =\sup_{\norm{D\varphi}_2\leqslant1}\ip r\varphi
    =\inf_{\delta_\mu A=r}\,\norm A_2\,.
\]
The minimizing field belongs to the closure of gradients.  This is the
same orthogonal projection that identifies Wasserstein tangent vectors
in \cite[Theorem~1.3.20]{ChewiBook}: adding a divergence-free field
leaves the continuity equation unchanged, whereas removing that field
reduces the transport cost without changing the endpoints.

\clearpage
\printbibliography

@book{HighamKloeden2021,
  author = {Higham, D. J. and Kloeden, P. E.},
  title = {An introduction to the numerical simulation of stochastic differential equations},
  date = {2021},
  publisher = {Society for Industrial and Applied Mathematics},
}

@inproceedings{Dalalyan2017,
  title = {Further and stronger analogy between sampling and optimization: {L}angevin {M}onte {C}arlo and gradient descent},
  author = {Dalalyan, Arnak S.},
  booktitle = {Proceedings of the 2017 Conference on Learning Theory},
  pages = {678--689},
  year = {2017},
  editor = {Kale, Satyen and Shamir, Ohad},
  volume = {65},
  series = {Proceedings of Machine Learning Research},
  month = {7},
  publisher = {PMLR},
}

@article{DurmusMoulines2019,
  author = {Durmus, Alain and Moulines, \'{E}ric},
  title = {High-dimensional {B}ayesian inference via the unadjusted
              {L}angevin algorithm},
  journal = {Bernoulli},
  fjournal = {Bernoulli. Official Journal of the Bernoulli Society for
              Mathematical Statistics and Probability},
  volume = {25},
  year = {2019},
  number = {4A},
  pages = {2854--2882},
}

@article{DurmusMajewskiMiasojedow2019,
  author = {Durmus, Alain and Majewski, Szymon and Miasojedow, B\l{}a\.{z}ej},
  title = {Analysis of {L}angevin {M}onte {C}arlo via convex optimization},
  journal = {J. Mach. Learn. Res.},
  fjournal = {Journal of Machine Learning Research (JMLR)},
  volume = {20},
  year = {2019},
  pages = {Paper No. 73, 46},
}

@inproceedings{ChengEtAl2018,
  title = {Underdamped {L}angevin {MCMC}: a non-asymptotic analysis},
  author = {Cheng, Xiang and Chatterji, Niladri S. and Bartlett, Peter L. and Jordan, Michael I.},
  booktitle = {Proceedings of the 31st  Conference on Learning Theory},
  pages = {300--323},
  year = {2018},
  editor = {Bubeck, Sébastien and Perchet, Vianney and Rigollet, Philippe},
  volume = {75},
  series = {Proceedings of Machine Learning Research},
  month = {7},
  publisher = {PMLR},
}

@article{DalalyanRiouDurand2020,
  author = {Dalalyan, Arnak S. and Riou-Durand, Lionel},
  title = {On sampling from a log-concave density using kinetic
              {L}angevin diffusions},
  journal = {Bernoulli},
  fjournal = {Bernoulli. Official Journal of the Bernoulli Society for
              Mathematical Statistics and Probability},
  volume = {26},
  year = {2020},
  number = {3},
  pages = {1956--1988},
}

@incollection{VempalaWibisono2019,
  title = {Rapid convergence of the unadjusted {L}angevin algorithm: isoperimetry suffices},
  author = {Vempala, Santosh S. and Wibisono, Andre},
  booktitle = {Advances in Neural Information Processing Systems 32},
  editor = {H. Wallach and H. Larochelle and A. Beygelzimer and F. d\textquotesingle Alch\'{e}-Buc and E. Fox and R. Garnett},
  pages = {8094--8106},
  year = {2019},
  publisher = {Curran Associates, Inc.},
}

@inproceedings{ZhangEtAl2023,
  title = {Improved discretization analysis for underdamped {L}angevin {M}onte {C}arlo},
  author = {Zhang, Matthew S. and Chewi, Sinho and Li, Mufan (Bill) and Balasubramanian, Krishnakumar and Erdogdu, Murat A.},
  booktitle = {Proceedings of Thirty Sixth Conference on Learning Theory},
  pages = {36--71},
  year = {2023},
  editor = {Neu, Gergely and Rosasco, Lorenzo},
  volume = {195},
  series = {Proceedings of Machine Learning Research},
  month = {7},
  publisher = {PMLR},
}

@article{Zhang2025Midpoint,
  title = {Analysis of {L}angevin midpoint methods using an anticipative {G}irsanov theorem},
  author = {Matthew S. Zhang},
  year = {2025},
  journal = {arXiv preprint 2507.12791},
}

@inproceedings{ShenLee2019,
  author = {Shen, Ruoqi and Lee, Yin Tat},
  booktitle = {Advances in Neural Information Processing Systems},
  editor = {H. Wallach and H. Larochelle and A. Beygelzimer and F. d\textquotesingle Alch\'{e}-Buc and E. Fox and R. Garnett},
  publisher = {Curran Associates, Inc.},
  title = {The randomized midpoint method for log-concave sampling},
  volume = {32},
  year = {2019},
}

@inproceedings{HeBalasubramanianErdogdu2020,
  author = {He, Ye and Balasubramanian, Krishnakumar and Erdogdu, Murat A.},
  booktitle = {Advances in Neural Information Processing Systems},
  editor = {H. Larochelle and M. Ranzato and R. Hadsell and M. F. Balcan and H. Lin},
  pages = {7366--7376},
  publisher = {Curran Associates, Inc.},
  title = {On the ergodicity, bias and asymptotic normality of randomized midpoint sampling method},
  volume = {33},
  year = {2020},
}

@inproceedings{YuKaragulyanDalalyan2024,
  title = {Langevin {M}onte {C}arlo for strongly log-concave distributions: randomized midpoint revisited},
  author = {Lu Yu and Avetik Karagulyan and Arnak S. Dalalyan},
  booktitle = {The Twelfth International Conference on Learning Representations},
  year = {2024},
}

@article{AltschulerChewi2026,
  author = {Jason M. Altschuler and Sinho Chewi},
  title = {Shifted composition {III}: local error framework for {KL} divergence},
  journal = {Found. Comput. Math.},
  fjournal = {Foundations of Computational Mathematics. The Journal of the
              Society for the Foundations of Computational Mathematics},
  year = {2026},
  doi = {10.1007/s10208-026-09753-x},
  url = {https://link.springer.com/article/10.1007/s10208-026-09753-x},
}

@inproceedings{AltschulerChewiZhang2026,
  author = {Altschuler, Jason M. and Chewi, Sinho and Zhang, Matthew S.},
  title = {Shifted composition {IV}: toward ballistic acceleration for log-concave sampling},
  year = {2026},
  publisher = {Association for Computing Machinery},
  address = {New York, NY, USA},
  booktitle = {Proceedings of the 58th Annual ACM Symposium on Theory of Computing},
  pages = {1739--1750},
  numpages = {12},
  venue = {Salt Lake City, UT, USA},
  series = {STOC '26},
}

@article{CaoLuWang2021,
  author = {Cao, Yu and Lu, Jianfeng and Wang, Lihan},
  title = {Complexity of randomized algorithms for underdamped {L}angevin
              dynamics},
  journal = {Commun. Math. Sci.},
  fjournal = {Communications in Mathematical Sciences},
  volume = {19},
  year = {2021},
  number = {7},
  pages = {1827--1853},
}

@inproceedings{KandasamyNagaraj2024,
  author = {Kandasamy, Saravanan and Nagaraj, Dheeraj},
  booktitle = {Advances in Neural Information Processing Systems},
  editor = {A. Globerson and L. Mackey and D. Belgrave and A. Fan and U. Paquet and J. Tomczak and C. Zhang},
  pages = {65972--66024},
  publisher = {Curran Associates, Inc.},
  title = {The {P}oisson midpoint method for {L}angevin dynamics: provably efficient discretization for diffusion models},
  volume = {37},
  year = {2024},
}

@book{ChewiBook,
  author = {Chewi, Sinho},
  title = {Log-concave sampling},
  note = {Final draft, August 20, 2026},
  year = {2026},
  url = {https://chewisinho.github.io/main.pdf},
}

@inproceedings{SrinivasanNagaraj2026,
  author = {Srinivasan, Rishikesh and Nagaraj, Dheeraj},
  booktitle = {International Conference on Learning Representations},
  editor = {C. Vondrick and B. Hariharan and C. Raffel and L. Pinto and D. Yang and A. Faust},
  pages = {78341--78367},
  title = {Poisson midpoint method for log concave sampling: beyond the strong error lower bounds},
  volume = {2026},
  year = {2026},
  url = {https://proceedings.iclr.cc/paper_files/paper/2026/file/7e872599932b6172bec451b5d7bf6fbf-Paper-Conference.pdf},
}

@article{PedrottiWhalley2026,
  author = {Pedrotti, Francesco and Whalley, Peter A.},
  title = {{Wasserstein} mixing time of the unadjusted {Langevin} algorithm},
  date = {2026},
  journal = {arXiv preprint 2608.02430},
}

@book{HelfferNier2005,
  author = {Helffer, B. and Nier, F.},
  title = {Hypoelliptic estimates and spectral theory for {Fokker--Planck} operators and {Witten} laplacians},
  date = {2005},
  series = {Lecture Notes in Mathematics},
  volume = {1862},
  publisher = {Springer},
}

@article {Huguet2021,
    AUTHOR = {Huguet, Baptiste},
     TITLE = {Intertwining relations for diffusions in manifolds and
              applications to functional inequalities},
   JOURNAL = {Stochastic Process. Appl.},
  FJOURNAL = {Stochastic Processes and their Applications},
    VOLUME = {145},
      YEAR = {2022},
     PAGES = {1--28},
}

@book{Simon2005,
  author = {Simon, B.},
  title = {Trace ideals and their applications},
  date = {2005},
  edition = {2},
  series = {Mathematical Surveys and Monographs},
  volume = {120},
  publisher = {American Mathematical Society},
}

@book{Nualart2006,
  author = {Nualart, David},
  title = {The {M}alliavin calculus and related topics},
  series = {Probability and its Applications (New York)},
  edition = {Second},
  publisher = {Springer-Verlag, Berlin},
  year = {2006},
  pages = {xiv+382},
}

@book{BerghLofstrom1976,
  author = {Bergh, J. and L\"ofstr\"om, J.},
  title = {Interpolation spaces: an introduction},
  date = {1976},
  publisher = {Springer},
}

@article{Stein1956,
  author = {Stein, E. M.},
  title = {Interpolation of linear operators},
  date = {1956},
  journaltitle = {Transactions of the American Mathematical Society},
  volume = {83},
  number = {2},
  pages = {482--492},
}

@book{Haase2006,
  author = {Haase, M.},
  title = {The functional calculus for sectorial operators},
  date = {2006},
  publisher = {Birkh\"auser},
  location = {Basel},
}

@book{NourdinPeccati2012,
  author = {Nourdin, I. and Peccati, G.},
  title = {Normal approximations with {Malliavin} calculus: from {Stein}'s method to universality},
  date = {2012},
  publisher = {Cambridge University Press},
}

@article{NourdinPeccatiSwan2014,
  author = {Nourdin, I. and Peccati, G. and Swan, Y.},
  title = {Entropy and the fourth moment phenomenon},
  date = {2014},
  journaltitle = {Journal of Functional Analysis},
  volume = {266},
  number = {5},
  pages = {3170--3207},
}

@article{PaulinWhalley2026,
  title = {Theoretical guarantees for stochastic gradient sampling methods via {G}aussian convolution inequalities},
  author = {Daniel Paulin and Peter A. Whalley},
  year = {2026},
  journal = {arXiv preprint 2604.24632},
}

@article{HigherOrderRHMC,
  title = {Smoothed {P}icard {H}amiltonian {M}onte {C}arlo},
  author = {Fan Chen and Sinho Chewi and Jianfeng Lu and Matthew S. Zhang},
  year = {2026},
  journal = {arXiv preprint 2609.06906},
}

@article{TalayTubaro1990,
  author = {Talay, D. and Tubaro, L.},
  title = {Expansion of the global error for numerical schemes solving stochastic differential equations},
  date = {1990},
  journaltitle = {Stochastic Analysis and Applications},
  volume = {8},
  number = {4},
  pages = {483--509},
}

@article{MattinglyStuartTretyakov2010,
  author = {Mattingly, J. C. and Stuart, A. M. and Tretyakov, M. V.},
  title = {Convergence of numerical time-averaging and stationary measures via {P}oisson equations},
  date = {2010},
  journaltitle = {SIAM Journal on Numerical Analysis},
  volume = {48},
  number = {2},
  pages = {552--577},
}

@article{JohndrowMattingly2017,
  author = {Johndrow, James E. and Mattingly, Jonathan C.},
  title = {Error bounds for approximations of {Markov} chains used in {Bayesian} sampling},
  date = {2017},
  journal = {arXiv preprint 1711.05382},
}

@article{LuYeZhou2025,
  author = {Lu, Jianfeng and Ye, Xuda and Zhou, Zhennan},
  title = {Mean square error analysis of stochastic gradient and variance-reduced sampling algorithms},
  date = {2025},
  journal = {arXiv preprint 2511.04413},
}

@inproceedings{LiEtAl2019,
  author = {Li, Xuechen and Wu, Yi and Mackey, Lester and Erdogdu, Murat A.},
  booktitle = {Advances in Neural Information Processing Systems},
  editor = {H. Wallach and H. Larochelle and A. Beygelzimer and F. d\textquotesingle Alch\'{e}-Buc and E. Fox and R. Garnett},
  publisher = {Curran Associates, Inc.},
  title = {Stochastic {R}unge--{K}utta accelerates {L}angevin {M}onte {C}arlo and beyond},
  volume = {32},
  year = {2019},
}

@article{SanzSernaZygalakis2021,
  author = {Sanz-Serna, Jes\'us Mar\'ia and Zygalakis, Konstantinos C.},
  title = {Wasserstein distance estimates for the distributions of numerical approximations to ergodic stochastic differential equations},
  journal = {Journal of Machine Learning Research},
  year = {2021},
  volume = {22},
  number = {242},
  pages = {1--37},
}

@article{PaulinWhalley2024,
  author = {Paulin, Daniel and Whalley, Peter A.},
  title = {Correction to `{Wasserstein} distance estimates for the distributions of numerical approximations to ergodic stochastic differential equations'},
  date = {2024},
  journaltitle = {Journal of Machine Learning Research},
  volume = {25},
  number = {376},
  pages = {1--9},
}

@article{FosterLyonsOberhauser2021,
  author = {Foster, James and Lyons, Terry and Oberhauser, Harald},
  title = {The shifted {ODE} method for underdamped {L}angevin {MCMC}},
  journal = {arXiv preprint 2101.03446},
  year = {2021},
}

@article{ZhangEtAl2025DDPM,
  title = {Sublinear iterations can suffice even for {DDPMs}},
  author = {Matthew S. Zhang and Stephen Huan and Jerry Huang and Nicholas M. Boffi and Sitan Chen and Sinho Chewi},
  year = {2025},
  journal = {arXiv preprint 2511.04844},
}

@article{DalalyanTsybakov2012,
  author = {Dalalyan, Arnak S. and Tsybakov, Alexandre B.},
  title = {Sparse regression learning by aggregation and {L}angevin {M}onte-{C}arlo},
  journal = {J. Comput. System Sci.},
  fjournal = {Journal of Computer and System Sciences},
  volume = {78},
  year = {2012},
  number = {5},
  pages = {1423--1443},
}

@article{DalalyanKaragulyan2019,
  author = {Dalalyan, Arnak S. and Karagulyan, Avetik},
  title = {User-friendly guarantees for the {L}angevin {M}onte {C}arlo with inaccurate gradient},
  journal = {Stochastic Process. Appl.},
  fjournal = {Stochastic Processes and their Applications},
  volume = {129},
  year = {2019},
  number = {12},
  pages = {5278--5311},
}

@inproceedings{ChengBartlett2018,
  title = {Convergence of {L}angevin {MCMC} in {KL}-divergence},
  author = {Cheng, Xiang and Bartlett, Peter},
  booktitle = {Proceedings of Algorithmic Learning Theory},
  pages = {186--211},
  year = {2018},
  editor = {Janoos, Firdaus and Mohri, Mehryar and Sridharan, Karthik},
  volume = {83},
  series = {Proceedings of Machine Learning Research},
  month = {4},
  publisher = {PMLR},
}

@inproceedings{GaneshTalwar2020,
  author = {Ganesh, Arun and Talwar, Kunal},
  booktitle = {Advances in Neural Information Processing Systems},
  editor = {H. Larochelle and M. Ranzato and R. Hadsell and M. F. Balcan and H. Lin},
  pages = {7222--7233},
  publisher = {Curran Associates, Inc.},
  title = {Faster differentially private samplers via {R}\'{e}nyi divergence analysis of discretized {L}angevin {MCMC}},
  volume = {33},
  year = {2020},
}

@inproceedings{ErdogduHosseinzadehZhang2022,
  title = {Convergence of {L}angevin {M}onte {C}arlo in chi-squared and {R}\'enyi divergence},
  author = {Erdogdu, Murat A. and Hosseinzadeh, Rasa and Zhang, Shunshi},
  booktitle = {Proceedings of the 25th International Conference on Artificial Intelligence and Statistics},
  pages = {8151--8175},
  year = {2022},
  editor = {Camps-Valls, Gustau and Ruiz, Francisco J. R. and Valera, Isabel},
  volume = {151},
  series = {Proceedings of Machine Learning Research},
  month = {3},
  publisher = {PMLR},
}

@article{ChewiEtAl2025LMC,
  author = {Chewi, Sinho and Erdogdu, Murat A. and Li, Mufan and Shen,
              Ruoqi and Zhang, Matthew S.},
  title = {Analysis of {L}angevin {M}onte {C}arlo from {P}oincar\'e{} to log-{S}obolev},
  journal = {Found. Comput. Math.},
  fjournal = {Foundations of Computational Mathematics. The Journal of the
              Society for the Foundations of Computational Mathematics},
  volume = {25},
  year = {2025},
  number = {4},
  pages = {1345--1395},
}

@article{MaEtAl2021,
  author = {Yi-An Ma and Niladri S. Chatterji and Xiang Cheng and Nicolas Flammarion and Peter L. Bartlett and Michael I. Jordan},
  title = {Is there an analog of {N}esterov acceleration for gradient-based {MCMC}?},
  volume = {27},
  journal = {Bernoulli},
  number = {3},
  publisher = {Bernoulli Society for Mathematical Statistics and Probability},
  pages = {1942--1992},
  year = {2021},
}

@article{AltschulerChewi2024,
  author = {Altschuler, Jason M. and Chewi, Sinho},
  title = {Faster high-accuracy log-concave sampling via algorithmic warm starts},
  journal = {J. ACM},
  fjournal = {Journal of the ACM},
  volume = {71},
  year = {2024},
  number = {3},
  pages = {Art. 24, 55},
}

@inproceedings{ChewiEtAl2021MALA,
  title = {Optimal dimension dependence of the {M}etropolis-adjusted {L}angevin algorithm},
  author = {Chewi, Sinho and Lu, Chen and Ahn, Kwangjun and Cheng, Xiang and Le Gouic, Thibaut and Rigollet, Philippe},
  booktitle = {Proceedings of Thirty Fourth Conference on Learning Theory},
  pages = {1260--1300},
  year = {2021},
  editor = {Belkin, Mikhail and Kpotufe, Samory},
  volume = {134},
  series = {Proceedings of Machine Learning Research},
  month = {8},
  publisher = {PMLR},
}

@inproceedings{LeeShenTian2021,
  title = {Structured logconcave sampling with a restricted {G}aussian oracle},
  author = {Lee, Yin Tat and Shen, Ruoqi and Tian, Kevin},
  booktitle = {Proceedings of Thirty Fourth Conference on Learning Theory},
  pages = {2993--3050},
  year = {2021},
  editor = {Belkin, Mikhail and Kpotufe, Samory},
  volume = {134},
  series = {Proceedings of Machine Learning Research},
  month = {8},
  publisher = {PMLR},
}

@inproceedings{ChenChewiSalimWibisono2022,
  title = {Improved analysis for a proximal algorithm for sampling},
  author = {Chen, Yongxin and Chewi, Sinho and Salim, Adil and Wibisono, Andre},
  booktitle = {Proceedings of Thirty Fifth Conference on Learning Theory},
  pages = {2984--3014},
  year = {2022},
  editor = {Loh, Po-Ling and Raginsky, Maxim},
  volume = {178},
  series = {Proceedings of Machine Learning Research},
  month = {7},
  publisher = {PMLR},
}

@inproceedings{ChenEtAl2026FORS,
  title = {High-accuracy sampling for diffusion models and log-concave distributions},
  author = {Chen, Fan and Chewi, Sinho and Daskalakis, Constantinos Costis and Rakhlin, Alexander},
  year = {2026},
  booktitle = {Proceedings of the 43rd International Conference on Machine Learning},
  volume = {306},
  series = {Proceedings of Machine Learning Research},
  pages = {13835--13869},
  publisher = {PMLR},
  url = {https://proceedings.mlr.press/v306/chen26p.html},
}

@article{ChenEtAl2026Exact,
  title = {Exact simulation of diffusions and improved algorithms for log-concave sampling},
  author = {Fan Chen and Sinho Chewi and Alexander Rakhlin and Matthew S. Zhang},
  year = {2026},
  journal = {arXiv preprint 2608.05022},
}

@article{ZhangAltschulerChewi2026,
  title = {Algorithmic warm starts for {H}amiltonian {M}onte {C}arlo},
  author = {Matthew S. Zhang and Jason M. Altschuler and Sinho Chewi},
  year = {2026},
  journal = {arXiv preprint 2603.22741},
}

@article{BouRabeeMarsden2025,
  author = {Bou-Rabee, Nawaf and Marsden, Milo},
  title = {Unadjusted {H}amiltonian {MCMC} with stratified {M}onte {C}arlo time integration},
  journal = {Ann. Appl. Probab.},
  fjournal = {The Annals of Applied Probability},
  volume = {35},
  year = {2025},
  number = {1},
  pages = {360--392},
}

@book{Buckdahn1994,
  author = {Buckdahn, Rainer},
  title = {Anticipative {Girsanov} transformations and {Skorohod} stochastic differential equations},
  date = {1994},
  series = {Memoirs of the American Mathematical Society},
  volume = {111},
  number = {533},
  publisher = {American Mathematical Society},
}

@article{ScottEtAl2026,
  author = {Maximilian Scott and D{\'{a}}ire O'Kane and Andra{\v{z}} Jelin{\v{c}}i{\v{c}} and James Foster},
  title = {Underdamped {L}angevin {MCMC} with third order convergence},
  journal = {Journal of Machine Learning Research},
  year = {2026},
  volume = {27},
  number = {126},
  pages = {1--63},
}

@inproceedings{Wibisono2018,
  title = {Sampling as optimization in the space of measures: the {L}angevin dynamics as a composite optimization problem},
  author = {Wibisono, Andre},
  booktitle = {Proceedings of the 31st Conference on Learning Theory},
  pages = {2093--3027},
  year = {2018},
  editor = {Sébastien Bubeck and Vianney Perchet and Philippe Rigollet},
  volume = {75},
  series = {Proceedings of Machine Learning Research},
  month = {7},
  publisher = {PMLR},
}

@book{AmbrosioGigliSavare2008,
  author = {Ambrosio, L. and Gigli, N. and Savar\'{e}, G.},
  title = {Gradient flows in metric spaces and in the space of probability measures},
  date = {2008},
  edition = {2},
  series = {Lectures in Mathematics ETH Z\"{u}rich},
  publisher = {Birkh\"{a}user},
}

@article{Peyre2018,
  author = {Peyre, R\'{e}mi},
  title = {Comparison between {$W_2$} distance and {$\dot H^{-1}$} norm, and localization of {W}asserstein distance},
  journal = {ESAIM Control Optim. Calc. Var.},
  volume = {24},
  number = {4},
  pages = {1489--1501},
  year = {2018},
  doi = {10.1051/cocv/2017050},
}
\end{document}